\documentclass[10pt,reqno,dvipdfmx]{amsart}

%---------------------------------------------------------------------------------------------------------------
%	usepackage
%---------------------------------------------------------------------------------------------------------------

\usepackage{amsmath,amssymb,amsthm,cite}

\usepackage[abbrev]{amsrefs}
\usepackage{graphicx}

\usepackage[top=30mm, bottom=30mm, left=25mm, right=25mm]{geometry}

%\usepackage[color]{showkeys}
%\definecolor{refkey}{rgb}{1,0,0}
%\definecolor{labelkey}{rgb}{1,0,0}
%\usepackage{color}

%---------------------------------------------------------------------------------------------------------------
%	theoremstyle
%---------------------------------------------------------------------------------------------------------------

\newtheorem{theo}{Theorem}[section]
\newtheorem{lemm}[theo]{Lemma}

\newtheorem{prop}[theo]{Proposition}

\numberwithin{equation}{section}

\theoremstyle{definition}

\newtheorem{rema}[theo]{Remark}

%---------------------------------------------------------------------------------------------------------------
%	new comand
%---------------------------------------------------------------------------------------------------------------

\newcommand{\BB}{\mathbb{B}}
\newcommand{\BC}{\mathbb{C}}
\newcommand{\BE}{\mathbb{E}}
\newcommand{\BF}{\mathbb{F}}

\newcommand{\BI}{\mathbb{I}}
\newcommand{\BK}{\mathbb{K}}
\newcommand{\BN}{\mathbb{N}}

\newcommand{\BR}{\mathbb{R}}
\newcommand{\BS}{\mathbb{S}}
\newcommand{\BT}{\mathbb{T}}
\newcommand{\BU}{\mathbb{U}}

\newcommand{\ba}{\boldsymbol{a}}

\newcommand{\bb}{\boldsymbol{b}}
\newcommand{\bB}{\boldsymbol{B}}

\newcommand{\bD}{\boldsymbol{D}}

\newcommand{\bff}{\boldsymbol{f}}
\newcommand{\bF}{\boldsymbol{F}}
\newcommand{\bG}{\boldsymbol{G}}
\newcommand{\bg}{\boldsymbol{g}}
\newcommand{\bH}{\boldsymbol{H}}
\newcommand{\bh}{\boldsymbol{h}}
\newcommand{\bk}{\boldsymbol{k}}
\newcommand{\bK}{\boldsymbol{K}}
\newcommand{\bI}{\boldsymbol{I}}

\newcommand{\bN}{\boldsymbol{N}}
\newcommand{\bn}{\boldsymbol{n}}

\newcommand{\bu}{\boldsymbol{u}}

\newcommand{\bv}{\boldsymbol{v}}

\newcommand{\CB}{\mathcal{B}}

\newcommand{\CE}{\mathcal{E}}
\newcommand{\CF}{\mathcal{F}}
\newcommand{\CH}{\mathcal{H}}

\newcommand{\CL}{\mathcal{L}}
\newcommand{\CM}{\mathcal{M}}

\newcommand{\CO}{\mathcal{O}}
\newcommand{\CP}{\mathcal{P}}

\newcommand{\CR}{\mathcal{R}}
\newcommand{\CS}{\mathcal{S}}
\newcommand{\CT}{\mathcal{T}}
\newcommand{\CU}{\mathcal{U}}
\newcommand{\CV}{\mathcal{V}}

\newcommand{\fp}{\mathfrak{p}}

\newcommand{\pd}{\partial}
\newcommand{\wt}{\widetilde}
\newcommand{\wh}{\widehat}
\newcommand{\ov}{\overline}

\newcommand{\dv}{\mathrm{div}\,}
\newcommand{\Hol}{\mathrm{Hol}}

\newcommand{\dx}{\,\mathrm{d}x}

\newcommand{\dt}{\,\mathrm{d}t}
\newcommand{\RE}{\mathrm{Re}\,}

%---------------------------------------------------------------------------------------------------------------
%	document
%---------------------------------------------------------------------------------------------------------------

\begin{document}
\title[]{Local well-posedness of incompressible viscous fluids \\ in bounded cylinders with $90^\circ$-contact angle}

\author{Keiichi Watanabe}

\address{Global Center for Science and Engineering, Waseda University, 3-4-1 Ookubo, Shinjuku-ku, Tokyo, 169-8555, Japan}		

\subjclass[2010]{Primary: 35R35; Secondary: 76D03, 76D45}

\email{keiichi-watanabe@akane.waseda.jp}

\thanks{This research was partly supported by JSPS Grant-in-aid for Research Activity Start-up Grant Number 20K22311 and Waseda University Grant for Special Research Projects.}

\keywords{Moving contact lines; Free boundary problems; Navier-Stokes equations; Maximal regularity.}

%\date{\today}

\maketitle

\begin{abstract}
We consider a free boundary problem of the Navier--Stokes equations in the three-dimensional Euclidean space with moving contact line, where the 90$^\circ$-contact angle condition is posed. We show that for given $T > 0$ the problem is local well-posed on $(0, T)$ provided that the initial data are small. In contrast to the strategy in Wilke (2013), we study the transformed problem in an $L^p$-in-time and $L^q$-in-space setting, which yields the optimal regularity of the initial data. 
\end{abstract}

%---------------------------------------------------------------------------------------------------------------
%	1. introduction
%---------------------------------------------------------------------------------------------------------------
\section{Introduction}
\noindent
\subsection{Description of the problem}
We consider a fixed rigid body with a bounded, cylindrical, and simply connected cavity $\CV \subset \BR^3$, \textit{partially} filled with an incompressible viscous Newtonian fluid that fills a region $\Omega (t)$ at time $t > 0$. Here, the domain of cavity $\CV$ is defined by
\begin{equation*}
\CV := \{(x_1, x_2, x_3) \mid (x_1, x_2) \in D, \enskip x_3 \in (- 2 H, 2 H)\},
\end{equation*}
where $D \subset \BR^2$ is a bounded open set and $H$ is a given positive constant. We suppose that $D$ is surrounded by a smooth boundary $\pd D$ of class $C^4$. A sharp interface $\Gamma (t)$ separates the cavity $\CV$ into the fluid part $\Omega (t)$ and the vacant part $\CV \setminus \Omega (t)$. In addition, the boundary of the free interface $\pd \Gamma (t)$ separates the boundary of cavity $\pd \CV$ into the wetting part $\Sigma (t)$ and the drying part $\Sigma^*(t)$. The contact line $S (t)$ is defined by $S (t) = \pd \CV \cap \pd \Gamma (t)$. By abuse of notation, we will write $\Omega_t = \Omega (t)$, $\Gamma_t = \Gamma (t)$, $\Sigma_t = \Sigma (t)$, $\Sigma_t^* = \Sigma^* (t)$, and $S_t = S (t)$. If the initial position $\Gamma_0 = \Gamma (0)$ of $\Gamma_t$ can be approximate $D$ in the sense that the Hausdorff distance of the second order bundles of $\Gamma_t$ and $D$ is small enough, we may assume that the unknown free surface $\Gamma_t$ can be parameterized by means of an unknown height function $\eta$ such that
\begin{equation}
\label{Gamma_t}
\Gamma_t := \{(x_1, x_2, x_3) \mid (x_1, x_2) \in D, \enskip x_3 = \eta (x_1, x_2, t), \enskip t \ge 0\}
\end{equation}
whenever $\lvert \eta \rvert_{L^\infty (D)}$ and $\lvert \nabla_{x'} \eta \rvert_{L^\infty (D)}$ are suitably small. Here, we have set $x' = (x_1, x_2)$ for short. We may suppose that $\Omega_t$, $\Sigma_t$, and $S_t$ are represented by
\begin{align*}
\Omega_t & := \{(x_1, x_2, x_3) \mid (x_1, x_2) \in D, \enskip x_3 \in (- H, \eta (x_1, x_2, t)), \enskip t \ge 0\}, \\
\Sigma_t & := \{(x_1, x_2, x_3) \mid (x_1, x_2) \in \pd D, \enskip x_3 \in (- H, \eta (x_1, x_2, t)), \enskip t \ge 0\}, \\
S_t & := \{(x_1, x_2, x_3) \mid (x_1, x_2) \in \pd D, \enskip x_3 = \eta (x_1, x_2, t), \enskip t \ge 0\},
\end{align*}
respectively. Furthermore, we define the bottom $B$ of the cavity $\CV$ by
\begin{equation*}
B := \{(x_1, x_2, x_3) \mid (x_1, x_2) \in D, \enskip x_3 = - H\}.
\end{equation*}
This article considers the following free boundary problem of the Navier--Stokes equations: Given $\Gamma_0 \subset \CV$ and $\bv_0 \colon \Omega_0 \to \BR^3$, find a family $\{\Gamma_t\}_{t \ge 0}$ and a pair of functions $\bv (\cdot, t) \colon \Omega_t \to \BR^3$ and $\fp (\cdot, t) \colon \Omega_t \to \BR$ satisfying
\begin{align}
\label{eq-main}
\left\{
\begin{aligned}
\pd_t \bv + (\bv \cdot \nabla) \bv & = \dv \BT (\bv, \fp), & \quad & \text{in $\Omega_t$,} \\
\dv \bv & = 0, & \quad & \text{in $\Omega_t$,} \\
\BT (\bv, \fp) \bn_{\Gamma_t} & = \sigma \CH_{\Gamma_t} \bn_{\Gamma_t} - p_0 \bn_{\Gamma_t}, & \quad & \text{in $\Gamma_t$,} \\
V_{\Gamma_t} & = \langle \bv, \bn_{\Gamma_t} \rangle, & \quad & \text{in $\Gamma_t$,} \\
P_{\Sigma_t} (2 \mu \bD (\bv) \bn_{\Sigma_t}) & = 0, & \quad & \text{on $\Sigma_t$,} \\
\langle \bv, \bn_{\Sigma_t} \rangle & = 0, & \quad & \text{on $\Sigma_t$,} \\
P_B (2 \mu \bD (\bv) \bn_B) & = 0, & \quad & \text{on $B$,} \\
\langle \bv, \bn_B \rangle & = 0, & \quad & \text{on $B$,} \\
\langle \bn_{\Gamma_t}, \bn_{\Sigma_t} \rangle & = 0, & \quad & \text{on $S_t$}, \\
\bv (0) & = \bv_0 & \quad & \text{in $\Omega_0$}, \\ 
\Gamma_t \vert_{t = 0} & = \Gamma_0,
\end{aligned}
\right.
\end{align}
where we use the notation $\langle \cdot, \cdot \rangle$ to describe the dot product of vector fields. In this paper, we consider the case where the initial position $\Gamma_0$ of the free boundary is the graph of a height function $\eta_0$ on $D$, i.e.,
\begin{equation*}
\Gamma_0 := \{(x_1, x_2, x_3) \mid (x_1, x_2) \in D, \enskip x_3 \in \eta_0 (x_1, x_2)\}.
\end{equation*}
Here, $\bv$ and $\fp$ are unknown functions describing the velocity fields and the pressure of incompressible viscous fluid, respectively. In the system \eqref{eq-main}, the symbol $\BT (\bv, \fp)$ stands for the viscous stress tensor defined by
\begin{align*}
\BT (\bv, \fp) = \BS (\bv) - \fp \bI = 2 \mu \bD (\bv) - \fp \bI,
\end{align*}
where $\bD (\bv) = 2^{- 1} (\nabla \bv + (\nabla \bv)^\top)$ is the deformation tensor; $\mu > 0$ stands for a constant denoting the viscosity coefficient; $\bn_{\Gamma_t}$, $\bn_{\Sigma_t}$, and $\bn_B$ are the outward unit normal field on $\Gamma_t$, $\Sigma_t$, and $B$, respectively; $\sigma$ stands for the surface tension coefficient, which is a given positive constant; $\CH_{\Gamma_t}$ stands for the double mean curvature of $\Gamma_t$ given by $\CH_{\Gamma_t} = - \dv_{\Gamma_t} \bn_{\Gamma_t}$, where $\dv_{\Gamma_t}$ is the surface divergence on $\Gamma_t$; $p_0$ stands for an external pressure, which is a given positive \textit{constant}; $V_{\Gamma_t}$ stands for the normal velocity of $\Gamma_t$; $P_{\Sigma_t} := \bI - \bn_{\Sigma_t} \otimes \bn_{\Sigma_t}$ and $P_B := \bI - \bn_B \otimes \bn_B$ stand for the orthogonal projections onto the tangent bundle of $\Sigma_t$ and $B$, respectively. The \textit{contact angle} $\theta = \theta (x, t)$ is defined by $\cos \theta = - \langle \bn_{\Gamma_t}, \bn_{\Sigma_t} \rangle$ on $S_t$. Since we assume $\theta \equiv \pi \slash 2$, the boundary condition on the contact line can be read as $\langle \bn_{\Gamma_t}, \bn_{\Sigma_t} \rangle = 0$ on $S_t$. Although it is not difficult to consider the problem in the $n$-dimensional cases with $n \ge 3$, we here only deal with the three-dimensional case for simplicity.
\subsection{Historical remarks}
If the contact angle $\theta$ is fixed into the trivial cases, say, $\theta = \pi$ or $\theta = \pi\slash2$, there are the pioneering contributions by Solonnikov~\cite{S95} and Wilke~\cite{W17}. The key observations of their studies were that these contact angles remove the singularities at the contact lines. However, from the classical Young law, the contact angles seem to depend on the time if the initial contact angles are not equal to the contact angles at the equilibria, denoted by $\theta_\infty$. Hence, this shows that fixing the contact angle is a kind of idealization. Compared with their studies, the \textit{moving} contact angle problem for the two-dimensional \textit{Stokes} flow was considered by~\cite{GT18,ZT17}. To be more precise, as explained in \cite[Sec.~1.3]{GT18} and~\cite[Sec.~1]{ZT17}, they applied the Ren-E model~\cite{RE07}, i.e, there exists an increasing function $F$ such that
\begin{equation}
\label{cond-linevelocity_RenE}
F (0) = 0, \qquad V_{S_t} = F (\cos \theta - \cos \theta_\infty).
\end{equation}
Notice that these conditions guarantee the relation $(\cos \theta - \cos \theta_\infty) \langle \bv, \bn_{S_t}\rangle \le 0$, which infers
\begin{equation}
\label{ineq-1.4}
\int_{S_t} (\cos \theta - \cos \theta_\infty) \langle \bv, \bn_{S_t} \rangle \,\mathrm{d} l \le 0,
\end{equation}
i.e., the negative total available energy is a strict Lyapunov functional. Notice that they left an open question whether or not the boundary condition at the contact line has to be included to obtain a well-posed problem. In \cite{RE07}, Ren and E showed that if one supposes \eqref{cond-linevelocity_RenE} then the viscous dissipation rate is finite, but it was unclear whether the pressure is still logarithmically singular at the moving contact line, where this type of singularity is often referred as a \textit{weak singularity}. Recently, Fricke et al.~\cite{FKB19} showed that a weak singularity needs to be present at the contact line even if the contact angle is permitted to vary because the contact angle evolution is unphysical provided that the slip length of the rigid surface is positive and finite. The key observation of the study in \cite{FKB19} was to derive a kinematic evolution law for the dynamical contact angle, where the contact angle depends on $t$. To do this, they imposed the following additional conditions on the contact line instead of \eqref{cond-linevelocity_RenE}: Supposed that the contact angle $\theta$ and the contact line normal velocity $V_{S_t} := \langle \bv, \bn_{S_t} \rangle$ are related via $\theta = f (V_{S_t})$, where $f$ is some function satisfying
\begin{equation}
\label{cond-linevelocity}
f (0) = \theta_\infty, \qquad V_{S_t} (f (V_{S_t}) - \theta_\infty) \ge 0.
\end{equation}
Here, the contact line normal vector $\bn_{S_t}$ is defined via projection $P_{\pd \Sigma_t} = \bI - \bn_{\pd \Sigma_t} \otimes \bn_{\pd \Sigma_t}$ as
\begin{equation*}
\bn_{S_t} := \frac{P_{\pd \Sigma_t} \bn_{\pd \Gamma_t}}{\lvert P_{\pd \Sigma_t} \bn_{\pd \Gamma_t} \rvert},
\end{equation*}
cf. Fricke~\cite[Def.~2]{FKB19}, where $\bn_{\pd \Sigma_t}$ is a unit outer vector of $\pd \Sigma_t$ that is perpendicular to the lateral of the cavity $\CV$. The condition $V_{S_t} (f (V_{S_t}) - \theta_\infty) \ge 0$ ensures the energy dissipation of the system. This also implies that, in the absence of an external force, the contact line should only advance if the contact angle is greater than or equal to $\theta_\infty$ (and vice versa), which seems to be reasonable if one thinks of the spreading droplet problem. Roughly speaking, in their model, the moving direction of the contact line is monotone. Hence, in their model, the contact line cannot advance and ``go through'' the equilibrium contact line with the contact angle less than $\theta_\infty$. Summing up, as far as the author knows, the perfect prescription for the boundary condition on the contact line has not yet shown up. To evade this lack of clarity, we assume throughout this paper that the contact angle is constant and equal to 90 degrees. This is a kind of idealization but it is known that the total available energy is a strict Lyapunov functional. \par
Since the position of the free surface $\Gamma_t$ is a priori unknown, it will be convenient to transform the problem for the velocity and the pressure on a fixed domain. To this end, we apply the direct mapping method via a \textit{Hanzawa transformation}, where we can obtain precise regularity information for the free surface. Namely, we do not consider Lagrangian coordinates to derive the transformed problem. Here, the position of the fixed surface $\Gamma_*$ should be close to the unknown free surface $\Gamma_t$ in the sense that the Hausdorff distance of the second-order bundles of $\Gamma_t$ and $\Gamma_*$ is small enough. In this case, the Hanzawa transformation is a diffeomorphism mapping, so that we may obtain the well-posedness result of the free boundary problem from the fixed boundary problem. This shows that it is crucial to consider the problem in the appropriate fixed domain. For simplicity, in our study, we choose the right circular cylinder as a fixed domain, i.e., the domain $\Omega_t$ will be given as a perturbation of the right circular cylinder. The advantage of this setting is that we can apply the standard reflection arguments as was applied in \cite{W17}.   \par
As we will see in Section~\ref{sect-transform}, the transformed system via the Hanzawa transform becomes a \textit{quasilinear} parabolic system. Hence, in the present paper, we devote to prove solvability results based on the theory of maximal regularity, in which we can solve the nonlinear problem by the contraction mapping principle. The main difficulty of maximal $L^p - L^q$-regularity approach is that, if we study the corresponding linearized problem, the boundary data have to be in the intersection space
\begin{equation}
\label{intersection}
F^s_{p, q} (J; L^q (\pd D)) \cap L^p (J; B^{s \slash 2}_{q, q} (\pd D)) \qquad 0 < s < 1,
\end{equation}
where $F^s_{p, q}$ and $B^{s\slash2}_{q, q}$ denote the vector-valued inhomogeneous Triebel-Lizorkin space and the scalar-valued inhomogeneous Besov space, respectively, cf., \cite{DHP,PS16}. Hence, it is required to establish estimates for the nonlinear terms in this intersection space, which has not been established so far when $p \ne q$. Notice that if $p = q$, then the vector-valued Triebel-Lizorkin space becomes the vector-valued Sobolev-Slobodecki\u{\i} space, where the estimations for the nonlinear terms in this space are well-known, see, e.g., \cite[Appendix]{PS11}. Here, in~\cite{KPW13,PS10,PS11}, they studied the two-phase free boundary problem for the Navier--Stokes equations in the case $p = q$, where the free boundary can be understood in the classical sense. See also, e.g., \cite{MS91,PS02,S84,S89,S91} for results of the free boundary problem for the Navier--Stokes equations, which were established in anisotropic Sobolev-Slobodetski\u{\i} as well as in H\"older spaces. Compared with their studies, Shibata~\cite{S14,S16,S20} obtained the maximal $L^p - L^q$ regularity results for the linearized problem of the free boundary problem of the one-phase Navier--Stokes equations in the case $p \ne q$, see also \cite{MS17,SS20pre,SS11,SS20} for the two-phase case. Unfortunately, in their arguments, the boundary data were not lying in~\eqref{intersection}, and thus their results are not optimal in view of trace theorems. To overcome these fallacious, we use the recent contributions established by Meyries and Veraar~\cite{MV12,MV14} and Lindemulder~\cite{L19}. Using their results, we will show the principal linearization has maximal $L^p - L^q$ regularity in the case $p \ne q$, where the boundary data belong to the intersection space \eqref{intersection}. We also succeed to establish the estimates for the nonlinearities appeared on the boundaries in the intersection space~\eqref{intersection}. It should be emphasized that our approach completely works for refining the previous studies of the free boundary problems for the one-phase or two-phase Navier--Stokes equations in domains surronded by smooth boundary (e.g., bounded or exterior domains) to obtain optimal $L^p - L^q$-regularity space-time estimates for the corresponding linearized equation with $p \ne q$, which also yields well-posedness results for the free boundary problems with initial data possessing optimal regularity. \par
To work in the maximal regularity framework, it is crucial to study the \textit{model problems} in the whole space, a half space, a wedge domain with angle equal to $\pi\slash2$, see Section~\ref{sect-model-prob}. The model problems in half spaces are well-studied by many authors, e.g., \cite{PS16,S16,S20,SS12}, but we give more sophisticated results in view of the trace theory. For the model problem in a wedge domain with an angle equals to $\pi\slash2$, there are several studies by~\cite{BKK,K13,W17}. Their arguments relied on the standard refection argument but they are missing the condition for the existence of ``trace of trace," i.e., the trace onto an intersection of the boundaries. Hence, in this paper, we give optimal conditions for the existence of the trace onto an intersection of the boundaries --- this will be related to the compatibility conditions for the Cauchy problem and the precise regularity information of its solution on the contact line.
\subsection{Main result}
As the main result in the article we establish the local well-posedness of the problem~\eqref{eq-main}.
\begin{theo}
\label{th-main}
Let $p$, $q$, $\delta$ satisfy
\begin{equation}
\label{cond-thmain}
2 < p < \infty, \quad 3 < q < \infty, \quad \frac{1}{p} + \frac{3}{2 q} < \delta - \frac{1}2 \le \frac{1}2.
\end{equation}
Then given $T > 0$, there exists $\varepsilon_0 = \varepsilon_0 (T) > 0$ such that for any initial data
\begin{equation*}
(\bv_0, \eta_0) \in B^{2 (\delta - 1\slash p)}_{q, p} (\Omega_0)^3 \times B^{2 + \delta - 1\slash p - 1\slash q}_{q, p} (D),
\end{equation*}
satisfying the compatibility conditions
\begin{equation*}
\left\{\begin{aligned}
\dv \bv_0 & = 0, & \quad & \text{in $\Omega_0$}, \\
P_{\Gamma_0} (2 \mu \bD (\bv_0) \bn_{\Gamma_0}) & = 0, & \quad & \text{on $\ov{\Gamma_0}$}, \\
P_{\Sigma_0} (2 \mu \bD (\bv_0) \bn_{\Sigma_0}) = 0, \quad \langle \bv_0, \bn_{\Sigma_0} \rangle & = 0, & \quad & \text{on $\ov{\Sigma_0}$}, \\
P_B (2 \mu \bD (\bv_0) \bn_B) = 0, \quad \langle \bv_0, \bn_B \rangle & = 0, & \quad & \text{on $\ov B$}, \\
\langle \bn_{\Gamma_0}, \bn_{\Sigma_0} \rangle & = 0, & \quad & \text{on $S_0$},
\end{aligned}\right.
\end{equation*}
and the smallness condition
\begin{equation*}
\lvert \bv_0 \rvert_{B^{2 (\delta - 1\slash p)}_{q, p} (\Omega_0)} + \lvert \eta_0 \rvert_{B^{2 + \delta - 1\slash p - 1\slash q}_{q, p} (D)} \le \varepsilon_0, 
\end{equation*}
the problem~\eqref{eq-main} admits a unique classical solution $(\bv, \fp, \Gamma)$ on $(0, T)$, where $P_{\Gamma_0} := \bI - \bn_{\Gamma_0} \otimes \bn_{\Gamma_0}$, $P_{\Sigma_0} := \bI - \bn_{\Sigma_0} \otimes \bn_{\Sigma_0}$. Furthermore, the free boundary $\Gamma_t$ is the graph of a function $\eta (t)$ on $D$, the set $\CM = \bigcup_{t \in (0, T)} (\Gamma_t \times \{t\})$ is a real analytic manifold, and the function $(\bv, \fp) \colon \{(x, t) \in \Omega_t \times (0, T)\} \to \BR^4$ is real analytic.
\end{theo}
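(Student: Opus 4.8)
The plan is to follow the by-now-standard route for free boundary problems of Navier--Stokes type via maximal regularity, but carried out carefully in the $L^p$-in-time, $L^q$-in-space setting with $p \ne q$. First I would reduce \eqref{eq-main} to a problem on a fixed domain. Taking as reference configuration the right circular cylinder $\Omega_*$ over $D$ --- with lateral boundary $\Sigma_*$ over $\pd D$, bottom $B$, and flat upper face $\Gamma_*$ over $D \times \{0\}$ --- I would build a Hanzawa transformation $\Theta_\eta$ from the height function $\eta$, extended into the bulk and cut off away from $\Gamma_*$, mapping $\Omega_*$ diffeomorphically onto $\Omega_t$ whenever $\eta$ is small in $C^1$. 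The left inequality in \eqref{cond-thmain}, $\tfrac1p + \tfrac3{2q} < \delta - \tfrac12$, is exactly $2(\delta - 1/p) - 3/q > 1$, hence forces the velocity time-trace space $B^{2(\delta - 1/p)}_{q,p}(\Omega_*)$ to embed into $C^1(\ov{\Omega_*})$; together with $q > 3$ it also gives $\eta(t) \in B^{2 + \delta - 1/p - 1/q}_{q,p}(D) \hookrightarrow C^2(\ov D)$, so $\Theta_\eta$ is a diffeomorphism and the transformed curvature is well defined. Pulling $(\bv, \fp)$ back to $\Omega_*$ turns \eqref{eq-main} into a quasilinear parabolic problem for $(\bu, \pi, \eta)$: a Stokes system with a transformed divergence constraint, the dynamic condition (mean curvature plus external pressure) on $\Gamma_*$, the kinematic law $\pd_t \eta = \langle \bu, \bn_{\Gamma_*} \rangle + (\text{l.o.t.})$ on $\Gamma_*$, Navier-type tangential-stress and no-penetration conditions on $\Sigma_*$ and $B$, and the $90^\circ$-condition on $S_*$, all coefficients and right-hand sides being smooth nonlinear functions of $(\eta, \nabla\eta, \nabla^2\eta, \bu, \nabla\bu, \pi)$ that vanish to the appropriate order at the origin.

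Next, freezing coefficients yields a principal linear operator $L$, and the crux is to prove that $L$ is an isomorphism from the solution space --- velocity in the natural weighted maximal $L^p$-$L^q$ class with temporal weight governed by $\delta$ (time-trace space $B^{2(\delta - 1/p)}_{q,p}(\Omega_*)$), pressure in the associated class, and $\eta$ one time-derivative and $3/2$ spatial derivatives smoother --- onto a data space $\BF(J)$ in which the boundary inhomogeneities on $\Gamma_*$, $\Sigma_*$, $B$ lie in intersection spaces of the form \eqref{intersection} \emph{and} admit mutually compatible traces onto the contact line $S_*$. I would prove this by localization: reduce to the whole-space, half-space and $\pi/2$-wedge model problems of Section~\ref{sect-model-prob}, solving the half-space problems with the refined trace theory of Meyries--Veraar \cite{MV12,MV14} and Lindemulder \cite{L19}, and the wedge problem by even/odd reflection across the two flat faces --- admissible exactly because the angle is $\pi/2$ and the tangential-stress/no-penetration conditions reflect, as in \cite{W17} --- while enforcing that the reflection respects the ``trace of trace'' condition. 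A partition of unity subordinate to a finite cover of $\ov{\Omega_*}$ together with a Neumann-series absorption of commutators and lower-order terms (using $\pd D \in C^4$) then yields the isomorphism, in the spirit of \cite{DHP,PS16}.

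With $L^{-1}$ in hand I would remove the initial data by subtracting a reference solution $z_*$ of the linear problem with data $(\bv_0, \eta_0)$ --- this is where the compatibility conditions enter, ensuring $(\bv_0, \eta_0)$ lies in the range of the time-trace --- and solve for the remainder $z$ in the zero-trace subspace ${}_0\BE(J)$ by the contraction mapping principle applied to $\Phi(z) = L^{-1}(N(z + z_*) - N(z_*) + (\text{data corrections}))$. That $\Phi$ maps a small ball into itself and contracts rests on bilinear and superposition estimates for the bulk nonlinearities in $L^p(J; L^q(\Omega_*))$, using $B^{2(\delta - 1/p)}_{q,p}(\Omega_*) \hookrightarrow C^1(\ov{\Omega_*})$, together with --- the genuinely new point --- estimates for the boundary nonlinearities in the intersection space \eqref{intersection}, which must be carried out by hand since $p \ne q$ is outside the Sobolev--Slobodecki\u{\i} calculus of \cite[Appendix]{PS11}; since every nonlinearity is at least quadratic or carries a factor of $\eta$ and $\|L^{-1}\|$ on $J = (0,T)$ is finite, taking $\varepsilon_0 = \varepsilon_0(T)$ small makes $\Phi$ a $\tfrac12$-contraction, and transforming back produces the unique solution $(\bv, \fp, \Gamma)$. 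The real analyticity of $\CM$ and of $(\bv, \fp)$ on $\Omega_t \times (0,T)$ then follows from the parameter trick: the transformed system is invariant under a group of dilations and translations in $(x,t)$, and applying the implicit function theorem to the resulting parameter-dependent family --- analytic because $L^{-1}$ and the superposition maps are --- upgrades the solution, by uniqueness, to a real analytic function of $(x,t)$ and of the defining height function. The step I expect to be the main obstacle is the second one: proving maximal $L^p$-$L^q$-regularity of $L$ with boundary data in \eqref{intersection} and, above all, getting the correct compatibility of their traces on $S_*$ in the $\pi/2$-wedge model problem --- precisely the point that earlier reflection-based treatments such as \cite{W17} left open.
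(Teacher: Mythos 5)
Your proposal follows essentially the same route as the paper: Hanzawa transform to the flat cylinder $\Omega_*$, maximal $L^p$-$L^q$-regularity of the principal linearization via localization to whole-space/half-space/quarter-space model problems (with the refined trace theory of Meyries--Veraar and Lindemulder and even/odd reflection across the $\pi/2$-wedge), removal of initial data and contraction in the zero-trace subspace, and real analyticity via the parameter trick. The one place worth flagging is that you attribute the need for smallness of $\varepsilon_0$ to the nonlinearities being "at least quadratic or carrying a factor of $\eta$" --- the paper points out (Remark~1.4) that the bilinear term $D(\bu',\eta) = -\langle\bu',\nabla_{x'}\eta\rangle$ is not small in $\BF_{3,\delta}(J;\Gamma_*)$ merely because $\nabla\eta$ is small, and that avoiding this genuinely requires smallness of the initial \emph{velocity} as well (or a more delicate splitting \`a la Pr\"uss--Simonett); your argument does in effect impose smallness of both, so the conclusion stands, but the justification as stated elides this subtlety.
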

\begin{rema}
The condition \eqref{cond-thmain} induces the embeddings
\begin{equation*}
B^{2 (\delta - 1\slash p)}_{q, p} (\Omega_0) \hookrightarrow \mathrm{BUC}^1 (\ov{\Omega_0}), \quad B^{2 + \delta - 1\slash p - 1\slash q}_{q, p} (D) \hookrightarrow \mathrm{BUC}^2 (\ov{D}).
\end{equation*}
\end{rema}
\begin{rema}
The restriction \eqref{cond-thmain} implies the existence of the trace onto an intersection of the boundaries. Hence, we need the compatibility conditions on the initial contact line $S_0 := \pd \Gamma_0 \cap \pd \Sigma_0$ and on the corner of the boundary $\pd \Sigma_0 \cap \pd B$.
\end{rema}
\begin{rema}
The smallness condition for the initial velocity $\bv_0$ is due to the nonlinear term $D (\bu', \eta) = - \langle \bu', \nabla_{x'} \eta \rangle$ appeared in the transformed problem~\eqref{eq-fixed}, where $\bu' = (u_1, u_2)^\top$. In fact, this term cannot be small in the norm of $\BF_{3, \delta} (J; \Gamma_*)$, defined in \eqref{def-spaces}, even if $\lvert \nabla_{x'} \eta \rvert_{L^\infty (D)}$ is small. To avoid this harmful nonlinearity, we have to consider the modified term $\langle \mathbf{b} - \bu', \nabla_{x'} \eta \rangle$, where $\mathbf{b}$ is taken such that $\mathbf{b} (0) = \mathrm{Tr}_{\Gamma_*} \bu'_0$. We refer to Pr\"uss and Simonett~\cite{PS11} for the details, see also \cite{KPW13,PSW14,S20,SY17,S11}. However, to make our arguments simple, we keep the smallness condition for the initial velocity $\bv_0$.
\end{rema}
The plan of this paper is constructed as follows: Section~\ref{sect-transform} introduces the Hanzawa transformation to transform the free boundary problem~\eqref{eq-main} into a domain with a fixed boundary problem. Section~\ref{sect-model-prob} gives the results for the model problems in the whole space, a half space, a quarter space. Using a well-known localization technique, we show maximal regularity of the principal linearization in Section~\ref{sect-MR}. Finally, Section~\ref{sect-nonlinear} proves our main result, Theorem~\ref{th-main}, for the free boundary problem~\eqref{eq-main} with the help of the result obtained in Section~\ref{sect-MR}. In the appendix, we collect technical results needed to execute the above program.
%\section{Preliminaries}
\subsection*{Notation}
As usual, $\BN$, $\BR$, $\BC$ denote the set of all natural, real, and complex number, respectively. Moreover, we also denote $\BR_+ = (0, \infty)$. \par
For $m \in \BN$ and $D \subset \BR^n$, $n = 1, 2, 3$, we set $\ba \cdot \bb = \langle \ba, \bb \rangle = \sum_{j = 1}^m a_j b_j$ for $m$-vectors $\ba = (a_1, \dots, a_m)^\top$ and $\bb = (b_1, \cdots, b_m)^\top$, while we set $(\bff \mid \bg)_D = \int_D \bff (x) \cdot \bg (x) \dx$ for $m$-vector functions $\bff (x) = (f_1 (x), \dots, f_m (x))^\top$ and $\bg (x) = (g_1 (x), \dots, g_m (x))^\top$ on $D$. Besides, $C^\infty_c (D)$ stands for the set of all $C^\infty$-functions on $\BR^n$ whose supports are compact and contained in $D \subset \BR^n$. \par
For $p, q \in [1, \infty]$, $s > 0$, and $D \subset \BR^n$, let $L^q (D)$, $H^{s, q} (D)$, $B^s_{p, q} (D)$, $F^s_{p, q} (D)$ denote the standard $\BK$-valued Lebesgue, Bessel potential, inhomogeneous Besov spaces, and inhomogeneous Triebel-Lizorkin spaces on $D$, respectively, where $\BK \in \{\BR, \BC\}$. For $I \subset \BR$ and $p \in (1, \infty]$, let $L^p (I; X)$ and $H^{1, p} (I; X)$ be the $X$-valued Lebesgue and Sobolev spaces on $I$, respectively. Furthermore, for $p \in (1, \infty)$ and $\delta \in (1\slash p, 1]$, we set
\begin{equation*}
\begin{split}
L^p_\delta (I; X) & := \{f \colon I \to X \mid t^{1 - \delta} f \in L^q (I; X)\}, \\
H^{1, p}_\delta (I; X) & := \{f \in L^p_\delta (I; X) \cap H^{1, 1} (I; X) \mid \pd_t f \in L^p_\delta (I; X)\}.
\end{split}
\end{equation*}
For $p \in (1, \infty)$, $q \in [1, \infty]$, $s \in \BR$, we write the $X$-valued Triebel-Lizorkin spaces with the power wight $\lvert t \rvert^{p(1 - \delta)}$ by $F^s_{p, q, \delta} (I; X)$, where $X$ is a Banach space. Furthermore, ${}_0 H^{1, p}_\delta (I; X)$, and ${}_0 F^s_{p, q, \delta} (I; X)$ denote the subspaces of $H^{1, p}_\delta (I; X)$ and $F^s_{p, q, \delta} (I; X)$, respectively, consisting of all functions having a vanishing trace at $t = 0$, whenever they exist. For $0 < s < 1$, the $X$-valued Bessel-potential spaces $H^{s, p}_\delta (I; X)$ as well as ${}_0 H^{s, p}_\delta (I; X)$ are defined in an analogous way, employing the
complex interpolation method. Namely,
\begin{equation*}
\begin{split}
H^{s, p}_\delta (I; X) & = [L^p_\delta (I; X), H^{1, p}_\delta (I; X)]_s, \\
{}_0 H^{s, p}_\delta (I; X) & = [L^p_\delta (I; X), {}_0 H^{1, p}_\delta (I; X)]_s,
\end{split}
\end{equation*}
where $[\,\cdot, \cdot\,]_\theta$ is the complex interpolation functor with $0 < \theta < 1$. For the precise definition, see, e.g., \cite{MV14}. Here, the following characterizations are known (cf. Pr\"uss and Simonett~\cite[Ch.~3,~6]{PS16}):
\begin{equation*}
\begin{aligned}
f & \in {}_0 H^{1, p}_\delta (\BR_+; X) & \quad & \Leftrightarrow & \quad t^{1 - \delta} f & \in {}_0 H^{1, p} (\BR_+; X), \\
f & \in {}_0 H^{s, p}_\delta (\BR_+; X) & \quad & \Leftrightarrow & \quad t^{1 - \delta} f & \in {}_0 H^{s, p} (\BR_+; X) \\
f & \in {}_0 F^s_{p, q, \delta} (\BR_+; X) & \quad & \Leftrightarrow & \quad t^{1 - \delta}_+ f & \in F^s_{p, q, 1} (\BR; X),
\end{aligned}
\end{equation*}
where we have set $t_+^{1 - \delta} = \max\{t^{1 - \delta}, 0\}$. The symbol $\mathrm{BUC} (I; X)$ stands for the Banach space of all $X$-valued bounded uniformly continuous functions on $I$. In addition, $\mathrm{BUC}^m (I; X)$ is the subset of $\mathrm{BUC} (I; X)$ that has bounded partial derivatives up to order $m \in \BN$. Here, $\mathrm{BUC} (D)$ and $\mathrm{BUC}^m (D)$ are defined similarly as above. In addition, $\mathrm{UC} (I; X)$ denotes the Banach space of all $X$-valued uniformly continuous functions on $I$. For further information on function spaces, we refer to \cite{MV14,PS16,Sbook,T95} and references therein.

\section{Reduction to a fixed reference configuration}
\label{sect-transform}
\noindent
In general, if we study the well-posedness of a free boundary problem it is required to transform the free boundary problem into a domain with a fixed boundary problem since the free boundary is a priori unknown. \par
%To this end, we employ the \textit{direct mapping method} via the \textit{Hanzawa transformation} originally introduced by Hanzawa~\cite{H81}. Notice that the Hanzawa transformation induces an information on a regularity of the free interface, we usually study the problem from this approach when surface tensions on a free boundary are taken into account. \par
Suppose that the free interface at time $t$ is given as a graph over the fixed interface $\Gamma_* := \ov{D} \times \{0\}$. Namely, we suppose that there exists a height function $\eta \colon \Gamma_* \times J \to (- H, H)$ such that
\begin{equation}
\label{free-boundary-gamma}
\Gamma (t) = \Gamma_t := \{x \in \Gamma_* \times (- H, H) \mid (x_1, x_2) \in \Gamma_*, \enskip x_3 = \eta (x_1, x_2, t) \}
\end{equation}
for all $t \in J$, where $J = (0, T)$, $T > 0$. Let $\chi \in C^\infty_c (\BR; [0, 1])$ be a bump function such that $\chi (s) = 1$ for $\lvert s \rvert \le r\slash2$ and $\chi (s) = 0$ for $\lvert s \rvert \ge r$, where $0 < r \le H\slash3$. We then define a mapping
\begin{equation*}
\begin{split}
& \Theta_\eta \colon \ov{D} \times (- H, 0) \times J \to \bigcup_{t \in J} \Omega_t \times \{t\}, \\
& \Theta_\eta (x, t) := x + \chi (x_3) \eta (x_1, x_2, t) e_3 =: x + \boldsymbol{\theta}_\eta (x, t).
\end{split}
\end{equation*}
A simple computation yields
\begin{equation*}
\nabla \Theta_\eta = \bI + \begin{pmatrix}
0 & 0 & 0 \\
0 & 0 & 0 \\
(\pd_{x_1} \eta) \chi & (\pd_{x_2} \eta) \chi & \eta \chi'
\end{pmatrix} =: \bI + \boldsymbol{\theta}_\eta'.
\end{equation*}
Hence, if the value of $\lvert \eta \chi' \rvert_{L^\infty (\Gamma_* \times J)}$ is sufficiently small, we see that $\nabla \Theta_\eta$ is a regular matrix and thus $\Theta_\eta$ is invertible. For example, we can achieve this investigation if it holds $\lvert \eta \rvert_{L^\infty (\Gamma_* \times J)} \le (2 \lvert \chi' \rvert_{L^\infty (\BR)})^{- 1}$. We remark that $\lvert \chi' \rvert_{L^\infty (\BR)}$ is bounded by a constant depending only on $r$, and thus $\lvert \eta \rvert_{L^\infty (\Gamma_* \times J)}$ is bounded by a constant depending only on $r$. In the following, we fix $\chi$, choose $r_0 \in (0, (2 \lvert \chi' \rvert_{L^\infty (\BR)})^{- 1})$ suitably small, and suppose $\lvert \eta \rvert_{L^\infty (\Gamma_* \times J)} \le r_0$. Under the conditions stated above, we find that the inverse $\Theta_\eta^{- 1}$ of $\Theta_\eta$ is well-defined and it transforms the free interface $\Gamma_t$ to the flat interface $\Gamma_*$. \par
We denote the \textit{pull-back} of $(\bv, \fp - p_\infty)$ by $(\bu, \pi)$, that is,
\begin{equation*}
\bu (x, t) = \bv (\Theta_\eta (x, t), t), \quad \pi (x, t) = \fp (\Theta_\eta (x, t), t) - p_\infty
\end{equation*}
for $x \in \ov{D} \times [- H, H]$ and $t \in [0, \infty)$. Then, we can compute
\begin{equation*}
\begin{split}
[\nabla \fp] \circ \Theta_\eta & = \nabla_x \pi - M_0 (\eta) \nabla_x \pi, \\
[\dv \bv] \circ \Theta_\eta & = \dv_{\!x} \bu - M_0 (\eta) \dv_{\! x} \bu, \\
[\Delta \bv] \circ \Theta_\eta & = \Delta_x \bu - M_1 (\eta) \colon \nabla^2_x \bu - M_2 (\eta) \nabla_x \bu, \\
[\pd_t \bv] \circ \Theta_\eta & = \pd_t \bu - \chi \pd_t \eta (1 + \chi' \eta)^{- 1} \pd_{x_3} \bu,
\end{split}
\end{equation*}
where we have set
\begin{equation*}
\begin{split}
M_0 (\eta) & := \boldsymbol{\theta}_\eta' (\bI + \boldsymbol{\theta}_\eta')^{- 1}, \\
M_1 (\eta) \colon \nabla^2 \bu & := \Big[2 \mathrm{sym} (\boldsymbol{\theta}_\eta' (\bI + \boldsymbol{\theta}_\eta')^{- 1}) - (\bI + \boldsymbol{\theta}_\eta')^{- 1} \boldsymbol{\theta}_\eta'^\top \boldsymbol{\theta}_\eta' (\bI + \boldsymbol{\theta}_\eta')^{- 1}\Big] \colon \nabla_x^2 \bu, \\
M_2 (\eta) \nabla \bu & := ((\Delta_x \Theta_\eta^{- 1}) \circ \Theta_\eta) \dv \bu.
\end{split}
\end{equation*}
Here, $\mathrm{sym}$ denotes the symmetric part of a matrix. Notice that the similar calculations can be found in, e.g., K\"ohne et al.~\cite[Sec.~2]{KPW13}, Pr\"uss and Simonett~\cite[Ch.~2]{PS16}, and Wilke~\cite[Sec.~1.1]{W17}. Besides, the assumption~\eqref{free-boundary-gamma} implies $V_{\Gamma_t} = (\pd_t \Theta_\eta) \cdot \bn_{\Gamma_t} = (\pd_t \eta) \slash \sqrt{1 + \lvert \nabla_{x'} \eta \rvert^2}$. Hence, we see that $(\bu, \pi, \eta)$ satisfies
\begin{align}
\label{eq-fixed}
\left\{
\begin{aligned}
\pd_t \bu - \mu \Delta \bu + \nabla \pi & = \bF (\bu, \pi, \eta), & \quad & \text{in $\Omega_* \times (0, T)$,} \\
\dv \bu & = F_\mathrm{div} (\bu, \eta), & \quad & \text{in $\Omega_* \times (0, T)$,} \\
\pd_t \eta - u_3 & = D (\bu, \eta), & \quad & \text{in $\Gamma_* \times (0, T)$,} \\
\mu (\pd_3 u_m + \pd_m u_3) & = K_m (\bu, \eta), & \quad & \text{in $\Gamma_* \times (0, T)$,} \\
2 \mu \pd_3 u_3 - \pi - \sigma \Delta_{\Gamma_*} \eta & = K_3 (\bu, \eta), & \quad & \text{in $\Gamma_* \times (0, T)$,} \\
P_{\Sigma_*} (2 \mu \bD (\bu) \bn_{\Sigma_*}) & = \bG (\bu, \eta) & \quad & \text{on $\Sigma_* \times (0, T)$,} \\
\bu \cdot \bn_{\Sigma_*} & = 0 & \quad & \text{on $\Sigma_* \times (0, T)$,} \\
\mu (\pd_3 u_m + \pd_m u_3) & = H_m (\bu, \eta) & \quad & \text{on $B \times (0, T)$,} \\
u_3 & = 0 & \quad & \text{on $B \times (0, T)$,} \\
(\nabla_{\pd D} \eta) \cdot \bn_{\pd D} & = 0, & \quad & \text{on $S_* \times (0, T)$,} \\
\bu (0) & = \bu_0 & \quad & \text{in $\Omega_*$}, \\ 
\eta (0) & = \eta_0, & \quad & \text{on $\Gamma_*$}
\end{aligned}
\right.
\end{align}
with
\begin{equation}
\begin{split}
\bF (\bu, \pi, \eta) & := \chi \pd_t \eta (1 + \chi' \eta)^{- 1} \pd_{x_3} \bu - \mu (M_1 (\eta) \colon \nabla^2 \bu + M_2 (\eta) \nabla \bu) + M_0 (\eta) \nabla \pi, \\
F_\mathrm{div} (\bu, \eta) & := M_0 (\eta) \dv \bu, \\
D (\bu, \eta) & := - \bu' \cdot \nabla_{x'} \eta, \\
K_m (\bu, \eta) & := 2 \mu \bD (\bu) \pd_{x_m} \eta - \lvert \nabla_{x'} \eta \rvert^2 \mu \pd_{x_3} u_m - \Big((1 + \lvert \nabla_{x'} \eta \rvert^2) \mu \pd_{x_3} u_3 - \nabla_{x'} \eta \cdot (\mu \nabla_{x'} u_3)\Big) \nabla_{x'} \eta, \\
K_3 (\bu, \eta) & := \nabla_{x'} \eta \cdot (\mu \nabla_{x'} u_3) + \sum_{m = 1}^2 \pd_{x_m} \eta (\mu \nabla_{x'} u_m) - \lvert \nabla_{x'} \eta \rvert^2 \mu \pd_{x_3} u_3 \\
& \qquad + \sigma \bigg\{\dv_{\!x'} \bigg(\frac{\nabla_{x'} \eta}{\sqrt{1 + \lvert \nabla_{x'} \eta \rvert^2}}\bigg) - \Delta_{x'} \eta \bigg\}, \\
G (\bu, \eta) & := P_{\Sigma_*} \Big(\mu (M_0 (\eta) \nabla \bu + (\nabla \bu)^\top (M_0 (\eta))^\top) \bn_{\Sigma_*} \Big), \\
H_m (\bu, \eta) & := - K_m (\bu, \eta)
\end{split}
\end{equation}
for $m = 1, 2$, where $\bu' = (u_1, u_2)^\top$. Here, $T$ is a positive constant and we have used the assumption $p_0 = p_\infty$ and set
\begin{equation}
\label{def-domain}
\begin{split}
\Omega_* & := \{(x_1, x_2, x_3) \mid (x_1, x_2) \in D, \enskip x_3 \in (- H, 0)\}, \\
\Gamma_* & := \{(x_1, x_2, x_3) \mid (x_1, x_2) \in D, \enskip x_3 = 0\}, \\
\Sigma_* & := \{(x_1, x_2, x_3) \mid (x_1, x_2) \in \pd D, \enskip x_3 \in (- H, 0)\}, \\
B & :=  \{(x_1, x_2, x_3) \mid (x_1, x_2) \in D, \enskip x_3 = - H\}, \\
S_* & := \{(x_1, x_2, x_3) \mid (x_1, x_2) \in \pd D, \enskip x_3 = 0\},
\end{split}
\end{equation}
Notice that the right-hand members in this system are nonlinear and lower order terms. Our aim in this paper is to consider the reformulation of the transformed problem \eqref{eq-fixed} in abstract form $L z = N (z)$ with $z = (\bu, \pi, \eta)$, where $L$ is said to be the \textit{principal linearization}. In order to follow the strategy due to K\"ohne et al.~\cite{KPW13} (and see also Pr\"uss and Simonett~\cite{PS16}), we first show that $L$ has maximal regularity.

\section{Model problems}
\label{sect-model-prob}
\noindent
The proof of maximal regularity of the principal linearization $L$ is based on a localization technique. In fact, using change of coordinates, we may reduce the problem to the following types of model problems:
\begin{enumerate}
\renewcommand{\labelenumi}{(\roman{enumi})}
\item The Stokes equations in the whole space (without any boundary conditions);
\item The Stokes equations in a half space with slip boundary conditions;
\item The Stokes equations in a half space with free boundary conditions;
\item The Stokes equations in a quarter space with slip boundary conditions;
\item The Stokes equations in a quarter space with slip boundary conditions on one part of the boundary and free boundary conditions on the other part.	
\end{enumerate}
Here and in the following, a wedge domain with an angle equal to $\pi\slash2$ is said to be a \textit{quarter space}. Details of a localization procedure will be left to the next section.  \par
This section aims to state the maximal $L^p - L^q$-regularity properties for the linearized problems (i)--(v). To this end, we define
\begin{equation*}
\dot H^{1, q} (D) := \{w \in L^1_\mathrm{loc} (D) \mid \nabla w \in L^q (D)\}
\end{equation*}
for $1 < q < \infty$ and a domain $D \subset \BR^3$. Besides, for $\CS \subset \pd D$, we define
\begin{equation*}
\dot H^{1, q}_\CS (D) := \{w \in L^1_\mathrm{loc} (D) \mid \nabla w \in L^q (D), \enskip w = 0 \enskip \text{on $\CS$}\};
\end{equation*}
in particular, $\dot H^{1, q}_\emptyset (D) := \dot H^{1, q} (D)$. Then, the space $\dot H^{- 1, q}_\CS (D)$ is defined as
\begin{equation*}
\dot H^{- 1, q}_\CS (D) := \Big(\dot H^{1, q'}_{\pd D \backslash \CS} (D)\Big)^*
\end{equation*}
with conventions $\dot H^{- 1, q} (D) = \dot H^{- 1, q}_\emptyset (D)$ and ${}_0 \dot H^{- 1, q} (D) = \dot H^{- 1, q}_{\pd D} (D)$. \par
For simplicity of notation, we write $\pd_j$ instead of $\pd\slash\pd x_j$, $j = 1, 2, 3$, if there is no confusion. 
\subsection{The Stokes equations in the whole space}
Consider the problem
\begin{equation}
\label{eq-whole}
\left\{\begin{split}
\pd_t \bu - \mu \Delta \bu + \nabla \pi & = \bff, & \quad & \text{in $\BR^3 \times J$}, \\
\dv \bu & = f_\mathrm{div}, & \quad & \text{in $\BR^3 \times J$}, \\
\bu (0) & = \bu_0, & \quad & \text{in $\BR^3$}.
\end{split}\right.
\end{equation}
Here and in the following, we use the notation $J = (0, T)$ for $0 < T < \infty$. According to Pr{\" u}ss and Simonett~\cite[Sec. 7.1]{PS16}, we know the following theorem.
\begin{theo}
\label{th-model-whole}
Let $1 < p, q < \infty$, $1\slash p < \delta \le 1$, and $T > 0$. The problem \eqref{eq-whole} has a unique solution $(\bu, \pi)$ satisfying
\begin{equation*}
\begin{split}
\bu & \in H^{1, p}_\delta (J; L^q (\BR^3)^3) \cap L^p_\delta (J; H^{2, q} (\BR^3)^3), \\
\pi & \in L^p_\delta (J; \dot H^{1, q} (\BR^3)),
\end{split}
\end{equation*}
if and only if the data $(\bff, f_\mathrm{div}, \bu_0)$ satisfy the following:
\begin{enumerate}
\renewcommand{\labelenumi}{(\alph{enumi})}
\item $\bff \in L^p_\delta (J; L^q (\BR^3)^3)$;
\item $f_\mathrm{div} \in H^{1, p}_\delta (J; \dot H^{- 1, q} (\BR^3)) \cap L^p_\delta (J; H^{1, q} (\BR^3))$;
\item $\bu_0 \in B^{2 (\delta - 1\slash p)}_{q, p} (\BR^3)^3$ and $\dv \bu_0 = f_\mathrm{div} (0)$.	
\end{enumerate}
The solution $(\bu, \pi)$ depends continuously on the data in the corresponding spaces.	
\end{theo}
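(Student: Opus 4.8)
The plan is to reduce \eqref{eq-whole} to the heat equation, by first absorbing the inhomogeneous divergence into an explicit lift and then removing the pressure with the Helmholtz--Leray projection, and afterwards to invoke the weighted maximal regularity of the Laplacian on $\BR^3$.

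First I would set $\bw := \nabla(-\Delta)^{-1} f_\mathrm{div}$, so that $\dv\bw = f_\mathrm{div}$. Since $\nabla(-\Delta)^{-1}$ maps $\dot H^{-1,q}(\BR^3)$ boundedly into $L^q(\BR^3)$ and $H^{1,q}(\BR^3)$ boundedly into $\{w : \nabla w \in H^{1,q}(\BR^3)\}$ --- the second-order derivatives of $\bw$ being compositions of two Riesz transforms with one ordinary derivative, all bounded on $L^q$ --- and since it commutes with $\pd_t$, assumption (b) yields $\bw \in H^{1,p}_\delta(J;L^q(\BR^3)^3)\cap L^p_\delta(J;H^{2,q}(\BR^3)^3)$ with norm controlled by that of $f_\mathrm{div}$; in particular $\bw$ has a time trace $\bw(0)\in B^{2(\delta-1/p)}_{q,p}(\BR^3)^3$ with $\dv\bw(0)=f_\mathrm{div}(0)$. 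Writing $\bu = \wt\bu + \bw$, the pair $(\wt\bu,\pi)$ solves \eqref{eq-whole} with $\bff$ replaced by $\wt\bff := \bff + \mu\Delta\bw - \pd_t\bw \in L^p_\delta(J;L^q(\BR^3)^3)$, subject to $\dv\wt\bu = 0$, $\wt\bu(0) = \bu_0 - \bw(0) =: \wt\bu_0 \in B^{2(\delta-1/p)}_{q,p}(\BR^3)^3$, and $\dv\wt\bu_0 = 0$ by (c). Conversely, any solution of this reduced problem returns one of the original, so the two formulations are equivalent.

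Next I would apply the Helmholtz--Leray projection $\BP = I + \nabla(-\Delta)^{-1}\dv$, a Fourier multiplier of Mikhlin type on $\BR^3$, hence bounded on $L^q(\BR^3)^3$ and commuting with $\pd_t$ and $\Delta$. Since $\dv\wt\bu = 0$ one has $\BP\wt\bu = \wt\bu$; as $\pd_t\wt\bu$ and $\Delta\wt\bu$ are solenoidal and $\BP\nabla\pi = 0$, applying $\BP$ to the momentum equation decouples the velocity,
\[
\pd_t\wt\bu - \mu\Delta\wt\bu = \BP\wt\bff \quad\text{in }\BR^3\times J,\qquad \wt\bu(0)=\wt\bu_0 .
\]
This is the one nontrivial point: $-\Delta$ has a bounded $\CH^\infty$-calculus on $L^q(\BR^3)$, hence enjoys maximal $L^p$--$L^q$ regularity, and since $t^{p(1-\delta)}$ is an $A_p$ Muckenhoupt weight this persists in the temporally weighted classes, with $B^{2(\delta-1/p)}_{q,p}(\BR^3)^3$ the natural trace space for the initial value (see \cite{PS16}, and \cite{MV12} for the weights). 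Hence the heat problem has a unique solution $\wt\bu \in H^{1,p}_\delta(J;L^q(\BR^3)^3)\cap L^p_\delta(J;H^{2,q}(\BR^3)^3)$ depending continuously on $(\wt\bff,\wt\bu_0)$, precisely because $\wt\bu_0$ lies in this trace space.

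It then remains to recover the pressure: applying $I-\BP$ to the momentum equation and using that $\pd_t\wt\bu - \mu\Delta\wt\bu$ is solenoidal gives $\nabla\pi = (I-\BP)\wt\bff = -\nabla(-\Delta)^{-1}\dv\wt\bff$, a matrix of second-order Riesz transforms applied to $\wt\bff$, so $\pi \in L^p_\delta(J;\dot H^{1,q}(\BR^3))$ with the asserted bound. Undoing the reductions, $\bu = \wt\bu + \bw$ gives a solution with all the stated regularity and continuous dependence, and uniqueness follows from uniqueness for the heat equation (applied to $\BP\bu$) together with the above formula for $\nabla\pi$. Necessity of (a)--(c) is immediate from the mapping properties of $\pd_t$ and $\dv$ on the solution classes and the trace theorem for $H^{1,p}_\delta(J;L^q)\cap L^p_\delta(J;H^{2,q})$. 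I expect no real obstacle beyond confirming that this weighted maximal-regularity and trace package for the heat equation carries over verbatim to the vector-valued $L^q(\BR^3)$ setting, which is exactly what the cited references provide.
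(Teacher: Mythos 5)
Your proof is correct and follows the standard route that the paper itself simply cites (Pr\"uss--Simonett, Sec.~7.1): lift the inhomogeneous divergence via a Newtonian potential, apply the Helmholtz projection to reduce to the weighted heat equation, invoke its maximal $L^p_\delta$--$L^q$ regularity, and recover the pressure from the gradient part of the right-hand side. The only blemish is a trivial sign: with $\bw = \nabla(-\Delta)^{-1}f_{\mathrm{div}}$ one gets $\dv \bw = -f_{\mathrm{div}}$, so the lift should be $\bw = -\nabla(-\Delta)^{-1}f_{\mathrm{div}}$, which does not affect the argument.
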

\subsection{The Stokes equations in a half space}
We next consider the Stokes equations in a half space 
\begin{equation}
\label{eq-stokes-half-noslip}
\left\{\begin{split}
\pd_t \bu - \mu \Delta \bu + \nabla \pi & = \bff, & \quad & \text{in $\BR^3_+ \times J$}, \\
\dv \bu & = f_\mathrm{div}, & \quad & \text{in $\BR^3_+ \times J$}, \\
\bu & = \bg, & \quad & \text{on $\pd \BR^3_+ \times J$}, \\
\bu (0) & = \bu_0, & \quad & \text{in $\BR^3_+$}.
\end{split}\right.
\end{equation}
\begin{equation}
\label{eq-stokes-half-slip}
\left\{\begin{split}
\pd_t \bu - \mu \Delta \bu + \nabla \pi & = \bff, & \quad & \text{in $\BR^3_+ \times J$}, \\
\dv \bu & = f_\mathrm{div}, & \quad & \text{in $\BR^3_+ \times J$}, \\
\mu (\pd_3 u_m + \pd_m u_3) & = h_m, & \quad & \text{on $\pd \BR^3_+ \times J$}, \\
u_3 & = h_3, & \quad & \text{on $\pd \BR^3_+ \times J$}, \\
\bu (0) & = \bu_0, & \quad & \text{in $\BR^3_+$},
\end{split}\right.
\end{equation}
where $m = 1, 2$ and we have used the notation
\begin{equation*}
\begin{split}
\BR^3_+ & := \{x = (x_1, x_2, x_3) \mid (x_1, x_2) \in \BR^2, \enskip x_3 > 0\}, \\
\pd \BR^3_+ & := \{x = (x_1, x_2, x_3) \mid (x_1, x_2) \in \BR^2, \enskip x_3 = 0\}. 
\end{split}
\end{equation*}
Besides, in order to describe the compatibility conditions for the problems \eqref{eq-stokes-half-noslip} and \eqref{eq-stokes-half-slip}, we introduce the space $\wh H^{- 1, q} (\BR^3_+)$ as the set of all $(\varphi_1, \varphi_2) \in L^q (\BR^3_+) \times B^{2 - 1\slash q}_{q, q} (\pd \BR^3_+)^3$ that satisfy the regularity property $(\varphi_1, \varphi_2 \cdot \bn_{\pd \BR^3_+}) \in \dot H^{- 1, q} (\BR^3_+)$. If we adopt the notation
\begin{equation*}
\langle (\varphi_1, \varphi_2 \cdot \bn_{\pd \BR^3_+}) \mid \phi \rangle_{\BR^3_+} := - (\varphi_1 \mid \phi)_{\BR^3_+} + (\varphi_2 \cdot \bn_{\pd \BR^3_+} \mid \phi)_{\pd \BR^3_+} \quad \text{for any $\phi \in \dot H^{1, q'} (\BR^3_+)$},
\end{equation*}
then from the divergence equation we have the conditions
\begin{equation*}
\begin{split}
\langle (f_\mathrm{div}, \bg \cdot \bn_{\pd \BR^3_+}) \mid \phi \rangle_{\BR^3_+} & = - (\bu \mid \nabla \phi)_{\BR^3_+}, \\
\langle (f_\mathrm{div}, h_3) \mid \phi \rangle_{\BR^3_+} & = - (\bu \mid \nabla \phi)_{\BR^3_+}, \\
\end{split}
\end{equation*}
for any $\phi \in \dot H^{1, q'} (\BR^3_+)$ when we deal with \eqref{eq-stokes-half-noslip} and \eqref{eq-stokes-half-slip}, respectively. Furthermore, let us introduce function spaces
\begin{equation}
\label{def-spaces}
\begin{split}
\BE_{1, \delta} (J; D) & := H^{1, p}_\delta (J; L^q (D)^3) \cap L^p_\delta (J; H^{2, q} (D)^3), \\
\BE_{2, \delta} (J; D) & := L^p_\delta (J; \dot H^{1, q} (D)), \\
\BF_{0, \delta} (J, D) & := L^p_\delta (J; L^q (D)^3), \\
\BF_{1, \delta} (J; D) & := H^{1, p}_\delta (J; \dot H^{- 1, q} (D)) \cap L^p_\delta (J; H^{1, q} (D)), \\
\BF_{2, \delta} (J; \pd D) & := F^{1\slash2 - 1\slash(2q)}_{p, q, \delta} (J; L^q (\pd D)) \cap L^p_\delta (J; B^{1- 1\slash q}_{q, q} (\pd D)), \\
\BF_{3, \delta} (J; \pd D) & := F^{1 - 1\slash(2q)}_{p, q, \delta} (J; L^q (\pd D)) \cap L^p_\delta (J; B^{2 - 1\slash q}_{q, q} (\pd D))
\end{split}
\end{equation}
for $J \subset \BR_+$ and $D \subset \BR^3$. Then the following theorems are well-known (cf. Pr{\" u}ss and Simonett~\cite[Sec.~7.2]{PS16}).
\begin{theo}
\label{th-model-half-dirichlet}
Let $1 < p, q < \infty$, $1\slash p < \delta \le 1$, $1\slash p + 1\slash(2 q) \ne \delta$, $T > 0$, and $J = (0, T)$. Then there exists a unique solution $(\bu, \pi)$ to the equations \eqref{eq-stokes-half-noslip} with regularity $\bu \in \BE_{1, \delta} (J; \BR^3_+)$ and $\pi \in \BE_{2, \delta} (J; \BR^3_+)$
if and only if
\begin{enumerate}
\renewcommand{\labelenumi}{(\alph{enumi})}
\item $\bff \in \BF_{0, \delta} (J; \BR^3_+)$;
\item $f_\mathrm{div} \in \BF_{1, \delta} (J; \BR^3_+)$;
\item $\bg \in \BF^3_{3, \delta} (J; \pd \BR^3_+)$ and $\bg (0) = \mathrm{Tr}_{\pd \BR^3_+} [\bu_0]$ if $1\slash p + 1\slash(2 q) < \delta$;
\item $(f_\mathrm{div}, \bg \cdot \bn_{\pd \BR^3_+}) \in H^{1, p}_\delta (J; \wh H^{- 1, q} (\BR^3_+))$;
\item $\bu_0 \in B^{2 (\delta - 1\slash p)}_{q, p} (\BR^3_+)^3$ and $\dv \bu_0 = f_\mathrm{div} (0)$.		
\end{enumerate}
Furthermore, the solution $(\bu, \pi)$ depends continuously on the data in the corresponding spaces.	
\end{theo}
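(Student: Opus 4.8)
The plan is to prove the two implications separately, the necessity being essentially a consequence of trace theory and the sufficiency requiring the bulk of the analysis. \emph{Necessity.} Assuming $(\bu,\pi) \in \BE_{1,\delta}(J;\BR^3_+) \times \BE_{2,\delta}(J;\BR^3_+)$ solves \eqref{eq-stokes-half-noslip}, condition (a) and the first part of (e) are read off from the equations, while $\dv \bu_0 = f_\mathrm{div}(0)$ follows by taking the temporal trace of the divergence equation, which is licit since the temporal trace of $\BE_{1,\delta}$ lands in $B^{2(\delta - 1/p)}_{q,p}(\BR^3_+)$. Boundedness of $\dv$ from $H^{2,q}$ to $H^{1,q}$ and from $L^q$ to $\dot H^{-1,q}$, combined with $\pd_t \dv \bu = \dv \pd_t \bu$, gives (b). For (c) one invokes the spatial trace theorem in the temporally weighted anisotropic setting of Meyries--Veraar~\cite{MV12,MV14} and Lindemulder~\cite{L19}: $\mathrm{Tr}_{\pd\BR^3_+}$ is a retraction from $\BE_{1,\delta}(J;\BR^3_+)$ onto $\BF_{3,\delta}(J;\pd\BR^3_+)^3$, so $\bg = \mathrm{Tr}\,\bu$ belongs there, and when $\delta > 1/p + 1/(2q)$ the temporal smoothness $1 - 1/(2q)$ of $\BF_{3,\delta}$ exceeds the threshold $1 - \delta + 1/p$ for a temporal trace to exist, which forces $\bg(0) = \mathrm{Tr}_{\pd\BR^3_+}[\bu_0]$. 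Condition (d) is obtained by testing the divergence equation against $\phi \in \dot H^{1,q'}(\BR^3_+)$, integrating by parts in space so that the boundary integral produces $\bg\cdot\bn_{\pd\BR^3_+}$, and then differentiating in $t$ and using $\pd_t\bu \in L^p_\delta(J;L^q)$.

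\emph{Sufficiency.} I would first reduce to homogeneous interior and initial data. Extending $\bff$, $f_\mathrm{div}$, $\bu_0$ to $\BR^3$ while preserving their regularity classes and the relation $\dv\bu_0 = f_\mathrm{div}(0)$ (for $f_\mathrm{div}$ this requires a bounded extension of $H^{1,p}_\delta(J;\dot H^{-1,q})\cap L^p_\delta(J;H^{1,q})$ across $\pd\BR^3_+$), Theorem~\ref{th-model-whole} furnishes a whole-space solution $(\bu_1,\pi_1)$; subtracting it leaves a problem with $\bff = f_\mathrm{div} = 0$, $\bu_0 = 0$ and a new boundary datum $\wt\bg = \bg - \mathrm{Tr}\,\bu_1 \in \BF_{3,\delta}(J;\pd\BR^3_+)^3$ which, by (c)--(d), has vanishing temporal trace (when the latter exists) and still satisfies the weak compatibility now with $f_\mathrm{div} = 0$. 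The reduced problem I would solve by Fourier--Laplace transform in $(x',t)$: the resulting ODE system in $x_3$ is solved explicitly, representing $\bu$ and $\pi$ through Poisson-type operators acting on $\wt\bg$, with symbol controlled by the Lopatinskii--Shapiro determinant of the Stokes boundary operator, which is bounded away from zero. The maximal-regularity estimate then reduces to the $\CR$-boundedness of a family of operator-valued Fourier multipliers; applying the operator-valued multiplier theorem in the UMD-valued weighted framework (cf.\ \cite{DHP,PS16}) together with the trace characterizations of \cite{MV12,MV14,L19} shows that $\wt\bg \mapsto (\bu,\pi)$ is bounded precisely from $\BF_{3,\delta}(J;\pd\BR^3_+)^3$ into $\BE_{1,\delta}\times\BE_{2,\delta}$. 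Finally $\pi$ is recovered from the associated weak Neumann problem for the Laplacian, giving $\pi \in \BE_{2,\delta}$, and uniqueness follows from the a priori estimate applied to a solution with zero data.

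\emph{Main obstacle.} The crucial nonroutine point is the last step when $p \ne q$: identifying the \emph{exact} spatial trace space of $\BE_{1,\delta}(J;\BR^3_+)$ as the intersection $\BF_{3,\delta}$, rather than a single anisotropic space, and proving the two-sided estimate there while simultaneously carrying the temporal weight $t^{p(1-\delta)}$. This is where the anisotropic mixed-norm trace theory of Meyries--Veraar and Lindemulder is indispensable and where the earlier treatments mentioned in the introduction lose optimality; particular care is needed near the excluded exponent $\delta = 1/p + 1/(2q)$, at which the temporal trace at $t = 0$ changes character and the compatibility condition (c) degenerates.
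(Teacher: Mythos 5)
The paper provides no proof of this theorem. It states that Theorems~\ref{th-model-half-dirichlet} and~\ref{th-model-half-slip} are ``well-known,'' citing Pr\"uss and Simonett \cite{PS16}, Sec.~7.2, and then in the remark immediately following notes that the statements of \cite{PS16}, Thm.~7.2.1, must be corrected in view of trace theory, referring to Lindemulder \cite{L19}. The paper's own detailed half-space argument appears only for the free-boundary variant, Theorem~\ref{th-model-half-free}. So there is no ``paper's own proof'' against which to compare directly; your outline is best judged against that nearby argument and the cited references.

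Against those, your outline is a correct account of the standard route: necessity by weighted anisotropic trace theory (and condition (d) from testing the divergence equation against $\phi \in \dot H^{1,q'}(\BR^3_+)$, integrating by parts, and differentiating in $t$); sufficiency by reduction to zero interior and initial data, explicit Fourier--Laplace Poisson operators controlled by the Lopatinskii--Shapiro determinant, and $\CR$-boundedness together with the operator-valued Fourier multiplier theorem in the weighted UMD-valued setting. You also correctly identify the key technical point: for $p \neq q$ the spatial trace space of $\BE_{1,\delta}$ is the genuine intersection $\BF_{3,\delta}$ of a time-Triebel--Lizorkin space and an $L^p$-valued Besov space, which is precisely what the Meyries--Veraar/Lindemulder theory supplies and precisely the correction the paper flags.

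The one spot where your reduction is more delicate than the one used in the proof of Theorem~\ref{th-model-half-free}: you propose to extend $(\bff, f_\mathrm{div}, \bu_0)$ to all of $\BR^3$ in a single step, simultaneously preserving every regularity class --- including $f_\mathrm{div} \in H^{1,p}_\delta(J;\dot H^{-1,q})$ --- together with the compatibility $\dv e[\bu_0] = E[f_\mathrm{div}](0)$, and you only flag this as requiring care. The paper sidesteps both difficulties by a two-stage reduction: first a heat-equation solve in $\BR^3$ (removing $\bff$ and $\bu_0$, with no divergence constraint to respect), which leaves a remaining divergence datum $f_\mathrm{div,2}$ with vanishing time trace; then a separate lift of $f_\mathrm{div,2}$ via even reflection in $x_3$ and the whole-space Stokes problem, where the initial compatibility is now automatic. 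That two-step route is cleaner and I would recommend you adopt it; apart from this I see no step in your sketch that would fail.
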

\begin{theo}
\label{th-model-half-slip}
Suppose $1 < p, q < \infty$, $1\slash p < \delta \le 1$, $1\slash p + 1\slash(2 q) \notin \{\delta - 1\slash2, \delta\}$, $T > 0$, and $J = (0, T)$. The equations~\eqref{eq-stokes-half-slip} admits a unique solution $(\bu, \pi)$ in the class $\bu \in \BE_{1, \delta} (J; \BR^3_+)$, $\pi \in \BE_{2, \delta} (J; \BR^3_+)$ if and only if
\begin{enumerate}
\renewcommand{\labelenumi}{(\alph{enumi})}
\item $\bff \in \BF_{0, \delta} (J; \BR^3_+)$;
\item $f_\mathrm{div} \in \BF_{1, \delta} (J; \BR^3_+)$;
\item $h_m \in \BF_{2, \delta} (J; \pd \BR^3_+)$ and $h_m (0) = \mathrm{Tr}_{\pd \BR^3_+} [\mu (\pd_3 u_{0, m} + \pd_m u_{0, 3})]$ for $m = 1, 2$ if $1\slash p + 1\slash(2 q) < \delta - 1\slash2$;
\item $h_3 \in \BF_{3, \delta} (J; \pd \BR^3_+)$ and $h_3 (0) = \mathrm{Tr}_{\pd \BR^3_+} [u_{0, 3}]$ for $m = 1, 2$ if $1\slash p + 1\slash(2 q) < \delta$;
\item $(f_\mathrm{div}, h_3) \in H^{1, p}_\delta (J; \wh H^{- 1, q} (\BR^3_+))$;
\item $\bu_0 \in B^{2 (\delta - 1\slash p)}_{q, p} (\BR^3_+)^3$ and $\dv \bu_0 = f_\mathrm{div} (0)$.			
\end{enumerate}
In addition, the solution map $(\bff, f_\mathrm{div}, \bh, \bu_0) \mapsto (\bu, \pi)$ is continuous between the corresponding spaces.	
\end{theo}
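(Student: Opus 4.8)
The plan is to follow the standard route for Stokes model problems in a half space: first show that (a)--(f) are necessary, using trace theory for the anisotropic space $\BE_{1, \delta}$ and the weak form of the divergence equation; then prove sufficiency by (1) removing the volume data $(\bff, f_\mathrm{div}, \bu_0)$ via a reflection to the whole space and Theorem~\ref{th-model-whole}, (2) solving the resulting pure boundary-value problem by an explicit Fourier--Laplace symbol, and (3) checking that the associated solution operators are bounded between the weighted anisotropic spaces of \eqref{def-spaces}. For necessity: if $(\bu, \pi) \in \BE_{1,\delta}(J; \BR^3_+) \times \BE_{2,\delta}(J; \BR^3_+)$ solves \eqref{eq-stokes-half-slip}, then (a) and (b) are immediate from the first two equations and the mapping properties of $\nabla$; (f) follows from the temporal trace $\bu(0) \in B^{2(\delta - 1/p)}_{q,p}(\BR^3_+)^3$ of $\BE_{1,\delta}$ (well defined since $\delta > 1/p$) together with the divergence equation; the spatial trace theory for $\BE_{1,\delta}$ --- here one invokes the weighted trace results of Meyries--Veraar~\cite{MV12,MV14} and Lindemulder~\cite{L19} --- gives $\mu(\pd_3 u_m + \pd_m u_3)\vert_{x_3 = 0} \in \BF_{2,\delta}$ and $u_3\vert_{x_3 = 0} \in \BF_{3,\delta}$, i.e.\ (c)--(d), the temporal trace conditions being meaningful and forced exactly in the stated ranges $1/p + 1/(2q) < \delta - 1/2$, resp.\ $< \delta$, outside the excluded values; and (e) is obtained by testing $\dv \bu = f_\mathrm{div}$ against $\phi \in \dot H^{1,q'}(\BR^3_+)$, integrating by parts, and using $u_3\vert_{x_3 = 0} = h_3$.

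\textbf{Reduction of the volume data.} The slip conditions in \eqref{eq-stokes-half-slip} are precisely those preserved by the reflection $E$ that extends $u_1, u_2, \pi$ evenly and $u_3$ oddly across $\pd\BR^3_+$ (consistent with $\dv$, and mapping $f_\mathrm{div}$ to its odd extension). Extend $\bff$, $f_\mathrm{div}$, $\bu_0$ to $\BR^3$ by $E$; the extended data satisfy the hypotheses of Theorem~\ref{th-model-whole}, the point being that the odd extension of $f_\mathrm{div} \in \BF_{1,\delta}$ remains in $H^{1,p}_\delta(J; \dot H^{- 1, q}(\BR^3)) \cap L^p_\delta(J; H^{1, q}(\BR^3))$, which uses exactly the membership $(f_\mathrm{div}, h_3) \in H^{1,p}_\delta(J; \widehat H^{- 1, q}(\BR^3_+))$ from (e), while $\dv \bu_0 = f_\mathrm{div}(0)$ is preserved. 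Solving the whole-space problem by Theorem~\ref{th-model-whole}, restricting to $\BR^3_+$, and subtracting, we reduce to $\bff = 0$, $f_\mathrm{div} = 0$, $\bu_0 = 0$, with $h_m$, $h_3$ replaced by themselves minus the (continuously bounded) slip traces of the subtracted solution; all compatibility conditions in (c)--(e) are inherited.

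\textbf{The reduced boundary-value problem.} With vanishing volume data and initial value, apply the Laplace transform in $t$ (with spectral parameter $\lambda$ in a sector) and the Fourier transform in $x' = (x_1, x_2)$ with covariable $\xi'$. Then \eqref{eq-stokes-half-slip} becomes a linear ODE in $x_3 > 0$ with characteristic exponent $-|\xi'|$ for $\widehat\pi$ (harmonicity of $\pi$, since $\dv\bu = 0$ and $\bff = 0$) and $-\omega$ for the velocity, where
\begin{equation*}
\omega = \omega(\lambda, \xi') := \sqrt{\mu^{-1}\lambda + |\xi'|^2}, \qquad \RE \omega > 0.
\end{equation*}
Solving under decay at $x_3 = \infty$ and the divergence constraint yields $(\widehat\bu, \widehat\pi)$ as combinations of $e^{-|\xi'| x_3}$ and $e^{-\omega x_3}$ with coefficients rational in $\xi'$, $|\xi'|$, $\omega$ acting on $(\widehat h_1, \widehat h_2, \widehat h_3)$; inversion produces solution operators $\bh \mapsto (\bu, \pi)$.

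\textbf{Mapping properties and the main obstacle.} It remains to show that these operators are bounded from $\BF_{2,\delta}(J; \pd\BR^3_+)^2 \times \BF_{3,\delta}(J; \pd\BR^3_+)$ into $\BE_{1,\delta}(J; \BR^3_+) \times \BE_{2,\delta}(J; \BR^3_+)$. Writing each component as a combination of the Poisson operator associated with $\lambda - \mu\Delta$ and of symbols that are $\mathcal{R}$-bounded in $(\lambda, \xi')$ on the relevant sector, one applies the operator-valued Mikhlin and Kalton--Weis theorems (cf.~\cite{DHP,PS16}) for the $L^p_\delta$-in-time, $L^q$- and $H^{2,q}$-in-space estimates, while the spatial Besov regularity $B^{1 - 1/q}_{q,q}$, $B^{2 - 1/q}_{q,q}$ and the weighted temporal Triebel--Lizorkin regularity $F^{1/2 - 1/(2q)}_{p,q,\delta}$, $F^{1 - 1/(2q)}_{p,q,\delta}$ of the data are matched to the solution by the characterizations of weighted anisotropic trace spaces in \cite{MV12,MV14,L19}; the pressure bound $\pi \in L^p_\delta(J; \dot H^{1,q})$ follows from the homogeneity of the explicit symbol of $\nabla\widehat\pi$, no temporal smoothness of $\pi$ (hence no compatibility at $t = 0$) being needed. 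The main obstacle is exactly this last step: for $p = q$ with $\delta = 1$ the data spaces reduce to Sobolev--Slobodecki\u{\i} spaces and the estimates are classical, but for $p \ne q$ with the temporal Muckenhoupt weight $t^{p(1 - \delta)}$ one must combine $\mathcal{R}$-sectoriality with the Meyries--Veraar/Lindemulder trace theory, carefully tracking which fractional temporal order is produced by each factor of the symbol and the degeneration of traces at the excluded exponents $1/p + 1/(2q) \in \{\delta - 1/2, \delta\}$.
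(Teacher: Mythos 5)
The paper does not actually supply a proof of Theorem~\ref{th-model-half-slip}: it cites Pr\"uss--Simonett~\cite[Sec.~7.2]{PS16} and notes in the following remark that the statement must be corrected in view of Lindemulder's weighted trace theory~\cite{L19}. Your overall strategy --- necessity via weighted trace theory for $\BE_{1,\delta}$; reduction of $(\bff, f_\mathrm{div}, \bu_0)$ by a symmetric extension and Theorem~\ref{th-model-whole}; explicit Fourier--Laplace analysis of the residual boundary-value problem; and operator-valued Mikhlin / $\CR$-boundedness to verify the mapping properties in the weighted anisotropic data spaces --- is the standard one and in line with the cited sources.

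There is, however, a sign error and a genuine gap in the reduction step. First, with $u_1, u_2, \pi$ extended evenly and $u_3$ extended oddly in $x_3$, the quantity $\dv\bu = \pd_1 u_1 + \pd_2 u_2 + \pd_3 u_3$ is \emph{even} (since $\pd_3$ of an odd function is even), so $f_\mathrm{div}$ must be extended by $E^e_{x_3}$, not by its odd extension as you claim. Second, and more seriously, the symmetric extension of $\bu_0$ is not generally available: the odd extension of $u_{0, 3} \in B^{2 (\delta - 1\slash p)}_{q, p}(\BR^3_+)$ remains in $B^{2 (\delta - 1\slash p)}_{q, p}(\BR^3)$ only if $\mathrm{Tr}_{\pd \BR^3_+}[u_{0, 3}] = 0$, since $2 (\delta - 1\slash p) > 1\slash q$ precisely in the regime $1\slash p + 1\slash(2 q) < \delta$ where the normal trace exists and equals $h_3(0)$, which need not vanish. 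Your reduction should be reordered, as the paper does in the proof of Theorem~\ref{th-model-half-free}: first extend $\bu_0$ and $\bff$ generically (without imposing symmetry) to $\BR^3$ and subtract the solution of a whole-space heat equation $\pd_t \bu_1 - \mu \Delta \bu_1 = e_{\BR^3}[\bff]$, $\bu_1(0) = e_{\BR^3}[\bu_0]$; this kills $\bff$ and $\bu_0$, produces residual data with vanishing trace at $t = 0$, and in particular a divergence datum $f_{\mathrm{div}, 2} = f_\mathrm{div} - \dv(\bu_1 \vert_{\BR^3_+})$ whose even extension is legitimate. Only then apply Theorem~\ref{th-model-whole} with $E^e_{x_3}[f_{\mathrm{div}, 2}]$ to remove the divergence, and pass to the pure boundary-value problem with data vanishing at $t = 0$, from which point your Fourier--Laplace and Mikhlin arguments proceed as sketched.
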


\begin{rema}
In view of trace theorems, the statements in Pr{\" u}ss and Simonett~\cite[Thm.~7.2.1]{PS16} should be corrected as above. See, e.g., \cite{L19} for the details.
\end{rema}

Consider the Stokes equations with free boundary conditions
\begin{equation}
\label{eq-stokes-half-free}
\left\{\begin{split}
\pd_t \bu - \mu \Delta \bu + \nabla \pi & = \bff, & \quad & \text{in $\BR^3_+ \times J$}, \\
\dv \bu & = f_\mathrm{div}, & \quad & \text{in $\BR^3_+ \times J$}, \\
\pd_t \eta - u_3 & = d, & \quad & \text{on $\pd \BR^3_+ \times J$}, \\
\mu (\pd_3 u_m + \pd_m u_3) & = k_m, & \quad & \text{on $\pd \BR^3_+ \times J$}, \\
2 \mu \pd_3 u_3 - \pi - \sigma \Delta_{\pd \BR^3_+} \eta & = k_3, & \quad & \text{on $\pd \BR^3_+ \times J$}, \\
\bu (0) & = \bu_0, & \quad & \text{in $\BR^3_+$}, \\
\eta (0) & = \eta_0, & \quad & \text{on $\pd \BR^3_+$},
\end{split}\right.
\end{equation}
where $m = 1, 2$ and $\Delta_{\pd \BR^3_+} = \sum_{j = 1}^2 \pd_j^2$. The maximal $L^p - L^q$ regularity theorem for this problem has been studied by Shibata~\cite{S16,S20} (cf. Shibata 
and Shimizu~\cite{SS12}). However, we will show that more optimal regularity results can be obtained. To this end, we introduce a function space
\begin{equation*}
\begin{split}
\BE_{3, \delta} (J; \pd D) & := F^{1\slash2 - 1\slash(2q)}_{p, q, \delta} (J; L^q (\pd D)) \cap L^p_\delta (J; B^{1 - 1\slash q}_{q, q} (\pd D)), \\
\BE_{4, \delta} (J; \pd D) & := F^{2 - 1\slash(2 q)}_{p, q, \delta} (J; L^q (\pd D)) \cap H^{1, p}_\delta (J; B^{2 - 1\slash q}_{q, q} (\pd D)) \cap L^p_\delta (J; B^{3 - 1\slash q}_{q, q} (\pd D))
\end{split}
\end{equation*}
for $J \subset \BR_+$ and $D \subset \BR^3$.
\begin{theo}
\label{th-model-half-free}
Let $1 < p, q <\infty$, $1\slash p < \delta \le 1$, $1\slash p + 1\slash(2 q) \ne \delta$, $T > 0$, and $J = (0, T)$.  The problem~\eqref{eq-stokes-half-free} has a unique solution $(\bu, \pi, \eta)$ with $\bu \in \BE_{1, \delta} (J; \BR^3_+)$, $\pi \in \BE_{2, \delta} (J; \BR^3_+)$, $\mathrm{Tr}_{\pd \BR^3_+} [\pi] \in \BE_{3, \delta} (J; \pd \BR^3_+)$, and $\eta \in \BE_{4, \delta} (J; \pd \BR^3_+)$ if and only if
\begin{enumerate}
\renewcommand{\labelenumi}{(\alph{enumi})}
\item $\bff \in \BF_{0, \delta} (J; \BR^3_+)$;
\item $f_\mathrm{div} \in \BF_{1, \delta} (J; \BR^3_+)$;
\item $d \in \BF_{3, \delta} (J; \pd \BR^3_+)$;
\item $k_j \in \BF_{2, \delta} (J; \pd \BR^3_+)$ for $j = 1, 2, 3$ and $k_m (0) = \mathrm{Tr}_{\pd \BR^3_+} [\mu (\pd_3 u_{0, m} + \pd_m u_{0, 3})]$ for $m = 1, 2$ if $1\slash p + 1\slash(2 q) < \delta - 1\slash2$;
\item $\bu_0 \in B^{2 (\delta - 1\slash p)}_{q, p} (\BR^3_+)^3$ and $\dv \bu_0 = f_\mathrm{div} (0)$;
\item $\eta_0 \in B^{2 + \delta - 1\slash p - 1\slash q}_{q, p} (\pd \BR^3_+)$.	
\end{enumerate}
Besides, the solution $(\bu, \pi, h)$ depends continuously on the data in the corresponding spaces.	
\end{theo}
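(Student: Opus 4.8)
The theorem asserts an equivalence, and the two directions are of different natures. The necessity of (a)--(f) is the routine direction: given a solution $(\bu,\pi,\eta)$ in the asserted class, (a), (b), (e), (f) are read off directly, and the regularity of $d,k_1,k_2,k_3$ together with the temporal compatibility in (d) follow by applying the anisotropic, power-weighted trace theorems of Meyries--Veraar~\cite{MV12,MV14} and Lindemulder~\cite{L19} to $\bu\in\BE_{1,\delta}(J;\BR^3_+)$ and $\pi\in\BE_{2,\delta}(J;\BR^3_+)$ in the last three boundary equations of \eqref{eq-stokes-half-free}; no compatibility linking $d$, $k_3$, $\eta_0$ occurs, because the kinematic equation only pins down $\pd_t\eta(0)$, which is not constrained by $\eta_0$.

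For the sufficiency direction the plan is as follows. First I would reduce to the case $\bff=0$, $f_\mathrm{div}=0$, $\bu_0=0$, $\eta_0=0$: extending $(\bff,f_\mathrm{div},\bu_0)$ to $\BR^3$ and subtracting the solution furnished by Theorem~\ref{th-model-whole}, then correcting by a solution of an auxiliary divergence equation and by an extension of $\eta_0$ into $\BE_{4,\delta}(J;\pd\BR^3_+)$ --- all as in Pr\"uss--Simonett~\cite[Ch.~7]{PS16} --- removes the bulk data and the initial data at the cost of modifying $d,k_1,k_2,k_3$ only by terms that remain in $\BF_{3,\delta}$, $\BF_{2,\delta}$ with the correct vanishing temporal traces; conditions (d)--(e) are exactly what permit these corrections.

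For the reduced problem I would apply the Laplace transform in $t$ and the Fourier transform in $x'=(x_1,x_2)$, turning \eqref{eq-stokes-half-free} into a system of linear ODEs in $x_3>0$ whose bounded solutions are spanned by $e^{-|\xi'|x_3}$ and $e^{-\omega x_3}$ with $\omega=(|\xi'|^2+\lambda/\mu)^{1/2}$. Eliminating $\hat\eta$ via $\lambda\hat\eta=\hat u_3|_{x_3=0}+\hat d$ and imposing the three remaining boundary conditions yields a $3\times3$ algebraic system whose determinant is the Lopatinskii--Shapiro symbol of the free-boundary Stokes operator; this symbol is nonvanishing with the expected parabolic homogeneity, so the system is uniquely solvable and the solution is represented by explicit Fourier--Laplace multiplier operators acting on $(d,k_1,k_2,k_3)$. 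The next, and central, task is to show that these operators map $\BF_{3,\delta}(J;\pd\BR^3_+)$ and $\BF_{2,\delta}(J;\pd\BR^3_+)$ boundedly into $\BE_{1,\delta}$, $\BE_{2,\delta}$, $\BE_{3,\delta}$, $\BE_{4,\delta}$; this I would carry out with operator-valued Fourier multiplier theorems and $\CR$-boundedness (Weis, Kalton--Weis) transported to the weighted Triebel--Lizorkin and Besov scales through \cite{MV12,MV14,L19}, the analytic heart being the verification, uniformly in $(\lambda,\xi')$, of the $\CR$-bounds for the parabolic-homogeneous symbol families obtained by differentiating the explicit formula, using $\omega\sim|\xi'|+|\lambda|^{1/2}$ and the lower bound for the Lopatinskii determinant. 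It is precisely here that $d$ and $k_j$ get captured in the optimal intersection space \eqref{intersection}, improving on Shibata~\cite{S16,S20}.

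Finally, the regularity of $\eta$ is recovered a posteriori: $\mathrm{Tr}_{\pd\BR^3_+}[u_3]\in\BF_{3,\delta}(J;\pd\BR^3_+)$ by the trace theorem for $\BE_{1,\delta}$, so $\pd_t\eta=u_3|_{\pd\BR^3_+}+d$ with $d\in\BF_{3,\delta}$ and $\eta_0\in B^{2+\delta-1/p-1/q}_{q,p}(\pd\BR^3_+)$ gives $\eta\in H^{1,p}_\delta(J;B^{2-1/q}_{q,q})\cap F^{2-1/(2q)}_{p,q,\delta}(J;L^q)$, and the normal-stress equation $2\mu\pd_3u_3-\pi-\sigma\Delta_{\pd\BR^3_+}\eta=k_3$ gives $\sigma\Delta_{\pd\BR^3_+}\eta\in\BE_{3,\delta}$, hence $\eta\in L^p_\delta(J;B^{3-1/q}_{q,q})$ and $\mathrm{Tr}_{\pd\BR^3_+}[\pi]\in\BE_{3,\delta}(J;\pd\BR^3_+)$; altogether $\eta\in\BE_{4,\delta}(J;\pd\BR^3_+)$. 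Uniqueness and continuous dependence follow from the uniqueness and the boundedness established in the previous steps. The step I expect to be the main obstacle is the $\CR$-boundedness analysis of the solution symbols: getting these bounds uniformly in the Fourier--Laplace parameters while arranging the estimates so that the boundary data live precisely in the anisotropic intersection spaces $\BF_{2,\delta}$ and $\BF_{3,\delta}$ is genuinely delicate when $p\ne q$, and is where the weighted Triebel--Lizorkin trace theory of \cite{MV14,L19} is indispensable.
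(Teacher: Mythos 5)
Your overall plan coincides with the paper's: reduce to homogeneous bulk data and initial data, pass to the time Laplace transform, build the solution from operator families bounded in the $\CR$-sense, and recover the extra regularity of $\eta$ and of $\mathrm{Tr}_{\pd \BR^3_+}[\pi]$ a posteriori from the kinematic and normal-stress boundary equations. The notable divergence is in how the $\CR$-bounded families are obtained and how the transition to weighted spaces is organized. You propose to Fourier-transform in $x'$, write down the $3\times 3$ Lopatinskii--Shapiro system explicitly, verify the nonvanishing of its determinant, and then establish $\CR$-bounds on the resulting symbol families directly in the weighted Triebel--Lizorkin scales via Meyries--Veraar/Lindemulder. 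The paper instead takes the $\CR$-bounded operator families $(\CU_{\BR^3_+},\CP_{\BR^3_+},\CE_{\BR^3_+})$ as given from Shibata's work, and gets the estimates by first proving weighted $L^p$-in-time bounds for $\pd_t\bu$, $\nabla^2\bu$, $\nabla\pi$, $\pd_t\eta$ through Pr\"uss's operator-valued Fourier multiplier theorem for power-weighted $L^p$, and only then invoking the trace/interpolation theory to translate the boundary-data norm into $\BF_{2,\delta}$ and $\BF_{3,\delta}$ and the solution norms into $\BE_{1,\delta},\ldots,\BE_{4,\delta}$. Your route would thus be more self-contained (it does not lean on Shibata), but is also harder; the paper's two-step (weighted $L^p$ multiplier estimate, then trace theory) is what actually makes the bookkeeping tractable.

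One step you gloss over deserves emphasis. Because the paper works with a bilateral Laplace transform and extends the unknowns by zero to $\{t<0\}$, it must first show that extension by zero is consistent with the equations, i.e.\ that a solution with right-hand sides supported in $\{t\ge 0\}$ vanishes on $\{t<0\}$. The paper does this via a duality argument against the dual backward problem, integrating by parts term by term (the pressure and the surface-tension term require the kinematic equation to close the identity). If you intend to take the Laplace transform in $t$ of data that have merely been extended trivially, you need this support/uniqueness argument or an equivalent; it is not automatic from the multiplier estimates alone and is a genuine part of the sufficiency proof, not merely "uniqueness as a corollary".
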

\begin{proof}
We only prove the sufficient part since the necessary part immediately follows from trace theorems. In order to show the sufficient part, we consider the following shifted problem:
\begin{equation}
\label{eq-stokes-half-free-shift}
\left\{\begin{split}
\pd_t \bu + \omega \bu - \mu \Delta \bu + \nabla \pi & = \bff, & \quad & \text{in $\BR^3_+ \times \BR_+$}, \\
\dv \bu & = f_\mathrm{div}, & \quad & \text{in $\BR^3_+ \times \BR_+$}, \\
\pd_t \eta + \omega \eta - u_3 & = d, & \quad & \text{on $\pd \BR^3_+ \times \BR_+$}, \\
\mu (\pd_3 u_m + \pd_m u_3) & = k_m, & \quad & \text{on $\pd \BR^3_+ \times \BR_+$}, \\
2 \mu \pd_3 u_3 - \pi - \sigma \Delta_{\pd \BR^3_+} \eta & = k_3, & \quad & \text{on $\pd \BR^3_+ \times \BR_+$}, \\
\bu (0) & = \bu_0, & \quad & \text{in $\BR^3_+$}, \\
\eta (0) & = \eta_0, & \quad & \text{on $\pd \BR^3_+$},
\end{split}\right.
\end{equation}
where $m = 1, 2$ and $\omega > 0$ denotes a (possibly large) shift parameter. We first show that there exists $\omega_0 > 0$ such that for each $\omega \ge \omega_0$, the system \eqref{eq-stokes-half-free-shift} admits a unique solution $(\bu, \pi, \eta)$ in the corresponding regularity classes, and then we will prove that \eqref{eq-stokes-half-free} has a unique solution in the right regularity classes --- this is due to the fact that several trace theorems can be applied not for the finite interval $J$ but for the semi-infinite interval $\BR_+$. However, it is known that we can drop off the parameter $\omega$ by restricting the time interval to be finite, and thus it is sufficient to consider \eqref{eq-stokes-half-free-shift}. \par
Without loss of generality, we may assume that $(\bff, f_\mathrm{div}, d, \bu_0, \eta_0) = 0$ and $\bk (0) = 0$ if $1\slash p + 1\slash(2 q) < \delta$. This can be observed as follows: Let us first consider the problem with $\omega = 0$. Define
\begin{equation*}
\begin{split}
\eta_1 (t) & := \Big[2 e^{- (\bI - \Delta_{\pd \BR^3_+})^{1\slash2} t} - e^{- 2 (\bI - \Delta_{\pd \BR^3_+})^{1\slash2} t} \Big] \eta_0 \\
& \quad + \Big[e^{- (\bI - \Delta_{\pd \BR^3_+}) t} - e^{- 2 (\bI - \Delta_{\pd \BR^3_+}) t} \Big] (\bI - \Delta_{\pd \BR^3_+})^{- 1} (u_{0, 3} \vert_{\pd \BR^3_+} + d \vert_{t = 0}) \\
& = \eta_{1, 1} (t) + \eta_{1, 2} (t)
\end{split}
\end{equation*}
for any $t \ge 0$. Since the operators $\mathrm{Tr}_{t = 0} \circ \mathrm{Tr}_{\pd \BR^3_+}$ and $\mathrm{Tr}_{\pd \BR^3_+} \circ \mathrm{Tr}_{t = 0}$ coincide (cf. Lindemulder~\cite[pp.~88]{L19}), we have $d \vert_{t = 0} \in B^{2 (\delta - 1\slash p) - 1\slash q}_{q, p} (\pd \BR^3_+)$. Since $\eta_0 \in B^{2 + \delta - 1\slash p - 1\slash q}_{q, p} (\pd \BR^3_+)$ and $u_{0, 3} \vert_{\pd \BR^3_+} \in B^{2 (\delta - 1\slash p) - 1\slash q}_{q, p} (\pd \BR^3_+)$, by the standard semigroup theory, we see that 
\begin{equation*}
\eta_1 \in \BE_{4, \delta} (\BR_+; \pd \BR^3_+)
\end{equation*}
with $\eta_1 (0) = \eta_0$ and $\pd_t \eta_1 (0) = u_{0, 3} \vert_{\pd \BR^3_+} + d \vert_{t = 0}$. Indeed, we observe that $\eta_{1, 1} (0) = \eta_0$, $\pd_t \eta_{1, 1} (0) = 0$ and $\eta_{1, 2} (0) = 0$, $\pd_t \eta_{1, 2} (0) = u_0 \vert_{\pd \BR^3_+} + d \vert_{t = 0}$. Hence, by \cite[Thm.~4.2]{MV14}, it holds
\begin{equation*}
\eta_{1, 1} \in F^{3 - 1\slash q}_{p, 1, \delta} (\BR_+; L^q (\pd \BR^3_+)) \cap F^0_{p, 1, \delta} (\BR_+; B^{3 - 1\slash q}_{q, 1} (\pd \BR^3_+)) \hookrightarrow \BE_{4, \delta} (\BR_+; \pd \BR^3_+)
\end{equation*}
due to $\eta_0 \in B^{2 + \delta - 1\slash p - 1\slash q}_{q, p} (\pd \BR^3_+)$. Besides, by $(\bI - \Delta_{\pd \BR^3_+})^{- 1} u_{0, 3} \vert_{\pd \BR^3_+} + d \vert_{t = 0} \in B^{2 + 2 (\delta - 1\slash p) - 1\slash q}_{q, p} (\pd \BR^3_+)$, we also have
\begin{equation*}
\eta_{1, 2} \in F^{2 - 1\slash(2 q)}_{p, 1, \delta} (\BR_+; L^q (\pd \BR^3_+)) \cap F^0_{p, 1, \delta} (\BR_+; B^{4 - 1\slash q}_{q, q} (\pd \BR^3_+)) \hookrightarrow \BE_{4, \delta} (\BR_+; \pd \BR^3_+).
\end{equation*}
Hence, we have $\eta_1 \in \BE_{4, \delta} (\BR_+; \pd \BR^3_+)$. \par
We now extend $\bu_0$ to all of $\BR^3$ in the class $B^{2 (\delta - 1\slash p)}_{q, p} (\BR^3)$, and extend $\bff$ by zero, where those are denoted by $e_{\BR^3} [\bu_0]$ and $E^z_{x_3} [\bff]$, respectively. Then for each $\omega > 0$ the problem
\begin{equation*}
\left\{\begin{split}
\pd_t \bu_1 + \omega \bu_1 - \mu \Delta \bu_1 & = E^z_{x_3} [\bff], & \quad & \text{in $\BR^3 \times \BR_+$}, \\
\bu_1 (0) & = e_{\BR^3} [\bu_0], & \quad & \text{in $\BR^3$}
\end{split}\right.
\end{equation*}
has a unique solution $\bu_1 \in H^{1, p}_\delta (\BR_+; L^q (\BR^3)) \cap L^p_\delta (\BR_+; H^{2, q} (\BR^3))$, see, e.g., \cite[Thm.~6.1.8, 4.4.4]{PS16}. Thus $\bu_2 := \bu - \bu_1 \vert_{\BR^3_+}$ and $\pi_2 := \pi$ should solve
\begin{equation*}
\left\{\begin{split}
\pd_t \bu_2 + \omega \bu_2 - \mu \Delta \bu_2 + \nabla \pi_2 & = 0, & \quad & \text{in $\BR^3_+ \times \BR_+$}, \\
\dv \bu_2 & = f_\mathrm{div, 2}, & \quad & \text{in $\BR^3_+ \times \BR_+$}, \\
\bu_2 (0) & = 0, & \quad & \text{in $\BR^3_+$},
\end{split}\right.
\end{equation*}
where we have set $f_\mathrm{div, 2} := f_\mathrm{div} - \dv (\bu_1 \vert_{\BR^3_+}) \in \BF_{1, \delta} (\BR_+; \BR^3_+)$. Here, $f_\mathrm{div, 2}$ has vanishing trace at $t = 0$. Extending $\bu_2$ by even reflection with respect to $x_3$, we consider
\begin{equation*}
\left\{\begin{split}
\pd_t \bu_3 + \omega \bu_3 - \mu \Delta \bu_3 +\nabla \pi_3 & = 0, & \quad & \text{in $\BR^3 \times \BR_+$}, \\
\dv \bu_3 & = E^e_{x_3} [f_\mathrm{div, 2}], & \quad &\text{in $\BR^3 \times \BR_+$}, \\
\bu_3 (0) & = 0, & \quad & \text{in $\BR^3$},
\end{split}\right.
\end{equation*}
where $E^e_{x_3}$ denotes even extension with respect to $x_3$. According to \cite[Sec.~7.1]{PS16}, we can obtain $(\bu_3, \pi_3)$ in the right regularity class. Define $\bu_4 := \bu_2 - \bu_3 \vert_{\BR^3_+}$, $\pi_4 := \pi_2 - \pi_3 \vert_{\BR^3_+}$, and $\eta_4 := \eta - \eta_1$. Then we see that the pair $(\bu_4, \pi_4, \eta_4)$ satisfies the problem \eqref{eq-stokes-half-free-shift} with $(\bff, f_\mathrm{div}, d, \bu_0, \eta_0) = 0$ and the boundary data
\begin{equation*}
\begin{split}
d_4 & = d - \omega \eta_1 + u_{1, 3} \vert_{\BR^3_+} + u_{3, 3} \vert_{\BR^3_+} \\
k_{4, m} & = k_m - \mathrm{Tr}_{\pd \BR^3_+} [\mu (\pd_3 (u_{1, m} \vert_{\BR^3_+} + u_{3, m} \vert_{\BR^3_+}) + \pd_m (u_{1, 3} \vert_{\BR^3_+} + u_{3, 3} \vert_{\BR^3_+}))], \\
k_{4, 3} & = k_3 - \mathrm{Tr}_{\pd \BR^3_+} [(2 \mu \pd_3 (u_{1, 3} \vert_{\BR^3_+} + u_{3, 3} \vert_{\BR^3_+}) - \pi_3 \vert_{\BR^3_+} - \sigma \Delta_{\pd \BR^3_+} \eta_1)].
\end{split}
\end{equation*}
Notice that $\pd_t \eta_4 \vert_{t = 0} = 0$ due to the construction. It is not difficult to observe that $k_{4, 1}$, $k_{4, 2}$, $k_{4, 3}$ have the right regularity and have vanishing traces at $t = 0$ whenever they exist. Summing up, for a fixed parameter $\omega >  0$, we see that $(\bu_4, \pi_4, \eta_4)$ solves
\begin{equation}
\label{eq-stokes-half-free-2}
\left\{\begin{split}
\pd_t \bu_4 + \omega \bu_4 - \mu \Delta \bu_4 + \nabla \pi_4 & = 0, & \quad & \text{in $\BR^3_+ \times \BR_+$}, \\
\dv \bu_4 & = 0, & \quad & \text{in $\BR^3_+ \times \BR_+$}, \\
\pd_t \eta_4 + \omega \eta_4 - u_{4, 3} & = d_4, & \quad & \text{on $\pd \BR^3_+ \times \BR_+$}, \\
\mu (\pd_3 u_{4, m} + \pd_m u_{4, 3}) & = k_{4, m}, & \quad & \text{on $\pd \BR^3_+ \times \BR_+$}, \\
2 \mu \pd_3 u_{4, 3} - \pi_4 - \sigma \Delta_{\pd \BR^3_+} \eta_4 & = k_{4, 3}, & \quad & \text{on $\pd \BR^3_+ \times \BR_+$}, \\
\bu_4 (0) & = 0, & \quad & \text{in $\BR^3_+$}, \\
\eta_4 (0) & = 0, & \quad & \text{on $\pd \BR^3_+$}
\end{split}\right.
\end{equation}
with $m = 1, 2$. This shows that we may suppose $(\bff, f_\mathrm{div}, d, \bu_0, \eta_0) = 0$ and $\bk (0) = 0$ if $1\slash p + 1\slash(2 q) < \delta$. \par
In the following, we suppose $1\slash p + 1\slash(2 q) \ne \delta$. We now consider \eqref{eq-stokes-half-free-shift} with $(\bff, f_\mathrm{div}, d, \bu_0, \eta_0) = 0$, where $\bk$ has vanishing trace at $t = 0$ whenever it exists. Let $E_t$ be an extension operator with respect to $t$. Notice that we can extend $\bk$ trivially to all of $t \in \BR$. We show that the unknowns $\bu$, $\pi$, $\eta$ vanishes on $\{t < 0\}$ if $(\bu, \pi, \eta)$ satisfies
\begin{equation}
\label{eq-stokes-half-free-3}
\left\{\begin{split}
\pd_t \bu + \omega \bu - \mu \Delta \bu + \nabla \pi & = 0, & \quad & \text{in $\BR^3_+ \times \BR$}, \\
\dv \bu & = 0, & \quad & \text{in $\BR^3_+ \times \BR$}, \\
\pd_t \eta + \omega \eta - u_3 & = 0, & \quad & \text{on $\pd \BR^3_+ \times \BR$}, \\
\mu (\pd_3 u_m + \pd_m u_3) & = k_m, & \quad & \text{on $\pd \BR^3_+ \times \BR$}, \\
2 \mu \pd_3 u_3 - \pi - \sigma \Delta_{\pd \BR^3_+} \eta & = k_3, & \quad & \text{on $\pd \BR^3_+ \times \BR$}, \\
\bu (0) & = 0, & \quad & \text{in $\BR^3_+$}, \\
\eta (0) & = 0, & \quad & \text{on $\pd \BR^3_+$}
\end{split}\right.
\end{equation}
with $\bk = 0$ on $\{t < 0\}$. To this end, for $\wt \bff \in L^{p'} (- \infty, T; L^{q'} (\BR^3_+)^3)$ let $(\wt \bu, \wt \pi, \wt \eta)$ be the solution to the \textit{dual backward problem}
\begin{equation*}
\left\{\begin{split}
- \pd_t \wt \bu + \omega \wt \bu - \mu \Delta \wt \bu + \nabla \wt \pi & = \wt \bff, & \quad & \text{in $\BR^3_+ \times (- \infty, T)$}, \\
\dv \wt \bu & = 0, & \quad & \text{in $\BR^3_+ \times (- \infty, T)$}, \\
- \pd_t \wt \eta + \omega \wt \eta - \wt u_3 & = 0, & \quad & \text{on $\pd \BR^3_+ \times (- \infty, T)$}, \\
\mu (\pd_3 \wt u_m + \pd_m \wt u_3) & = 0, & \quad & \text{on $\pd \BR^3_+ \times (- \infty, T)$}, \\
2 \mu \pd_3 \wt u_3 - \wt \pi - \sigma \Delta_{\pd \BR^3_+} \wt \eta & = 0, & \quad & \text{on $\pd \BR^3_+ \times (- \infty, T)$}, \\
\wt \bu (T) & = 0, & \quad & \text{in $\BR^3_+$}, \\
\wt \eta (T) & = 0, & \quad & \text{on $\pd \BR^3_+$}
\end{split}\right.
\end{equation*}
with $T \in \BR$. Integration by parts gives
\begin{equation*}
\begin{split}
0 & = \int_{- \infty}^T (\pd_t \bu + \omega \bu - \mu \Delta \bu + \nabla \pi \mid \wt \bu)_{\BR^3_+} \dt \\
& = \int_{- \infty}^T (\bu \mid - \pd_t \wt \bu + \omega \wt \bu - \mu \Delta \wt \bu)_{\BR^3_+} \dt + \int_{- \infty}^T \Big((u_3 \mid 2 \mu \pd_3 \wt u_3)_{\pd \BR^3_+} - (\sigma\Delta_{\pd \BR^3_+} \eta \mid \wt u_3)_{\pd \BR^3_+} \Big) \dt \\
& = \int_{- \infty}^T (\bu \mid - \pd_t \wt \bu + \omega \wt \bu - \mu \Delta \wt \bu)_{\BR^3_+} \dt + \int_{- \infty}^T \Big((u_3 \mid 2 \mu \pd_3 \wt u_3)_{\pd \BR^3_+} - (\sigma\Delta_{\pd \BR^3_+} \eta \mid - \pd_t \wt \eta + \omega \wt \eta)_{\pd \BR^3_+} \Big) \dt,
\end{split}
\end{equation*}
where we have used the fact $\bu (0) = \wt \bu (T) = 0$ and $\eta (0) = \wt \eta (T) = 0$. We also see that
\begin{equation*}
\begin{split}
\int_{- \infty}^T (\bu \mid \nabla \wt \pi)_{\BR^3_+} \dt & = \int_{- \infty}^T (u_3 \mid \wt \pi)_{\pd \BR^3_+} \dt \\
& = \int_{- \infty}^T (u_3 \mid 2 \mu \pd_3 \wt u_3 - \sigma \Delta_{\pd \BR^3_+} \wt \eta)_{\pd \BR^3_+} \dt \\
& = \int_{- \infty}^T \Big((\pd_t \eta + \omega \eta \mid - \sigma \Delta_{\pd \BR^3_+} \wt \eta)_{\pd \BR^3_+} + (u_3 \mid 2 \mu \pd_3 \wt u_3)_{\pd \BR^3_+} \Big) \dt.
\end{split}
\end{equation*}
Summarizing, we obtain
\begin{equation*}
0 = \int_{- \infty}^T (\bu \mid - \pd_t \wt \bu + \omega \wt \bu - \mu \Delta \wt \bu + \nabla \wt \pi)_{\BR^3_+} \dt = \int_{- \infty}^T (\bu \mid \wt \bff)_{\BR^3_+} \dt.
\end{equation*}
Since $\wt \bff \in L^{p'}_\delta (- \infty, T; L^{q'} (\BR^3_+)^3)$ should be chosen arbitrary, we have $\bu \equiv 0$ on $(-\infty, T)$ for any $T \in \BR$ whenever given data vanishes. Recalling that $\bk$ vanishes on $\{t < 0\}$, we can conclude that $\bu \equiv 0$ on $\{t < 0\}$. In addition, we can also show $\eta = 0$ and $\pi = 0$ on $\{t < 0\}$. Therefore, in the following, let $E_t$ denotes the zero extension operator with respect to $t$. \par
Let $\CL$ be the (bilateral) Laplace transform with respect to a time variable $t$ defined by
\begin{equation*}
\CL [f (t)] (\lambda) = \int_{- \infty}^\infty e^{- \lambda t} f (t) \dt = \int_{- \infty}^\infty e^{- i \tau t} e^{- \gamma t} f (t) \dt = \CF_\tau [e^{- \gamma t} f (t)] (\tau),
\end{equation*}
where $\lambda = \gamma + i \tau \in \BC$ is a parameter with real numbers $\gamma$ and $\tau$. Extending the unknowns to $\{t < 0\}$ trivially by $E_t$, and then applying the Laplace transform to~\eqref{eq-stokes-half-free-shift}, we find
\begin{equation}
\label{eq-stokes-half-free-resolvent}
\left\{\begin{split}
z \CL[E_t[\bu]] - \mu \Delta \CL[E_t[\bu]] + \nabla \CL[E_t[\pi]] & = 0, & \quad & \text{in $\BR^3_+$}, \\
\dv \CL[E_t[\bu]] & = 0, & \quad & \text{in $\BR^3_+$}, \\
z \CL[E_t[\eta]] - \CL[E_t[u_3]] & = 0, & \quad & \text{on $\pd \BR^3_+$}, \\
\mu (\pd_3 \CL[E_t[u_m]] + \pd_m \CL[E_t[u_3]]) & = \CL[E_t [k_{4, m}]
], & \quad & \text{on $\pd \BR^3_+$}, \\
2 \mu \pd_3 \CL[E_t[u_3]] - \CL[E_t[\pi]] - \sigma \Delta_{\pd \BR^3_+} \CL[E_t[\eta]] & = \CL[E_t [k_{4, 3}]], & \quad & \text{on $\pd \BR^3_+$}
\end{split}\right.
\end{equation}
with $z := \lambda + \omega$. To simplify the notation, we may denote $\bK = E_{\pd \BR^3_+} [E_t [\bk_4]]$ and $\bK_\CL = E_{\pd \BR^3_+} [\CL [E_t [\bk_4]]]$. From the result due to Shibata, for any $0 < \theta < \pi\slash2$ and $\CL [E_t [\bk_4]] \in B^{1 - 1\slash q}_{q, q} (\pd \BR^3_+)^3$, there exist a (possibly large) constant $\omega_0 > 0$ and operator families 
\begin{equation*}
\begin{split}
\CU_{\BR^3_+} (z) & \in \Hol (\omega_0 + \Sigma_{\theta + \pi\slash2} \cup \{0\}; \CB (L^q (\BR^3_+)^3 \times H^{1, q} (\BR^3_+)^3, H^{2, q} (\BR^3_+)^3)), \\
\CP_{\BR^3_+} (z) & \in \Hol (\omega_0 + \Sigma_{\theta + \pi\slash2} \cup \{0\}; \CB (L^q (\BR^3_+)^3 \times H^{1, q} (\BR^3_+)^3, \dot H^{1, q} (\BR^3_+))), \\
\CE_{\BR^3_+} (z) & \in \Hol (\omega_0 + \Sigma_{\theta + \pi\slash2} \cup \{0\}; \CB (L^q (\BR^3_+)^3 \times H^{1, q} (\BR^3_+)^3, H^{3, q} (\BR^3_+)))
\end{split}
\end{equation*}
such that for each $\omega > \omega_0$, the triplet
\begin{equation*}
(\CL [E_t[\bu]], \CL [E_t[\pi]], \CL [E_t[\eta]]) := (\CU_{\BR^3_+}, \CP_{\BR^3_+}, \mathrm{Tr}_{\pd \BR^3_+} \circ \CH_{\BR^3_+}) (z) (z^{1\slash2} \bK_\CL, \bK_\CL)
\end{equation*}
is a unique solution to~\eqref{eq-stokes-half-free-resolvent} possessing the estimates
\begin{equation*}
\begin{split}
\CR_{L^q (\BR^3_+)^3 \times H^{1, q} (\BR^3_+)^3 \to H^{2 - j, q} (\BR^3_+)^3} \{(\tau \pd_\tau)^\ell (z^j \CU (z)) \mid z \in \omega_0 + \Sigma_{\theta + \pi \slash2} \cup \{0\} \} & \le C, \\
\CR_{L^q (\BR^3_+)^3 \times H^{1, q} (\BR^3_+)^3 \to L^q (\BR^3_+)^3} \{(\tau \pd_\tau)^\ell (\nabla \CP (z)) \mid z \in \omega_0 + \Sigma_{\theta + \pi \slash2} \cup \{0\} \} & \le C, \\
\CR_{L^q (\BR^3_+)^3 \times H^{1, q} (\BR^3_+)^3 \to H^{3 - j', q} (\BR^3_+)} \{(\tau \pd_\tau)^\ell (z^{j'} \CH (z)) \mid z \in \omega_0 + \Sigma_{\theta + \pi \slash2} \cup \{0\} \} & \le C
\end{split}
\end{equation*}
for $\ell, j' = 0, 1$, $j = 0, 1, 2$, and $\tau$ stands for the imaginary part of $\lambda$, where a positive constant $C$ does not depend on $\lambda$ and $\omega$. Here, we have set $\Sigma_{\theta_0} := \{z \in \BC \backslash \{0\} \mid \lvert \arg (z) \rvert < \theta_0\}$ with $\theta_0 \in (0, \pi)$. Furthermore, for Banach spaces $Z_1$ and $Z_2$, we denote by $\CB (Z_1, Z_2)$  the Banach space of all bounded linear operators from $Z_1$ to $Z_2$, by $\mathrm{Hol} (U; \CB (Z_1, Z_2))$ the set of all $\CB (Z_1,Z_2)$-valued holomorphic functions defined on $U \subset \BC$, and by $\CR_{Z_1 \to Z_2} \{\CT\}$ the $\CR$-bound of a family of operators $\CT \subset \CB (Z_1, Z_2)$. Furthermore, $E_{\pd \BR^3_+}$ stands for an extension operator to $\{x_3 > 0\}$, which makes us to observe $\bK_\CL \in H^{1, q} (\BR^3_+)^3$. Notice that we have taken $\omega_0$ so large that the set of $\lambda$ may include the right half-plane $\{\lambda \in \BC \mid \RE \lambda \ge 0\}$. In the following, we may assume that $\lambda \in \{\lambda \in \BC \mid \RE \lambda \ge 0\}$. Let $\CL^{- 1}$ be the inverse of Laplace transform given by
\begin{equation*}
\CL^{- 1} [g (\lambda)] (t) = \frac{1}{2 \pi i} \int_{\gamma - i \infty}^{\gamma + i \infty} e^{\lambda t} g (\lambda) \,\mathrm{d} \lambda = \frac{1}{2 \pi} \int_{- \infty}^\infty e^{i \tau t} e^{\gamma t} g (\lambda) \,\mathrm{d} \tau = e^{\gamma t} \CF^{- 1}_\tau [g (\lambda)] (t)
\end{equation*}
for $\lambda = \gamma + i \tau \in \BC$, $t \in \BR$, where we choose $\gamma$ such that $\gamma \ge 0$. Setting
\begin{equation*}
\Lambda^{1\slash2}_z [f] (t) := \CL^{- 1} [\lvert z \rvert^{1\slash2} \CL [f] (z)] (t)
\end{equation*}
and using the relation $z^{1\slash2}[g] (z) = \CL [\Lambda^{1\slash2}_z [g] (t)] (z)$, we define $(E_t[\bu], E_t[\pi], \eta) (t)$ by
\begin{equation*}
\begin{split}
E_t[\bu] (t) & = \CL^{- 1} [\CU_{\BR^3_+} (z) (z^{1\slash2} \bK_\CL, \bK_\CL)] \\
& = e^{\gamma t} \CF^{- 1}_\tau [\CU_{\BR^3_+} (z) \CF_\tau [e^{- \gamma t} (\Lambda^{1\slash2}_z [\bK], \bK)] (\tau)] (t), \\
E_t[\pi] (t) & = \CL^{- 1} [\CP_{\BR^3_+} (z) (z^{1\slash2} \bK_\CL, \bK_\CL)] \\
& = e^{\gamma t} \CF^{- 1}_\tau [\CP_{\BR^3_+} (z) \CF_\tau [e^{- \gamma t} (\Lambda^{1\slash2}_z [\bK], \bK)] (\tau)] (t), \\
E_t[\eta] (t) & = \CL^{- 1} [\CE_{\BR^3_+} (z) (z^{1\slash2} \bK_\CL, \bK_\CL)] \\
& = e^{\gamma t} \CF^{- 1}_\tau [\CE_{\BR^3_+} (z) \CF_\tau [e^{- \gamma t} (\Lambda^{1\slash2}_z [\bK], \bK)] (\tau)] (t)
\end{split}
\end{equation*}
for $t \in \BR$, $\RE \lambda \ge 0$. Noting $\lambda \CL [f] (z) = \CL [\pd_t f] (z)$, we see that
\begin{equation*}
\begin{split}
\pd_t E_t[\bu] (t) & = \CL^{- 1} [(z - \omega) \CU_{\BR^3_+} (z) (z^{1\slash2} \bK_\CL, \bK_\CL)] \\
& = e^{\gamma t} \CF^{- 1}_\tau [(z - \omega) \CU_{\BR^3_+} (z) \CF_\tau [e^{- \gamma t} (\Lambda^{1\slash2}_z [\bK], \bK)] (\tau)] (t), \\
\pd_t E_t [\eta] (t) & = \CL^{- 1} [(z - \omega) \CE_{\BR^3_+} (z) (z^{1\slash2} \bK_\CL, \bK_\CL)] \\
& = e^{\gamma t} \CF^{- 1}_\tau [(z - \omega) \CE_{\BR^3_+} (z) \CF_\tau [e^{- \gamma t} (\Lambda^{1\slash2}_z [\bK], \bK)] (\tau)] (t)
\end{split}
\end{equation*}
Since it holds
\begin{equation*}
\begin{split}
\Lambda^{1\slash2}_z \bK & = \CL^{- 1} [\lvert \lambda + \omega \rvert^{1\slash2} \CL [\bK] (\lambda)] (t) \\
& = e^{\gamma t} \CF^{- 1}_\tau \Big[((\gamma + \omega)^2 + \tau^2)^{1\slash4} \CF_\tau [e^{- \gamma t} \bK (t)] (\tau) \Big] \\
& = e^{\gamma t} \CF^{- 1}_\tau \bigg[\bigg(1 + \frac{(\gamma + \omega)^2 - 1}{1 + \tau^2}\bigg)^{1\slash4} (1 + \tau^2)^{1\slash4} \CF_\tau [e^{- \gamma t} \bK (t)] (\tau) \bigg],
\end{split}
\end{equation*}
we see that
\begin{equation*}
\begin{split}
& \lvert e^{- \gamma t} \Lambda_z^{1\slash2} \bK \rvert_{L^p_\delta (\BR; L^q (\BR^3_+))} \\
& \le \bigg\lvert \CF^{- 1}_\tau \bigg[\bigg(1 + \frac{(\gamma + \omega)^2 - 1}{1 + \tau^2}\bigg)^{1\slash4} (1 + \tau^2)^{1\slash4} \CF_\tau [e^{- \gamma t} \bK (t)] (\tau) \bigg] \bigg\rvert_{L^p_\delta (\BR; L^q (\BR^3_+))} \\
& \le \sup_{\tau \in \BR} \bigg\lvert 1 + \frac{(\gamma + \omega)^2 - 1}{1 + \tau^2} \bigg\rvert^{1\slash4} \lvert \CF^{- 1}_\tau [(1 + \tau^2)^{1\slash4} \CF_\tau [e^{- \gamma t} \bK (t)] (\tau)] \rvert_{L^p_\delta (\BR; L^q (\BR^3_+))} \\
& \le (\gamma + \omega)^{1\slash2} \lvert e^{- \gamma t} \bK \rvert_{H^{1\slash2, p}_\delta (\BR; L^q (\BR^3_+))}
\end{split} 
\end{equation*}
provided that $\omega > \max(\omega_0, 1)$. Hence, employing the operator-valued Fourier multiplier theorem of Pr\"uss~\cite{P19}, for each $\omega > \max (\omega_0, 1)$, it holds
\begin{equation*}
\begin{split}
\lvert e^{- \gamma t}\pd_t E_t [\bu] \rvert_{L^p_\delta (\BR; L^q (\BR^3_+))} & \le C \Big(\lvert e^{- \gamma t} \Lambda_z^{1\slash2} [\bK] \rvert_{L^p_\delta (\BR; L^q (\BR^3_+))} + \lvert e^{- \gamma t} \bK \rvert_{L^p_\delta (\BR; H^{1, q} (\BR^3_+))} \Big) \\
& \le C \Big((\gamma + \omega)^{1\slash2} \lvert e^{- \gamma t} \bK \rvert_{H^{1\slash2, p}_\delta (\BR; L^q (\BR^3_+))} + \lvert e^{- \gamma t} \bK \rvert_{L^p_\delta (\BR; H^{1, q} (\BR^3_+))} \Big) 
\end{split}
\end{equation*}
with arbitrary $\gamma \ge 0$. Without loss of generality, we now assume $\gamma = 0$. From the trace theory, there exists a constant $C$ such that
\begin{equation*}
\begin{split}
\lvert \bK \rvert_{H^{1\slash2, p}_\delta (\BR; L^q (\BR^3_+))} & \le \lvert \bK \rvert_{H^{1\slash2, p}_\delta (\BR; L^q (\BR^3_+)) \cap L^p_\delta (\BR; H^{1, q} (\BR^3_+))} \le \lvert \bk \rvert_{\BF_{2, \delta} (\BR_+; \pd \BR^3_+)}, \\
\lvert \bK \rvert_{L^p_\delta (\BR; H^{1, q} (\BR^3_+))} & \le \lvert \bK \rvert_{H^{1\slash2, p}_\delta (\BR; L^q (\BR^3_+)) \cap L^p_\delta (\BR; H^{1, q} (\BR^3_+))} \le \lvert \bk \rvert_{\BF_{2, \delta} (\BR_+; \pd \BR^3_+)}
\end{split}
\end{equation*}
are valid. Thus, for each $\omega > \max (\omega_0, 1)$ we obtain
\begin{equation*}
\lvert \pd_t \bu \rvert_{L^p_\delta (\BR_+; L^q (\BR^3_+))} \le C \omega^{1\slash2} \lvert \bk \rvert_{\BF_{2, \delta} (\pd \BR^3_+)},
\end{equation*}
where a constant $C$ is independent of $\gamma$, $\tau$, and $\omega$. Similarly, we observe
\begin{alignat*}4
\lvert \bu \rvert_{L^p_\delta (\BR_+; H^{2, q} (\BR^3_+))} & \le C \omega^{1\slash2} \lvert \bk \rvert_{\BF_{2, \delta} (\BR_+; \pd \BR^3_+)}, & \qquad \lvert \nabla \pi \rvert_{L^p_\delta (\BR_+; L^q (\BR^3_+))} & \le C \omega^{1\slash2} \lvert \bk\rvert_{\BF_{2, \delta} (\BR_+; \pd \BR^3_+)}, \\
\lvert \pd_t \eta \rvert_{L^p_\delta (\BR_+; B^{2 - 1\slash q}_{q, q} (\pd \BR^3_+))} & \le C \omega^{1\slash2} \lvert \bk \rvert_{\BF_{2, \delta} (\BR_+; \pd \BR^3_+)}, & \qquad \lvert \eta \rvert_{L^p_\delta (\BR_+; B^{3 - 1\slash q}_{q, q} (\pd \BR^3_+))} & \le C \omega^{1\slash2} \lvert \bk \rvert_{\BF_{2, \delta} (\BR_+; \pd \BR^3_+)}.
\end{alignat*}
Then, we now see that the problem \eqref{eq-stokes-half-free-3} admits the unique solution $(\bu, \pi, \eta)$ with $\bu \in \BE_{1, \delta} (\BR_+; \BR^3_+)$, $\pi \in \BE_{2, \delta} (\BR_+; \pd \BR^3_+)$, and
\begin{equation*}
\eta \in H^{1, p}_\delta (\BR_+; B^{2 - 1\slash q}_{q, q} (\pd \BR^3_+)) \cap L^p_\delta (\BR_+; B^{3 - 1\slash q}_{q, q} (\pd \BR^3_+)).
\end{equation*}
Especially, from the equation for $\eta$, we obtain an additional regularity information on $\eta$, and thus we arrive at $\eta \in \BE_{4, \delta} (J; \pd \BR^3_+)$ due to $u_3, d \in F^{1 - 1\slash(2 q)}_{p, q, \delta} (\BR_+; L^q (\pd \BR^3_+)) \cap L^p_\delta (\BR_+; B^{2 - 1\slash q}_{q, q} (\pd \BR^3_+))$. In addition, since it holds 
\begin{equation*}
\pi = k_3 - (2 \mu \pd_3 u_3 - \sigma \Delta_{\pd \BR^3_+} \eta) \qquad \text{on $\pd \BR^3_+$},
\end{equation*}
we also observe that $\mathrm{Tr}_{\pd \BR^3_+} [\pi] \in \BE_{3, \delta}(\BR_+; \pd \BR^3_+)$ from the trace theory. Summing up, there exists a unique solution to \eqref{eq-stokes-half-free-shift}. Finally, a uniqueness part follows from the duality argument, which we have used above in this proof.	
\end{proof}

\subsection{The Stokes equations in quarter-spaces}
\subsubsection{System with slip-slip boundary coditions}
Let us consider the Stokes equations in a quarter-space with slip-slip boundary conditions:
\begin{align}
\label{eq-4.5}
\left\{\begin{aligned}
\pd_t \bu - \mu \Delta \bu + \nabla \pi & = \bff, & \quad & \text{in $\BK^3 \times J$}, \\
\dv \bu & = f_\mathrm{div}, & \quad & \text{in $\BK^3 \times J$}, \\
\mu (\pd_2 u_\ell + \pd_\ell u_2) & = g_\ell, & \quad & \text{in $\pd_2 \BK^3 \times J$}, \\
u_2 & = g_2, & \quad & \text{in $\pd_2 \BK^3 \times J$}, \\
\mu (\pd_3 u_m + \pd_m u_3) & = h_m, & \quad & \text{in $\pd_3 \BK^3 \times J$}, \\
u_3 & = h_3, & \quad & \text{in $\pd_3 \BK^3 \times J$}, \\
\bu (0) & = \bu_0, & \quad & \text{in $\BK^3 \times J$},
\end{aligned}\right.
\end{align}
where $\ell = 1, 3$ and $m = 1, 2$. Here, to simplify the notation, we have used the following notations:
\begin{align*}
\BK^3 & := \{(x_1, x_2, x_3) \mid x_1 \in \BR, \enskip x_2 > 0, \enskip x_3 > 0\}, \\
\pd_2 \BK^3 & := \{(x_1, x_2, x_3) \mid x_1 \in \BR, \enskip x_2 = 0, \enskip x_3 > 0\}, \\
\pd_3 \BK^3 & := \{(x_1, x_2, x_3) \mid x_1 \in \BR, \enskip x_2 > 0, \enskip x_3 = 0\}.
\end{align*}
If there is no confusion, we write $\pd \BK^3 := \pd_2 \BK^3 \cap \pd_3 \BK^3$ for short. Let us introduce the space $\wh H^{- 1, q} (\BK^3)$ as the set of all $(\varphi_1, \varphi_2, \varphi_3) \in L^q (\BK^3) \times B^{2 - 1\slash q}_{q, q} (\pd_2 \BK^3) \times B^{2 - 1\slash q}_{q, q} (\pd_3 \BK^3)$ with $(\varphi_1, \varphi_2, \varphi_3) \in {}_0 \dot H^{- 1, q} (\BK^3)$. Set
\begin{equation*}
\langle (\varphi_1, \varphi_2, \varphi_3) \mid \phi \rangle_{\BK^3} := - (\varphi_1 \mid \phi)_{\BK^3} + (\varphi_2 \mid \phi)_{\pd_2 \BK^3} + (\varphi_3 \mid \phi)_{\pd_3 \BK^3} \quad \text{for any $\phi \in \dot H^{1, q'} (\BK^3)$}.
\end{equation*}
Then, by the divergence equation, we have the condition
\begin{equation*}
\langle (f_\mathrm{div}, g_2, h_3) \mid \phi \rangle_{\BK^3} = - (\bu \mid \nabla \phi)_{\BK^3} \quad \text{for any $\phi \in \dot H^{1, q'} (\BK^3)$}.
\end{equation*}
We now prove the following theorem.
\begin{theo}
\label{th-slipslip}
Let 
\begin{equation}
\label{cond-pqdelta-model1}
1 < p < \infty, \quad 2 < q < \infty, \quad \frac{1}{p} < \delta \le 1, \quad \frac{1}{p} + \frac{1}{2 q} \notin \bigg\{\delta - \frac{1}2, \delta \bigg\}, \quad \frac{1}{p} + \frac{1}{q} \notin \bigg\{\delta - \frac{1}2, \delta \bigg\}.
\end{equation}
Let $T > 0$ and $J = (0, T)$. Then the problem \eqref{eq-4.5} has a unique solution $(\bu, \pi)$ with $\bu \in \BE_{1, \delta} (J; \BK^3)$ and $\pi \in \BE_{2, \delta} (J; \BK^3)$ if and only if
\begin{enumerate}
\renewcommand{\labelenumi}{(\alph{enumi})}
\item $\bff \in \BF_{0, \delta} (J; \BK^3)$;
\item $f_\mathrm{div} \in \BF_{1, \delta} (J; \BK^3)$;
\item $g_\ell \in \BF_{2, \delta} (J; \pd_2 \BK^3)$ for $\ell = 1, 3$;
\item $g_2 \in \BF_{3, \delta} (J; \pd_2 \BK^3)$;
\item $h_m \in \BF_{2, \delta} (J; \pd_3 \BK^3)$ for $m = 1, 2$;
\item $h_3 \in \BF_{3, \delta} (J; \pd_3 \BK^3)$;
\item $(f_\mathrm{div}, g_2, h_3) \in H^{1, p}_\delta (J; \wh H^{- 1, q} (\BK^3))$;
\item $\bu_0 \in B^{2 (\delta - 1\slash p)}_{q, p} (\BK^3)^3$ and $\dv \bu_0 = f_\mathrm{div} (0)$;
\item $g_\ell (0) = \mathrm{Tr}_{\pd_2 \BK^3} [\mu (\pd_2 u_{0, \ell} + \pd_\ell u_{0, 2})]$ for $\ell = 1, 2$ and $h_m (0) = \mathrm{Tr}_{\pd_3 \BK^3} [\mu (\pd_3 u_{0, m} + \pd_m u_{0, 3})]$ for $m = 1, 2$ if $1\slash p + 1\slash(2 q) < \delta - 1\slash2$;
\item $g_2 (0) = \mathrm{Tr}_{\pd_2 \BK^3} [u_{0, 2}]$ and $h_3 (0) = \mathrm{Tr}_{\pd_3 \BK^3} [u_{0, 3}]$ if $1\slash p + 1\slash(2 q) < \delta$;
\item $\mathrm{Tr}_{\pd \BK^3} [g_\ell (0)] = \mathrm{Tr}_{\pd \BK^3} [\mu (\pd_2 u_{0, \ell} + \pd_\ell u_{0, 2})]$ for $\ell = 1, 3$ and $\mathrm{Tr}_{\pd \BK^3} [h_m (0)] = \mathrm{Tr}_{\pd \BK^3} [\mu (\pd_3 u_{0, m} + \pd_m u_{0, 3})]$ for $m = 1, 2$ if $1\slash p + 1\slash q < \delta - 1\slash2$;
\item $\mathrm{Tr}_{\pd \BK^3} [g_2 (0)] = \mathrm{Tr}_{\pd \BK^3} [u_{0, 2}]$ and $\mathrm{Tr}_{\pd \BK^3} [h_3 (0)] = \mathrm{Tr}_{\pd \BK^3} [u_{0, 3}]$ if $1\slash p + 1\slash q < \delta$.	
\end{enumerate}
The solution $(\bu, \pi)$ depends continuously on the data in the corresponding spaces.	
\end{theo}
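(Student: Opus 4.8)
Necessity of (a)--(l) I would obtain from the trace theory for the anisotropic, temporally weighted Triebel--Lizorkin and Besov spaces, combined with the divergence equation: given a solution $(\bu,\pi)\in\BE_{1,\delta}(J;\BK^3)\times\BE_{2,\delta}(J;\BK^3)$, the memberships (a)--(h) are read off from the mapping properties of the time trace $\mathrm{Tr}_{t=0}$ and of the spatial traces onto $\pd_2\BK^3$, $\pd_3\BK^3$ and onto the edge $\pd\BK^3$, while (i)--(l) follow by taking these traces in the equations and in $\bu(0)=\bu_0$. The two ``trace of trace'' relations (k)--(l) use that $\mathrm{Tr}_{t=0}\circ\mathrm{Tr}_{\pd\BK^3}$ and $\mathrm{Tr}_{\pd\BK^3}\circ\mathrm{Tr}_{t=0}$ coincide on the relevant class (cf.\ Lindemulder~\cite{L19}), and the restrictions $\tfrac1p+\tfrac1{2q},\ \tfrac1p+\tfrac1q\notin\{\delta-\tfrac12,\delta\}$ are imposed to exclude exactly the thresholds at which these iterated traces would fail to exist.

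For sufficiency the plan is a reflection argument that turns \eqref{eq-4.5} into a half-space problem covered by Theorem~\ref{th-model-half-slip}. First I would reduce to homogeneous interior and initial data: extend $(\bff,f_\mathrm{div},\bu_0)$ from $\BK^3$ to $\BR^3$ by the parity-preserving reflections across $\{x_2=0\}$ and $\{x_3=0\}$ (even for $f_\mathrm{div}$ and for $f_1,u_{0,1}$; odd in $x_2$ and even in $x_3$ for $f_2,u_{0,2}$; even in $x_2$ and odd in $x_3$ for $f_3,u_{0,3}$), which is consistent with the divergence constraint and, under \eqref{cond-pqdelta-model1}, lands in the spaces required by Theorem~\ref{th-model-whole}; solving the whole-space Stokes problem and invoking its uniqueness, the solution inherits these parities, hence --- using $\bu^\ast\in H^{2,q}$, so that normal derivatives of the even components and traces of the odd components vanish on the coordinate planes --- it satisfies the homogeneous slip conditions on both faces, and subtracting it leaves $\bff=f_\mathrm{div}=0$, $\bu_0=0$ with the boundary data essentially unchanged. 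Next I would remove the data $(h_1,h_2,h_3)$ on $\pd_3\BK^3$: extend it to $\{x_3=0\}$ and subtract the corresponding solution of the half-space (in $x_3$) slip problem of Theorem~\ref{th-model-half-slip}; this alters $(g_1,g_2,g_3)$ on $\pd_2\BK^3$ only within their spaces $\BF_{2,\delta}$, $\BF_{3,\delta}$, and the compatibility relations (g), (i)--(l) are precisely what guarantees that the extension is admissible and that the new $g_3$ (together with the corresponding normal-derivative data) still has vanishing trace on $\pd\BK^3$. Finally, with vanishing interior, initial and $\pd_3\BK^3$ data, I would reflect $(\bu,\pi)$ and the remaining $(g_1,g_2,g_3)$ across $\{x_3=0\}$ --- even for $u_1,u_2,\pi,g_1,g_2$, odd for $u_3,g_3$ --- obtaining a half-space problem on $\{x_2>0\}$ with slip boundary conditions on $\{x_2=0\}$; this is solved by Theorem~\ref{th-model-half-slip}, and since the reflected data carry the stated $x_3$-symmetry, uniqueness forces the solution to share it, so its restriction to $\BK^3$ solves the reduced problem. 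Undoing the two subtractions yields the asserted solution, and continuous dependence is tracked along each reduction.

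I expect the genuine difficulty to lie not in the reflection mechanism but in the bookkeeping near the edge $\pd\BK^3=\{x_2=x_3=0\}$. One must check that every reduction keeps the boundary data in the intersection spaces $\BF_{2,\delta}(J;\cdot)$ and $\BF_{3,\delta}(J;\cdot)$ and, most importantly, respects the ``trace of trace'' compatibility, since the odd extensions in $x_3$ require the vanishing on $\pd\BK^3$ of $g_3$ and of the relevant normal-derivative data; this is exactly where \eqref{cond-pqdelta-model1} is consumed --- the excluded values $\tfrac1p+\tfrac1q\in\{\delta-\tfrac12,\delta\}$ are the thresholds at which edge traces of $\BF_{2,\delta}$- or $\BF_{3,\delta}$-data appear or disappear, while $\tfrac1p+\tfrac1{2q}\in\{\delta-\tfrac12,\delta\}$ plays the same role for the time traces at $t=0$. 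A second technical point is the behaviour of the compatibility condition (g), i.e.\ $(f_\mathrm{div},g_2,h_3)\in H^{1,p}_\delta(J;\wh H^{-1,q}(\BK^3))$, under even extension in $x_3$: one has to verify that the extended triple still defines an element of $\dot H^{-1,q}$ of the half-space $\{x_2>0\}$, which follows by testing against the odd-in-$x_3$ functions in $\dot H^{1,q'}$ and using that these are dense in the relevant subspace. Once these points are settled, the estimates are the routine ones inherited from Theorems~\ref{th-model-whole} and \ref{th-model-half-slip}.
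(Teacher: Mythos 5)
Your overall mechanism---successive subtractions and even/odd reflections reducing to the half-space Theorems~\ref{th-model-half-dirichlet} and~\ref{th-model-half-slip}---is the same one the paper uses, and your last two stages are a reasonable sketch of that iteration. But the first reduction has a genuine gap.

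You propose to extend $(\bff, f_\mathrm{div}, \bu_0)$ to all of $\BR^3$ by parity-preserving reflections, with $u_{0,2}$ \emph{odd} in $x_2$ and $u_{0,3}$ \emph{odd} in $x_3$, and then solve the whole-space problem of Theorem~\ref{th-model-whole}. For the odd reflection of $u_{0,2}$ across $\{x_2=0\}$ to belong to $B^{2(\delta-1/p)}_{q,p}(\BR^3)$, its trace on $\{x_2=0\}$ must vanish whenever that trace exists, i.e.\ whenever $2(\delta-1/p)>1/q$, equivalently $\delta > 1/p + 1/(2q)$. That regime is allowed by \eqref{cond-pqdelta-model1}, and in it compatibility condition (j) pins down $\mathrm{Tr}_{\pd_2 \BK^3}[u_{0,2}] = g_2(0)$, a prescribed and generically nonzero function; the same objection applies to the odd reflection of $u_{0,3}$ via $h_3(0)$. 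So the proposed extension does not land in the data class of Theorem~\ref{th-model-whole}, and the whole-space subtraction step fails --- the whole space simply has no boundary into which the nonzero $g_2$, $h_3$ could be absorbed.

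The paper avoids this by never passing through the whole space. Its first step extends $\bu_0$ by a \emph{generic} (non-reflective) extension operator to the half space $\BR^{3+}=\{x_2>0,\,x_3\in\BR\}$, extends $\bff$, $f_\mathrm{div}$, $g_\ell$ evenly and $g_2$ by the operator $E_{x_3}$ in $x_3$, and solves the half-space \emph{slip} problem of Theorem~\ref{th-model-half-slip} on $\BR^{3+}$; the slip boundary on $\{x_2=0\}$ is precisely where the nonzero $g_2$ is imposed as data. Only after that subtraction does the residual carry zero data on $\pd_2\BK^3$ and the vanishing properties on $\pd\BK^3$ that make the subsequent even/odd reflections admissible, and the paper verifies this explicitly from \eqref{cond-pqdelta-model1}. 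It then iterates with a half-space (in $x_3$) slip problem after reflecting in $x_2$, and finally a half-space (in $x_2$) \emph{Dirichlet} problem (Theorem~\ref{th-model-half-dirichlet}) after reflecting in $x_3$, using a uniqueness argument to propagate the parities back into the slip conditions on $\pd_3\BK^3$. To repair your argument, you would replace the whole-space stage by such a half-space stage, and then, as you anticipate, check before each odd reflection that the residual $g_3$ and the corresponding tangential-stress data indeed vanish on $\pd\BK^3$.
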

\begin{rema}
\label{rem-3.7}
The condition $q \in (2, \infty)$ ensures the existence of the trace onto an intersection of the boundaries. For instance, using a similar argument as in the proof of \cite[Thm.~4.6]{L19}, we see that
\begin{equation*}
\mathrm{Tr}_{\pd_3 \BK^3} \Big[\BF_{2, \delta} (J; \pd_2 \BK^3) \Big] = F^{1\slash2 - 1\slash q}_{p, q, \delta} (J; L^q (\pd \BK^3)) \cap L^p_\delta (J; B^{1 - 2\slash q}_{q, q} (\pd \BK^3))
\end{equation*}
if $2 < q < \infty$. Hence, we have compatibility conditions for $g_\ell$ and $h_m$ on $\pd \BK^3$ provided $1\slash p + 1\slash q < \delta - 1\slash2$. Similarly, there are compatibility conditions for $g_2$ and $h_3$ on $\pd \BK^3$ if $1\slash p + 1\slash q < \delta$, cf., \cite[Rem.~3.5]{L19}.
\end{rema}	
\begin{proof}
Define
\begin{equation*}
\begin{split}
\BR^{3 +} & := \{x = (x_1, x_2, x_3) \mid x_1 \in \BR, \enskip x_2 > 0, \enskip x_3 \in \BR\}, \\
\pd \BR^{3 +} & := \{x = (x_1, x_2, x_3) \mid x_1 \in \BR, \enskip x_2 = 0, \enskip x_3 \in \BR\}.
\end{split}
\end{equation*}
Let us extend $\bu_0$ to $\BR^{n +}$ in the class $B^{2 (\delta - 1\slash p)}_{q, p} (\BR^{n +})$ and denote it by $e_{\BR^{n +}} [\bu_0]$. From Theorem~\ref{th-model-half-slip}, there exist functions $\bu_S = (u_{S, 1}, u_{S, 2}, u_{S, 3})^\top \in \BE_{1, \delta} (J; \BR^{n +})$ and  $\pi_S \in \BE_{2, \delta} (J; \BR^{n +})$ satisfying
\begin{align*}
\left\{\begin{aligned}
\pd_t \bu_S - \mu \Delta \bu_S + \nabla \pi_S & = E^e_{x_3} [\bff], & \quad & \text{in $\BR^{3 +} \times J$}, \\
\dv \bu_S & = E^e_{x_3} [f_\mathrm{div}], & \quad & \text{in $\BR^{3 +} \times J$}, \\
\mu (\pd_2 u_{S, \ell} + \pd_\ell u_{S, 2}) & = E^e_{x_3} [g_\ell], & \quad & \text{on $\pd \BR^{3 +} \times J$}, \\
u_{S, 2} & = E_{x_3} [g_2], & \quad & \text{on $\pd \BR^{3 +} \times J$}, \\
\bu_S (0) & = e_{\BR^{3 +}} [\bu_0], & \quad & \text{in $\BR^{3 +}$},
\end{aligned}\right.
\end{align*}
where $E^e_{x_3}$ denotes even extension with respect to $x_3$ and $E_{x_3}$ is an extension operator defined by
\begin{align}
\label{4.6}
E_{x_3} [f (x_1, x_3, t)] := \begin{cases}
E_{\pd_2 \BK^3} [f] (x_1, x_3, t) & \text{if $x_3 > 0$}, \\
2 E_{\pd_2 \BK^3} [f] (x_1, 2 x_3, t) - E_{\pd_2 \BK^3} [f] (x_1, 3 x_3, t) & \text{if $x_3 < 0$}.
\end{cases}
\end{align}
for $f (\cdot, t) \in B^{2 - 1\slash q}_{q, q} (\pd_2 \BK^3)$. We now define $\bg^* = (g^*_1, g^*_2, g^*_3)$ by
\begin{align*}
g^*_m := \mathrm{Tr}_{\pd_3 \BK^3} [\mu (\pd_3 u_{S, m} + \pd_m u_{S, m})], \qquad g^*_3 := \mathrm{Tr}_{\pd_3 \BK^3} [u_{S, 3}]
\end{align*}
with $m = 1, 2$. If we write $\bu = \bu_S + \ov \bu_S$ and $\pi = \pi_S + \ov \pi_S$, we see that a pair of functions $(\ov \bu_S, \ov \pi_S)$ satisfies
\begin{align*}
\left\{\begin{aligned}
\pd_t \ov \bu_S - \mu \Delta \ov \bu_S + \nabla \ov \pi_S & = 0, & \quad & \text{in $\BK^3 \times J$}, \\
\dv \ov \bu_S & = 0, & \quad & \text{in $\BK^3 \times J$}, \\
\mu (\pd_2 \ov u_{S, \ell} + \pd_\ell \ov u_{S, 2}) & = 0, & \quad & \text{on $\pd_2 \BK^3 \times J$}, \\
\ov u_{S, 2} & = 0, & \quad & \text{on $\pd_2 \BK^3 \times J$}, \\
\mu (\pd_3 \ov u_{S, m} + \pd_m \ov u_{S, 3}) & = h_m - g^*_m, & \quad & \text{on $\pd_3 \BK^3 \times  J$}, \\
\ov u_{S, 3} & = h_3 - g_3^*, & \quad & \text{on $\pd_3 \BK^3 \times J$}, \\
\ov \bu_S (0) & = 0, & \quad & \text{in $\BK^3$}.
\end{aligned}\right.
\end{align*}
Notice that we have $\pd_2 \ov u_{S, \ell} = 0$ and $\ov u_{S, 2} = 0$ on the contact line $\pd \BK^3$ from the condition~\eqref{cond-pqdelta-model1}. Hence, we have $\pd_2 (h_\ell - g_\ell^*) = 0$, $\ell = 1, 3$, and $h_2 - g^*_2 = 0$ on $\pd \BK^3 \times \BR_+$, which makes us to extend $h_\ell - g^*_\ell$ by even reflection and $h_2 - g_2^*$ by odd reflection to $\{x_2 < 0\}$. We now deal with the half-space problem
\begin{align}
\label{eq-4.7}
\left\{\begin{aligned}
\pd_t \wt \bu_S^* - \mu \Delta \wt \bu_S^* + \nabla \wt \pi_S^* & = 0, & \quad & \text{in $\BR^3_+ \times J$}, \\
\dv \wt \bu_S^* & = 0, & \quad & \text{in $\BR^3_+ \times J$}, \\
\mu (\pd_3 \wt u_{S, 1}^* + \pd_1 \wt u_{S, 3}^*) & = E^e_{x_2} [h_1 - g^*_1], & \quad & \text{on $\pd \BR^3_+ \times J$}, \\
\mu (\pd_3 \wt u_{S, 2}^* + \pd_2 \wt u_{S, 3}^*) & = E^o_{x_2} [h_2 - g^*_2], & \quad & \text{on $\pd \BR^3_+ \times J$}, \\
\wt u_{S, 3}^* & = E^e_{x_2} [h_3 - g^*_3], & \quad & \text{on $\pd \BR^3_+ \times J$}, \\
\wt \bu_S^* (0) & = 0, & \quad & \text{in $\BR^3_+$}
\end{aligned}\right.
\end{align}
Here, $E^o_{x_2}$ and $E^e_{x_2}$ represent odd and even reflection with respect to $x_2$, respectively. According to Theorem~\ref{th-model-half-slip}, we can find a unique solution to \eqref{eq-4.7}. We also see that restricted function $(\wt \bu_S^* \vert_{\BK^3}, \wt \pi_S^* \vert_{\BK^3}) =: (\ov \bu_S^*, \ov \pi_S^*)$ satisfies the quarter-space problem
\begin{align*}
\left\{\begin{aligned}
\pd_t \ov \bu_S^* - \mu \Delta \ov \bu_S^* + \nabla \ov \pi_S^* & = 0, & \quad & \text{in $\BK^3 \times J$}, \\
\dv \ov \bu_S^* & = 0, & \quad & \text{in $\BK^3 \times J$}, \\
\ov \bu_S^* & = \bg^{**}, & \quad & \text{on $\pd_2 \BK^3 \times J$}, \\
\mu (\pd_3 \ov u_{S, m}^* + \pd_m \ov u_{S, 3}^*) & = h_m - g^*_m, & \quad & \text{on $\pd_3 \BK^3 \times J$}, \\
\ov u_{S, 3}^* & = h_3 - g_3^*, & \quad & \text{on $\pd_3 \BK^3 \times J$}, \\
\ov \bu_S^* (0) & = 0, & \quad & \text{in $\BK^3$},
\end{aligned}\right.
\end{align*}
where $\bg^{**} = (g^{**}_1, g^{**}_2, g^{**}_3)^\top$ is a function defined by $\bg^{**} := \mathrm{Tr}_{\pd_2 \BK^3} [\wt \bu_S^*]$. \par
In the following, we shall consider the following problem
\begin{align*}
\left\{\begin{aligned}
\pd_t \ov \bu_S^{**} - \mu \Delta \ov \bu_S^{**} + \nabla \ov \pi_S^{**} & = 0, & \quad & \text{in $\BK^3 \times J$}, \\
\dv \ov \bu_S^{**} & = 0, & \quad & \text{in $\BK^3 \times J$}, \\
\ov \bu_S^{**} & = - \bg^{**}, & \quad & \text{on $\pd_2 \BK^3 \times J$}, \\
\mu (\pd_3 \ov u_{S, m}^{**} + \pd_m \ov u_{S, 3}^{**}) & = 0, & \quad & \text{on $\pd_3 \BK^3 \times J$}, \\
\ov u_{S, 3}^{**} & = 0, & \quad & \text{on $\pd_3 \BK^3 \times J$}, \\
\ov \bu^{**}_S (0) & = 0, & \quad & \text{in $\BK^3$}
\end{aligned}\right.
\end{align*}
with $\ell = 1, 3$ and $m = 1, 2$. At this stage, the function $\ov \bu_S^*$ is given so that $\bg^{**}$ is given as well. By the compatibility conditions on the contact line $\pd \BK^3$, we have $\pd_3 (- g^{**}_m) = - g^{**}_3 = 0$ on $\pd \BK^3 \times J$ and this makes us to extend $- g_m^{**}$ by even reflection and $- g^{**}_3$ by odd reflection with respect to $x_3$, respectively. Then we consider the reflected half-space problem
\begin{align}
\label{eq-4.8}
\left\{\begin{aligned}
\pd_t \wt \bu_S^{**} - \mu \Delta \wt \bu_S^{**} + \nabla \wt \pi_S^{**} & = 0, & \quad & \text{in $\BR^{n +} \times J$}, \\
\dv \wt \bu_S^{**} & = 0, & \quad & \text{in $\BR^{n +} \times J$}, \\
\wt u_{S, m}^{**} & = - E^e_{x_3} [g^{**}_m], & \quad & \text{in $\pd \BR^{n +} \times J$}, \\
\wt u_{S, 3}^{**} & = - E^o_{x_3} [g^{**}_3], & \quad & \text{in $\pd \BR^{n +} \times J$}, \\
\wt \bu^{**}_S (0) & = 0, & \quad & \text{in $\BR^{n +}$}
\end{aligned}\right.
\end{align}
Notice that we have $\wt u_{S, 3}^{**} = 0$ on $\pd \BK^3 \times J$. According to Theorem~\ref{th-model-half-slip}, we see that \eqref{eq-4.8} admits a unique solution in the right regularity class. We emphasize that the restricted pair $(\wt \bu_S^{**} \vert_{\BK^3}, \wt \pi_S^{**} \vert_{\BK^3})$ satisfies the slip boundary conditions on $\pd_3 \BK^3$:
\begin{align}
\label{slip-b.c.}
\mu (\pd_3 (\wt u_{S, m}^{**} \vert_{\BK^3}) + \pd_m (\wt u_{S, 3}^{**} \vert_{\BK^3})) = 0 \quad \text{and} \quad \wt u_{S, 3}^{**} \vert_{\BK^3} = 0 \quad \text{on $\pd_3 \BK^3 \times J$}.
\end{align}
Indeed, it is easy to verify that the function $(\wt \bv_S, \wt p_S)$ with 
\begin{align*}
\wt \bv_S (x, t) & := (\wt u_{S, 1}^{**} (x_1, x_2, - x_3, t), \wt u_{S, 2}^{**} (x_1, x_2, - x_3, t), - \wt u_{S, 3}^{**} (x_1, x_2, - x_3, t)), \\
\wt p_S (x, t) & := \wt \pi^{**}_S (x_1, x_2, - x_3, t)
\end{align*}
is a solution to \eqref{eq-4.8} in the required regularity class. By Theorem~\ref{th-model-half-dirichlet} we know that a solution to \eqref{eq-4.8} is unique, so that we observe $\wt \bu_S^{**} = \wt \bv_S$ and $\wt \pi_S^{**} = \wt p_S$, i.e.,
\begin{align*}
\wt u^{**}_{S, m} (x_1, x_2, - x_3, t) & = \wt u^{**}_{S, m} (x_1, x_2, x_3, t) \qquad (m = 1, 2), \\
\wt u^{**}_{S, 3} (x_1, x_2, - x_3, t) & = - \wt u^{**}_{S, 3} (x_1, x_2, x_3, t), \\
\wt \pi_S (x_1, x_2, - x_3, t) & = \wt \pi_S (x_1, x_2, - x_3, t).
\end{align*}
This implies that $\wt u^{**}_{S, m}$ are even extension of $\ov u^{**}_{S, m}$ and that $\wt u^{**}_{S, 3}$ is odd extension of $\ov u^{**}_{S, 3}$ with respect to $x_3$, respectively. From these observations, we obtain~\eqref{slip-b.c.} and see that the problem \eqref{eq-4.8} admits a solution defined by $(\ov \bu^{**}_S, \ov \pi^{**}_S) = (\wt \bu_S^{**} \vert_{\BK^3}, \wt \pi_S^{**} \vert_{\BK^3})$. Summing up, we observe that $(\ov \bu_S, \ov \pi_S)$ is given by $\ov \bu_S = \ov \bu_S^* + \ov \bu_S^{**}$ and $\ov \pi_S = \ov \pi_S^* + \ov \pi_S^{**}$, and thus $\bu = \bu_S + \ov \bu_S^* + \ov \bu_S^{**}$ and $\pi = \pi_S + \ov \pi_S^* + \ov \pi_S^{**}$ satisfy \eqref{eq-4.5} in the right regularities. The uniqueness of the solution easily follows from its construction.	
\end{proof}

\subsubsection{System with slip-free boundary conditions}
Consider the Stokes equations in a quarter-space with slip-free boundary conditions:
\begin{align}
\label{eq-4.10}
\left\{\begin{aligned}
\pd_t \bu - \mu \Delta \bu + \nabla \pi & = \bff, & \quad & \text{in $\BK^3 \times J$}, \\
\dv \bu & = f_\mathrm{div}, & \quad & \text{in $\BK^3 \times J$}, \\
\mu (\pd_2 u_\ell + \pd_\ell u_2) & = g_\ell, & \quad & \text{on $\pd_2 \BK^3 \times J$}, \\
u_2 & = g_2, & \quad & \text{on $\pd_2 \BK^3 \times J$}, \\
\pd_t \eta - u_3 & = d, & \quad & \text{on $\pd_3 \BK^3 \times J$}, \\
\mu (\pd_3 u_m + \pd_m u_3) & = k_m, & \quad & \text{on $\pd_3 \BK^3 \times J$}, \\
2 \mu \pd_3 u_3 - \pi - \sigma \Delta_{\pd_3 \BK^3} \eta & = k_3, & \quad & \text{on $\pd_3 \BK^3 \times J$}, \\
\pd_2 \eta & = h, & \quad & \text{on $\pd \BK^3$}, \\
\bu (0) & = \bu_0, & \quad & \text{in $\BK^3$}, \\
\eta (0) & = \eta_0, & \quad & \text{on $\pd_3 \BK^3$},
\end{aligned}\right.
\end{align}
where $\ell = 1, 3$ and $m = 1, 2$. Similarly as we introduced in the previous subsection, we define the space $\wh H^{- 1, q}_{\pd_2 \BK^3} (\BK^3)$ as the set of all $(\varphi_1, \varphi_2) \in L^q (\BK^3) \times B^{2 - 1\slash q}_{q, q} (\pd_2 \BK^3)$ satisfying $(\varphi_1, \varphi_2) \in \dot H^{- 1, q}_{\pd_2 \BK^3} (\BK^3)$. Set
\begin{equation*}
\langle (\varphi_1, \varphi_2) \mid \phi \rangle_{\BK^3} := - (\varphi_1 \mid \phi)_{\BK^3} + (\varphi_2 \mid \phi)_{\pd_2 \BK^3} \quad \text{for any $\phi \in \dot H^{1, q'}_{\pd_3 \BK^3} (\BK^3)$}.
\end{equation*}
Then, by the divergence equation, we obtain the condition
\begin{equation*}
\langle (f_\mathrm{div}, g_2) \mid \phi \rangle_{\BK^3} = - (\bu \mid \nabla \phi)_{\BK^3} \quad \text{for any $\phi \in \dot H^{1, q'}_{\pd_3 \BK^3} (\BK^3)$}.
\end{equation*}
Besides, we introduce a function space
\begin{equation*}
\BF_{4, \delta} (J; S) := F^{3\slash2 - 1\slash q}_{p, q, \delta} (J; L^q (S)) \cap H^{1, p}_\delta (J; B^{1 - 2\slash q}_{q, q} (S)) \cap L^p_\delta (J; B^{2 - 2\slash q}_{q, q} (S))
\end{equation*}
for $S \subset \BR^3$. We will prove the following theorem.
\begin{theo}
\label{th-model-slipfree}
Let $T > 0$ and $J = (0, T)$. Suppose that $p$, $q$, and $\delta$ satisfy
\begin{equation}
\label{cond-pqdelta-model2}
1 < p < \infty, \quad 2 < q < \infty, \quad \frac{1}{p} < \delta \le 1, \quad \frac{1}{p} + \frac{1}{2 q} \ne \delta, \quad \frac{1}{p} + \frac{1}{q} \ne \delta - \frac{1}2.
\end{equation}
Then the problem \eqref{eq-4.10} admits a unique solution $(\bu, \pi, \eta)$ with $\bu \in \BE_{1, \delta} (J; \BK^3)$, $\pi \in \BE_{2, \delta} (J; \BK^3)$, $\mathrm{Tr}_{\pd_3 \BK^3} [\pi] \in \BF_{2, \delta} (J; \pd_3 \BK^3)$, $\eta \in \BE_{4, \delta} (J; \pd_3 \BK^3)$, if and only if
\begin{enumerate}
\renewcommand{\labelenumi}{(\alph{enumi})}
\item $\bff \in \BF_{0, \delta} (J; \BK^3)$;
\item $f_\mathrm{div} \in \BF_{1, \delta} (J; \BK^3)$;
\item $g_\ell \in \BF_{2, \delta} (J; \pd_2 \BK^3)$ for $\ell = 1, 3$;
\item $g_2 \in \BF_{3, \delta} (J; \pd_2 \BK^3)$;	
\item $d \in \BF_{3, \delta} (J; \pd_3 \BK^3)$;
\item $k_j \in \BF_{2, \delta} (J; \pd_3 \BK^3)$ for $j = 1, 2, 3$;
\item $h \in \BF_{4, \delta} (J; \pd \BK^3)$;
\item $(f_\mathrm{div}, g_2, 0) \in H^{1, p}_\delta (J; \wh H^{- 1, q}_{\pd_2 \BK^3} (\BK^3))$;
\item $\bu_0 \in B^{2 (\delta - 1\slash p)}_{q, p} (\BK^3)^3$ and $\dv \bu_0 = f_\mathrm{div} (0)$;
\item $\eta_0 \in B^{2 + \delta - 1\slash p - 1\slash q}_{q, p} (\pd_3 \BK^3)$ and $\pd_2 \eta_0 = h (0)$;
\item $g_\ell (0) = \mathrm{Tr}_{\pd_2 \BK^3} [\mu (\pd_2 u_{0, \ell} + \pd_\ell u_{0, 2})]$ for $\ell = 1, 2$ and $k_m (0) = \mathrm{Tr}_{\pd_3 \BK^3} [\mu (\pd_3 u_{0, m} + \pd_m u_{0, 3})]$ for $m = 1, 2$ if $1\slash p + 1 \slash (2 q) < \delta$;
\item $\mathrm{Tr}_{\pd \BK^3} [g_\ell (0)] = \mathrm{Tr}_{\pd \BK^3} [\mu (\pd_2 u_{0, \ell} + \pd_\ell u_{0, 2})]$ for $\ell = 1, 3$ and $\mathrm{Tr}_{\pd \BK^3} [k_m (0)] = \mathrm{Tr}_{\pd \BK^3} [\mu (\pd_3 u_{0, m} + \pd_m u_{0, 3})]$ for $m = 1, 2$ if $1\slash p + 1\slash q < \delta - 1\slash2$.	
\end{enumerate}
Besides, the solution $(\bu, \pi, \eta)$ depends continuously on the data in the corresponding spaces.	
\end{theo}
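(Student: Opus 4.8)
The plan is to establish \eqref{eq-4.10} by reflection across the slip face $\pd_2\BK^3$, reducing it to the half-space slip problem of Theorem~\ref{th-model-half-slip} and the half-space free-boundary problem of Theorem~\ref{th-model-half-free}, exactly in the spirit of the proof of Theorem~\ref{th-slipslip}. Since the necessity of (a)--(l) follows from the trace theorems together with the trace-onto-$\pd\BK^3$ characterizations of Remark~\ref{rem-3.7}, it suffices to treat sufficiency, which I would carry out in three stages.

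\emph{Reduction to homogeneous data.} First I would subtract from $\eta$ a lifting $\eta_* \in \BE_{4,\delta}(J;\pd_3\BK^3)$ of the pair $(\eta_0,h)$, that is, a function with $\eta_*(0) = \eta_0$, $\pd_2\eta_*|_{\pd\BK^3} = h$ and $\pd_t\eta_*(0) = \mathrm{Tr}_{\pd_3\BK^3}[u_{0,3}] + d(0)$, built by combining the analytic semigroup $e^{-(\bI - \Delta_{\pd_3\BK^3})^{1/2}t}$ applied to $\eta_0$ (corrected by lower-order terms as for the function $\eta_1$ in the proof of Theorem~\ref{th-model-half-free}) with a parabolic extension into $\pd_3\BK^3$ of the edge datum $h$. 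Its existence uses the compatibility $\pd_2\eta_0 = h(0)$ from (j) together with the $\BE_{4,\delta}$-analogue of Remark~\ref{rem-3.7}, namely $\pd_2[\BE_{4,\delta}(J;\pd_3\BK^3)]|_{\pd\BK^3} = \BF_{4,\delta}(J;\pd\BK^3)$, which is exactly why $h \in \BF_{4,\delta}$ is the correct regularity. Replacing $\eta$ by $\eta - \eta_*$ turns the four lines of \eqref{eq-4.10} involving $\eta$ into equations with new data $d - \pd_t\eta_*$ and $k_3 + \sigma\Delta_{\pd_3\BK^3}\eta_*$ and with $\pd_2(\eta - \eta_*)|_{\pd\BK^3} = 0$, $(\eta-\eta_*)(0) = 0$, so we may assume $\eta_0 = 0$ and $h = 0$. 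Next, as in the proof of Theorem~\ref{th-slipslip}, I would extend $\bu_0$ in the $x_3$ variable to $\BR^{3+} := \{x_1\in\BR,\ x_2 > 0,\ x_3\in\BR\}$ in $B^{2(\delta - 1/p)}_{q,p}(\BR^{3+})^3$ and solve, by Theorem~\ref{th-model-half-slip}, the half-space slip problem on $\BR^{3+}$ with face $\{x_2 = 0\}$ and data $E^e_{x_3}[\bff]$, $E^e_{x_3}[f_\mathrm{div}]$, $E^e_{x_3}[g_\ell]$, $E_{x_3}[g_2]$ and initial value $e_{\BR^{3+}}[\bu_0]$, where $E_{x_3}$ is the extension operator \eqref{4.6}; condition (h) is precisely what makes $(E^e_{x_3}[f_\mathrm{div}], E_{x_3}[g_2])$ admissible for that theorem. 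Subtracting the restriction of this solution to $\BK^3$ leaves a problem with $\bff = f_\mathrm{div} = 0$, $\bu_0 = 0$ and vanishing slip data on $\pd_2\BK^3$, the free-boundary data on $\pd_3\BK^3$ being modified only by explicit traces of the subtracted velocity and pressure; using (k)--(l) one checks that these modified data still lie in the intersection spaces \eqref{def-spaces} and have vanishing traces at $t = 0$ wherever \eqref{cond-pqdelta-model2} requires.

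\emph{Reflection.} With the slip data on $\pd_2\BK^3$ now zero, I would extend the reduced problem across $\{x_2 = 0\}$ by reflecting $u_1$, $u_3$, $\pi$, $\eta$, $d$, $k_1$, $k_3$ evenly and $u_2$, $k_2$ oddly in $x_2$. As in the proof of Theorem~\ref{th-slipslip}, the relations $u_2 = 0$ and $\pd_2 u_\ell = 0$ ($\ell = 1, 3$) on $\pd_2\BK^3$ forced by the vanishing slip data give $k_2 = 0$ and $\pd_2 d = \pd_2 k_1 = \pd_2 k_3 = 0$ on $\pd\BK^3$, which together with $\pd_2\eta|_{\pd\BK^3} = 0$ places the reflected data in $\BF_{j,\delta}(J;\pd\BR^3_+)$ and in $B^{2+\delta - 1/p - 1/q}_{q,p}(\pd\BR^3_+)$. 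The reflected problem is the half-space free-boundary problem \eqref{eq-stokes-half-free} on $\BR^3_+ := \{x_3 > 0\}$, whose hypotheses hold because \eqref{cond-pqdelta-model2} contains $1/p + 1/(2q) \ne \delta$; Theorem~\ref{th-model-half-free} then yields a unique solution $(\wt\bu, \wt\pi, \wt\eta)$ with $\wt\bu \in \BE_{1,\delta}$, $\wt\pi \in \BE_{2,\delta}$, $\mathrm{Tr}_{\pd\BR^3_+}[\wt\pi] \in \BF_{2,\delta}$, $\wt\eta \in \BE_{4,\delta}$. Comparing $(\wt\bu, \wt\pi, \wt\eta)$ with its reflection $(x_1,x_2,x_3)\mapsto(x_1,-x_2,x_3)$ (with the sign of $\wt u_2$ flipped) and invoking uniqueness shows that $\wt u_1$, $\wt u_3$, $\wt\pi$, $\wt\eta$ are even and $\wt u_2$ is odd in $x_2$, so the restriction to $\BK^3$ satisfies $\wt u_2 = 0$ and $\mu(\pd_2\wt u_\ell + \pd_\ell\wt u_2) = 0$ on $\pd_2\BK^3$, i.e. the required slip conditions. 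Undoing the reductions of the previous stage assembles a solution of \eqref{eq-4.10} in the asserted classes --- the regularity $\mathrm{Tr}_{\pd_3\BK^3}[\pi] \in \BF_{2,\delta}(J;\pd_3\BK^3)$ being read off, as in Theorem~\ref{th-model-half-free}, by solving the identity $2\mu\pd_3 u_3 - \pi - \sigma\Delta_{\pd_3\BK^3}\eta = k_3$ on $\pd_3\BK^3$ for $\pi$ --- and uniqueness follows either from the construction or from the duality argument used in the proof of Theorem~\ref{th-model-half-free}.

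\emph{The main obstacle.} The delicate part is the bookkeeping on the edge $\pd\BK^3$ and at $t = 0$: one must verify that every datum produced by the successive subtractions still belongs to the correct \emph{intersection} space in \eqref{def-spaces} and, in the ranges of \eqref{cond-pqdelta-model2} where the trace onto $\pd\BK^3$ exists, carries the prescribed traces on $\pd\BK^3$ and at $t = 0$, so that the even and odd extensions in the reflection stage do not spoil the regularity. This rests on the refined ``trace of trace'' theory of Lindemulder~\cite{L19} and Meyries--Veraar~\cite{MV14} (Remark~\ref{rem-3.7}), and the single most technical point is the construction of $\eta_*$ with the \emph{simultaneous} exact edge trace $\pd_2\eta_*|_{\pd\BK^3} = h$ and exact time trace $\pd_t\eta_*(0) = \mathrm{Tr}_{\pd_3\BK^3}[u_{0,3}] + d(0)$; a secondary point is checking that the $\wh H^{-1,q}$-type compatibility (h) survives the extension operators $E^e_{x_3}$ and $E_{x_3}$, so that Theorem~\ref{th-model-half-slip} genuinely applies in the reduction stage.
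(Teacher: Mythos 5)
The proposed route is genuinely different from the paper's: you first clear $(\eta_0,h)$ by a single lifting $\eta_*$, then in one slip-subtraction on $\BR^{3+}$ you remove $\bff$, $f_\mathrm{div}$, $\bu_0$, \emph{and} the slip data on $\pd_2\BK^3$ together, and then reflect in $x_2$ and apply Theorem~\ref{th-model-half-free}. The paper instead first solves the \emph{free-boundary} half-space problem on $\BR^3_+ = \{x_3>0\}$ with $E_{x_2}$-extended data (Theorem~\ref{th-model-half-free}), thereby clearing \emph{all} the free-boundary data on $\pd_3\BK^3$ in one stroke, only then lifts the residual edge datum $h-h^*$ by Lemma~\ref{lem-ext2}, only then solves a slip problem on $\BR^{3+}$ with homogeneous interior/initial data, and only then reflects.

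This reordering is not cosmetic --- it is where your argument breaks. After your slip-subtraction on $\BR^{3+}$, the normal-stress datum on $\pd_3\BK^3$ becomes $k_3 - \mathrm{Tr}_{\pd_3\BK^3}\bigl[2\mu\pd_3 u_{S,3} - \pi_S\bigr]$, with $(\bu_S,\pi_S)$ the solution of the half-space slip problem \eqref{eq-stokes-half-slip}. But Theorem~\ref{th-model-half-slip} controls $\pi_S$ only in $\BE_{2,\delta}(J;\BR^{3+}) = L^p_\delta(J;\dot H^{1,q}(\BR^{3+}))$, i.e.\ with \emph{no} time regularity, and since the slip problem prescribes no boundary condition for $\pi_S$ on $\{x_3=0\}$, nothing in its conclusion improves the time regularity of $\mathrm{Tr}_{\pd_3\BK^3}[\pi_S]$. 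With arbitrary $\bff\in L^p_\delta(J;L^q)$ and arbitrary $\bu_0$ as input, $\mathrm{Tr}_{\pd_3\BK^3}[\pi_S]$ is genuinely only $L^p$ in time, so the modified $k_3$ cannot lie in $\BF_{2,\delta}(J;\pd_3\BK^3)$, which is a $F^{1/2-1/(2q)}_{p,q,\delta}(J;L^q)$--in--time space; hence Theorem~\ref{th-model-half-free} is not applicable in your reflection stage. The paper sidesteps this precisely by the order of operations: Theorem~\ref{th-model-half-free}, applied \emph{first}, explicitly asserts $\mathrm{Tr}_{\pd\BR^3_+}[\pi_F]\in\BE_{3,\delta}$ (read off, as noted in that proof, from the dynamic boundary condition itself), so the subtraction of $(\bu_F,\pi_F,\eta_F)$ leaves \emph{zero} free-boundary data on $\pd_3\BK^3$, and the subsequent slip problem \eqref{eq-4.12} is only invoked with $\bff=f_\mathrm{div}=0$, $\bu_0=0$ and boundary data vanishing at $t=0$. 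A secondary, independent gap is the lifting $\eta_*$: you want $\eta_*(0)=\eta_0$, $\pd_2\eta_*|_{\pd\BK^3}=h$, and $\pd_t\eta_*(0) = \mathrm{Tr}_{\pd_3\BK^3}[u_{0,3}] + d(0)$ simultaneously in $\BE_{4,\delta}(J;\pd_3\BK^3)$, but Lemma~\ref{lem-ext2}, the only tool the paper provides for lifting edge data, is formulated for data with \emph{vanishing} time trace; the paper obtains the initial/time-derivative traces from $\eta_F$ via Theorem~\ref{th-model-half-free} and only lifts the residual, time-trace-free, edge datum $h-h^*$. You flag this as ``the single most technical point,'' but without either adopting the paper's order or supplying a proof of the stronger lifting, the construction of $\eta_*$ is not available.
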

\begin{rema}
Since \eqref{cond-pqdelta-model2} infers $\delta + (1 \slash 2) > 1 \slash (2 p) + 1 \slash q$, the trace of $\eta_0$ onto $\pd \BK$ exists, i.e., the compatibility condition $\pd_2 \eta_0 = h (0)$ on $\pd \BK^3$ should be considered.
\end{rema}
\begin{proof}
Necessity follows easily from trace theory. In the following, we will prove sufficiency. Let $e_{\BR^3_+} [\bu_0]$ and $e_{\BR^3_+} [\eta_0]$ denote extensions of $\bu_0$ and $\eta_0$ to $\BR^3_+$ in the class $B^{2 (\delta - 1\slash p)}_{q, p} (\BR^3_+)^3$ and $B^{2 + \delta - 1\slash p - 1\slash q}_{q, p} (\pd \BR^3_+)$, respectively. Using Theorem~\ref{th-model-half-free}, the problem
\begin{align*}
\left\{\begin{aligned}
\pd_t \bu_F - \mu \Delta \bu_F + \nabla \pi_F & = \bff, & \quad & \text{in $\BR^3_+ \times J$}, \\
\dv \bu_F & = f_\mathrm{div}, & \quad & \text{in $\BR^3_+ \times J$}, \\
\pd_t \eta_F - u_{F, 3} & = E_{x_2} [d], & \quad & \text{on $\pd \BR^3_+ \times J$}, \\
\mu (\pd_3 u_{F, m} + \pd_m u_{F, 3}) & = E^e_{x_2} [k_m], & \quad & \text{on $\pd \BR^3_+ \times J$}, \\
2 \mu \pd_3 u_{F, 3} - \pi_F - \sigma \Delta_{\pd \BR^3_+} \eta_F & = E^e_{x_2} [k_3], & \quad & \text{on $\pd \BR^3_+ \times J$}, \\
\bu_F (0) & = e_{\BR^3_+} [\bu_0], & \quad & \text{in $\BR^3_+$}, \\
\eta_F (0) & = e_{\BR^3_+} [\eta_0], & \quad & \text{on $\pd \BR^3_+$}
\end{aligned}\right.
\end{align*}
admits a unique solution $(\bu_F, \pi_F, \eta_F)$, where $E_{x_2}[d]$ is an extension of $d$ to $\{x_2 < 0\}$ given by
\begin{align*}
E_{x_2} d (x_1, x_3) := \begin{cases}
d (x_1, x_2) & \text{if $x_2 > 0$}, \\
2 d (x_1, 2 x_3) - d (x_1, 3 x_3) & \text{if $x_2 < 0$}.
\end{cases}
\end{align*}
Writing $\bu = \bu_F + \ov \bu_F$, $\pi = \pi_F + \ov \pi_F$, and $\eta = \eta_F + \ov \eta_F$, we find that $(\ov \bu_F, \ov \pi_F, \ov \eta_F)$ solves
\begin{align}
\label{eq-4.11}
\left\{\begin{aligned}
\pd_t \ov \bu_F - \mu \Delta \ov \bu_F + \nabla \ov \pi_F & = 0, & \quad & \text{in $\BK^3 \times J$}, \\
\dv \ov \bu_F & = 0, & \quad & \text{in $\BK^3 \times J$}, \\
\mu (\pd_2 \ov u_{F, \ell} + \pd_\ell \ov u_{F, 2}) & = g_\ell - k^*_\ell, & \quad & \text{on $\pd_2 \BK^3 \times J$}, \\
\ov u_{F, 2} & = g_2 - k^*_2, & \quad & \text{on $\pd_2 \BK^3 \times J$}, \\
\pd_t \ov \eta_F - \ov u_{F, 3} & = 0, & \quad & \text{on $\pd_3 \BK^3 \times J$}, \\
\mu (\pd_3 \ov u_{F, m} + \pd_m \ov u_{F, 3}) & = 0, & \quad & \text{on $\pd_3 \BK^3 \times J$}, \\
2 \mu \pd_3 \ov u_{F, 3} - \ov \pi_F - \sigma \Delta_{\pd_3 \BK^3} \ov \eta_F & = 0, & \quad & \text{on $\pd_3 \BK^3 \times J$}, \\
\pd_2 \ov \eta_F & = h - h^*, & \quad & \text{on $\pd \BK^3 \times J$}, \\
\ov \bu_F (0) & = 0, & \quad & \text{in $\BK^3$}, \\
\ov \eta_F (0) & = 0, & \quad & \text{on $\pd_3 \BK^3$}
\end{aligned}\right.
\end{align}
with
\begin{align*}
k^*_\ell := \mathrm{Tr}_{\pd_2 \BK^3} [\mu (\pd_3 u_{F, \ell} + \pd_\ell u_{F, \ell})], \quad k^*_2 := \mathrm{Tr}_{\pd_2 \BK^3} [u_{F, 2}], \quad h^* := - \mathrm{Tr}_{\pd \BK^3} [\pd_2 \eta_F].
\end{align*}
Notice that $d^*$ is well-defined. In fact, by Lemma~\ref{lem-ext1} and \cite[Prop.~3.10]{MV12}, we have
\begin{equation*}
\begin{split}
h^* & \in {}_0 F^{3\slash2 - 1\slash q}_{p, q, \delta} (J; L^q (\pd_3 \BK^3)) \cap {}_0 H^{1, p}_\delta (J; B^{1 - 2\slash q}_{q, q} (\pd_3 \BK^3)) \cap L^p_\delta (J; B^{2 - 2\slash q}_{q, q} (\pd_3 \BK^3))
\end{split}
\end{equation*}
provided the condition \eqref{cond-pqdelta-model2}. From Lemma~\ref{lem-ext2}, we see that there exists $\ov \eta^*_F \in \BE_{4, \delta} (J; \pd_3 \BK^3)$ such that $\pd_2 \ov \eta_F^* = h - h^*$ on $\pd \BK^3 \times J$. We next consider the half-space problem
\begin{align}
\label{eq-4.12}
\left\{\begin{aligned}
\pd_t \ov \bu_F^* - \mu \Delta \ov \bu_F^* + \nabla \ov \pi_F^* & = 0, & \quad & \text{in $\BR^{3 +} \times J$}, \\
\dv \ov \bu_F^* & = 0, & \quad & \text{in $\BR^{3 +} \times J$}, \\
\mu (\pd_2 \ov u_{F, \ell}^* + \pd_\ell \ov u_{F, 2}^*) & = E^e_{x_3} [g_\ell - k^*_\ell], & \quad & \text{on $\pd \BR^{3 +} \times J$}, \\
\ov u_{F, 2}^* & = E_{x_3} [g_2 - k^*_2], & \quad & \text{on $\pd \BR^{3 +} \times J$}, \\
\ov \bu_F^* (0) & = 0, & \quad & \text{in $\BR^{3 +}$},
\end{aligned}\right.
\end{align}
where $E_{x_3}$ is the extension operator defined by \eqref{4.6}. From Theorem~\ref{th-model-half-slip}, there exist a pair $(\ov \bu_F^*, \ov \pi^*_F)$ that is a solution to \eqref{eq-4.12} in the right regularity class. We then see that $\ov \bu_F^{**} := \ov \bu_F - \ov \bu_F^* \vert_{\BK^3}$, $\ov \pi_F^{**} := \ov \pi_F - \ov \pi_F^* \vert_{\BK^3}$, and $\ov \eta_F^{**} := \ov \eta_F - \ov \eta_F^*$ satisfy the system
\begin{align}
\label{eq-4.13}
\left\{\begin{aligned}
\pd_t \ov \bu_F^{**} - \mu \Delta \ov \bu_F^{**} + \nabla \ov \pi_F^{**} & = 0, & \quad & \text{in $\BK^3 \times J$}, \\
\dv \ov \bu_F^{**} & = 0, & \quad & \text{in $\BK^3 \times J$}, \\
\mu (\pd_2 \ov u_{F, \ell}^{**} + \pd_\ell \ov u_{F, 2}^{**}) & = 0, & \quad & \text{on $\pd_2 \BK^3 \times J$}, \\
\ov u_{F, 2}^{**} & = 0, & \quad & \text{on $\pd_2 \BK^3 \times J$}, \\
\pd_t \ov \eta_F^{**} - \ov u_{F, 3}^{**} & = d^{**}, & \quad & \text{on $\pd_3 \BK^3 \times J$}, \\
\mu (\pd_3 \ov u_{F, m}^{**} + \pd_m \ov u_{F, 3}^{**}) & = k^{**}_m, & \quad & \text{on $\pd_3 \BK^3 \times J$}, \\
2 \mu \pd_3 \ov u_{F, 3}^{**} - \ov \pi_F^{**} - \sigma \Delta_{\pd_3 \BK^3} \ov \eta_F^{**} & = k^{**}_3, & \quad & \text{on $\pd_3 \BK^3 \times J$}, \\
\pd_2 \ov \eta_F^{**} & = 0, & \quad & \text{on $\pd \BK^3 \times J$}, \\
\ov \bu_F^{**} (0) & = 0, & \quad & \text{in $\BK^3$}, \\
\ov \eta_F^{**} (0) & = 0, & \quad & \text{on $\pd_3 \BK^3$},
\end{aligned}\right.
\end{align}
where we have set
\begin{align*}
d^{**} := - \pd_t \ov \eta_F^* + \ov u_{F, 3}^*, \quad k^{**}_m := \mu (\pd_3 \ov u^*_{F, m} + \pd_m \ov u^*_{F, 3}), \quad k^{**}_3 := 2 \mu \pd_3 \ov u_{F, 3}^* - \ov \pi_F^* - \sigma \Delta_{\pd_3 \BK^3} \ov \eta_F^*.
\end{align*}
To show the existence of solution to \eqref{eq-4.13}, we consider the reflected half-space problem
\begin{align}
\label{eq-4.14}
\left\{\begin{aligned}
\pd_t \wt \bu_F^{**} - \mu \Delta \wt \bu_F^{**} + \nabla \wt \pi_F^{**} & = 0, & \quad & \text{in $\BR^3_+ \times J$}, \\
\dv \wt \bu_F^{**} & = 0, & \quad & \text{in $\BR^3_+ \times J$}, \\
\pd_t \wt \eta_F^{**} - \wt u_{F, 3}^{**} & = E_{x_2} [d^{**}], & \quad & \text{on $\pd \BR^3_+ \times J$}, \\
\mu (\pd_3 \wt u_{F, m}^{**} + \pd_m \wt u_{F, 3}^{**}) & = E^e_{x_2} [k^{**}_m], & \quad & \text{on $\pd \BR^3_+ \times J$}, \\
2 \mu \pd_3 \wt u_{F, 3}^{**} - \wt \pi_F^{**} - \sigma \Delta_{\pd_3 \BK^3} \wt \eta_F^{**} & = E^e_{x_2} [k^{**}_3], & \quad & \text{on $\pd \BR^3_+ \times J$}, \\
\wt \bu^{**}_F (0) & = 0, & \quad & \text{in $\BR^3_+$}, \\
\wt \eta_F^{**} (0) & = 0, & \quad & \text{on $\pd \BR^3_+$}.
\end{aligned}\right.
\end{align}
According to Theorem~\ref{th-model-half-free}, the problem \eqref{eq-4.14} admits a unique solution $(\wt \bu_F^{**}, \wt \pi_F^{**}, \wt \eta_F^{**})$. In addition, we can observe that the function $(\wt \bv_F, \wt p_F, \wt \zeta_F) = (\wt v_{F, 1}, \wt v_{F, 2}, \wt v_{F, 3}, \wt p_F, \wt \zeta_F)$ defined by
\begin{align*}
\begin{aligned}
\wt v_{F, \ell} (x, t) & := \wt u_{F, \ell}^{**} (x_1, - x_2, x_3, t), & \quad \wt v_{F, 2} (x, t) & := - \wt u_{F, 2}^{**} (x_1, - x_2, x_3, t), \\
\wt p_F (x, t) & := \wt \pi_F^{**} (x_1, - x_2, x_3, t), & \quad \wt \zeta_F (x, t) & := \wt \eta_F^{**} (x_1, - x_2, t)
\end{aligned}
\end{align*}
solves the problem \eqref{eq-4.14}. Since a solution to \eqref{eq-4.14} is unique, it follows that
\begin{align*}
\begin{aligned}
\wt u_{F, \ell}^{**} (x_1, - x_2, x_3, t) & = \wt u_{F, \ell}^{**} (x_1, x_2, x_3, t), & \quad \wt u_{F, 2}^{**} (x_1, - x_2, x_3, t) & = - \wt u_{F, 2}^{**} (x_1, x_2, x_3, t), \\
\wt \pi_F^{**} (x_1, - x_2, x_3, t) & = \wt \pi_F^{**} (x_1, x_2, x_3, t), & \quad \wt \eta_F^{**} (x_1, - x_2, t) & = \wt \eta_F^{**} (x_1, x_2, t).
\end{aligned}
\end{align*}
This makes us to obtain
\begin{align*}
\mu (\pd_2 \wt u_{F, \ell}^{**} + \pd_\ell \wt u_{F, 2}^{**}) = 0, \quad \wt u_{F, 2}^{**} = 0 \quad \text{on $\pd_2 \BK^3 \times J$}
\end{align*}
and $\pd_2 \wt \eta_F^{**} = 0$ on $\pd \BK^3 \times J$. Hence, the restricted function $(\wt \bu_F^{**} \vert_{\BK^3}, \wt \pi_F^{**} \vert_{\BK^3}, \wt \eta_F^{**} \vert_{\pd_3 \BK^3})$ is a solution to~\eqref{eq-4.13}. Thus, a solution to \eqref{eq-4.11} is given by $\ov \bu_F = \ov \bu_F^* + \ov \bu_F^{**}$, $\ov \pi_F = \ov \pi_F^* + \ov \pi_F^{**}$, and $\ov \eta_F = \ov \eta_F^* + \ov \eta_F^{**}$. Summing up, we see that a solution to \eqref{eq-4.10} is given by
\begin{align*}
\bu = \bu_F + \ov \bu_F^* + \ov \bu_F^{**}, \quad \pi = \pi_F + \ov \pi_F^* + \ov \pi_F^{**}, \quad \eta = \eta_F + \ov \eta_F^* + \ov \eta_F^{**}.
\end{align*}
The uniqueness of the solution to \eqref{eq-4.10} can be obtained by its construction. The proof is complete.	
\end{proof}

\section{Maximal Regularity of the principal Linearization}
\label{sect-MR}
\noindent
The principle part of the linearized system reads as follows
\begin{align}
\label{eq-linear}
\left\{
\begin{aligned}
\pd_t \bu - \mu \Delta \bu + \nabla \pi & = \bff, & \quad & \text{in $\Omega_* \times J$,} \\
\dv \bu & = f_\mathrm{div}, & \quad & \text{in $\Omega_* \times J$,} \\
\pd_t \eta - u_3 & = d, & \quad & \text{in $\Gamma_* \times J$,} \\
\mu (\pd_3 u_m + \pd_m u_3) & = k_m, & \quad & \text{in $\Gamma_* \times J$,} \\
2 \mu \pd_3 u_3 - \pi - \sigma \Delta_{\Gamma_*} \eta & = k_3, & \quad & \text{in $\Gamma_* \times J$,} \\
P_{\Sigma_*} (2 \mu \bD (\bu) \bn_{\Sigma_*}) & = P_{\Sigma_*} \bg & \quad & \text{on $\Sigma_* \times J$,} \\
\bu \cdot \bn_{\Sigma_*} & = \bg \cdot \bn_{\Sigma_*} & \quad & \text{on $\Sigma_* \times J$,} \\
\mu (\pd_3 u_m + \pd_m u_3) & = h_m & \quad & \text{on $B \times J$,} \\
u_3 & = h_3 & \quad & \text{on $B \times J$,} \\
(\nabla_{\pd D} \eta) \cdot \bn_{\pd D} & = h_4, & \quad & \text{on $S_* \times J$}, \\
\bu (0) & = \bu_0 & \quad & \text{in $\Omega_*$}, \\ 
\eta (0) & = \eta_0, & \quad & \text{on $\Gamma_*$},
\end{aligned}
\right.
\end{align}
where $m = 1, 2$ and $\bg = (g_1, g_2, g_3)^\top$. To identify a hidden regularity coming from the divergence equation, we define
the space $\wh H^{- 1, q} (\BK^3)$ as the set of all $(\varphi_1, \varphi_2, \varphi_3) \in L^q (\BK^3) \times B^{2 - 1\slash q}_{q, q} (\Sigma_*) \times B^{2 - 1\slash q}_{q, q} (B)$ with $(\varphi_1, \varphi_2, \varphi_3) \in \dot H^{- 1, q}_{\Sigma_* \cup B} (\BK^3)$. Set
\begin{equation*}
\langle (\varphi_1, \varphi_2, \varphi_3) \mid \phi \rangle_{\Omega_*} := - (\varphi_1 \mid \phi)_{\Omega_*} + (\varphi_2 \mid \phi)_{\Sigma_*} + (\varphi_3 \mid \phi)_B \quad \text{for any $\phi \in \dot H^{1, q'}_{\Gamma_*} (\BK^3)$}.
\end{equation*}
By the divergence theorem, the divergence equation yields the condition
\begin{equation*}
\langle (f_\mathrm{div}, g_2, h_3) \mid \phi \rangle_{\BK^3} = - (\bu \mid \nabla \phi)_{\BK^3} \quad \text{for any $\phi \in \dot H^{1, q'}_{\Gamma_*} (\BK^3)$}.
\end{equation*}
\par
The aim of this section is to show maximal regularity of the principal linearization.
\begin{theo}
\label{th-MR-linear}
Set $J = (0, T)$, $T > 0$. Let $p$, $q$, and $\delta$ satisfy \eqref{cond-pqdelta-model2}. There exists a unique solution $(\bu, \pi, \eta)$ of \eqref{eq-linear} with $\bu \in \BE_{1, \delta} (J; \Omega_*)$, $\pi \in \BE_{2, \delta} (J; \Omega_*)$, $\mathrm{Tr}_{\Gamma_*} [\pi] \in \BE_{3, \delta} (J; \Gamma_*)$, $\eta \in \BE_{4, \delta} (J; \Gamma_*)$, if and only if
\begin{enumerate}
\renewcommand{\labelenumi}{(\alph{enumi})}
\item $\bff \in \BF_{0, \delta} (J; \Omega_*)$;
\item $f_\mathrm{div} \in \BF_{1, \delta} (J; \Omega*)$;
\item $P_{\Sigma_*} \bg \in \BF_{2, \delta} (J; \Sigma_*)$ for $\ell = 1, 3$;
\item $\bg \cdot \bn_{\Sigma_*} \in \BF_{3, \delta} (J; \Sigma_*)$;	
\item $d \in \BF_{3, \delta} (J; \Gamma_*)$;
\item $k_j \in \BF_{2, \delta} (J; \Gamma_*)$ for $j = 1, 2, 3$;
\item $h_m \in \BF_{2, \delta} (J; B)$;
\item $h_3 \in \BF_{3, \delta} (J; B)$;
\item $h_4 \in \BF_{4, \delta} (J; S_*)$;
\item $(f_\mathrm{div}, \bg \cdot \bn_{\Sigma_*}, h_3) \in H^{1, p}_\delta (J; \wh H^{- 1, q}_{\Sigma_* \cup B} (\Omega_*))$;
\item $\bu_0 \in B^{2 (\delta - 1\slash p)}_{q, p} (\Omega_*)^3$ and $\dv \bu_0 = f_\mathrm{div} (0)$;
\item $\eta_0 \in B^{2 + \delta - 1\slash p - 1\slash q}_{q, p} (\Gamma_*)$ and $(\nabla_{\pd D} \eta_0) \cdot \bn_{\pd D} = h_4 (0)$;
\item $P_{\Sigma_*} \bg (0) = \mathrm{Tr}_{\Sigma_*} [P_{\Sigma_*} (2 \mu \bD (\bu_0) \bn_{\Sigma_*})]$, $k_m (0) = \mathrm{Tr}_{\Gamma_*} [\mu (\pd_3 u_{0, m} + \pd_m u_{0, 3})]$, and $h_m (0) = \mathrm{Tr}_B [\mu (\pd_3 u_m + \pd_m u_3)]$ if $1\slash p + 1\slash(2 q) < \delta - 1\slash2$;
\item $\bg (0) \cdot \bn_{\Sigma_*} = \mathrm{Tr}_{\Sigma_*} [\bu_0 \cdot \bn_{\Sigma_*}]$ and $h_3 (0) = \mathrm{Tr}_B [u_{0, 3}]$ if $1\slash p + 1\slash(2 q) < \delta$;
\item $\mathrm{Tr}_{S_*} [P_{\Sigma_*} \bg (0)] = \mathrm{Tr}_{S_*} [P_{\Gamma_*} (2 \mu \bD (\bu_0) \bn_{\Sigma_*})]$ and $\mathrm{Tr}_{S_*} [k_m (0)] = \mathrm{Tr}_{S_*} [\mu (\pd_3 u_{0, m} + \pd_m u_{0, 3})]$ if $1\slash p + 1\slash q < \delta - 1\slash2$;
\item $\mathrm{Tr}_{\pd \Sigma_* \cap \pd B} [\bg (0) \cdot \bn_{\Sigma_*}] = \mathrm{Tr}_{\pd \Sigma_* \cap \pd B} [\bu_0 \cdot \bn_{\Sigma_*}]$ and $\mathrm{Tr}_{\pd \Sigma_* \cap \pd B} [h_3 (0)] = \mathrm{Tr}_{\pd \Sigma_* \cap \pd B} [u_{0, 3}]$ if $1\slash p + 1\slash q < \delta$.
\end{enumerate}
In addition, the solution map is continuous between the corresponding spaces.
\end{theo}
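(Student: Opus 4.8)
\emph{Necessity.} Conditions (a)--(p) follow directly from the trace theorems for the weighted anisotropic mixed-norm spaces collected in the appendix, applied to the regularity classes asserted for $(\bu, \pi, \eta)$. In particular, conditions (o)--(p) --- the compatibilities on the contact line $S_*$ and on the bottom edge $\pd \Sigma_* \cap \pd B$ --- express the fact that $\BF_{2, \delta}$ and $\BF_{3, \delta}$ admit a trace on submanifolds of codimension two when $q \in (2, \infty)$, cf.\ Remark~\ref{rem-3.7} and \cite[Thm.~4.6]{L19}.

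\emph{Reduction.} For sufficiency we follow the localization scheme of K\"ohne, Pr\"uss and Wilke~\cite{KPW13} and Pr\"uss and Simonett~\cite[Ch.~6--7]{PS16}, with the model systems replaced by Theorems~\ref{th-model-whole}, \ref{th-model-half-slip}, \ref{th-model-half-free}, \ref{th-slipslip} and \ref{th-model-slipfree}. First, extending $\bu_0$, $\eta_0$ and extending $\bff$, $f_\mathrm{div}$ by zero as in the proof of Theorem~\ref{th-model-half-free}, and subtracting the solutions of the resulting auxiliary heat and Stokes problems, we reduce to $(\bff, f_\mathrm{div}, \bu_0, \eta_0) = 0$ with all boundary data having vanishing temporal trace at $t = 0$ whenever such a trace is forced to exist. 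Second, adding a shift $+\omega\bu$ in the momentum equation and $+\omega\eta$ in the kinematic equation, it suffices to solve the shifted system for a fixed but large $\omega$, since the shift is removed on the finite interval $J$ in the usual way.

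\emph{Localization.} Because $\pd D$ is of class $C^4$ and $\CV$ is a right cylinder, we cover $\ov{\Omega_*}$ by finitely many charts $U_1, \dots, U_N$ of five types: interior charts; charts meeting only $\Gamma_*$; charts meeting only $\Sigma_*$ or only $B$; charts centred at a point of $S_*$; and charts centred at a point of $\pd \Sigma_* \cap \pd B$. Fix a subordinate partition of unity $\{\varphi_j\}$ and cut-offs $\{\psi_j\}$ with $\psi_j \equiv 1$ on $\supp \varphi_j$, and inside each $U_j$ straighten the relevant piece of $\pd \Omega_*$ by a $C^4$-diffeomorphism $\Phi_j$. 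Thanks to the product structure in the $x_3$-direction, near $S_*$ the straightened images of $\Gamma_*$ and $\Sigma_*$ meet orthogonally, and near $\pd \Sigma_* \cap \pd B$ those of $B$ and $\Sigma_*$ meet orthogonally, so that the flattened localized problems are precisely of model type: the whole space at interior points (Theorem~\ref{th-model-whole}), the half space with free boundary conditions near $\Gamma_*$ (Theorem~\ref{th-model-half-free}), the half space with slip conditions near $\Sigma_*$ or $B$ (Theorem~\ref{th-model-half-slip}), the quarter space with slip--free conditions near $S_*$ (Theorem~\ref{th-model-slipfree}), and the quarter space with slip--slip conditions near $\pd \Sigma_* \cap \pd B$ (Theorem~\ref{th-slipslip}). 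The localized unknowns $(\psi_j\bu, \psi_j\pi, \psi_j\eta)\circ\Phi_j^{-1}$ then solve the corresponding model system with right-hand side equal to the localized data plus lower-order perturbations: the commutators $[\mu\Delta, \varphi_j]\bu$, $(\nabla\varphi_j)\pi$, $(\nabla\varphi_j)\cdot\bu$, $(\nabla_{\Gamma_*}\varphi_j)\eta$ and their boundary analogues --- including the curvature terms produced when $P_{\Sigma_*}(2\mu\bD(\bu)\bn_{\Sigma_*})$ is flattened --- together with the terms obtained by freezing the top-order coefficients of $\Phi_j$ at the centre of $U_j$, whose smallness is ensured by shrinking $\mathrm{diam}\,U_j$.

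\emph{Assembly and main obstacle.} Applying the relevant model theorem on each chart furnishes a bounded right inverse $S_j$; setting $S g := \sum_j \psi_j\, S_j(\varphi_j g)$ for data $g$, with the pressure components summed in $\dot H^{1,q}$ and reconstructed through the weak characterization of $\nabla\pi$ exactly as in \cite[Ch.~7]{PS16}, one obtains $L S = I + R$, where $R$ collects the perturbations above. Since the commutators are of order one lower and the shifted model operators gain a factor $\omega^{-1/2}$ on first-order terms, while the coefficient-freezing contributions are small for fine covers, $R$ is a contraction once the cover is fine enough and $\omega$ large enough; hence $S(I+R)^{-1}$ solves the shifted version of \eqref{eq-linear}, restriction to $J$ removes $\omega$, the pressure trace on $\Gamma_*$ is read off from $2\mu\pd_3 u_3 - \pi - \sigma\Delta_{\Gamma_*}\eta = k_3$, and uniqueness follows by the duality argument used in the proof of Theorem~\ref{th-model-half-free}. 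The step I expect to be the main obstacle is verifying that the \emph{localized} data genuinely satisfy the hypotheses of the model theorems: the localized divergence $\varphi_j f_\mathrm{div} + (\nabla\varphi_j)\cdot\bu$ must lie in $\BF_{1,\delta}$ and, jointly with the localized normal boundary traces, must satisfy the $\wh H^{-1,q}$-compatibility of condition (j) on the flattened domain, which is checked by testing the global identity $\langle(f_\mathrm{div}, \bg\cdot\bn_{\Sigma_*}, h_3)\mid\phi\rangle_{\Omega_*} = -(\bu\mid\nabla\phi)_{\Omega_*}$ against $\varphi_j\phi$ and absorbing the term $((\nabla\varphi_j)\phi\mid\bu)_{\Omega_*}$; and, near $S_*$ and $\pd\Sigma_*\cap\pd B$, one must check that precisely the corner compatibilities (o)--(p) make the flattened data admissible in Theorems~\ref{th-model-slipfree} and \ref{th-slipslip}.
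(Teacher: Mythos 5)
Your overall strategy --- reduce the data, shift by $\omega$, cover $\ov{\Omega_*}$ by charts straightened to whole-/half-/quarter-spaces, apply the model theorems, and close by a parametrix and Neumann series --- is the one the paper takes (it defers the bookkeeping to~\cite[Sec.~2.3]{W17}, with the bent-space Theorems in Section~\ref{sect-bent} doing the coefficient-freezing work), and your discussion of necessity, of the divergence compatibility under localization, and of the orthogonality of the straightened boundary pieces at $S_*$ and $\pd\Sigma_*\cap\pd B$ is accurate.

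The gap is in the assembly step, specifically in the treatment of the zero-order pressure commutator. When $L$ is applied to $\sum_j\psi_j S_j(\varphi_j g)$, the error operator $R$ contains not only the first-order velocity commutators $[\mu\Delta,\psi_j]\bu_j$ and $(\nabla\varphi_j)\cdot\bu$ (on which your $\omega^{-1/2}$-gain and the fine cover do bite), but also the term $(\nabla\psi_j)\pi_j$ coming from $\psi_j\nabla\pi_j = \nabla(\psi_j\pi_j) - (\nabla\psi_j)\pi_j$. This term is \emph{zeroth order} in $\pi_j$: the shift $\omega$ acts only on $\pd_t\bu$ and $\pd_t\eta$, not on the pressure, so no $\omega^{-1/2}$-gain is available; and shrinking the cover increases $\lvert\nabla\psi_j\rvert_\infty$, so the ``fine cover'' argument cannot make this contribution small either. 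Moreover, $\pi_j$ a priori lies only in $L^p_\delta(J;\dot H^{1,q})$, so even membership of $(\nabla\psi_j)\pi_j$ in $L^p_\delta(J;L^q)$ is not automatic. The paper supplies exactly the missing ingredient in Lemma~\ref{lemm-regularity-pressure}: after the reduction of Section~\ref{sec-reduction_data} (note that the paper also zeroes out $\bg\cdot\bn_{\Sigma_*}$ and $h_3$, not only $\bff,f_\mathrm{div},\bu_0,\eta_0$, precisely so that this lemma applies), the (locally normalized) pressure gains fractional time regularity $\pi\in {}_0H^{\alpha,p}_\delta(J;L^q)$ for some $\alpha\in(0,1/2-1/(2q))$, with bound in terms of $\lvert\bu\rvert_{\BE_{1,\delta}}$, $\lvert\mathrm{Tr}_{\Gamma_*}\pi\rvert_{\BE_{3,\delta}}$ and $\lvert\bff\rvert_{H^{\alpha,p}_\delta L^q}$; combined with a H\"older-in-time argument on the finite interval $J$ this furnishes the smallness of $(\nabla\psi_j)\pi_j$ that closes the Neumann series. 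Without this pressure-regularity lemma your argument does not yield that $R$ is a contraction.
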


\subsection{Bent spaces}
\label{sect-bent}
\noindent
In this subsection, we give results of the Stokes equations in bent spaces.

\subsubsection{Bent half spaces}
Let $\gamma \colon \BR^2 \to \BR$ be a bounded function in $C^3$ class such that $\lvert \nabla_{x'} \gamma \rvert_{L^\infty (\BR^2)} \le c$, where $c$ is a small constant. Let $\BR^3_\gamma$ be the bent half space defined by
\begin{equation}
\label{def-Rgamma}
\BR^3_\gamma := \{(x_1, x_2, x_3) \in \BR^3 \mid (x_1, x_2) \in \BR^2, \enskip x_3 > \gamma (x_1, x_2)\}
\end{equation}
and $\pd \BR^3_\gamma$ be its boundary defined by
\begin{equation*}
\pd \BR^3_\gamma := \{(x_1, x_2, x_3) \in \BR^3 \mid (x_1, x_2) \in \BR^2, \enskip x_3 = \gamma (x_1, x_2)\}.
\end{equation*}
Introducing the transformation $\Phi_\gamma \colon \BR^3_\gamma \to \BR^3_+$ defined by $y = \Phi_\gamma (x) = (x_1, x_2, x_3 - \gamma (x_1, x_2))$, we see that $\Phi_\gamma$ is a bijection with Jacobian equal to $1$. We also find that $\BR^3_\gamma = \Phi_\gamma (\BR^3_+)$ and $\pd \BR^3_\gamma = \Phi_\gamma (\pd \BR^3_+)$. For the unit normal vector of $\pd \BR^3_\gamma$, we have
\begin{equation*}
\bn_{\pd \BR^3_\gamma} = \frac{1}{\sqrt{1 + \lvert \nabla_{x'} \gamma \rvert^2}} (\nabla_{x'} \gamma, - 1)^\top.
\end{equation*}
Let $P_{\BR^3_\gamma}$ be the tangential projection to $\BR^3_\gamma$. Consider the following two systems:
\begin{equation}
\label{eq-stokes-benthalf-slip}
\left\{\begin{split}
\pd_t \bu - \mu \Delta \bu + \nabla \pi & = \bff, & \quad & \text{in $\BR^3_\gamma \times J$}, \\
\dv \bu & = f_\mathrm{div}, & \quad & \text{in $\BR^3_\gamma \times J$}, \\
P_{\BR^3_\gamma}(2 \mu \bD (\bu) \bn_{\pd \BR^3_\gamma}) & = \bh', & \quad & \text{on $\pd \BR^3_\gamma \times J$}, \\
\bu \cdot \bn_{\pd \BR^3_\gamma} & = h_3, & \quad & \text{on $\pd \BR^3_\gamma \times J$}, \\
\bu (0) & = \bu_0, & \quad & \text{in $\BR^3_\gamma$}.
\end{split}\right.
\end{equation}
Let $\wh H^{- 1, q} (\BR^3_\gamma)$ be the set of all $(\varphi_1, \varphi_2) \in L^q (\BR^3_\gamma) \times B^{2 - 1\slash q}_{q, q} (\pd \BR^3_\gamma)$ that satisfy the regularity property $(\varphi_1, \varphi_2) \in {}_0 \dot H^{- 1, q} (\BR^3_\gamma)$. With the notation
\begin{equation*}
\langle (\varphi_1, \varphi_2) \mid \phi \rangle_{\BR^3_\gamma} := - (\varphi_1 \mid \phi)_{\BR^3_\gamma} + (\varphi_2 \mid \phi)_{\pd \BR^3_\gamma} \quad \text{for any $\phi \in \dot H^{1, q'} (\BR^3_\gamma)$},
\end{equation*}
we have the conditions
\begin{equation*}
\langle (\varphi_1, \varphi_2) \mid \phi \rangle_{\BR^3_\gamma} := - (\varphi_1 \mid \phi)_{\BR^3_\gamma} + (\varphi_2 \mid \phi)_{\BR^3_\gamma} \quad \text{for any $\phi \in \dot H^{1, q'} (\BR^3_\gamma)$},
\end{equation*}
in which follows from the divergence equation and the divergence theorem. In view of \cite[Sec.~7.3.2]{PS16}, there exists a unique solution $(\bu, \pi)$ to~\eqref{eq-stokes-benthalf-slip} in maximal regularity class provided $\lvert \nabla_{x'} \gamma \rvert_{L^\infty (\BR^2)}$ is bounded above by a small constant $c$.
\begin{theo}
Suppose $1 < p, q < \infty$, $1\slash p < \delta \le 1$, $1\slash p + 1\slash(2 q) \notin \{\delta - 1\slash2, \delta\}$, $T > 0$, and $J = (0, T)$. Then there exists a constant $c$ such that the problem~\ref{eq-stokes-benthalf-slip} has a unique solution with regularity $\bu \in \BE_{1, \delta} (J; \BR^3_\gamma)$, $\pi \in \BE_{2, \delta} (J; \BR^3_\gamma)$ if and only if
\begin{enumerate}
\renewcommand{\labelenumi}{(\alph{enumi})}
\item $\bff \in \BF_{0, \delta} (J; \BR^3_\gamma)$;
\item $f_\mathrm{div} \in \BF_{1, \delta} (J; \BR^3_\gamma)$;
\item $\bh' \in \BF_{2, \delta}^2 (J; \pd \BR^3_\gamma)$ and $\bh' (0) = \mathrm{Tr}_{\pd \BR^3_\gamma} [P_{\BR^3_\gamma} (2 \mu \bD (\bu_0) \bn_{\pd \BR^3_\gamma})]$ if $1\slash p + 1\slash(2 q) < \delta - 1\slash2$;
\item $h_3 \in \BF_{3, \delta} (J; \pd \BR^3_\gamma)$ and $h_3 (0) = \mathrm{Tr}_{\pd \BR^3_\gamma} [u_{0, 3}]$ if $1\slash p + 1\slash(2 q) < \delta$;
\item $(f_\mathrm{div}, h_3) \in H^{1, p}_\delta (J; \wh H^{- 1, q} (\BR^3_\gamma))$;
\item $\bu_0 \in B^{2 (\delta - 1\slash p)}_{q, p} (\BR^3_\gamma)^3$ and $\dv \bu_0 = f_\mathrm{div} (0)$.				
\end{enumerate}
Furthermore, the solution map $(\bff, f_\mathrm{div}, \bh', h_3, \bu_0) \mapsto (\bu, \pi)$ is continuous between the corresponding spaces.	
\end{theo}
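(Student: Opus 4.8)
Necessity of conditions (a)--(f) is immediate from the trace theorems, exactly as in the proofs of Theorems~\ref{th-model-half-slip} and~\ref{th-model-half-free}, so the plan is to prove sufficiency. The idea is to flatten the boundary with the diffeomorphism $\Phi_\gamma$ of~\eqref{def-Rgamma} and to treat the resulting system as a small perturbation of the flat slip problem~\eqref{eq-stokes-half-slip}, which is solvable by Theorem~\ref{th-model-half-slip}. Since $\gamma$ is bounded and of class $C^3$, both $\Phi_\gamma$ and $\Phi_\gamma^{-1}$ are $C^3$-diffeomorphisms with Jacobian identically equal to $1$; hence the pull-back $w\mapsto w\circ\Phi_\gamma$ induces topological isomorphisms $\BE_{i,\delta}(J;\BR^3_\gamma)\cong\BE_{i,\delta}(J;\BR^3_+)$ and $\BF_{i,\delta}(J;\BR^3_\gamma)\cong\BF_{i,\delta}(J;\BR^3_+)$ for all relevant $i$, together with the corresponding isomorphisms on the boundary spaces over $\pd\BR^3_\gamma$ and $\pd\BR^3_+$; it also carries $\dot H^{1,q'}(\BR^3_\gamma)$ onto $\dot H^{1,q'}(\BR^3_+)$, hence $\wh H^{-1,q}(\BR^3_\gamma)$ onto $\wh H^{-1,q}(\BR^3_+)$, the duality pairing being transported accordingly. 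It therefore suffices to solve the pulled-back problem on $\BR^3_+$.

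A direct computation of the type carried out in Section~\ref{sect-transform}, with the graph term of $\gamma$ in place of $\boldsymbol{\theta}_\eta$, shows that $(\bv,\rho):=(\bu\circ\Phi_\gamma^{-1},\,\pi\circ\Phi_\gamma^{-1})$ satisfies a system of the schematic form
\begin{equation*}
L_0(\bv,\rho) = (\text{transported data}) + R_\gamma(\bv,\rho) \qquad \text{on } \BR^3_+\times J,
\end{equation*}
where $L_0$ denotes the flat slip operator of~\eqref{eq-stokes-half-slip} and $R_\gamma$ collects all commutator contributions produced by the change of variables; recall in particular that $\bn_{\pd\BR^3_\gamma}$ is carried to $(\nabla_{x'}\gamma,-1)^\top(1+|\nabla_{x'}\gamma|^2)^{-1/2}$, which perturbs the two boundary operators. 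The second-order part of $R_\gamma$ --- in the interior equation, in the divergence equation, and in the slip conditions --- has coefficients of size $O(|\nabla_{x'}\gamma|_{L^\infty(\BR^2)})=O(c)$, while the remaining contributions are of strictly lower order, with coefficients controlled by $|\nabla_{x'}^2\gamma|_{L^\infty(\BR^2)}$, hence bounded but \emph{not} small.

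To close the argument I would proceed as in the proof of Theorem~\ref{th-model-half-free}: first subtract suitable extensions of the data so as to reduce to vanishing initial data and vanishing time-traces, thereby working in the spaces ${}_0\BE_{i,\delta}$, ${}_0\BF_{i,\delta}$ on which $L_0$ is boundedly invertible with norm independent of $T$. On these spaces $R_\gamma L_0^{-1}$ is bounded, with operator norm estimated by $Cc+C(|\nabla_{x'}^2\gamma|_{L^\infty})\,T^{\alpha}$ for some $\alpha>0$; the first term is made small by choosing the bending constant $c$ small, and the second is made small by shrinking the time interval, the positive power of $T$ being available because the lower-order commutators land in spaces of lower time-regularity in which functions vanishing at $t=0$ obey a Poincar\'e-type estimate (equivalently, one may introduce a large shift $\omega$ as in~\eqref{eq-stokes-half-free-shift}, render $R_\gamma L_{0,\omega}^{-1}$ small, and drop $\omega$ afterwards on the finite interval $J$). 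Consequently $\mathrm{Id}-R_\gamma L_0^{-1}$ is invertible by a Neumann series on $(0,T')$ for $T'$ small, which yields a unique solution there; a finite iteration over consecutive subintervals of length $T'$ then gives the solution on all of $J=(0,T)$ together with the asserted continuous dependence, and transporting back by $\Phi_\gamma$ completes the existence proof. Uniqueness follows from the uniqueness part of Theorem~\ref{th-model-half-slip} combined with the invertibility of $\mathrm{Id}-R_\gamma L_0^{-1}$.

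The main obstacle is not the routine perturbation estimates but the divergence equation together with its non-local compatibility condition~(e): after flattening, $\dv\bu=f_\mathrm{div}$ becomes $\dv\bv=f_\mathrm{div}\circ\Phi_\gamma^{-1}+(\text{coefficients})\cdot\nabla\bv$, and one has to verify that this perturbation is admissible for Theorem~\ref{th-model-half-slip}, i.e.\ that $(\text{coefficients})\cdot\nabla\bv\in H^{1,p}_\delta(J;L^q(\BR^3_+))$ and that, tested against $\dot H^{1,q'}(\BR^3_+)$, it reproduces precisely the transported $\wh H^{-1,q}$-datum, so that condition~(e) of the flat theorem is met by every iterate. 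Controlling this transported compatibility condition uniformly along the Neumann iteration is where the genuine work lies.
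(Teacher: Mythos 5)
The paper does not give a proof of this theorem: it is stated after the single sentence ``In view of \cite[Sec.~7.3.2]{PS16}, there exists a unique solution $(\bu, \pi)$ to~\eqref{eq-stokes-benthalf-slip} in maximal regularity class provided $\lvert \nabla_{x'} \gamma \rvert_{L^\infty (\BR^2)}$ is bounded above by a small constant $c$,'' and then the theorem is recorded without further argument. So there is no ``paper's own proof'' to compare against directly; the natural point of comparison is the paper's detailed proof of the analogous bent \emph{quarter}-space result, Theorem~\ref{th-bent-slipslip}, which spells out precisely the strategy you propose.

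Your plan coincides with the paper's technique for Theorem~\ref{th-bent-slipslip}: flatten via the graph map, reduce to vanishing initial data and time-traces so that the flat solution operator is boundedly invertible on ${}_0\BE\to{}_0\BF$ with a $T$-independent constant, collect the commutator terms into a perturbation $B$, split its estimate into a part that is $O(\lvert\nabla_{x'}\gamma\rvert_{L^\infty})$ and a lower-order remainder that is killed by $T^{1/(2p)}$ via the mixed-derivative embedding ${}_0\BE_{1,\delta}\hookrightarrow L^{2p}_\delta(J;H^{1+1/q,q})$, and close by a Neumann series plus finite iteration in $t$. That is exactly what the paper does on the quarter space (compare~\eqref{est-pertubation-bent} and the lines around it), so your approach is the right one and not a genuinely different route.

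Two points worth tightening if you were to execute this. First, the perturbed divergence datum should be verified to lie in the full space $\BF_{1,\delta}=H^{1,p}_\delta(J;\dot H^{-1,q})\cap L^p_\delta(J;H^{1,q})$, not merely $H^{1,p}_\delta(J;L^q)$ as you wrote; this is the space that Theorem~\ref{th-model-half-slip}(b) requires. Second, you correctly flag that the transported compatibility condition (e) is the delicate part, but in the flat-side bookkeeping this actually resolves by a one-line integration by parts: just as in the proof of Theorem~\ref{th-bent-slipslip}, the divergence commutator $(\pd_{x'}\gamma)\cdot\pd_{x_3}\bv'$ paired with the boundary perturbation of $\bv\cdot\bn$ becomes $((\pd_{x'}\gamma)\,\bv'\mid\nabla_{x'}\phi)_{\BR^3_+}$ against $\phi\in\dot H^{1,q'}(\BR^3_+)$, so it is manifestly in $H^{1,p}_\delta(J;\wh H^{-1,q}(\BR^3_+))$ with norm $O(\lvert\nabla_{x'}\gamma\rvert_{L^\infty})\lvert\bv\rvert_{\BE_{1,\delta}}$, and the iterates inherit the compatibility for free. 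So the ``genuine work'' you anticipate is really a small computation once set up this way.
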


\subsubsection{Bent quarter spaces}
Let $\gamma \colon \BR \to \BR$ be a function of class $BC^3$ such that $\lvert \pd_1 \gamma \rvert_{L^\infty (\BR)} \le c$, where $c$ is a small constant determined later. We define a bent quarter space by
\begin{equation}
\label{def-Kgamma}
\BK^3_\gamma := \{(x_1, x_2, x_3) \mid x_1 \in \BR, \enskip x_2 > \gamma (x_1), \enskip x_3 > 0\}.
\end{equation}
Besides, we denote the boundaries of $\BK^3_\gamma$ by
\begin{equation*}
\begin{split}
\pd_2 \BK^3_\gamma & := \{(x_1, x_2, x_3) \mid x_1 \in \BR, \enskip x_2 = \gamma (x_1), \enskip x_3 > 0\}, \\
\pd_3 \BK^3_\gamma & := \{(x_1, x_2, x_3) \mid x_1 \in \BR, \enskip x_2 > \gamma (x_1), \enskip x_3 = 0\}
\end{split}
\end{equation*}
We also write $\pd \BK^3_\gamma := \pd_2 \BK^3_\gamma \cap \pd_3 \BK^3_\gamma$ if no confusion occurs. Besides, we set
\begin{equation*}
\bn_{\pd_2 \BK^3_\gamma} := b (x) (\pd_1 \gamma, - 1, 0)^\top, \qquad b (x) = \frac{1}{\sqrt{1 + \lvert \pd_1 \gamma \rvert^2}}, %, \qquad \bn_{\pd_3 \BK^3_\gamma} := (0, \dots, 0, - 1)^\top
\end{equation*}
which denotes the outward unit normal field on $\pd_2 \BK^3_\gamma$. \par
First, we consider the Stokes equations in $\BK^3_\gamma$ with slip-slip boundary conditions
\begin{align}
\label{eq-slipslip-bent}
\left\{\begin{aligned}
\pd_t \bu - \mu \Delta \bu + \nabla \pi & = \bff, & \quad & \text{in $\BK^3_\gamma \times J$}, \\
\dv \bu & = f_\mathrm{div}, & \quad & \text{in $\BK^3_\gamma \times J$}, \\
P_{\pd_2 \BK^3_\gamma} (2 \mu \bD (\bu) \bn_{\pd_2 \BK^3_\gamma}) & = \bg', & \quad & \text{in $\pd_2 \BK^3_\gamma \times J$}, \\
\bu \cdot \bn_{\pd_2 \BK^3_\gamma} & = g_2, & \quad & \text{in $\pd_2 \BK^3_\gamma \times J$}, \\
\mu (\pd_3 u_m + \pd_m u_3) & = h_m, & \quad & \text{in $\pd_3 \BK^3_\gamma \times J$}, \\
u_3 & = h_3, & \quad & \text{in $\pd_3 \BK^3_\gamma \times J$}, \\
\bu (0) & = \bu_0, & \quad & \text{in $\BK^3_\gamma$},
\end{aligned}\right.
\end{align}
where $m = 1, 2$ and $\bg' = (g_1, 0, g_3)$. Since the equations \eqref{eq-slipslip-bent} is a perturbation of the half-space problem~\eqref{eq-stokes-half-slip}, we can show the existence of unique solution to \eqref{eq-slipslip-bent} provided $\lvert \pd_1 \gamma \rvert_{L^\infty (\BR)}$ is small enough. As we introduced before, let $\wh H^{- 1, q} (\BK^3_\gamma)$ be the set of all $(\varphi_1, \varphi_2, \varphi_3) \in L^q (\BK^3_\gamma) \times B^{2 - 1 \slash q}_{q, q} (\pd_2 \BK^3_\gamma)\times B^{2 - 1\slash q}_{q, q} (\pd_3 \BK^3_\gamma)$ with $(\varphi_1, \varphi_2, \varphi_3) \in \dot H^{- 1, q} (\BK^3_\gamma)$. Set
\begin{equation*}
\langle (\varphi_1, \varphi_2, \varphi_3) \mid \phi \rangle_{\BK^3_\gamma} := - (\varphi_1 \mid \phi)_{\BK^3_\gamma} + (\varphi_2 \mid \phi)_{\pd_2 \BK^3_\gamma} + (\varphi_3 \mid \phi)_{\pd_3 \BK^3_\gamma} \quad \text{for any $\phi \in \dot H^{1, q'} (\BK^3_\gamma)$}.
\end{equation*}
Then, by the divergence equation, we have the condition
\begin{equation*}
\langle (f_\mathrm{div}, g_2, h_3) \mid \phi \rangle_{\BK^3_\gamma} = - (\bu \mid \nabla \phi)_{\BK^3_\gamma} \quad \text{for any $\phi \in \dot H^{1, q'} (\BK^3_\gamma)$}.
\end{equation*}

\begin{theo}
\label{th-bent-slipslip}
Suppose that $p$, $q$, and $\delta$ satisfy \eqref{cond-pqdelta-model1}. Let $T > 0$ and $J = (0, T)$. Then there exists a unique solution $(\bu, \pi)$ to the problem \eqref{eq-slipslip-bent} with $\bu \in \BE_{1, \delta} (J; \BK^3_\gamma)$ and $\pi \in \BE_{2, \delta} (J; \BK^3_\gamma)$ if and only if
\begin{enumerate}
\renewcommand{\labelenumi}{(\alph{enumi})}
\item $\bff \in \BF_{0, \delta} (J; \BK^3_\gamma)$;
\item $f_\mathrm{div} \in \BF_{1, \delta} (J; \BK^3_\gamma)$;
\item $g_\ell \in \BF_{2, \delta} (J; \pd_2 \BK^3_\gamma)$, $\ell = 1, 3$;
\item $g_2 \in \BF_{3, \delta} (J; \pd_2 \BK^3_\gamma)$;
\item $h_m \in \BF_{2, \delta} (J; \pd_3 \BK^3_\gamma)$;
\item $h_3 \in \BF_{3, \delta} (J; \pd_3 \BK^3_\gamma)$;
\item $(f_\mathrm{div}, g_2, h_3) \in H^{1, p}_\delta (J; \wh H^{- 1, q} (\BK^3_\gamma))$;
\item $\bu_0 \in B^{2 (\delta - 1\slash p)}_{q, p} (\BK^3_\gamma)^3$ and $\dv \bu_0 = f_\mathrm{div} (0)$;
\item $\bg' (0) = \mathrm{Tr}_{\pd_2 \BK^3_\gamma} [P_{\pd_2 \BK^3_\gamma} (2 \mu \bD (\bu_0) \bn_{\pd_2 \BK^3_\gamma})]$ and $h_m (0) = \mathrm{Tr}_{\pd_3 \BK^3_\gamma} [\mu (\pd_3 u_{0, m} + \pd_m u_{0, 3})]$ if $1\slash p + 1\slash(2 q) < \delta - 1\slash2$;
\item $g_2 (0) = \mathrm{Tr}_{\pd_2 \BK^3_\gamma} [\bu_0 \cdot \bn_{\pd_2 \BK^3_\gamma}]$ and $h_3 (0) = \mathrm{Tr}_{\pd_3 \BK^3_\gamma} [u_{0, 3}]$ if $1\slash p + 1\slash(2 q) < \delta$;
\item $\mathrm{Tr}_{\pd \BK^3_\gamma} [\bg' (0)] = \mathrm{Tr}_{\pd \BK^3_\gamma} [P_{\pd_3 \BK^3_\gamma} (2 \mu \bD (\bu_0) \bn_{\pd_2 \BK^3_\gamma})]$ and $\mathrm{Tr}_{\pd \BK^3_\gamma} [h_m (0)] = \mathrm{Tr}_{\pd \BK^3_\gamma} [\mu (\pd_3 u_{0, m} + \pd_m u_{0, 3})]$ if $1\slash p + 1\slash q < \delta - 1\slash2$;
\item $\mathrm{Tr}_{\pd \BK^3_\gamma} [g_2 (0)] = \mathrm{Tr}_{\pd \BK^3_\gamma} [u_{0, 2}]$ and $\mathrm{Tr}_{\pd \BK^3_\gamma} [h_3 (0)] = \mathrm{Tr}_{\pd \BK^3_\gamma} [u_{0, 3}]$ if $1\slash p + 1\slash q < \delta$.		
\end{enumerate}
In addition, the solution $(\bu, \pi)$ depends continuously on the data in the corresponding spaces.	
\end{theo}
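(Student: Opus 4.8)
The plan is to reduce \eqref{eq-slipslip-bent} to the flat quarter-space problem of Theorem~\ref{th-slipslip} by a perturbation argument, in the same spirit in which the bent half-space result quoted above follows from its flat counterpart. First I would flatten the curved face $\pd_2 \BK^3_\gamma$ by the diffeomorphism $\Phi_\gamma \colon \BK^3_\gamma \to \BK^3$, $\Phi_\gamma(x) := (x_1, x_2 - \gamma(x_1), x_3)$, which has Jacobian $1$ and maps $\pd_2 \BK^3_\gamma$, $\pd_3 \BK^3_\gamma$, and the edge $\pd \BK^3_\gamma$ onto $\pd_2 \BK^3$, $\pd_3 \BK^3$, and $\pd \BK^3$, respectively. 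Setting $\bv(y,t) := \bu(\Phi_\gamma^{-1}(y),t)$, $\rho(y,t) := \pi(\Phi_\gamma^{-1}(y),t)$ and using $\pd_{x_1} = \pd_{y_1} - \gamma'\pd_{y_2}$, $\pd_{x_2} = \pd_{y_2}$, $\pd_{x_3} = \pd_{y_3}$, the pair $(\bv,\rho)$ satisfies a system of the type \eqref{eq-4.5} in which $\mu \Delta$ is replaced by $\mu\Delta_y - 2\mu\gamma'\pd_{y_1}\pd_{y_2} + \mu(\gamma')^2\pd_{y_2}^2 - \mu\gamma''\pd_{y_2}$, the divergence acquires the extra term $-\gamma'\pd_{y_2}v_1$, the pressure gradient acquires $-\gamma'(\pd_{y_2}\rho)\be_1$, and in the slip operators on $\pd_2\BK^3$ the flat normal $(0,-1,0)^\top$ is replaced by $b(x)(\gamma',-1,0)^\top$ with $b = (1+\lvert\gamma'\rvert^2)^{-1/2}$. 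Since $\gamma$ depends only on $x_1$ the change of variables does not touch the time variable, so all perturbations are purely spatial.

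I would then write the transformed problem as $\mathbb{L}_0 z = \mathcal{B}_c z + G$, where $z = (\bv,\rho)$, $\mathbb{L}_0$ is the flat slip--slip operator of Theorem~\ref{th-slipslip}, $G$ is the transformed data, and $\mathcal{B}_c$ is the perturbation. Because $\Phi_\gamma$ is a bi-Lipschitz $C^3$-diffeomorphism and pullback commutes with the time trace and with the spatial traces onto the two faces and the edge, the data conditions (a)--(l) of Theorem~\ref{th-slipslip} for $G$ are equivalent to the conditions (a)--(l) of the present statement for the original data (here one also identifies $\wh H^{-1,q}(\BK^3_\gamma)$ with $\wh H^{-1,q}(\BK^3)$ through $\Phi_\gamma$). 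The coefficients of $\mathcal{B}_c$ come in two flavours: those multiplying the leading-order corrections --- the two second-order interior terms, the first-order divergence and pressure corrections, and the boundary corrections through $\gamma'$ and $b(x)-1$ --- carry a factor that is $O(\lvert\pd_1\gamma\rvert_{L^\infty(\BR)})$, while the single genuinely lower-order interior term $\mu\gamma''\pd_{y_2}v$ carries only a bounded coefficient. Using the pointwise-multiplier properties of $BC^3$-functions on the weighted anisotropic spaces $\BF_{0,\delta},\dots,\BF_{3,\delta}$ and on $H^{1,p}_\delta(J;\wh H^{-1,q})$ --- which follow from the intrinsic norm characterizations together with the results of Meyries--Veraar~\cite{MV12,MV14} --- the leading part $\mathcal{B}_c^{\mathrm{lead}}$ maps the solution space $\BE_{1,\delta}(J;\BK^3)\times\BE_{2,\delta}(J;\BK^3)$ into the data space with operator norm $\le C\lvert\pd_1\gamma\rvert_{L^\infty(\BR)}$.

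The conclusion would then come from a Neumann series combined with the standard lower-order perturbation/shift argument. Denoting by $\mathcal{S}_0$ the bounded solution operator of Theorem~\ref{th-slipslip} and choosing $c$ so small that $C\lVert\mathcal{S}_0\rVert\,c < 1$, the operator $I - \mathcal{S}_0\mathcal{B}_c^{\mathrm{lead}}$ is boundedly invertible. The remaining term $\mu\gamma''\pd_{y_2}v$ is handled by noting that $\mathbb{L}_0 + \omega$ still satisfies Theorem~\ref{th-slipslip} (apply it with $\bff$ replaced by $\bff + \omega\bv$), that by parabolic scaling its solution operator gains a factor $\omega^{-1/2}$ when composed with a first-order spatial derivative, and that the rescaling $z \mapsto e^{\omega t}z$ is an isomorphism of the weighted spaces over the \emph{finite} interval $J$; hence for $\omega$ large this term too becomes a small perturbation, and $\omega$ is then removed by undoing the rescaling on $J$ (equivalently, one invokes the standard perturbation theorem that lower-order perturbations preserve maximal $L^p$--$L^q$ regularity on finite intervals, cf.\ \cite[Sec.~7.3]{PS16}). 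Transforming back by $\Phi_\gamma$ yields the unique solution $(\bu,\pi)$ of \eqref{eq-slipslip-bent} in the asserted regularity classes; uniqueness follows by running the same perturbation argument on the homogeneous problem, or alternatively by a duality argument as in the proof of Theorem~\ref{th-model-half-free}.

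I expect the main obstacle to be the perturbations \emph{on the curved face} together with the hidden divergence condition~(g). One must show that multiplication by $\gamma'$ and by $b(x)-1$ is a bounded operator of small norm on the intersection spaces $\BF_{2,\delta}(J;\pd_2\BK^3)$ and $\BF_{3,\delta}(J;\pd_2\BK^3)$, which mix a Triebel--Lizorkin space in time with a Besov space in space, and --- more delicately --- that the transformed triple $(f_\mathrm{div} - \gamma'\pd_{y_2}v_1,\, g_2,\, h_3)$ still belongs to $H^{1,p}_\delta(J;\wh H^{-1,q}(\BK^3))$, i.e.\ that $\Phi_\gamma$ preserves the duality-defined space $\dot H^{-1,q}$: here the smallness of $\lvert\pd_1\gamma\rvert_{L^\infty(\BR)}$ combined with a transformation estimate for $\dot H^{-1,q}(\BK^3)$ tested against $\dot H^{1,q'}(\BK^3)$ is essential, and this is the step I would carry out in full detail.
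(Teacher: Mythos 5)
Your proposal follows the same skeleton as the paper's proof: flatten via the diffeomorphism $\Phi_\gamma$, write the transformed system as a perturbation $Lz = Bz + F$ of the flat quarter-space problem of Theorem~\ref{th-slipslip}, estimate the perturbation, and close by a Neumann series; the boundary-multiplier and $\wh H^{-1,q}$-compatibility issues you flag at the end are precisely the points the paper also takes care with. The one tactical difference is how the non-small lower-order terms are absorbed. The paper first reduces to $\bu_0 = \bff = 0$ (by solving an auxiliary heat equation and subtracting), then shows the perturbation estimate
\begin{equation*}
\lvert B z\rvert_Y \le C\lvert\pd_{x_1}\gamma\rvert_{L^\infty}\lvert z\rvert_Z + M\lvert\bu\rvert_{L^p_\delta(J;H^{1+1/q,q})},
\end{equation*}
and kills the $M$-term by the mixed-derivative/Sobolev embedding ${}_0\BE_{1,\delta}\hookrightarrow L^{2p}_\delta(J;H^{1+1/q,q})$ (with $T$-independent constant, which is why the reduction to vanishing time trace is made first), a H\"older factor $T^{1/(2p)}$, and finitely many iterations to reach arbitrary $T$. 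You instead propose a shift $\omega$ with a parabolic $\omega^{-1/2}$ gain. Both are standard and interchangeable here; however, note that the $\omega^{-1/2}$ gain (and the paper's $T$-independent embedding constant) only hold on the ${}_0$-subspaces, so the preliminary reduction to zero initial data that the paper performs is also needed in your version before the shift estimate can be applied uniformly --- this is worth making explicit, since you do not mention it. Otherwise the proposal is sound.
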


\begin{proof}
We first reduce the problem \eqref{eq-slipslip-bent} to the case $\bu_0 = \bff = 0$. To this end, let $e_{\BR^3} [\bu_0]$ and $e_{\BR^3} [\bff]$ denote extensions of $\bu_0$ and $\bff$ to all of $\BR^3$ in the class $B^{2 (\delta - 1\slash p)}_{q, p} (\BR^3)$ and $\BF_{0, \delta} (J; \BR^3)$, respectively, which solve the heat equation
\begin{equation*}
\left\{\begin{aligned}
\pd_t \wt \bu - \mu \Delta \wt \bu & = e_{\BR^3} [\bff], & \quad & \text{in $\BR^3 \times J$}, \\
e_{\BR^3} [\bu] (0) & = e_{\BR^3} [\bu_0], & \quad & \text{in $\BR^3$}
\end{aligned}\right.
\end{equation*}
to obtain a unique solution $\wt \bu \in \BF_{0, \delta} (J; \BR^3)$. If $\bu$ is a solution to \eqref{eq-slipslip-bent}, we see that the restricted function $\bu^* = \bu - \wt \bu$ solves
\begin{equation}
\label{eq-slipslip-bent-modi}
\left\{\begin{aligned}
\pd_t \bu^* - \mu \Delta \bu^* + \nabla \pi & = 0, & \quad & \text{in $\BK^3_\gamma \times J$}, \\
\dv \bu^* & = f_\mathrm{div}^*, & \quad & \text{in $\BK^3_\gamma \times J$}, \\
P_{\pd_2 \BK^3_\gamma} (2 \mu \bD (\bu^*) \bn_{\pd_2 \BK^3_\gamma}) & = \bg'^*, & \quad & \text{in $\pd_2 \BK^3_\gamma \times J$}, \\
\bu^* \cdot \bn_{\pd_2 \BK^3_\gamma} & = g_2^*, & \quad & \text{in $\pd_2 \BK^3_\gamma \times J$}, \\
\mu (\pd_3 u_m^* + \pd_m u_3^*) & = h_m^*, & \quad & \text{in $\pd_3 \BK^3_\gamma \times J$}, \\
u_3^* & = h_3^*, & \quad & \text{in $\pd_3 \BK^3_\gamma \times J$}, \\
\bu^* (0) & = 0, & \quad & \text{in $\BK^3_\gamma$},
\end{aligned}\right.
\end{equation}
with 
\begin{alignat*}4
f_\mathrm{div}^* & = f_\mathrm{div} - \dv \wt \bu \vert_{\BK^3_\gamma} , & \quad \bg'^* & = \bg' - P_{\pd_3 \BK^3_\gamma} (2 \mu \bD (\wt \bu \vert_{\BK^3_\gamma}) \bn_{\pd_2 \BK^3_\gamma}) , \\
g_2^* & = g_2 - \wt \bu \vert_{\BK^3_\gamma} \cdot \bn_{\pd_2 \BK^3_\gamma}, & \quad h_m^* & = h_m - \mu (\pd_3 \wt u_m \vert_{\BK^3_\gamma} + \pd_m \wt u_3 \vert_{\BK^3_\gamma}), & \quad h_3^* & = h_3 - \wt u_3 \vert_{\BK^3_\gamma}.
\end{alignat*}
Considering the time trace at $t = 0$, we have $(f_\mathrm{div}^*, g_2^*, h_3^*) \in H^{1, p}_\delta (J; \wh H^{- 1, q} (\BK^3_\gamma))$ vanishing at $t = 0$.  
\par 
Next, we transform \eqref{eq-slipslip-bent} to the problem in $\BK^3$. To this end, we introduce new variables
\begin{equation*}
\ov x = (\ov x_1, \ov x_2, \ov x_3) = (x_1, x_2 - \gamma (x_1), x_3) \quad \text{for $x \in \BK^3_\gamma$}.
\end{equation*}
Then, we define the new functions
\begin{equation*}
\begin{split}
\ov \bu (\ov x) & := \bu^* (\ov x_1, \ov x_2 + \gamma (\ov x_1), \ov x_3), \\
\ov \pi (\ov x) & := \pi (\ov x_1, \ov x_2 + \gamma (\ov x_1), \ov x_3),
\end{split}
\end{equation*}
where $(\bu^*, \pi)$ is a solution to \eqref{eq-slipslip-bent-modi}. In the same way, we also transform the given data $(f^*_\mathrm{div}, \bg'^*, g^*_2, \bh'^*, h^*_3)$ to $(\ov f_\mathrm{div}, \ov \bg', \ov g_2, \ov \bh', \ov h_3)$, where $\bh'^* = (h_1, h_2)$ and $\ov \bh' = (\ov h_1, \ov h_2)$. Besides, the differential operators are transformed into $\pd_{x_2}^s = \pd_{\ov x_2}^s$, $\pd_{x_3}^s = \pd^s_{\ov x_3}$ for $s = 1, 2$ and
\begin{equation*}
\begin{split}
\pd_{x_1} & = \pd_{\ov x_1} - (\pd_{x_1} \gamma) \pd_{\ov x_2}, \\
\pd_{x_1}^2 & = \pd_{\ov x_1}^2 - 2 (\pd_{x_1} \gamma) \pd_{\ov x_1} \pd_{\ov x_2} - (\pd_{x_1}^2 \gamma) \pd_{\ov x_2} + (\pd_{x_1} \gamma)^2 \pd^2_{\ov x_2}
\end{split}
\end{equation*}
Thus, we find that $(\ov \bu, \ov \pi)$ is a solution to the following problem
\begin{equation}
\label{eq-slipslip-bent-reduced}
\left\{\begin{aligned}
\pd_t \ov \bu - \mu \Delta_{\ov x} \ov \bu + \nabla_{\ov x} \ov \pi & = \bB_1 (\gamma, \ov \bu, \ov \pi), & \quad & \text{in $\BK^3 \times J$}, \\
\dv_{\ov x} \ov \bu & = \ov f_\mathrm{div} + B_2 (\gamma, \ov \bu), & \quad & \text{in $\BK^3 \times J$}, \\
\mu (\pd_{\ov x_2} \ov u_1 + \pd_{\ov x_1} \ov u_2) & = - \ov g_1 + \mu b (x) B_3 (\gamma, \ov \bu), & \quad & \text{in $\pd_2 \BK^3 \times J$}, \\
\mu (\pd_{\ov x_2} \ov u_3 + \pd_{\ov x_3} \ov u_2) & = - \ov g_3 + \mu b (x) B_4 (\gamma, \ov \bu), & \quad & \text{in $\pd_2 \BK^3 \times J$}, \\
\ov u_2 & = - \ov g_2 + \mu b (x) B_5 (\gamma, \ov \bu), & \quad & \text{in $\pd_2 \BK^3 \times J$}, \\
\mu (\pd_{\ov x_3} \ov u_1 + \pd_{\ov x_1} \ov u_3) & = \ov h_1 + B_6 (\gamma, \ov \bu), & \quad & \text{in $\pd_3 \BK^3 \times J$}, \\
\mu (\pd_{\ov x_3} \ov u_2 + \pd_{\ov x_2} \ov u_3) & = \ov h_2, & \quad & \text{in $\pd_3 \BK^3 \times J$}, \\
\ov u_3 & = \ov h_3, & \quad & \text{in $\pd_3 \BK^3 \times J$}, \\
\ov \bu (0) & = 0, & \quad & \text{in $\BK^3$},
\end{aligned}\right.
\end{equation}
where we have set
\begin{equation*}
\begin{split}
\bB_1 (\gamma, \ov \bu, \ov \pi) & = 2 (\pd_{x_1} \gamma) \pd_{x_1} \pd_{\ov x_2} \ov \bu + (\pd^2_{\ov x_1} \gamma) \pd_{\ov x_2} \ov \bu - \lvert \pd_{x_1} \gamma \rvert^2 \pd_{\ov x_2}^2 \ov \bu + (\pd_{\ov x_2} \ov \pi) (\pd_{x_1} \gamma, 0, 0)^\top, \\
B_2 (\gamma, \ov \bu) & = (\pd_{x_1} \gamma) \pd_{\ov x_2} \ov u_1, \\
B_3 (\gamma, \ov \bu) & = 2 (\pd_{x_1} \gamma) \pd_{\ov x_1}\ov u_1 + (\pd_{x_1} \gamma) (1 - \lvert \pd_{x_1} \gamma \rvert^2) \pd_{\ov x_2} \ov u_2 - \lvert \pd_{x_1} \gamma \rvert^2 \pd_{\ov x_2} \ov u_1 \\
& \quad - 2 (\pd_{x_1} \gamma) (1 - \lvert \pd_{x_1} \gamma \rvert^2) (\pd_{x_1} \gamma) \pd_{\ov x_2} \ov u_1 - (\pd_{x_1} \gamma)^2 \pd_{\ov x_1} \ov u_2 \\
& \quad - 2 (\pd_{x_1} \gamma)^3 \pd_{\ov x_1} \ov u_1 + \frac{\lvert \pd_{x_1} \gamma \rvert^2}{1 + \sqrt{1 + \lvert \pd_{x_1} \gamma \rvert^2}} \Big(\pd_{\ov x_2} \ov u_1 + \pd_{\ov x_1} \ov u_2\Big), \\
B_4 (\gamma, \ov \bu) & = (\pd_{x_1} \gamma) (\pd_{\ov x_3} \ov u_1 + \pd_{\ov x_1} \ov u_3) - \lvert \pd_{x_1} \gamma \rvert^2 \pd_{\ov x_3} \ov u_3 + \frac{\lvert \pd_{x_1} \gamma \rvert^2}{1 + \sqrt{1 + \lvert \pd_{x_1} \gamma \rvert^2}} \Big(\pd_{\ov x_2} \ov u_3 + \pd_{\ov x_3} \ov u_2\Big) \\
B_5 (\gamma, \ov \bu) & = (\pd_{x_1} \gamma) \ov u_1 + \frac{\lvert \pd_{x_1} \gamma \rvert^2}{1 + \sqrt{1 + \lvert \pd_{x_1} \gamma \rvert^2}} \ov u_2, \\
B_6 (\gamma, \ov \bu) & = \mu (\pd_{x_1} \gamma) (\pd_{\ov x_2} \ov u_3), 
\end{split}
\end{equation*}
and $\ov \bu = (\ov u_1, \ov u_2, \ov u_3)^\top$. Here, to derive $B_3$ and $B_4$, we have used the following observations: First, by the identity $(P_{\pd_2 \BK^3} w) \cdot \bn_{\pd_2 \BK^3_\gamma} = 0$, $w \in \BR^3$, we see that the second component of $P_{\pd_2 \BK^3_\gamma} w$ is redundant, i.e., the second component of $P_{\pd_2 \BK^3_\gamma} w$ is given by $(P_{\pd_2 \BK^3_\gamma} w) (\pd_{x_1} \gamma) e_1$. Next, we read the boundary condition $P_{\pd_2 \BK^3_\gamma} (2 \mu \bD (\bu^*) \bn_{\pd_2 \BK^3_\gamma}) = \bg'^*$ componentwise, i.e., this boundary condition is replaced by
\begin{equation*}
[P_{\pd_2 \BK^3_\gamma} (2 \mu \bD (\bu^*) \bn_{\pd_2 \BK^3_\gamma})] \cdot e_\ell = \bg'^* \cdot e_\ell
\end{equation*}
for $\ell = 1, 3$. The calculation
\begin{equation*}
2 \bD (\bu^*) = 2 \bD (\ov \bu) - \begin{pmatrix}
(\pd_{x_1} \gamma) \pd_{\ov x_2} \ov u_1 + (\pd_{\ov x_2} \ov u_1) \pd_{x_1} \gamma & (\pd_{\ov x_2} \ov u_2) \pd_{x_1} \gamma & (\pd_{\ov x_3} \ov u_3) \pd_{x_1} \gamma \\
(\pd_{\ov x_2} \ov u_2) \pd_{x_1} \gamma & 0 & 0 \\
(\pd_{\ov x_3} \ov u_3) \pd_{x_1} \gamma & 0 & 0 \\
\end{pmatrix}
\end{equation*}
implies that $2 \bD (\bu^*) \bn_{\pd_2 \BK^3_\gamma}$ is given by
\begin{equation*}
\begin{split}
2 \bD (\bu^*) \bn_{\pd_2 \BK^3_\gamma} \vert_1 & = b (x) \Big[- \Big(\pd_{\ov x_2} \ov u_1 + \pd_{\ov x_1} \ov u_2 \Big) + 2 (\pd_{x_1} \gamma) \pd_{\ov x_1} \ov u_1 + (\pd_{x_1} \gamma) \pd_{\ov x_2} \ov u_2 \\
& \quad - \lvert \pd_{x_1} \gamma \rvert^2 \pd_{\ov x_2} \ov u_1 - (\pd_{x_1} \gamma)^2 \pd_{\ov x_2} \ov u_1\Big], \\
2 \bD (\bu^*) \bn_{\pd_2 \BK^3_\gamma} \vert_2 & = b (x) \Big[- 2 \pd_{\ov x_2} \ov u_2 + (\pd_{x_1} \gamma) \Big(\pd_{\ov x_2} \ov u_1 + \pd_{\ov x_1} \ov u_2 \Big) - \lvert \pd_{x_1} \gamma \rvert^2 \pd_{\ov x_2} \ov u_2 \Big], \\
2 \bD (\bu^*) \bn_{\pd_2 \BK^3_\gamma} \vert_3 & = b (x) \Big[- \Big(\pd_{\ov x_2} \ov u_3 + \pd_{\ov x_3} \ov u_2\Big) + (\pd_{x_1} \gamma) \Big(\pd_{\ov x_3} \ov u_1 + \pd_{\ov x_1} \ov u_3\Big) - \lvert \pd_{x_1} \gamma \rvert^2 \pd_{\ov x_3} \ov u_3 \Big],
\end{split}
\end{equation*}
and hence we obtain
\begin{equation*}
\begin{split}
& [P_{\pd_2 \BK^3_\gamma} (2 \mu \bD (\bu^*) \bn_{\pd_2 \BK^3_\gamma})] \cdot e_1 \\
& \quad = b (x) \Big[- \Big(\pd_{\ov x_2} \ov u_1 + \pd_{\ov x_1} \ov u_2 \Big) + 2 (\pd_{x_1} \gamma) \pd_{\ov x_1}\ov u_1 + (\pd_{x_1} \gamma) (1 - \lvert \pd_{x_1} \gamma \rvert^2) \pd_{\ov x_2} \ov u_2 - \lvert \pd_{x_1} \gamma \rvert^2 \pd_{\ov x_2} \ov u_1 \\
& \quad \quad - 2 (\pd_{x_1} \gamma) (1 - \lvert \pd_{x_1} \gamma \rvert^2) (\pd_{x_1} \gamma) \pd_{\ov x_2} \ov u_1 - (\pd_{x_1} \gamma)^2 \pd_{\ov x_1} \ov u_2 - 2 (\pd_{x_1} \gamma)^3 \pd_{\ov x_1} \ov u_1 \Big], \\
& [P_{\pd_2 \BK^3_\gamma} (2 \mu \bD (\bu^*) \bn_{\pd_2 \BK^3_\gamma})] \cdot e_3 \\
& \quad = b (x) \Big[- \Big(\pd_{\ov x_2} \ov u_3 + \pd_{\ov x_3} \ov u_2\Big) + (\pd_{x_1} \gamma) \Big(\pd_{\ov x_3} \ov u_1 + \pd_{\ov x_1} \ov u_3\Big) - \lvert \pd_{x_1} \gamma \rvert^2 \pd_{\ov x_3} \ov u_3\Big].
\end{split}
\end{equation*}
Notice that all perturbation operators are linear and analytic with respect to $\ov \bu$. The perturbation operators can be estimated as follows:
\begin{equation*}
\begin{split}
\lvert \bB_1 (\gamma, \ov \bu) \rvert_{\BF_{0, \delta} (J; \BK^3)} & \le \lvert \pd_{x_1} \gamma \rvert_{L^\infty (\BR)} \Big((2 + \lvert \pd_{x_1} \gamma \rvert_{L^\infty (\BR)}) \lvert \nabla^2 \ov \bu \rvert_{\BF_{0, \delta} (J; \BK^3)} + \lvert \ov \pi \rvert_{\BE_{2, \delta} (J; \BK^3)} \Big) \\
& \quad + \lvert \pd^2_{x_1} \gamma \rvert_{L^\infty (\BR)} \lvert \nabla \ov \bu \rvert_{\BF_{0, \delta} (J; \BK^3)}, \\
\lvert B_2 (\gamma, \ov \bu) \rvert_{L^p_\delta (J; H^{1, q} (\BK^3))} & \le \lvert \pd_{x_1} \gamma \rvert_{L^\infty (\BR)} \lvert \nabla^2 \ov \bu \rvert_{\BF_{0, \delta} (J; \BK^3)} \\
& \quad + \Big(\lvert \pd_{x_1}^2 \gamma \rvert_{L^\infty (\BR)} + \lvert \pd_{x_1} \gamma \rvert_{L^\infty (\BR)} \Big) \lvert \nabla \ov \bu \rvert_{\BF_{0, \delta} (J; \BK^3)}, \\
\lvert \pd_t B_2 (\gamma, \ov \bu) \rvert_{L^p_\delta (J; \dot H^{- 1, q} (\BK^3))} & \le \lvert \pd_{x_1} \gamma \rvert_{L^\infty (\BR)} \lvert \pd_t \ov \bu \rvert_{\BF_{0, \delta} (J; \BK^3)}, \\
\lvert b (x) B_{3, j} (\gamma, \ov \bu) \rvert_{F^{1\slash2 - 1\slash(2 q)}_{p, q, \delta} (J; L^q (\pd_2 \BK^3))} & \le C \lvert \pd_{x_1} \gamma \rvert_{L^\infty (\BR)} \sum_{\tau = 0}^3 \lvert \pd_{x_1} \gamma \rvert_{L^\infty (\BR)}^\tau \lvert \ov \bu \rvert_{\BE_{1, \delta} (J; \BK^3)} \\
\lvert b (x) B_{3, 3} \rvert_{F^{1\slash2 - 1\slash(2 q)}_{p, q, \delta} (J; L^q (\pd_2 \BK^3))} & \le C \lvert \pd_{x_1} \gamma \rvert_{L^\infty (\BR)} (1 + \lvert \pd_{x_1} \gamma \rvert_{L^\infty (\BR)}) \lvert \ov \bu \rvert_{\BE_{1, \delta} (J; \BK^3)}, \\
\lvert b (x) B_4 (\gamma, \ov \bu) \rvert_{F^{1 - 1\slash(2q)}_{p, q,\delta} (J; L^q (\pd_3 \BK^3))} & \le C \lvert \pd_{x_1} \gamma \rvert_{L^\infty (\BR)} \lvert \ov \bu \rvert_{\BE_{1, \delta} (J; \BK^3)}, \\
\lvert B_{5, j} (\gamma, \ov \bu) \rvert_{F^{1\slash2 - 1\slash(2q)}_{p, q, \delta} (J; L^q (\pd_3 \BK^3))} & \le C \lvert \pd_{x_1} \gamma \rvert_{L^\infty (\BR)} \lvert \ov \bu \rvert_{\BE_{1, \delta} (J; \BK^3)},
\end{split}
\end{equation*}
where constants $C$ are independent of $T$. Here, we use the trace theorem (cf. Lindemulder~\cite[pp.~88]{L19}) to derive the estimates
\begin{equation*}
\begin{split}
\lvert \nabla \ov \bu \rvert_{F^{1\slash2 - 1\slash(2q), p}_{p, q, \delta} (J; L^q (\pd_2 \BK^3))} & \le C \lvert \ov \bu \rvert_{\BF_{2, \delta} (J; \pd_2 \BK)} \le C \lvert \ov \bu \rvert_{\BE_{1, \delta} (J; \BK^3)}, \\
\lvert \nabla \ov \bu \rvert_{F^{1\slash2 - 1\slash(2q), p}_{p, q, \delta} (J; L^q (\pd_3 \BK^3))} & \le C \lvert \ov \bu \rvert_{\BF_{2, \delta} (J; \pd_3 \BK)} \le C \lvert \ov \bu \rvert_{\BE_{1, \delta} (J; \BK^3)}, \\
\lvert \ov \bu \rvert_{F^{1 - 1\slash(2q), p}_{p, q, \delta} (J; L^q (\pd_3 \BK^3))} & \le C \lvert \ov \bu \rvert_{\BF_{3, \delta} (J; \pd_3 \BK)} \le C \lvert \ov \bu \rvert_{\BE_{1, \delta} (J; \BK^3)}.
\end{split}
\end{equation*}
Notice that constants appeared in these estimates are independent of $T$ because $\ov \bu$ vanishes at $t = 0$. To obtain the estimates in $L^p_\delta (J; B^{1 - 1\slash q}_{q, q} (S))$ and $L^p_\delta (J; B^{2 - 1\slash q}_{q, q} (S))$, $S \in \{\pd_2 \BK^3, \pd_3 \BK^3\}$, we use the estimate
\begin{equation*}
\begin{split}
\lvert a \psi \rvert_{B^s_{q, q} (\BR^2)} & \le \lvert a \rvert_{L^\infty (\BR^2)} \lvert \psi \rvert_{B^s_{q, q} (\BR^2)} +  C \lvert \psi \rvert_{B^s_{q, q} (\BR^2)}^\theta \lvert \psi \rvert_{L^q (\BR^2)}^{1 - \theta},
\end{split}
\end{equation*}
where $0 < s < 2$, $1 \le q \le \infty$, $0 \le \theta < 1$, $a \in C^2 (\BR^2)$, and $\psi \in B^s_{q, q} (\BR^2)$, see \cite[Lem.~6.2.8]{PS16}. Here, the constant $C$ depends linearly on $\lvert a \rvert_{B^2_{\infty, \infty} (\BR^2)}$.\footnote{It is known that $C^2 (\BR^2)$ embeds into $B^2_{\infty, \infty} (\BR^2)$. Besides, from $0 < s < 2$, it holds $B^2_{\infty, \infty} (\BR^2) \hookrightarrow B^s_{\infty, q} (\BR^2)$ for all $1 \le q \le \infty$, and hence we obtain $C^2 (\BR^2) \hookrightarrow B^s_{\infty, q}(\BR^2)$ for $1 \le q \le \infty$ and $0 < s < 2$. For the embedding properties, we refer to Triebel~\cite{T95}.} Choosing $\theta$ small such that $\theta \le \lvert \pd_{x_1} \gamma \rvert_{L^\infty (\BR)}$, we observe
\begin{equation*}
\begin{split}
& \lvert b (x) B_{3, 1} (\gamma, \ov \bu) \rvert_{L^p_\delta (J; B^{1 - 1\slash q}_{q, q} (\pd_2 \BK^3))} \\
& \quad \le C \sum_{\tau = 0}^3 \lvert \pd_{x_1} \gamma \rvert_{L^\infty (\BR)}^\tau \Big(\lvert \pd_{x_1} \gamma \rvert_{L^\infty (\BR)} \lvert \nabla \ov \bu \rvert_{L^p_\delta (J; B^{1 - 1\slash q}_{q, q} (\pd_2 \BK^3))} + C_\gamma \lvert \nabla \ov \bu \rvert_{L^p_\delta (J; L^q (\pd_2 \BK^3))} \Big), \\
& \lvert b (x) B_{3, 3} (\gamma, \ov \bu) \rvert_{L^p_\delta (J; B^{1 - 1\slash q}_{q, q} (\pd_2 \BK^3))} \\
& \quad \le C \Big(1 + \lvert \pd_{x_1} \gamma \rvert_{L^\infty (\BR)} \Big) \Big(\lvert \pd_{x_1} \gamma \rvert_{L^\infty (\BR)} \lvert \nabla \ov \bu \rvert_{L^p_\delta (J; B^{1 - 1\slash q}_{q, q} (\pd_2 \BK^3))} + C_\gamma \lvert \nabla \ov \bu \rvert_{L^p_\delta (J; L^q (\pd_2 \BK^3))} \Big), \\
& \lvert b (x) B_4 (\gamma, \ov \bu) \rvert_{L^p_\delta (J; B^{2 - 1\slash q}_{q, q} (\pd_2 \BK^3))} \le C \Big(\lvert \pd_{x_1} \gamma \rvert_{L^\infty (\BR^3)} \lvert \ov \bu \rvert_{L^p_\delta (J; B^{2 - 1\slash q}_{q, q} (\pd_2 \BK^3))} + C_\gamma \lvert \ov \bu \rvert_{L^p_\delta (J; L^q (\pd_2 \BK^3))}  \Big), \\
& \lvert B_5 (\gamma, \ov \bu) \rvert_{L^p_\delta (J, B^{1 - 1\slash q}_{q, q} (\pd_3 \BK^3))} \le C \Big(\lvert \pd_{x_1} \gamma \rvert_{L^\infty (\BR)} \lvert \nabla \ov \bu \rvert_{L^p_\delta (J; B^{1 - 1\slash q}_{q, q} (\pd_3 \BK^3))} + C_\gamma \lvert \nabla \ov \bu \rvert_{L^p_\delta (J; L^q (\pd_3 \BK^3))}\Big),
\end{split}
\end{equation*}
where the constants $C$ and $C_\gamma$ are independent of $T$. Besides, the constant $C_\gamma$ depends on $\gamma$ and $C_\gamma \to 0$ as $\lvert \pd_{x_1} \gamma \rvert_{L^\infty (\BR)} \le c \to 0$. \par
It remains to consider $(B_2 (\gamma, \ov \bu), b(x) B_4 (\gamma, \ov \bu), 0)$ in $H^{1, p}_\delta (J; \wh H^{- 1, q} (\BK^3))$. Integrating by parts with respect to the variable $\ov x_2$, it holds
\begin{equation*}
\begin{split}
\langle (B_2 (\gamma, \ov \bu), b(x) B_4 (\gamma, \ov \bu), 0) \mid \phi \rangle_{\BK^3} & = - (B_2 (\gamma, \ov \bu) \mid \phi)_{\BK^3} + (b(x) B_4 (\gamma, \ov \bu) \mid \phi)_{\pd_2 \BK^3} \\
& = ((\pd_{x_1} \gamma) \ov u_1 \mid \pd_2 \phi)_{\BK^3}
\end{split}
\end{equation*}
for any $\phi \in \dot H^{1, q'} (\BK^3)$, and thus
it is clear that
\begin{equation*}
\lvert (B_2 (\gamma, \ov \bu), B_4 (\gamma, \ov \bu), 0) \rvert_{H^{1, p}_\delta (J; \wh H^{- 1, q} (\BK^3))} \le C \lvert \pd_{x_1} \gamma \rvert_{L^\infty (\BR)} \lvert \ov \bu \rvert_{\BE_{1, \delta} (J; \BK^3)}. 
\end{equation*}
\par
We now solve \eqref{eq-slipslip-bent-reduced}. Let $Z (J; \BK^3) := \BE_{1, \delta} (J; \BK^3) \times \BE_{2, \delta} (J; \BK^3)$ be the solution space, while $Y (J; \BK^3)$ be the product space of given data. In addition, let ${}_0 Z (J; \BK^3)$ and ${}_0 Y (J; \BK^3)$ denote the solution space $Z (J; \BK^3)$ and the data space $Y (J; \BK^3)$ with vanishing time trace at $t = 0$. Define
\begin{equation*}
\begin{split}
z & = (\ov \bu, \ov \pi) \in {}_0 Z (J; \BK^3), \\
F & := (0, \ov f_\mathrm{div}, - \ov \bg', - \ov g_2, \ov \bh', 0) \in {}_0 Y (J; \BK^3), \\
B z & = (\bB_1 (\gamma, \ov \bu, \ov \pi), B_2 (\gamma, \ov \bu), b (x) \bB_3 (\gamma, \ov \bu), b (x) B_4 (\gamma, \ov \bu), B_5 (\gamma, \ov \bu), 0) \colon {}_0 Z (J; \BK^3) \to {}_0 Y (J; \BK^3),
\end{split}
\end{equation*}
where $\bB_3 (\gamma, \ov \bu) = (B_{3, 1} (\gamma, \ov \bu), B_{3, 3} (\gamma, \ov \bu))$. Denoting the left-hand side of \eqref{eq-slipslip-bent-reduced} by $L$, we see that $L$ is isomorphism from ${}_0 Z (J; \BK^3)$ to ${}_0 Y (J; \BK^3)$ and may rewrite \eqref{eq-slipslip-bent-reduced} in the abstract form
\begin{equation}
\label{eq-slipslip-bent-abstract}
L z = B z + F.
\end{equation}
Recalling the estimates for the perturbation operators, we find
\begin{equation}
\label{est-pertubation-bent}
\lvert B z \rvert_{Y (J; \BK^3)} \le C \lvert \pd_{x_1} \gamma \rvert_{L^\infty (\BR)} \lvert z \rvert_Z + M \lvert \bu \rvert_{L^p_\delta (J; H^{1 + 1\slash q, q} (\BK^3))}
\end{equation}
with some constants $C, M > 0$ independent of $T$. By the mixed derivative theorem (cf. \cite[Ch.~4]{PS16}) and the Sobolev embedding (cf. \cite[Cor.~1.4]{MV12}), we have
\begin{equation*}
{}_0 \BE_{1, \delta} (J; \BK^3) \hookrightarrow {}_0 H^{1\slash2 - 1\slash q, p}_\delta (J; H^{1 + 1\slash q, q} (\BK^3)) \hookrightarrow L^{2 p}_\delta (J; H^{1 + 1\slash q, q} (\BK^3)),
\end{equation*}
where the embedding constants are independent of $T$. Hence, the H\"older inequality yields
\begin{equation*}
\lvert \ov \bu \rvert_{L^p_\delta (J; H^{1 + 1\slash q, q} (\BK^3))} \le T^{1\slash(2 p)} \lvert \bu \rvert_{L^{2 p}_\delta (J; H^{1 + 1\slash q, q} (\BK^3))} \le C T^{1\slash(2 p)} \lvert \ov \bu \rvert_{\BE_{1, \delta} (J; \BK^3)},
\end{equation*}
where $C > 0$ does not depend on $T$. Let $c$ be a given (small) constant and assume $\lvert \nabla_{x_1} \gamma \rvert_{L^\infty (\BR)} \le c$. Then, combined with \eqref{est-pertubation-bent}, the Neumann series argument implies that there exists small $T > 0$ such that~\eqref{eq-slipslip-bent-abstract} admits a unique solution $(\ov \bu, \ov \pi) \in {}_0 Z (J; \BK^3)$. Since $J = [0, T]$ is compact, we can solve \eqref{eq-slipslip-bent-reduced} for $J = [0, T]$ by repeating these arguments finitely many times, where $T > 0$ is now arbitrary. This completes the proof of the theorem.
\end{proof}
We next deal with the Stokes equations in $\BK^3_\gamma$ with slip-free boundary conditions:
\begin{align}
\label{eq-slipfree-bent}
\left\{\begin{aligned}
\pd_t \bu - \mu \Delta \bu + \nabla \pi & = \bff, & \quad & \text{in $\BK^3_\gamma \times J$}, \\
\dv \bu & = f_\mathrm{div}, & \quad & \text{in $\BK^3_\gamma \times J$}, \\
P_{\pd_2 \BK^3_\gamma} (2 \mu \bD (\bu) \bn_{\pd_2 \BK^3_\gamma}) & = \bg', & \quad & \text{in $\pd_2 \BK^3_\gamma \times J$}, \\
\bu \cdot \bn_{\pd_2 \BK^3_\gamma} & = g_2, & \quad & \text{in $\pd_2 \BK^3_\gamma \times J$}, \\
\pd_t \eta - u_3 & = d, & \quad & \text{in $\pd_3 \BK^3_\gamma \times J$}, \\
\mu (\pd_3 u_m + \pd_m u_3) & = k_m, & \quad & \text{in $\pd_3 \BK^3_\gamma \times J$}, \\
2 \mu \pd_3 u_3 - \pi - \sigma \Delta_{\pd_3 \BK^3} \eta & = k_3, & \quad & \text{in $\pd_3 \BK^3_\gamma \times J$}, \\
\bu (0) & = \bu_0, & \quad & \text{in $\BK^3_\gamma$}, \\
\eta (0) & = \eta_0, & \quad & \text{on $\pd_3 \BK^3_\gamma$}, \\
\end{aligned}\right.
\end{align}
where $m = 1, 2$ and $\bg' = (g_1, 0, g_3)^\top$. Let the space $\wh H^{- 1, q}_{\pd_2 \BK^3} (\BK^3)$ be the set of all $(\varphi_1, \varphi_2) \in L^q (\BK^3_\gamma) \times B^{2 - 1\slash q}_{q, q} (\pd_2 \BK^3_\gamma)$ satisfying $(\varphi_1, \varphi_2) \in \dot H^{- 1, q}_{\pd_2 \BK^3} (\BK^3_\gamma)$. Setting
\begin{equation*}
\langle (\varphi_1, \varphi_2) \mid \phi \rangle_{\BK^3_\gamma} := - (\varphi_1 \mid \phi)_{\BK^3_\gamma} + (\varphi_2 \mid \phi)_{\pd_2 \BK^3_\gamma} \quad \text{for any $\phi \in \dot H^{1, q'}_{\pd_3 \BK^3_\gamma} (\BK^3_\gamma)$},
\end{equation*}
we observe
\begin{equation*}
\langle (f_\mathrm{div}, g_2) \mid \phi \rangle_{\BK^3_\gamma} = - (\bu \mid \nabla \phi)_{\BK^3_\gamma} \quad \text{for any $\phi \in \dot H^{1, q'}_{\pd_3 \BK^3_\gamma} (\BK^3_\gamma)$}
\end{equation*}
as follows from the divergence equation. The next theorem can be proved in the same manner as in the proof of Theorem~\ref{th-bent-slipslip}, and hence we may omit the details.
\begin{theo}
Let $T > 0$ and $J = (0, T)$. Assume that $p$, $q$, and $\delta$ satisfy \eqref{cond-pqdelta-model2}. The problem \eqref{eq-slipfree-bent} admits a unique solution $(\bu, \pi, \eta)$ with $\bu \in \BE_{1, \delta} (J; \BK^3_\gamma)$, $\pi \in \BE_{2, \delta} (J; \BK^3_\gamma)$, $\mathrm{Tr}_{\pd_3 \BK^3_\gamma} [\pi] \in \BE_{3, \delta} (J; \pd_3 \BK^3_\gamma)$, $\eta \in \BE_{4, \delta} (J; \pd_3 \BK^3_\gamma)$, if and only if
\begin{enumerate}
\renewcommand{\labelenumi}{(\alph{enumi})}
\item $\bff \in \BF_{0, \delta} (J; \BK_\gamma^3)$;
\item $f_\mathrm{div} \in \BF_{1, \delta} (J; \BK_\gamma^3)$;
\item $g_\ell \in \BF_{2, \delta} (J; \pd_2 \BK_\gamma^3)$ for $\ell = 1, 3$;
\item $g_2 \in \BF_{3, \delta} (J; \pd_2 \BK_\gamma^3)$;	
\item $d \in \BF_{3, \delta} (J; \pd_3 \BK_\gamma^3)$;
\item $k_j \in \BF_{2, \delta} (J; \pd_3 \BK_\gamma^3)$ for $j = 1, 2, 3$;
\item $(f_\mathrm{div}, g_2) \in H^{1, p}_\delta (J; \wh H^{- 1, q}_{\pd_2 \BK^3_\gamma} (\BK^3_\gamma))$;
\item $\bu_0 \in B^{2 (\delta - 1\slash p)}_{q, p} (\BK_\gamma^3)^3$ and $\dv \bu_0 = f_\mathrm{div} (0)$;
\item $\eta_0 \in B^{2 + \delta - 1\slash p - 1\slash q}_{q, p} (\pd_3 \BK_\gamma^3)$;
\item $\bg' (0) = \mathrm{Tr}_{\pd_2 \BK^3_\gamma} [P_{\pd_2 \BK^3_\gamma} (2 \mu \bD (\bu_0) \bn_{\pd_2 \BK^3_\gamma})]$ and $k_m (0) = \mathrm{Tr}_{\pd_3 \BK_\gamma^3} [\mu (\pd_3 u_{0, m} + \pd_m u_{0, 3})]$ if $1\slash p + 1\slash(2 q) < \delta - 1\slash2$;
\item $\mathrm{Tr}_{\pd \BK^3_\gamma} [\bg' (0)] = \mathrm{Tr}_{\pd \BK^3_\gamma} [P_{\pd_3 \BK^3_\gamma} (2 \mu \bD (\bu_0) \bn_{\pd_2 \BK^3_\gamma})]$ and $\mathrm{Tr}_{\pd \BK_\gamma^3} [k_m (0)] = \mathrm{Tr}_{\pd \BK_\gamma^3} [\mu (\pd_3 u_{0, m} + \pd_m u_{0, 3})]$ if $1\slash p + 1\slash q < \delta - 1\slash2$.	
\end{enumerate}
Furthermore, the solution map is continuous between the corresponding spaces.
\end{theo}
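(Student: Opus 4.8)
The plan is to proceed exactly as in the proof of Theorem~\ref{th-bent-slipslip}, so I only sketch the modifications. Necessity is immediate from trace theory. For sufficiency, the first step is to reduce to trivial data $\bff = 0$, $\bu_0 = 0$, $\eta_0 = 0$: one solves the Cauchy problem for the heat equation with data $e_{\BR^3}[\bu_0]$, $e_{\BR^3}[\bff]$ on $\BR^3$ and subtracts its restriction to $\BK^3_\gamma$, and subtracts a suitable extension of $\eta_0$ lying in $\BE_{4, \delta}(J; \pd_3 \BK^3_\gamma)$, just as in the proof of Theorem~\ref{th-model-slipfree}. This modifies the boundary and divergence data, which stay in the correct classes, still satisfy the compatibility conditions --- including those on the contact line $\pd \BK^3_\gamma$ --- and vanish at $t = 0$.

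Next I would flatten the bent face via $\ov x = (x_1, x_2 - \gamma(x_1), x_3)$, a unit-Jacobian bijection of $\BK^3_\gamma$ onto $\BK^3$ mapping $\pd_2 \BK^3_\gamma$, $\pd_3 \BK^3_\gamma$, $\pd \BK^3_\gamma$ onto $\pd_2 \BK^3$, $\pd_3 \BK^3$, $\pd \BK^3$. Since $\pd_{x_1} = \pd_{\ov x_1} - (\pd_1\gamma)\pd_{\ov x_2}$, $\pd_{x_1}^2 = \pd_{\ov x_1}^2 - 2(\pd_1\gamma)\pd_{\ov x_1}\pd_{\ov x_2} - (\pd_1^2\gamma)\pd_{\ov x_2} + (\pd_1\gamma)^2\pd_{\ov x_2}^2$ while $\pd_{x_2}$, $\pd_{x_3}$ are unchanged, the pulled-back triple $(\ov\bu, \ov\pi, \ov\eta)$ solves the flat slip--free problem \eqref{eq-4.10} with trivial initial data, right-hand side equal to the transformed data, and additional perturbation terms: the interior perturbations $\bB_1$, $B_2$ and the $\pd_2 \BK^3$-perturbations $b(x)\bB_3$, $b(x)B_4$, $B_5$ already appearing in \eqref{eq-slipslip-bent-reduced}, plus perturbations on $\pd_3 \BK^3$ produced by $\pd_{x_1}$ inside $\mu(\pd_3 u_m + \pd_m u_3)$ and inside $2\mu\pd_3 u_3 - \pi - \sigma \Delta_{\pd_3 \BK^3}\eta$; in particular the surface Laplacian contributes terms of the form $(\pd_1\gamma)\pd_{\ov x_1}\pd_{\ov x_2}\ov\eta$, $(\pd_1^2\gamma)\pd_{\ov x_2}\ov\eta$, $(\pd_1\gamma)^2\pd_{\ov x_2}^2\ov\eta$. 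All perturbations are linear and analytic in $(\ov\bu, \ov\pi, \ov\eta)$, and the evolution equation $\pd_t\eta - u_3 = d$ on the flat face $\pd_3 \BK^3$ is untouched.

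Then I would bound the perturbation operator in the data norm by $C\lvert\pd_1\gamma\rvert_{L^\infty(\BR)}\lvert z\rvert_Z + M\lvert(\ov\bu, \ov\eta)\rvert_{L^p_\delta(J; H^{1 + 1\slash q, q}(\BK^3))}$: the terms carrying the top-order derivatives of $(\ov\bu, \ov\pi, \ov\eta)$ come with a factor $\pd_1\gamma$ and are small by the hypothesis $\lvert\pd_1\gamma\rvert_{L^\infty(\BR)}\le c$, while the remaining ones, carrying $\pd_1^2\gamma$ or $\pd_1^3\gamma$ and hitting lower-order derivatives, are controlled by the mixed derivative theorem together with the embeddings ${}_0\BE_{1, \delta}(J; \BK^3)\hookrightarrow L^{2p}_\delta(J; H^{1 + 1\slash q, q}(\BK^3))$ and its analogue for $\BE_{4, \delta}$, whence a factor $T^{1\slash(2p)}$ via Hölder's inequality. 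That $b(x)\bB_3$, $B_5$ and the $\pd_3 \BK^3$-perturbations of the $k_m$ lie in $\BF_{2, \delta}$, that $b(x)B_4$ lies in $\BF_{3, \delta}$, that the surface-Laplacian perturbations lie in the class prescribed for $k_3$ in \eqref{eq-4.10}, and that $(B_2, b(x)B_4, 0)$ lies in $H^{1, p}_\delta(J; \wh H^{-1, q}_{\pd_2 \BK^3}(\BK^3))$ --- the last via integration by parts in $\ov x_2$ --- is checked exactly as in the proof of Theorem~\ref{th-bent-slipslip}, using the trace theory of Lindemulder~\cite{L19} and the pointwise-multiplier estimate \cite[Lem.~6.2.8]{PS16}. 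Writing the flat problem abstractly as $Lz = Bz + F$ on the vanishing-trace spaces ${}_0Z(J; \BK^3)$, ${}_0Y(J; \BK^3)$, where $Z$ now also carries $\mathrm{Tr}_{\pd_3 \BK^3}[\pi]$ and $\eta$, and invoking Theorem~\ref{th-model-slipfree} for the isomorphism $L$ with $T$-independent norm on these subspaces, a Neumann series argument gives a unique solution on a short interval once $c$ and then $T$ are small; compactness of $[0, T]$ permits gluing finitely many such intervals, and undoing the transformations and restoring the subtracted pieces yields existence. Uniqueness follows from the construction (or the duality argument of Theorem~\ref{th-model-half-free}), and continuous dependence from the isomorphism property and the fixed-point bound.

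The hard part, as in Theorem~\ref{th-bent-slipslip}, is the bookkeeping that places every perturbation term in the correct intersection space with a bound of the shape $C(\lvert\pd_1\gamma\rvert_{L^\infty(\BR)} + T^{1\slash(2p)})\lvert z\rvert_Z$; the only new point over the slip--slip case is that the perturbation of the surface-tension term must be shown not to spoil the hidden regularity of $\eta$ coming from its own evolution equation. Once this estimate is secured, the perturbation scheme closes immediately.
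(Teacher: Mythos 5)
Your proposal is correct and follows exactly the approach the paper intends — the paper itself only writes that the theorem ``can be proved in the same manner as in the proof of Theorem~\ref{th-bent-slipslip}, and hence we may omit the details,'' and you have written out precisely those omitted details: flattening via $\ov x_2 = x_2 - \gamma(x_1)$, invoking Theorem~\ref{th-model-slipfree} for the flat isomorphism, estimating the extra perturbations coming from $\pd_{x_1}$ in the free-boundary conditions (in particular from $\sigma\Delta_{\pd_3\BK^3}\eta$), and closing via Neumann series on a short interval plus compactness of $[0,T]$. Your remark that the only genuinely new point over the slip--slip case is verifying that the surface-Laplacian perturbations of $\eta$ stay within $\BF_{2,\delta}(J;\pd_3\BK^3)$ with a small factor is accurate; this follows from the mixed-derivative theorem together with $\eta\in\BE_{4,\delta}$ exactly as you indicate.
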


\subsection{Regularity of the pressure}
In general, the pressure $\pi$ has no more regularity than one given in Theorem~\ref{th-MR-linear}. However, it is possible to obtain additional time-regularity for $\pi$ in a special situation.
\begin{lemm}
\label{lemm-regularity-pressure}
Suppose the assumptions given in Theorem~\ref{th-MR-linear}. Let $(\bu, \pi, \eta)$ be the solutions to \eqref{eq-linear} with
\begin{equation*}
\bu_0 = \eta_0 = f_\mathrm{div} = 0 \quad \text{in $\Omega_*$}, \quad u_3 = 0 \quad \text{on $\Gamma_*$},
\end{equation*}
for a.e. $t \in J$ and $\bff \in {}_0 H^{\alpha, p}_\delta (J; L^q (\Omega_*)^3)$ for some $\alpha \in (0, 1\slash2 - 1\slash(2 q))$. Then the following assertions hold.
\begin{enumerate}
\item If $\Omega_*$ is bounded domain given by \eqref{def-domain}, then it holds $\pi \in {}_0 H^{\alpha, p}_\delta (J; L^q (\Omega_*))$ possessing the estimate
\begin{equation*}
\lvert \pi \rvert_{H^{\alpha, p}_\delta (J; L^q (\Omega_*))} \le C \Big(\lvert \bu \rvert_{\BE_{1, \delta} (J; \Omega_*)} + \lvert \mathrm{Tr}_{\Gamma_*} [\pi] \rvert_{\BE_{3, \delta} (J; \Gamma_*)} + \lvert \bff \rvert_{H^{\alpha, p}_\delta (J; L^q (\Omega_*))} \Big).
\end{equation*}
Here, the constant $C$ does not depend on the length of the interval $J$.
\item If $\Omega_*$ is a full space, a (bent) half space, or a (bent) quarter space, then $(\pi)_K \in {}_0 H^{\alpha, p}_\delta (J; L^q (K))$ for each bounded domain $\Omega_*^R \subset \ov{\Omega_*}$ with $\Omega_*^R = \Omega_* \cap B (0, R)$, $R > 0$ large. In addition, it holds the estimate
\begin{equation*}
\lvert P_{0 R} \pi \rvert_{H^{\alpha, p}_\delta (J; L^q (\Omega_*))} \le C \Big(\lvert \bu \rvert_{\BE_{1, \delta} (J; \Omega_*)} + \lvert \mathrm{Tr}_{\Gamma_*} [\pi] \rvert_{\BE_{3, \delta} (J; \Gamma_*)} + \lvert \bff \rvert_{H^{\alpha, p} (J; L^q (\Omega_*))} \Big) 
\end{equation*}
with a constant $C$ independent of the length of the interval $J$. Here, $P_{0 R} \pi$ denotes the mean zero part of $\pi$ with respect to $\Omega_*^R$ in case the pressure $\pi$ does not appear on the boundary and $P_{0 R} = \bI$ otherwise.
\end{enumerate}
\end{lemm}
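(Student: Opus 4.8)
\textit{Proof plan.} The plan is to identify $\pi$ as (a time-dependent constant plus) the solution of a stationary weak Neumann problem on $\Omega_*$ whose solution operators act only in the spatial variables, hence commute with the temporal smoothing measured in $H^{\alpha, p}_\delta (J; \cdot)$; the gain of temporal regularity will then follow from the mixed derivative theorem (\cite{PS16}) applied to $\bu \in \BE_{1, \delta} (J; \Omega_*)$, together with the restriction $\alpha < 1\slash 2 - 1\slash(2 q)$. First I would test the momentum equation of \eqref{eq-linear} with $\nabla \phi$, $\phi \in \dot H^{1, q'} (\Omega_*)$, and integrate by parts: using $\dv \bu = f_\mathrm{div} = 0$ one has $(\pd_t \bu \mid \nabla \phi)_{\Omega_*} = \pd_t (\bu \cdot \bn \mid \phi)_{\pd \Omega_*}$, whose contribution over $\Gamma_*$ vanishes by the hypothesis $u_3 = 0$ there, and over $\Sigma_* \cup B$ vanishes by the no-flux boundary conditions (present in the configuration to which the lemma is applied); thus
\[
(\nabla \pi \mid \nabla \phi)_{\Omega_*} = (\bff \mid \nabla \phi)_{\Omega_*} + \mu (\Delta \bu \mid \nabla \phi)_{\Omega_*}, \qquad \phi \in \dot H^{1, q'} (\Omega_*).
\]
Consequently $\pi = \pi_1 + \mu \chi + c$, where $\pi_1$ and $\chi$ are the $\Omega_*$-mean-zero weak solutions of the Neumann problem with right-hand side $(\bff \mid \nabla \phi)_{\Omega_*}$, respectively $(\Delta \bu \mid \nabla \phi)_{\Omega_*}$, and $c = c (t)$ is the scalar fixed by matching $\mathrm{Tr}_{\Gamma_*} [\pi]$ (in the bounded case $\Omega_*$ does carry $\pi$ on $\Gamma_*$).

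I would then estimate the three pieces. For $\pi_1$: $\nabla \pi_1 = \mathbb{Q} \bff$, with $\mathbb{Q}$ the gradient projection of the Helmholtz--Weyl decomposition on $L^q (\Omega_*)$; since $\mathbb{Q}$ is bounded on $L^q$ and is a spatial operator, $\nabla \pi_1 \in H^{\alpha, p}_\delta (J; L^q (\Omega_*))$, and the Poincar\'e--Wirtinger inequality gives $\pi_1 \in H^{\alpha, p}_\delta (J; H^{1, q} (\Omega_*)) \hookrightarrow H^{\alpha, p}_\delta (J; L^q (\Omega_*))$ with norm controlled by $\lvert \bff \rvert_{H^{\alpha, p}_\delta (J; L^q (\Omega_*))}$. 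For $\chi$: $\nabla \chi = \mathbb{Q} \Delta \bu$, and by the mixed derivative theorem $\BE_{1, \delta} (J; \Omega_*) \hookrightarrow H^{\alpha, p}_\delta (J; H^{2 (1 - \alpha), q} (\Omega_*)^3)$, so $\Delta \bu \in H^{\alpha, p}_\delta (J; H^{- 2 \alpha, q} (\Omega_*)^3)$; since $\pd D \in C^4$ and the edges of $\Omega_*$ are right angles — for which the Neumann and mixed Dirichlet--Neumann problems produce no corner singularity — $\mathbb{Q}$ is bounded on $H^{- 2 \alpha, q} (\Omega_*)$ and the Neumann solve from $H^{- 2 \alpha, q}$-gradients into $H^{1 - 2 \alpha, q} (\Omega_*)$ is bounded, so $\chi \in H^{\alpha, p}_\delta (J; H^{1 - 2 \alpha, q} (\Omega_*)) \hookrightarrow H^{\alpha, p}_\delta (J; L^q (\Omega_*))$ (using $1 - 2 \alpha \ge 0$) with norm controlled by $\lvert \bu \rvert_{\BE_{1, \delta} (J; \Omega_*)}$. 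Finally, $c (t)$ depends linearly and boundedly on $\mathrm{Tr}_{\Gamma_*} [\pi]$ and on $\mathrm{Tr}_{\Gamma_*} [\pi_1], \mathrm{Tr}_{\Gamma_*} [\chi]$ through spatial averaging and the traces $H^{1, q} (\Omega_*), H^{1 - 2 \alpha, q} (\Omega_*) \to L^q (\Gamma_*)$ (admissible because $1 - 2 \alpha > 1\slash q$, i.e.\ because $\alpha < 1\slash 2 - 1\slash(2 q)$), so $\lvert c \rvert_{H^{\alpha, p}_\delta (J)}$ is controlled by $\lvert \mathrm{Tr}_{\Gamma_*} [\pi] \rvert_{H^{\alpha, p}_\delta (J; L^q (\Gamma_*))} + \lvert \bff \rvert_{H^{\alpha, p}_\delta} + \lvert \bu \rvert_{\BE_{1, \delta}}$, and $\BE_{3, \delta} (J; \Gamma_*) \hookrightarrow F^{1\slash 2 - 1\slash(2 q)}_{p, q, \delta} (J; L^q (\Gamma_*)) \hookrightarrow H^{\alpha, p}_\delta (J; L^q (\Gamma_*))$ — again using $\alpha < 1\slash 2 - 1\slash(2 q)$ — bounds the first term by $\lvert \mathrm{Tr}_{\Gamma_*} [\pi] \rvert_{\BE_{3, \delta} (J; \Gamma_*)}$. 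Adding the three contributions yields part (1). All constants are independent of $\lvert J \rvert$: the spatial operators ($\mathbb{Q}$, the Neumann solve, averaging, Poincar\'e--Wirtinger) give $\lvert J \rvert$-independent constants, and the temporal embeddings are $\lvert J \rvert$-independent on the subspace of functions vanishing at $t = 0$ — which is the present situation, $\bu_0 = \eta_0 = 0$ and $\bff \in {}_0 H^{\alpha, p}_\delta$ — upon extending by zero to $\BR_+$ and invoking the translation-invariant estimates, exactly as with $E_t$ in the proof of Theorem~\ref{th-model-half-free}; the vanishing of the temporal trace of $\pi$ at $t = 0$, where meaningful, is then inherited piece by piece.

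For part (2), with $\Omega_*$ a full, a (bent) half, or a (bent) quarter space, I would fix $R > 0$ and a $t$-independent cutoff $\zeta_R \in C^\infty_c$ with $\zeta_R \equiv 1$ on $B (0, R)$ and $\supp \zeta_R \subset B (0, 2 R)$, and note that $\zeta_R \pi$ solves on the bounded domain $\Omega_* \cap B (0, 2 R)$ a Neumann problem of the same structure with additional commutator sources $[\zeta_R, \nabla] \pi$ and $[\zeta_R, \Delta] \bu$; these are of lower order and, since the momentum equation already bounds $\lvert \pi \rvert_{\BE_{2, \delta}}$ by $C (\lvert \bu \rvert_{\BE_{1, \delta}} + \lvert \bff \rvert_{L^p_\delta (J; L^q (\Omega_*))})$, are controlled by the right-hand side of the claimed inequality, so the bounded-domain argument applied to $\zeta_R \pi$ gives the estimate for $P_{0 R} \pi$; here the mean-zero projection over $\Omega_* \cap B (0, R)$ enters precisely when $\pi$ appears in no boundary condition and is thus determined only up to a constant (there is no ``$c$''), while $P_{0 R} = \bI$ otherwise. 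The step I expect to be the crux is the very first one — confirming that $\pd_t \bu$ contributes nothing to $\nabla \pi$: this is exactly where $u_3 = 0$ on $\Gamma_*$ (and the absence of normal velocity on $\Sigma_* \cup B$) is used essentially, since $\pd_t \bu \in L^p_\delta (J; L^q (\Omega_*))$ carries no temporal smoothness and would otherwise be the obstruction. A secondary technical point is the mapping behaviour of $\mathbb{Q}$ and of the Neumann solve on the negative and low-order Sobolev scale over $\Omega_*$, which is available only because the $90^\circ$ edges of $\Omega_*$, together with the $C^4$-regularity of $\pd D$, rule out corner singularities.
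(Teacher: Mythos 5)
Your approach differs from the paper's, and the central step has a genuine gap.

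Where you diverge: the paper runs a duality argument. For each mean-zero $\varphi\in L^{q'}(\Omega_*)$ it solves the auxiliary mixed Dirichlet--Neumann problem $\Delta\phi=\varphi$, $\phi|_{\Gamma_*}=0$, homogeneous Neumann on $\Sigma_*\cup B$ (Lemma~\ref{lem-A.3}), obtaining $\phi\in H^{2,q'}(\Omega_*)$. It then writes $(\pi\mid\varphi)_{\Omega_*}=(\pi\mid\Delta\phi)_{\Omega_*}$ and integrates by parts \emph{twice} on the term $(\mu\Delta\bu\mid\nabla\phi)_{\Omega_*}$, producing $\int\mu\nabla\bu:\nabla^2\phi - (\mu\bn\cdot\nabla\bu\mid\nabla\phi)_{\pd\Omega_*}$. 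The derivatives have been transferred onto the smooth $\phi$, so the representation of $(\pi\mid\varphi)$ involves only $\nabla\bu$ (and its boundary trace), $\mathrm{Tr}_{\Gamma_*}[\pi]$, and $\bff$ --- each of which is known, via the mixed derivative theorem and the trace theory already used in the paper, to lie in an $H^{\alpha,p}_\delta$-in-time class when $\alpha<1/2-1/(2q)$. Applying $\pd_t^\alpha$ and taking the supremum over $\varphi$ then gives the estimate with no recourse to negative spatial Sobolev spaces. You instead decompose $\pi$ constructively as $\pi_1+\mu\chi+c$ via the Helmholtz--Weyl projection, which puts the burden directly on solving a weak Neumann problem with data $\Delta\bu\in H^{\alpha,p}_\delta(J;H^{-2\alpha,q}(\Omega_*))$.

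The gap: the step ``the Neumann solve from $H^{-2\alpha,q}$-gradients into $H^{1-2\alpha,q}(\Omega_*)$ is bounded,'' and equivalently the boundedness of $\mathbb{Q}$ on $H^{-2\alpha,q}(\Omega_*)$, is neither established in the paper nor a standard result for $\Omega_*$: this is a Lipschitz domain with $90^\circ$ edges, and the paper's entire elliptic apparatus (Lemmas~\ref{lem-A.3}--\ref{lem-A.5}) operates only on the scale $\dot H^{-1,q}\to H^{1,q}$ and $L^q\to H^{2,q}$ (plus the time-parabolic version), not on the intermediate fractional negative scale. Your appeal to the absence of corner singularities at right angles is heuristically reasonable but does not supply the needed mapping theorem. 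More fundamentally, the weak formulation $(\nabla\chi\mid\nabla\phi)=(\Delta\bu\mid\nabla\phi)$, $\phi\in\dot H^{1,q'}$, is not even classically well-defined when $\Delta\bu$ is merely in $H^{-2\alpha,q}$ and $\nabla\phi$ in $L^{q'}$: the duality pairing requires more spatial regularity on $\nabla\phi$ or a careful re-interpretation via $\bu$ itself. The paper's double integration by parts against the auxiliary $\phi\in H^{2,q'}$ is precisely the device that avoids this: it never needs $\Delta\bu$ as data but only $\nabla\bu$ and its trace, both of which are at the $L^q$ level. To repair your argument you would have to establish the fractional Helmholtz/Neumann result for the cylinder, which is a non-trivial project; simpler is to switch to the duality + double integration by parts scheme.

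One smaller point is correctly identified by you: the vanishing of $(\pd_t\bu\mid\nabla\phi)_{\Omega_*}$ is indeed the key input from $\dv\bu=0$, $u_3|_{\Gamma_*}=0$ and the (implicit, post-reduction) no-flux conditions on $\Sigma_*\cup B$; and the threshold $\alpha<1/2-1/(2q)$ is exactly what lets the boundary terms carrying $F^{1/2-1/(2q)}_{p,q,\delta}$-regularity absorb $\pd_t^\alpha$.
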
	
\begin{proof}
(1) Let $\varphi \in L^{q'} (\Omega_*)$ be fixed with mean zero. Consider the elliptic problem
\begin{equation}
\label{pressure-elliptic}
\left\{\begin{aligned}
\Delta \phi & = \varphi, & \quad & \text{in $\Omega_*$}, \\
\phi & = 0, & \quad & \text{on $\Gamma_*$}, \\
\bn_{\Sigma_*} \cdot \nabla \phi & = 0, & \quad & \text{on $\Sigma_*$}, \\
\bn_B \cdot \nabla \phi & = 0, & \quad & \text{on $B$}. 
\end{aligned}\right.
\end{equation}
By Lemma~\ref{lem-A.3}, we have a unique solution $\phi \in H^{2, q} (\Omega_*)$. Then, from integration by parts, we observe
\begin{equation*}
\begin{split}
(\pi \mid \varphi)_{\Omega_*} & = (\pi \mid \Delta \phi)_{\Omega_*} \\
& = - (\nabla \pi \mid \nabla \phi)_{\Omega_*} + (\pi \mid \bn_{\Gamma_*} \cdot \nabla \phi)_{\Gamma_*} \\
& = - (\mu \Delta \bu \mid \nabla \phi)_{\Omega_*} + (\pd_t \bu - \bff \mid \nabla \phi)_{\Omega_*} + (\pi \mid \bn_{\Gamma_*} \cdot \nabla \phi)_{\Gamma_*} \\
& = \int_{\Omega_*} \mu \nabla \bu \colon \nabla^2 \phi \dx - (\mu \bn_{\pd \Omega_*} \cdot \nabla \bu \mid \nabla \phi)_{\pd \Omega_*} - (\bff \mid \nabla \phi)_{\Omega_*} + (\pi \mid \bn_{\Gamma_*} \cdot \nabla \phi)_{\Gamma_*},
\end{split}
\end{equation*}
where we have used $(\bu \mid \nabla \phi)_{\Omega_*} = 0$. According to the mixed derivative theorem and the trace theory~(cf.~\cite[Sec.~4.5]{PS16}), we have $\nabla \bu \in {}_0 H^{1\slash2, p}_{\delta} (J; L^q (\Omega_*)^{n \times n})$, $\pd_k u_\ell \in {}_0 F^{1\slash2 - 1\slash(2 q)}_{p, q, \delta} (J; L^q (\pd \Omega_*))$ for $k, \ell \in \{1, 2, 3\}$, and $\mathrm{Tr}_{\Gamma_*} [\pi] \in {}_0 F^{1\slash2 - 1\slash(2 q)}_{p, q, \delta} (J; L^q (\Gamma_*))$. As $\bu_0 = 0$, we may apply the fractional time derivative $\pd_t^\alpha$ to the above identity, and hence, taking the supremum of the left-hand side over all $\varphi \in L^{q'} (\Omega_*)$ with $\lvert \varphi \rvert_{L^{q'} (\Omega_*)} \le 1$, we arrive at
\begin{equation*}
\begin{split}
\lvert \pd_t^\alpha \pi \rvert_{L^p_\delta (J; L^q (\Omega_*))} & \le C \Big(\lvert \pd^\alpha_t \nabla \bu \rvert_{L^p_\delta (J; L^q (\Omega_*))} + \lvert \pd^\alpha_t (\bn_{\pd \Omega_*} \cdot \nabla \bu) \rvert_{L^p_\delta (J; L^q (\pd \Omega_*))} \\
& \qquad + \lvert \pd_t^\alpha \mathrm{Tr}_{\Gamma*} [\pi] \rvert_{L^p_\delta (J; L^q (\Gamma_*))} + \lvert \pd_t^\alpha \bff \rvert_{L^p_\delta (J; L^q (\Omega_*))} \Big) \\
& \le C \Big(\lvert \bu \rvert_{\BE_{1, \delta} (J; \Omega_*)} + \lvert \mathrm{Tr}_{\Gamma_*} [\pi] \rvert_{\BE_{3, \delta} (J; \Gamma_*)} + \lvert \bff \rvert_{H^{\alpha, p}_\delta (J; L^q (\Omega_*))} \Big)
\end{split}
\end{equation*}
for each $\alpha \in (0, 1\slash2 - 1\slash(2 q))$ because it holds
\begin{equation*}
{}_0 F^{1\slash2 - 1\slash(2 q)}_{p, q, \delta} (J; L^q (\Gamma_*)) \hookrightarrow {}_0 F^{1\slash2 - 1\slash(2 q) - \varepsilon}_{p, 1, \delta} (J; L^q (\Gamma_*)) \hookrightarrow {}_0 H^{1\slash2 - 1\slash(2 q) - \varepsilon, p}_\delta (J; L^q (\Gamma_*))
\end{equation*}
for arbitrary $\varepsilon > 0$, see, e.g., \cite{MV14} for the detail. This shows $\pi \in {}_0 H^{\alpha, p}_\delta (J; L^q (\Omega_*))$. \par
(2) The proof of the second statement is essentially the same as in the proof of the first assertion. Let $\varphi \in L^{q'} (K)$ be fixed with mean zero.  Extend $P_{0 R} \varphi$ by zero to $\wt \varphi \in L^{q'} (\Omega_*)$. Then, by Lemma~\ref{lem-A.3}, the elliptic problem \eqref{pressure-elliptic} with $\wt \varphi$ as an inhomogeneity in the first equation is uniquely solvable. Especially the solution admits the regularity $\nabla \phi \in H^{1, q'} (\Omega_*)$ and the estimate
\begin{equation*}
\lvert \nabla \phi \rvert_{L^{q'} (\Omega_*)} + \lvert \nabla^2 \phi \rvert_{L^{q'} (\Omega_*)} \le C \lvert \wt \varphi \rvert_{L^{q'} (\Omega_*)} \le C \lvert \varphi \rvert_{L^{q'} (K)}. 
\end{equation*}
Furthermore, we have $(P_{0 R} \varphi \mid \varphi)_K = (P_{0 R} \varphi \mid \wt \varphi)_{\Omega_*}$. Thus, employing the same argument employed in the proof of the first assertion yields the desired result.	
\end{proof}

\subsection{Reduction of the data}
\label{sec-reduction_data}
It is convenient to reduce the given data in \eqref{eq-linear} to the case
\begin{equation*}
\bff = f_\mathrm{div} = \bu_0 = \eta_0 = \bg \cdot \bn_{\Sigma_*} = h_3 = 0.
\end{equation*}
In the following, we show that these reductions can be observed. We first extend $\eta_0 \in B^{2 + \delta - 1\slash p - 1\slash q}_{q, p} (\Gamma_*)$ and $u_{0, 3} \vert_{\Gamma_*}, d \vert_{t = 0} \in B^{2 (\delta - 1\slash p) - 1\slash q}_{q, p} (\Gamma_*)$ to $\wt \eta_0 \in B^{2 + \delta - 1\slash p - 1\slash q}_{q, p} (\pd \BR^3_+)$ and $\wt u_{0, 3} \vert_{\pd \BR^3_+}, \wt d \vert_{t = 0} \in B^{2 (\delta - 1\slash p) - 1\slash q}_{q, p} (\pd \BR^3_+)$, respectively. Using these functions, define
\begin{equation*}
\begin{split}
\wt \eta_* (t) & := \Big[2 e^{- (\bI - \Delta_{\pd \BR^3_+})^{1\slash2} t} - e^{- 2 (\bI - \Delta_{\pd \BR^3_+})^{1\slash2} t} \Big] \eta_0 \\
& \quad + \Big[e^{- (\bI - \Delta_{\pd \BR^3_+}) t} - e^{- 2 (\bI - \Delta_{\pd \BR^3_+}) t} \Big] (\bI - \Delta_{\pd \BR^3_+})^{- 1} (\wt u_{0, 3} \vert_{\pd \BR^3_+} + \wt d \vert_{t = 0}).
\end{split}
\end{equation*}
for $t \ge 0$. Then, as we discussed in the proof of Theorem~\ref{th-model-half-free}, the function $\wt \eta_*$ has the regularity $\wt \eta_* \in \BE_{4, \delta} (J; \pd \BR^3_+)$ satisfying $\wt \eta_* (0) = \wt \eta_0$ and $\pd_t \wt \eta_* (0) = \wt u_{0, 3} \vert_{\pd \BR^3_+} + \wt d \vert_{t = 0}$. Setting $\eta_* = \wt \eta_* \vert_{\Gamma_*}$, we find that $\eta_* (0) = \eta_0$ and $\pd_t \eta_* (0) = u_{0, 3} \vert_{\Gamma_*} + d \vert_{t = 0}$. Hence, if we set $\eta_5 := \eta - \eta_*$, we observe $\eta_5 (0) = \pd_t \eta_5 (0) = 0$. \par
Next, let $q_0 := - 2 \mu \pd_3 u_{0, 3} \vert_{\Gamma_*} + \sigma \Delta_{\Gamma_*} \eta_0 + k_3 \vert_{t = 0} \in B^{2 (\delta - 1\slash p) - 1 - 1\slash q}_{q, p} (\Gamma_*)$. Here, it holds
\begin{equation*}
k_3 \vert_{t = 0} \in B^{2 (\delta - 1\slash p) - 1 - 1\slash q}_{q, p} (\Gamma_*)
\end{equation*}
because the trace operator $\mathrm{Tr}_{t = 0}$ and the boundary operator $\mathrm{Tr}_{\Gamma_*} \pd_{x'}$ may commute (cf. Lindemulder~\cite[pp.~88]{L19}). We extend $q_0$ to some $\wt q_0 \in B^{2 (\delta - 1\slash p) - 1 - 1\slash q}_{q, p} (\pd \BR^3_+)$ and define $\wt q_* (t) := e^{- (\bI - \Delta_{\pd \BR^3_+}) t} \wt q_0$. Then, we obtain
\begin{equation*}
\wt q_* \in F^{1\slash2 - 1\slash(2 q)}_{p, q, \delta} (J; L^q (\pd \BR^3_+)) \cap L^p_\delta (J; B^{1 - 1\slash q}_{q, q} (\pd \BR^3_+)),
\end{equation*}
see~\cite[Thm.~4.2]{MV14}. If we set $q_* := \wt q_* \vert_{\Gamma_*}$, it holds
\begin{equation*}
q_* \in F^{1\slash2 - 1\slash(2 q)}_{p, q, \delta} (J; L^q (\Gamma_*)) \cap L^p_\delta (J; B^{1 - 1\slash q}_{q, q} (\Gamma_*))
\end{equation*}
and $q_* (0) = q_0$. Hence, given $q_*$, there exists a unique solution $\pi_* \in L^p_\delta (J; \dot H^{1, q} (\Omega_*))$ of the weak problem
\begin{equation*}
\left\{\begin{aligned}
(\nabla \pi_* \mid \nabla \varphi)_{\Omega_*} & = 0, & \quad & \text{for any $\varphi \in H^{1, q'}_{\Gamma_*} (\Omega_*)$}, \\
\pi_* & = q_*, & \quad & \text{on $\Gamma_*$}
\end{aligned}\right.
\end{equation*}
as follows from Lemma~\ref{lem-A.4}, where we suppose that $2 < q < \infty$. Here, we have defined $H^{1, q'}_{\Gamma_*} (\Omega_*)$ by $H^{1, q'}_{\Gamma_*} (\Omega_*) := \{w \in H^{1, q'} (\Omega_*) \mid w = 0 \enskip \text{on $\Gamma_*$}\}$. \par
Now, we consider the parabolic problem
\begin{equation}
\label{reduced-parabolic}
\left\{\begin{aligned}
\pd_t \bu_* - \mu \Delta \bu_* & = - \nabla \pi_* + \bff, & \quad & \text{in $\Omega_* \times J$}, \\
\mu (\pd_3 u_{*, m} + \pd_m u_{*, 3}) & = k_m, & \quad & \text{on $\Gamma_* \times J$}, \\
2 \mu \pd_3 u_{*, 3} & = k_3 - q_* + \sigma \Delta_{\Gamma_*} \eta_*, & \quad & \text{on $\Gamma_* \times J$}, \\
P_{\Sigma_*} (2 \mu \bD (\bu_*) \bn_{\Sigma_*}) & = P_{\Sigma_*} \bg, & \quad & \text{on $\Sigma_* \times J$}, \\
\bu_* \cdot \bn_{\Sigma_*} & = \bg \cdot \bn_{\Sigma_*}, & \quad & \text{on $\Sigma_* \times J$}, \\
\mu (\pd_3 u_{*, m} + \pd_m u_{*, 3}) & = h_m, & \quad & \text{on $B \times J$}, \\
u_{*, 3} & = h_3, & \quad & \text{on $B \times J$}, \\
\bu_* (0) & = \bu_0, & \quad & \text{in $\Omega_*$},
\end{aligned}\right.
\end{equation}
where $m = 1, 2$. According to Lemma~\ref{lem-A.6}, this system admits a solution $\bu_* \in H^{1, p}_\delta (J; L^q (\Omega_*)^3) \cap L^p_\delta (J; H^{2, q} (\Omega_*)^3)$. Here, all relevant compatibility conditions of the data are valid from the assumption. Setting $\bu_5 = \bu - \bu_*$ and $\pi_5 = \pi - \pi_*$, there is no loss of generality in assuming $\bu_0 = \eta_0 = \bff = 0$. To deal with $f_\mathrm{div}$, let us consider the elliptic problem
\begin{equation}
\label{reduced-elliptic}
\left\{\begin{aligned}
\Delta \psi & = f_\mathrm{div} - \dv \bu_*, & \quad & \text{in $\Omega_*$}, \\
\psi & = 0, & \quad & \text{on $\Gamma_*$}, \\
\bn_{\Sigma_*} \cdot \nabla \psi & = 0, & \quad & \text{on $\Sigma_*$}, \\
\bn_B \cdot \nabla \psi & = 0, & \quad & \text{on $B$}.
\end{aligned}\right.
\end{equation}
Notice that, by the compatibility conditions for $(f_\mathrm{div}, \bg \cdot \bn_{\Sigma_*}, h_3)$, we observe that $\int_{\Omega_*} (f_\mathrm{div} - \dv \bu_*) \dx = 0$ and
\begin{equation*}
f_\mathrm{div} - \mathrm{\bu_*} \in {}_0 H^{1, p}_\delta (J; \dot H^{- 1, q} (\Omega_*)) \cap L^p_\delta (J; H^{1, q} (\Omega_*)).
\end{equation*}
Thus, by Lemma~\ref{lem-A.5}, there exists a solution $\psi$ satisfying $\nabla \psi \in {}_0 \BE_{1, \delta} (J; \Omega_*)$. Then, setting $\bu_6 := \bu_5 - \nabla \psi$ and $\pi_6 = \pi_5 + \pd_t \psi - \mu \Delta \psi$, there is no loss of generality in assuming $f_\mathrm{div} = \bg \cdot \bn_{\Sigma_*} = h_3 = 0$. Furthermore, all the remaining data have vanishing trace at $t = 0$.
\subsection{Localization procedure}
\label{sect-parametrix}
Theorem~\ref{th-MR-linear} can be proved via a standard localization procedure with the results given above. Since the discussion is parallel as in \cite[Sec.~2.3]{W17}, we do not repeat the argument.
\section{Local well-posedness}
\label{sect-nonlinear}
\subsection{Nonlinearity}
As an application of the contraction mapping principle, we construct a unique strong solution to \eqref{eq-fixed}. To this end, we rewrite the system as
\begin{equation}
\label{abstract-system}
L z = N (z), \quad (\bu (0), \eta (0)) = (\bu_0, \eta_0),
\end{equation}
where we have set $z = (\bu, \pi, \eta)$ and $L$ stands for the linear operator representing the left-hand side of \eqref{eq-fixed}. The nonlinear mapping $N = N (z) := (\bN_1, N_2, N_3, \bN_4, \bN_5, \bN_6, 0)$ is given by
\begin{equation*}
\begin{split}
\bN_1 & := \bF (\bu, \pi, \eta), \\
N_2 & := F_\mathrm{div} (\bu, \eta), \\
N_3 & := D (\bu, \eta), \\
\bN_4 & := (K_1 (\bu, \eta), K_2 (\bu, \eta), K_3 (\bu, \eta)), \\
\bN_5 & := \bG (\bu, \eta), \\
\bN_6 & := (H_1 (\bu, \eta), H_2 (\bu, \eta)).
\end{split}
\end{equation*}
For shake of simplicity, we define
\begin{equation*}
\begin{split}
\BE (T) & := \{(\bu, \pi, \eta) \in \BE_{1, \delta} (J; \Omega_*) \times \BE_{2, \delta} (J; \Omega_*) \times  \BE_{5, \delta} (J; \Gamma_*) \mid \mathrm{Tr}_{\Gamma_*} [\pi] \in \BE_{3, \delta} (J; \Gamma_*)\}, \\
\BE_{5, \delta} (J; \Gamma_*) & := F^{2 - 1\slash(2 q)}_{p, q, \delta} (J; L^q (\Gamma_*)) \cap H^{1, p}_\delta (J; B^{2 - 1\slash q}_{q, q} (\Gamma_*)) \\
& \qquad \qquad \cap F^{1\slash2 - 1\slash(2 q)}_{p, q, \delta} (J; H^{2, q} (\Gamma_*)) \cap L^p_\delta (J; B^{3 - 1\slash q}_{q, q} (\Gamma_*)), \\
\BF (T) & := \BF_{0, \delta} (J; \Omega_*) \times \BF_{1, \delta} (J; \Omega_*) \times \BF_{3, \delta} (J; \Gamma_*) \times \BF_{2, \delta}^3 (J; \Gamma_*) \times \BF^2_{2, \delta} (J; \Sigma_*) \times \BF_{2, \delta}^2 (J; B) \times \{0\}
\end{split}
\end{equation*}
for $T > 0$, where $J = (0, T)$. Here, the generic elements of $\BF (T)$ are the function $N (z)$. 
%By Theorem~\ref{th-MR-linear}, for each $T > 0$, we have already known that $L \colon \BE (T) \to \BF (T)$ is bounded and linear mapping and that $L \colon {}_0 \BE (T) \to {}_0 \BF (T)$ is an isomorphism.
With sufficiently small $\varepsilon > 0$, we set $\BU_T := \{z = (\bu, \pi, \eta) \in \BE (T) \mid \lvert \eta \rvert_{L^\infty (J; L^\infty (\Gamma_*))} < \varepsilon \}$. We first derive suitable estimates for the nonlinearity $N (z)$.
\begin{prop}
\label{prop-nonlinearity}
Let $p$, $q$, $\delta$ satisfy
\begin{equation}
\label{cond-pqdelta}
2 < p < \infty, \quad 3 < q < \infty, \quad \frac{1}{p} + \frac{3}{2 q} < \delta - \frac{1}2 \le \frac{1}2.
\end{equation}
Then it holds
\begin{enumerate}
\item $N$ is a real analytic mapping from $\BU_T$ to $\BF (T)$ and $N (0) = D N (0)$.
\item $D N (z) \in \CB (\BU_T, \BF (T))$ for any $z \in \BE (T)$. 
\end{enumerate}
Here, $D N$ denotes the Fr\'{e}chet derivative of $N$.
\end{prop}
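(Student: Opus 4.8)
The plan is to verify, coordinate by coordinate, that $N$ maps $\BU_T$ into $\BF(T)$ with the stated regularity; once the requisite bilinear and composition estimates are in place, real analyticity of $N$ together with $N(0)=0$, $DN(0)=0$ and $DN(z)\in\CB(\BE(T),\BF(T))$ follow formally from the multilinear structure of the nonlinearities. First I would record the embeddings forced by \eqref{cond-pqdelta}. Since $2(\delta-1/p)-3/q>1$, the time trace of $\BE_{1,\delta}(J;\Omega_*)$ combined with a Sobolev embedding gives $\bu\in\mathrm{BUC}(\ov J;\mathrm{BUC}^1(\ov{\Omega_*}))$; since $1/p<1/2-1/(2q)$ and $3-3/q>2$, one has $\eta\in\mathrm{BUC}(\ov J;\mathrm{BUC}^2(\ov D))$, $\nabla_{x'}\eta\in\mathrm{BUC}(\ov J;\mathrm{BUC}^1(\ov D))$ and $\pd_t\eta\in L^p_\delta(J;B^{2-1/q}_{q,q}(D))$. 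Together with the mixed-derivative theorem and the trace theorems of Lindemulder~\cite{L19} and Meyries--Veraar~\cite{MV12,MV14}, these also give the precise regularity of the spatial traces $\mathrm{Tr}_{\Gamma_*}[\bu]\in\BF_{3,\delta}(J;\Gamma_*)$, $\mathrm{Tr}_{\Gamma_*}[\nabla\bu],\mathrm{Tr}_{\Sigma_*}[\nabla\bu],\mathrm{Tr}_B[\nabla\bu]\in\BF_{2,\delta}(J;\cdot)$, and — because $q>2$ — of the further traces of these onto the edge $S_*$. Every entry of $N$ is a finite sum of products of such factors and of smooth functions of $(\eta,\nabla_{x'}\eta)$: the coefficients $M_j(\eta)$ and $(1+\chi'\eta)^{-1}$ I would expand into Neumann series (convergent because $\lvert\nabla\boldsymbol{\theta}_\eta\rvert_{L^\infty}$ is kept small, as fixed in Section~\ref{sect-transform}), and $v\mapsto v/\sqrt{1+|v|^2}$ into its Taylor series $v+O(|v|^3)$.

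I would then dispatch the bulk terms. For $\bN_1=\bF(\bu,\pi,\eta)$, a sum of products $m(\eta,\nabla_{x'}\eta)\,\pd^\alpha\bu$ with $|\alpha|\le 2$ and $m(\eta,\nabla_{x'}\eta)\,\nabla\pi$, estimating pointwise in $t$ by $\lvert m(\eta(t),\nabla_{x'}\eta(t))\rvert_{L^\infty}$ times the $L^q$-norm of the remaining factor and integrating the $p$-th power with weight $t^{p(1-\delta)}$ places $\bN_1\in\BF_{0,\delta}(J;\Omega_*)$. For $N_2=M_0(\eta)\dv\bu$ the $L^p_\delta(J;H^{1,q})$ part uses that $H^{1,q}(\Omega_*)$ is a multiplication algebra for $q>3$, while the $H^{1,p}_\delta(J;\dot H^{-1,q})$ part is handled through the identity $M_0(\eta)\dv\bu=\dv(M_0(\eta)\bu)-(\nabla M_0(\eta))\cdot\bu$ and duality, using $M_0(0)=0$ and $\pd_t\eta\in L^p_\delta(J;B^{2-1/q}_{q,q})$ to control $\pd_t M_0(\eta)$.

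The heart of the matter is the boundary part $N_3,\bN_4,\bN_5,\bN_6$, whose targets are intersection spaces $F^{s}_{p,q,\delta}(J;L^q(\cdot))\cap L^p_\delta(J;B^{2s}_{q,q}(\cdot))$ with $s\in\{\tfrac12-\tfrac1{2q},\,1-\tfrac1{2q}\}$. The $L^p_\delta(J;B^{2s}_{q,q})$ components I would control by the Besov product estimate quoted after Theorem~\ref{th-bent-slipslip} (\cite[Lem.~6.2.8]{PS16}), applied slice-wise in $t$, together with the spatial regularity of $\eta,\nabla_{x'}\eta$ and of the traces of $\bu,\nabla\bu$. The genuinely new step is the time-Triebel--Lizorkin component — e.g. $\bu'\cdot\nabla_{x'}\eta\in F^{1-1/(2q)}_{p,q,\delta}(J;L^q(\Gamma_*))$, and terms such as $2\mu\bD(\bu)\pd_{x_m}\eta$ or $|\nabla_{x'}\eta|^2\mu\pd_{x_3}u_m$ in $F^{1/2-1/(2q)}_{p,q,\delta}(J;L^q(\Gamma_*))$. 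I would prove these by showing the relevant intersection spaces are multiplication algebras for $p\ne q$: via the identification ${}_0F^{s}_{p,q,\delta}(\BR_+;X)\simeq F^{s}_{p,q,1}(\BR;X)$ under $f\mapsto t_+^{1-\delta}f$ — available because, since $N(0)=0$, every boundary term carries a factor vanishing at $t=0$ — the claim reduces to a paraproduct splitting $fg=T_fg+T_gf+R(f,g)$ on the line, each piece estimated through the Fourier-multiplier description of $F^{s}_{p,q}$ and the $\mathrm{BUC}$- and spatial-Besov-bounds on the two factors; these algebra and product statements I would collect as lemmas in the appendix. The geometric term $\dv_{x'}(\nabla_{x'}\eta/\sqrt{1+|\nabla_{x'}\eta|^2})-\Delta_{x'}\eta$ in $K_3$ then reduces, via the Taylor expansion above, to the same kind of products, and $\bG$ on $\Sigma_*$ and $H_m=-K_m$ on $B$ are of this form.

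Once all these boundedness estimates are available, real analyticity of $N\colon\BU_T\to\BF(T)$ and the remaining assertions follow formally: each coordinate of $N$ is a finite sum of bounded multilinear maps composed with the real-analytic Nemytski\u{\i} operators $(\eta,\nabla_{x'}\eta)\mapsto m(\eta,\nabla_{x'}\eta)$ on the Banach algebra carrying $(\eta,\nabla_{x'}\eta)$ (which embeds into $\mathrm{BUC}$, so the composition operator is real analytic); every summand contains at least one factor vanishing at $z=0$, whence $N(0)=0$, and since every summand is in fact a product of at least two such factors, the Fr\'echet derivative at $z=0$ vanishes; differentiating term by term and re-using the same estimates shows that $DN(z)\colon\BE(T)\to\BF(T)$ is bounded for $z\in\BU_T$, with operator norm controlled uniformly on bounded subsets. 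I expect the main obstacle to be precisely the time-Triebel--Lizorkin product estimates for $p\ne q$: the Sobolev--Slobodecki\u{\i} case $p=q$ is classical (cf.~\cite{PS11}), but for $p\ne q$ one must push paraproduct arguments through weighted vector-valued Triebel--Lizorkin spaces, carefully exploiting the smallness coming both from the vanishing time trace and from $\lvert\nabla\boldsymbol{\theta}_\eta\rvert_{L^\infty}$.
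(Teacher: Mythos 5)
Your overall strategy — $\mathrm{BUC}$ embeddings forced by \eqref{cond-pqdelta}, Neumann/Taylor expansions of the $\eta$-dependent coefficients, slice-in-time estimates for the bulk and divergence terms, multiplication-algebra estimates in the boundary intersection spaces, and a formal conclusion of real analyticity from the polynomial structure — is exactly what the paper does; the proof of Proposition~\ref{prop-nonlinearity} is essentially a pointer to the preceding Lemma~\ref{embd-nonlinear} and to \cite[Prop.~6.2]{PS10}. You correctly isolate the time-Triebel--Lizorkin product estimate for $p\ne q$ as the genuinely new ingredient.

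The specific route you propose for that estimate has a gap. You want to reduce to an unweighted paraproduct on the line via the identification $f \in {}_0F^s_{p,q,\delta}(\BR_+;X) \Leftrightarrow t_+^{1-\delta}f \in F^s_{p,q,1}(\BR;X)$, but this map is not multiplicative: $t_+^{1-\delta}(fg) \ne (t_+^{1-\delta}f)(t_+^{1-\delta}g)$, so after the substitution only one factor can be transported to the unweighted space while the other remains weighted, and the paraproduct pieces $T_fg$, $T_gf$, $R(f,g)$ on the line no longer live in compatible spaces. Moreover, you justify the identification by claiming that, since $N(0)=0$, every boundary term has vanishing time trace; but $N(0)=0$ refers to the zero element $z\equiv0$ of $\BE(T)$, not to $t=0$, and in general $N_3(z)\vert_{t=0}=-\bu_0'\cdot\nabla_{x'}\eta_0\ne0$, so the ${}_0$-subscript required for the $t_+^{1-\delta}$-identification is not available unless one first subtracts off the initial data. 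The paper's Lemma~\ref{embd-nonlinear}(4) sidesteps both problems by using the translation-difference characterization of $F^s_{p,q,\delta}(\BR;X)$ from \cite[Prop.~2.1]{MV14}: with the seminorm built from $\int_{\lvert h\rvert\le y}\lvert\tau_h f-f\rvert_{X}\,\mathrm{d}h$, one has $[fg]^{(1)}\le\lvert f\rvert_{L^\infty}[g]^{(1)}+[f]^{(1)}\lvert g\rvert_{L^\infty}$ directly in the weighted space, because the power weight $\lvert t\rvert^{p(1-\delta)}$ enters only in the outermost $L^p$-norm and commutes with pulling out pointwise-in-time $L^\infty$ factors; paraproducts (Bahouri--Chemin--Danchin) are then used only for the spatial Besov component, as you also propose. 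Your argument would close if you either ran the paraproduct splitting directly in the weighted space $F^s_{p,q,\delta}$ (the power weight is $A_p$, so the Littlewood--Paley machinery applies) or switched to the translation-difference characterization; the reduction via $t_+^{1-\delta}$ as written does not.
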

\begin{rema}
\label{rema-cond-pqdelta}
(1) The last condition of \eqref{cond-pqdelta} guarantees the embedding $B^{2 (\delta - 1\slash p)}_{q, p} (\Omega_*) \hookrightarrow \mathrm{BUC} (J; \mathrm{BUC}^1 (\Omega_*))$. Furthermore, this condition also induces the conditions
\begin{equation*}
\frac{1}{p} + \frac{1}{2 q} < \delta - \frac{1}2, \qquad \frac{1}{p} + \frac{1}{q} < \delta - \frac{1}2,
\end{equation*}
and thus we need all compatibility conditions on the boundaries and the contact lines given before whenever $p$, $q$, $\delta$ satisfy \eqref{cond-pqdelta}. 
\end{rema}
To prove Proposition~\ref{prop-nonlinearity}, we introduce the following useful lemma.
\begin{lemm}
\label{embd-nonlinear}	
Assume that $p$, $q$, and $\delta$ satisfy \eqref{cond-pqdelta} and let $J = [0, T]$. Then the following assertions are valid.
\begin{enumerate}
\item $\BE_{1, \delta} (J; \Omega_*) \hookrightarrow \mathrm{BUC}^1 (J; \mathrm{BUC} (\ov{\Omega_*}))$.
\item $\BE_{3, \delta} (J; \Gamma_*) \hookrightarrow \mathrm{BUC} (J; \mathrm{BUC} (\ov{\Gamma_*}))$.
\item $\BE_{4, \delta} (J; \Gamma_*) \hookrightarrow \BE_{5, \delta} (J; \Gamma_*) \hookrightarrow \mathrm{BUC}^1 (J; \mathrm{BUC}^1 (\ov{\Gamma_*})) \cap \mathrm{BUC} (J; \mathrm{BUC}^2 (\ov{\Gamma_*}))$.
\item $\BE_{3, \delta} (J; \Gamma_*)$ and $\BF_{3, \delta} (J; \Gamma_*)$ are multiplication algebras.
\end{enumerate}
Here, in the assertions (1)--(3), the embedding constants are independent of $T > 0$ if the time traces vanish at $t = 0$.
\end{lemm}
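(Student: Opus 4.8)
The plan is to obtain all four assertions from three standard ingredients: (i) the mixed derivative theorem and the interpolation identities for the scales $H^{s,p}_\delta$ and (anisotropic) $F^s_{p,q,\delta}$ in the time variable (see \cite[Ch.~4]{PS16}); (ii) the weighted, vector-valued Sobolev-type embeddings of Meyries and Veraar \cite{MV12,MV14} for the power weight $t^{p(1-\delta)}$ --- in particular $H^{s,p}_\delta(J;X)\hookrightarrow\mathrm{BUC}(J;X)$ and $F^{s}_{p,q,\delta}(J;X)\hookrightarrow\mathrm{BUC}(J;X)$ whenever $s-(1-\delta)>1/p$ --- together with the fact that, on the subspaces with vanishing trace at $t=0$, these embeddings hold with $T$-independent constants (extend by zero to $\BR_+$ and use the intrinsic characterizations of \cite[Ch.~3]{PS16}); and (iii) the classical embeddings in the space variable on the Lipschitz (here $C^4$) domains $\Omega_*\subset\BR^3$ and $\Gamma_*\subset\BR^2$, namely $H^{s,q}(\Omega_*)\hookrightarrow\mathrm{BUC}^k(\ov{\Omega_*})$ for $s-3/q>k$ and $B^{s}_{q,q}(\Gamma_*)\hookrightarrow\mathrm{BUC}^k(\ov{\Gamma_*})$ for $s-2/q>k$. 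For each assertion the recipe is the same: interpolate the defining components against one another to produce a single weighted anisotropic space $F^{\sigma_t}_{p,q,\delta}(J;H^{\sigma_x,q})$ or $F^{\sigma_t}_{p,q,\delta}(J;B^{\sigma_x}_{q,q})$, then apply (ii) in time and (iii) in space; this reduces the claim to a numerical inequality among $1/p$, $1/q$, $\delta$, which in every case follows from \eqref{cond-pqdelta} (recall also Remark~\ref{rema-cond-pqdelta}, which records the consequences $1/p+1/(2q)<\delta-1/2$ and $1/p+1/q<\delta-1/2$).

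For (1) I would first use the mixed derivative theorem to get $\BE_{1,\delta}(J;\Omega_*)\hookrightarrow H^{\theta,p}_\delta(J;H^{2(1-\theta),q}(\Omega_*)^3)$ for every $\theta\in[0,1]$, and then choose $\theta$ in the window $1-\delta+1/p<\theta<\tfrac12-\tfrac3{2q}$; this window is nonempty precisely because the last condition in \eqref{cond-pqdelta} is $1/p+3/(2q)<\delta-1/2$, and for such $\theta$ one combines $H^{\theta,p}_\delta(J;\cdot)\hookrightarrow\mathrm{BUC}(J;\cdot)$ with $H^{2(1-\theta),q}(\Omega_*)\hookrightarrow\mathrm{BUC}^1(\ov{\Omega_*})$ to conclude. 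Assertion (2) is entirely analogous: since $\BE_{3,\delta}(J;\Gamma_*)=\BF_{2,\delta}(J;\Gamma_*)$, interpolating its two components produces $F^{(1-\theta)(1/2-1/(2q))}_{p,q,\delta}(J;B^{\theta(1-1/q)}_{q,q}(\Gamma_*))$, and one may choose $\theta$ so that the time smoothness exceeds $1/p+1-\delta$ and the spatial one exceeds $2/q$ because $\frac{1/p+1-\delta}{1/2-1/(2q)}+\frac{2/q}{1-1/q}<\frac{q-3}{q-1}+\frac{2}{q-1}=1$ (using $1/p+1-\delta<\tfrac12-\tfrac3{2q}$ and $q>3$), after which (iii) gives $B^{\sigma}_{q,q}(\Gamma_*)\hookrightarrow\mathrm{BUC}(\ov{\Gamma_*})$.

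For (3), I would split it into two parts. The inclusion $\BE_{4,\delta}(J;\Gamma_*)\hookrightarrow\BE_{5,\delta}(J;\Gamma_*)$ amounts to producing the extra component $F^{1/2-1/(2q)}_{p,q,\delta}(J;H^{2,q}(\Gamma_*))$ by interpolating $F^{2-1/(2q)}_{p,q,\delta}(J;L^q(\Gamma_*))$ against $L^p_\delta(J;B^{3-1/q}_{q,q}(\Gamma_*))$ at the weight $\theta$ for which the time smoothness equals $1/2-1/(2q)$; the corresponding spatial smoothness is then $\theta(3-1/q)=\frac{9q-3}{4q-1}>2$ for $q>1$, hence embeds into $H^{2,q}(\Gamma_*)$. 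For the embedding of $\BE_{5,\delta}(J;\Gamma_*)$ itself, the $\mathrm{BUC}(J;\mathrm{BUC}^2(\ov{\Gamma_*}))$ statement is obtained by the same interpolation of $F^{2-1/(2q)}_{p,q,\delta}(J;L^q)$ against $L^p_\delta(J;B^{3-1/q}_{q,q})$ followed by (ii)--(iii), the relevant threshold $\frac{1/p+1-\delta}{2-1/(2q)}+\frac{2+2/q}{3-1/q}<1$ being satisfied for $q>3$ by \eqref{cond-pqdelta}; and for the $\mathrm{BUC}^1$-in-time part one observes that $\pd_t$ maps $\BE_{5,\delta}(J;\Gamma_*)$ into $F^{1-1/(2q)}_{p,q,\delta}(J;L^q(\Gamma_*))\cap L^p_\delta(J;B^{2-1/q}_{q,q}(\Gamma_*))=\BF_{3,\delta}(J;\Gamma_*)$, so it suffices to prove $\BF_{3,\delta}(J;\Gamma_*)\hookrightarrow\mathrm{BUC}(J;\mathrm{BUC}^1(\ov{\Gamma_*}))$, which is again of the above type (time threshold $1-1/(2q)-(1-\delta)>1/p$, i.e. $1/p+1/(2q)<\delta$, and spatial threshold $B^{2-1/q}_{q,q}(\Gamma_*)\hookrightarrow\mathrm{BUC}^1(\ov{\Gamma_*})$ for $q>3$).

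For (4), by (2) and the sub-result $\BF_{3,\delta}(J;\Gamma_*)\hookrightarrow\mathrm{BUC}(J;\mathrm{BUC}^1(\ov{\Gamma_*}))$ just established, both $\BE_{3,\delta}(J;\Gamma_*)$ and $\BF_{3,\delta}(J;\Gamma_*)$ embed into $L^\infty(J\times\Gamma_*)$; combining this with the Leibniz/paraproduct estimates $\lvert fg\rvert_X\le C(\lvert f\rvert_{L^\infty}\lvert g\rvert_X+\lvert f\rvert_X\lvert g\rvert_{L^\infty})$, valid for each defining component of the form $F^{s}_{p,q,\delta}(J;L^q(\Gamma_*))$ or $L^p_\delta(J;B^{s}_{q,q}(\Gamma_*))$ with $s>0$ (pointwise multiplication in the $F$- and $B$-scales, applied in the mixed-norm setting, cf. \cite[Ch.~6]{PS16}), shows that these two spaces are closed under products; the $T$-uniformity on the vanishing-trace subspaces is inherited from the $T$-uniform forms of (ii) and (iii). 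I expect the genuine difficulty of the lemma to be concentrated in exactly this anisotropic, weighted bookkeeping: keeping precise track of how the power weight $t^{p(1-\delta)}$ feeds into the Meyries--Veraar embeddings, and checking that the multiplier and paraproduct estimates close in the honestly mixed norms $F^{s}_{p,q,\delta}(J;L^q(\Gamma_*))$ rather than on a single scalar scale; for this I would rely systematically on \cite{MV12,MV14,L19} and \cite[Ch.~3,4,6]{PS16}.
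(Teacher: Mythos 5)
Your proposal follows essentially the same strategy as the paper: interpolate between the defining components of each intersection space via Meyries--Veraar \cite{MV12,MV14}, then apply the weighted time embeddings into $\mathrm{BUC}$ and the classical spatial Sobolev/Besov embeddings, reducing every assertion to a linear-fractional inequality in $1/p$, $1/q$, $\delta$ that is exactly \eqref{cond-pqdelta}. The implementational differences are minor and sound: in (1) you invoke the mixed derivative theorem where the paper uses the trace method of real interpolation (Amann), both giving the same window $1-\delta+1/p<\theta<\tfrac12-\tfrac3{2q}$; in (3) you produce the $F^{1/2-1/(2q)}_{p,q,\delta}(J;H^{2,q})$ component by interpolating $F^{2-1/(2q)}_{p,q,\delta}(J;L^q)$ against $L^p_\delta(J;B^{3-1/q}_{q,q})$, whereas the paper interpolates $H^{1,p}_\delta(J;B^{2-1/q}_{q,q})$ against $L^p_\delta(J;B^{3-1/q}_{q,q})$ via $(B^{2-1/q}_{q,q},B^{3-1/q}_{q,q})_{1/q,1}=B^2_{q,1}$, and for the $\mathrm{BUC}^1$-in-time part you pass through $\pd_t\colon\BE_{5,\delta}\to\BF_{3,\delta}$ while the paper interpolates the full intersection directly --- your combined condition $\frac{1/p+1-\delta}{1-1/(2q)}+\frac{1+2/q}{2-1/q}<1$ reduces to $\frac{2qa+q+2}{2q-1}<1$ with $a=1/p+1-\delta<\frac{q-3}{2q}$, so it does close. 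The one place where you acknowledge but do not settle a genuine difficulty is assertion (4): you cite ``paraproduct estimates'' for the $F^{s}_{p,q,\delta}(J;L^q(\Gamma_*))$ component, but this is an $L^q(\Gamma_*)$-valued Triebel--Lizorkin scale in \emph{time}, so scalar paraproduct theory does not apply off the shelf. The paper instead uses the translation/difference characterization \cite[Prop.~2.1]{MV14} of $F^{s}_{p,q,\delta}(\BR;L^q)$: from $\tau_h(fg)-fg=\tau_hf(\tau_hg-g)+(\tau_hf-f)g$ one bounds the difference seminorm of $fg$ by $\lvert f\rvert_{L^\infty}$ times the seminorm of $g$ plus the symmetric term, and combines this with the genuine paraproduct estimate in the Besov component $L^p_\delta(J;B^{s}_{q,q}(\Gamma_*))$. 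That is the concrete step you would need to supply to make (4) watertight.
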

\begin{rema}
The assertions of Lemma~\ref{embd-nonlinear} is also valid for the case when the domain $\Omega_* \subset \BR^n$, $n \ge 2$, is surrounded by smooth boundary $\Gamma_*$ (at least of class $C^{2 -}$) if we replace \eqref{cond-pqdelta} by
\begin{equation*}
2 < p < \infty, \quad n < q < \infty, \quad \frac{1}{p} + \frac{n}{2 q} < \delta - \frac{1}2 \le \frac{1}2.
\end{equation*}
\end{rema}
\begin{proof}
Using extensions and restrictions, and employing the standard localization procedure, it suffices to consider the cases $\Gamma_* \times J = \BR^2 \times \BR$, $\Omega_* \times J = \BR^3 \times \BR$, see also \cite{L19}. \par
(1) This is a direct consequence of the trace method of real interpolation (cf. Amann~\cite[Thm.~III.4.10.2]{A95}). Here, the last condition of \eqref{cond-pqdelta} ensures $B^{2 (\delta - 1\slash p)}_{q, p} (\BR^3) \hookrightarrow \mathrm{BUC}^1 (\BR^3)$, see Triebel~\cite[Thm.~2.5.7]{T95} (cf. Sawano~\cite[Prop.~2.4]{Sbook}). \par
(2) From \cite[Thm.~3.1]{MV14}, we have
\begin{equation*}
\BE_{3, \delta} (\BR; \BR^2) \hookrightarrow F^{(1 - \theta_1) (1\slash2 - 1\slash(2 q))}_{p, q, \delta} (\BR; B^{\theta_1 (1 - 1\slash q)}_{q, q} (\BR^2))
\end{equation*}
for any $0 < \theta_1 < 1$, where we have used $L^q (\BR^2) = F^0_{q, 2} (\BR^2)$ and the real interpolation
\begin{equation*}
(F^0_{q, 2} (\BR^2), F^{1 - 1\slash q}_{q, q} (\BR^2))_{\theta_1, q} = B^{\theta_1 (1 - 1\slash q)}_{q, q} (\BR^2),
\end{equation*}
cf., \cite[Thm.~2.4.1]{T95}. Then, by \cite[Prop.~7.4]{MV12} and \cite[Thm.~2.5.7]{T95} (cf. \cite[Prop.~2.4]{Sbook}), we see that
\begin{equation*}
F^{(1 - \theta_1) (1\slash2 - 1\slash(2 q))}_{p, q, \delta} (\BR; B^{\theta_1 (1 - 1\slash q)}_{q, q} (\BR^2)) \hookrightarrow \mathrm{BUC} (\BR; \mathrm{BUC} (\BR^2))
\end{equation*}
if $\theta_1$ satisfies
\begin{equation*}
(1 - \theta_1) \bigg(\frac{1}2 - \frac{1}{2 q}\bigg) - \bigg(1 - \delta + \frac{1}{p}\bigg) > 0, \qquad \theta_1 \bigg(1 - \frac{1}{q}\bigg) - \frac2{q} > 0.
\end{equation*}
Noting the last condition of \eqref{cond-pqdelta}, these both inequalities are equivalent to finding the constant $0 < \theta_1 < 1$ such that
\begin{equation*}
\frac{2 q^{- 1}}{1 - q^{- 1}} < \theta_1 < \frac{2 (\delta - p) - (2 q)^{- 1} - 2^{- 1}}{1 - q^{- 1}}.
\end{equation*}
Notice that this set is nonempty due to the last condition of \eqref{cond-pqdelta}. Hence, for any $\theta_1$ satisfying this condition, we obtain the embedding $\BE_{3, \delta} (\BR; \BR^2) \hookrightarrow \mathrm{BUC} (\BR; \mathrm{BUC} (\BR^2))$. \par	
(3)	By \cite[Thm.~3.1]{MV14}, we find that
\begin{equation*}
\begin{split}
& H^{1, p}_\delta (\BR; B^{2 - 1\slash q}_{q, q} (\BR^2)) \cap L^p_\delta (\BR; B^{3 - 1\slash q}_{q, q} (\BR^2)) \\
& \quad \hookrightarrow F^{1\slash2}_{p, q, \delta} (\BR; B^{2 - 1\slash q}_{q, q} (\BR^2)) \cap F^0_{p, \infty, \delta} (\BR; B^{3 - 1\slash q}_{q, q} (\BR^2)) \\
& \quad \hookrightarrow F^{1\slash2 - 1\slash(2 q)}_{p, q, \delta} (\BR; B^2_{q, 1} (\BR^2)) \\
& \quad \hookrightarrow F^{1\slash2 - 1\slash(2 q)}_{p, q, \delta} (\BR; H^{2, q} (\BR^2)),
\end{split}
\end{equation*}
where we have used the real interpolation $(B^{2 - 1\slash q}_{q, q} (\BR^2), B^{3 - 1\slash q}_{q, q} (\BR^2))_{1\slash q, 1} = B^2_{q, 1} (\BR^2)$ (cf. Triebel~\cite[Thm.~2.4.1]{T95}). This yields $\BE_{4, \delta} (J; \Gamma_*) \hookrightarrow \BE_{5, \delta} (J; \Gamma_*)$. Similarly, from \cite[Thm.~3.1]{MV14}, we see that
\begin{equation*}
\begin{split}
\BE_{5, \delta} (\BR; \BR^2) & \hookrightarrow F^{2 - 1\slash(2 q)}_{p, q, \delta} (\BR; F^0_{q, 2} (\BR^2)) \cap F^1_{p, \infty, \delta} (\BR; B^{2 - 1\slash q}_{q, q} (\BR^2)) \cap F^0_{p, \infty, \delta} (\BR; B^{3 - 1\slash q}_{q, q} (\BR^2)) \\
& \hookrightarrow \Big(F^{2 - 1\slash(2 q)}_{p, q, \delta} (\BR; F^0_{q, 2} (\BR^2)) \cap F^1_{p, q, \delta} (\BR; F^{2 - 1\slash q}_{q, q} (\BR^2))\Big) \\
& \qquad \qquad \cap \Big(F^1_{p, \infty, \delta} (\BR; B^{2 - 1\slash q}_{q, q} (\BR^2)) \cap F^0_{p, \infty, \delta} (\BR; B^{3 - 1\slash q}_{q, q} (\BR^2))\Big) \\
& \hookrightarrow F^{2 - 1\slash(2 q) - \theta_2 (1 - 1\slash(2 q))}_{p, q, \delta} (\BR; B^{\theta_2 (2 - 1\slash q)}_{q, q} (\BR^2)) \cap F^{1 - \theta_3}_{p, \infty, \delta} (\BR; B^{\theta_3 + 2 - 1\slash q}_{q, q} (\BR^2))
\end{split}
\end{equation*}
with $0 < \theta_2, \theta_3 < 1$ because $L^q (\BR^2) = F^0_{q, 2} (\BR^2)$ and $B^{2 - 1\slash q}_{q, q} (\BR^2) = F^{2 - 1\slash q}_{q, q} (\BR^2)$. Here, we have used the interpolation properties 
\begin{equation*}
\begin{split}
(F^0_{q, 2} (\BR^2), F^{2 - 1\slash q}_{q, q} (\BR^2))_{\theta_2, q} & = B^{\theta_2 (2 - 1\slash q)}_{q, q} (\BR^2), \\
(B^{2 - 1\slash q}_{q, q} (\BR^2), B^{3 - 1\slash q}_{q, q} (\BR^2))_{\theta_3, q} & = B^{\theta_3 + 2 - 1\slash q}_{q, q} (\BR^2),
\end{split}
\end{equation*}
see, e.g., \cite[Thm.~2.4.1]{T95}. Using \cite[Prop.~7.4]{MV12} and \cite[Thm.~2.5.7]{T95} (cf. \cite[Prop.~2.4]{Sbook}), we obtain
\begin{equation*}
\begin{split}
F^{2 - 1\slash(2 q) - \theta_2 (1 - 1\slash(2 q))}_{p, q, \delta} (\BR; B^{\theta_2 (2 - 1\slash q)}_{q, q} (\BR^2)) & \hookrightarrow \mathrm{BUC}^1 (\BR; \mathrm{BUC}^1 (\BR^2)), \\
F^{1 - \theta_3}_{p, \infty, \delta} (\BR; B^{\theta_3 + 2 - 1\slash q}_{q, q} (\BR^2)) & \hookrightarrow \mathrm{BUC} (\BR; \mathrm{BUC}^2 (\BR^2))
\end{split}
\end{equation*}
whenever
\begin{equation*}
\begin{aligned}
2 - \frac{1}{2 q} - \theta_2 \bigg(1 - \frac{1}{2 q}\bigg) - \bigg(1 - \delta + \frac{1}{p}\bigg) & > 1, & \qquad \theta_2 \bigg(2 - \frac{1}{q}\bigg) - \frac2{q} & > 1, \\
1 - \theta_3 - \bigg(1 - \delta + \frac{1}{p}\bigg) & > 0, & \qquad \theta_3 + 2 - \frac{1}{q}- \frac2{q} & > 2.
\end{aligned}
\end{equation*}
Using the last condition of \eqref{cond-pqdelta}, we deduce that the constants $\theta_2$ and $\theta_3$ enjoy the conditions
\begin{equation*}
\begin{split}
\frac{2^{- 1} + 3 (2 q)^{- 1} - (2 q)^{- 1}}{1 - (2 q)^{- 1}} & < \theta_2 < \frac{\delta - p^{- 1} - (2 q)^{- 1}}{1 - (2 q)^{- 1}}, \\
\frac{3}{q} & < \theta_3 < \delta - \frac{1}{p},
\end{split}
\end{equation*}
respectively. Hence, for $\theta_2$ and $\theta_3$ both satisfying these inequalities, we arrive at the desired assertion. \par
(4) Let $f, g \in \BE_{3, \delta} (J; \Gamma_*)$. From well-known paraproduct estimates (cf. Bahouri et. al~\cite[Cor.~2.86.]{BCD11}), it holds
\begin{equation}
\label{est-E3-1}
\begin{split}
& \lvert f g \rvert_{L^p_\delta (\BR; B^{1 - 1\slash q}_{q, q} (\BR^2))} \\
& \le C \Big\lvert \lvert f \rvert_{L^\infty (\BR^2)} \lvert g \rvert_{B^{1 - 1\slash q}_{q, q} (\BR^2)} + \lvert f \rvert_{B^{1 - 1\slash q}_{q, q} (\BR^2)} \lvert g \rvert_{L^\infty (\BR^2)} \Big\rvert_{L^p_\delta (\BR)} \\
& \le C \Big(\lvert f \rvert_{L^\infty (\BR; L^\infty (\BR^2))} \lvert g \rvert_{L^p_\delta (\BR; B^{1 - 1\slash q}_{q, q} (\BR^2))} + \lvert f \rvert_{L^p_\delta (\BR; B^{1 - 1\slash q}_{q, q} (\BR^2))} \lvert g \rvert_{L^\infty (\BR; L^\infty (\BR^2))} \Big).
\end{split}
\end{equation}
For $f \in L^p_\delta (\BR; L^q (\BR^2))$ we define
\begin{equation*}
[ f ]_{F^s_{p, q, \delta} (\BR; L^q (\BR^2))}^{(1)} = \bigg\lvert \bigg(\int_0^\infty y^{- s q} \bigg(y^{- 1} \int_{\lvert h \rvert \le y} \lvert \tau_h f - f \rvert_{L^q (\BR^2)} \,\mathrm{d} h \bigg)^q \frac{\mathrm{d} y}{y} \bigg)^{1\slash q} \bigg\rvert_{L^p_\delta (\BR)}
\end{equation*}
with $s = 1\slash2 - 1\slash(2 q) < 1$ and set
\begin{equation*}
\lVert f \rVert_{F^s_{p, q, \delta} (\BR; L^q (\BR^2))}^{(1)} = \lvert f \rvert_{L^p_\delta (\BR; L^q (\BR^2))} + [ f ]_{F^s_{p, q, \delta} (\BR; L^q (\BR^2))}^{(1)}.
\end{equation*}
Here $\{\tau_h\}_{h \in \BR}$ denotes the group of translations defined by
\begin{equation*}
(\tau_h f) (x) = f (x + h) \qquad (x, h \in \BR).
\end{equation*}
Then, according to \cite[Prop.~2.1]{MV14}, we know that $F^s_{p, q, \delta} (\BR; L^q (\BR^2))$ can be characterized as
\begin{equation*}
C^{- 1} \lvert f \rvert_{F^s_{p, q, \delta} (\BR; L^q (\BR^2))} \le \lVert f \rVert_{F^s_{p, q, \delta} (\BR; L^q (\BR^2))}^{(1)} \le C \lvert f \rvert _{F^s_{p, q, \delta} (\BR; L^q (\BR^2))}
\end{equation*}
with some constant $C > 0$. Writing
\begin{equation*}
\tau_h (f g) - f g = \tau_h f (\tau_h g - g) + (\tau_h f - f) g
\end{equation*}
and using the H\"older inequality, we have
\begin{equation*}
[f g]_{F^s_{p, q, \delta} (\BR; L^q (\BR^2))}^{(1)} \le \lvert f \rvert_{L^\infty (\BR; L^\infty (\BR^2))} [ g ]_{F^s_{p, q, \delta} (\BR; L^q (\BR^2))} + [ f ]_{F^s_{p, q, \delta} (\BR; L^q (\BR^2))} \lvert g \rvert_{L^\infty (\BR; L^\infty (\BR^2))}.
\end{equation*}
Noting
\begin{equation*}
\lvert f g \rvert_{L^p_\delta (\BR; L^q (\BR^2))} \le \lvert f \rvert_{L^\infty (\BR; L^\infty (\BR^2))} \lvert g \rvert_{L^p_\delta (\BR; L^q (\BR^2))}, 
\end{equation*}
we achieve at the inequality
\begin{equation}
\label{est-E3-2}
\begin{split}
& \lvert f g \rvert_{F^s_{p, q, \delta} (\BR; L^q (\BR^2))} \\ & \le C \lVert f g \rVert_{F^s_{p, q, \delta} (\BR; L^q (\BR^2))}^{(1)} \\
& \le C \Big(\lvert f \rvert_{L^\infty (\BR; L^\infty (\BR^2))} \lVert g \rVert^{(1)}_{F^s_{p, q, \delta} (\BR; L^q (\BR^2))} + \lVert f \rVert_{F^s_{p, q, \delta} (\BR; L^q (\BR^2))}^{(1)} \lvert g \rvert_{L^\infty (\BR; L^\infty (\BR^2))} \Big) \\
& \le C \Big(\lvert f \rvert_{L^\infty (\BR; L^\infty (\BR^2))} \lvert g \rvert_{F^s_{p, q, \delta} (\BR; L^q (\BR^2))} + \lvert f \rvert_{F^s_{p, q, \delta} (\BR; L^q (\BR^2))} \lvert g \rvert_{L^\infty (\BR; L^\infty (\BR^2))} \Big).
\end{split}
\end{equation}
Consequently, by \eqref{est-E3-1} and \eqref{est-E3-2}, it holds
\begin{equation*}
\lvert f g \rvert_{\BE_{3, \delta} (\BR; \BR^2)} \le C \Big(\lvert f \rvert_{L^\infty (\BR; L^\infty (\BR^2))} \lvert g \rvert_{\BE_{3, \delta} (\BR; \BR^2)} + \lvert f \rvert_{\BE_{3, \delta} (\BR; \BR^2)} \lvert g \rvert_{L^\infty (\BR; L^\infty (\BR^2))} \Big).
\end{equation*}
Since $\BE_{3, \delta} (\BR; \BR^2)$ embeds continuously into $\mathrm{BUC} (\BR; \mathrm{BUC} (\BR^2))$, we see that $\BE_{3, \delta} (\BR; \BR^2)$ is a multiplication algebra. \par
It remains to prove that $\BF_{3, \delta} (\BR; \BR^2)$ is a multiplication algebra. Employing the same argument as above, we only need to prove that $\BF_{3, \delta} (\BR; \BR^2)$ embeds continuously into $\mathrm{BUC} (\BR; \mathrm{BUC} (\BR^2))$. For arbitrary $0 < \theta_4 < \infty$, it holds
\begin{equation*}
\BF_{3, \delta} (\BR; \BR^2) \hookrightarrow F^{(1 - \theta_4) (1 - 1\slash(2 q))}_{p, q, \delta} (\BR; B^{\theta_4 (2 - 1\slash q)}_{q, q} (\BR^2)).
\end{equation*}
If $\theta_4$ satisfies
\begin{equation*}
(1 - \theta_4) \bigg(1 - \frac{1}{2 q}\bigg) - \bigg(1 - \delta + \frac{1}{p}\bigg) > 0, \qquad \theta_4 \bigg(2 - \frac{1}{q}\bigg) - \frac2{q} > 0,
\end{equation*}
we obtain
\begin{equation*}
F^{(1 - \theta_4) (1 - 1\slash(2 q))}_{p, q, \delta} (\BR; B^{\theta_4 (2 - 1\slash q)}_{q, q} (\BR^2)) \hookrightarrow \mathrm{BUC} (\BR; \mathrm{BUC} (\BR^2)).
\end{equation*}
The conditions on $\theta_4$ are rewritten as
\begin{equation*}
\frac{q^{- 1}}{1 - (2 q)^{- 1}} < \theta_4 < \frac{\delta - p^{- 1} - (2 q)^{- 1}}{1 - (2 q)^{- 1}},
\end{equation*}
and thus for any $\theta_4$ satisfying this inequality, we obtain the required property. This completes the proof.
\end{proof}
\begin{proof}[Proof of Proposition~\ref{prop-nonlinearity}]	
Since the mapping $z \mapsto N (z)$ is polynomial, it suffices to show that $N \colon \BU_T \to \BF (T)$ is well-defined and continuous. Noting the mapping properties of the differential operators, cf., \cite[Prop.~3.10]{MV12} and \cite[pp.~88]{L19}, the assertions can be proved as in \cite[Prop.~6.2]{PS10}.
\end{proof}

\subsection{Nonlinear well-posedness}
We are now ready to prove the existence result for the transformed problem~\eqref{eq-fixed}.
\begin{theo}
\label{th-fixed}
Let $T > 0$ be a given constant. Suppose that \eqref{cond-pqdelta} holds. Then there exists a constant $\varepsilon = \varepsilon (T) > 0$ such that for all initial data $(\bu_0, \eta_0) \in B^{2 (\delta - 1\slash p)}_{q, p} (\Omega_*)^3 \times B^{2 + \delta - 1\slash p - 1\slash q}_{q, p} (\Gamma_*)$ satisfying the compatibility conditions
\begin{equation}
\label{compati-fixed}
\left\{\begin{aligned}
\dv \bu_0 & = F_\mathrm{div} (\bu_0, \eta_0), & \quad & \text{in $\Omega_*$}, \\
\mu (\pd_3 u_{0, m} + \pd_m u_{0, 3}) & = K_m (\bu_0, \eta_0), & \quad & \text{on $\ov{\Gamma_*}$}, \\
\bu_0 \cdot \bn_{\Sigma_*} = 0, \quad P_{\Sigma_0} (2 \mu \bD (\bu_0) \bn_{\Sigma_*}) & = \bG (\bu_0, \eta_0), & \quad & \text{on $\ov{\Sigma_*}$}, \\
\quad u_{0, 3} = 0, \quad \mu (\pd_3 u_{0, m} + \pd_m u_3) & = H_m (\bu_0, \eta_0), & \quad & \text{on $\ov B$}, \\
(\nabla_{\pd D} \eta) \cdot \bn_{\pd D} & = 0, & \quad & \text{on $S_*$},
\end{aligned}\right.
\end{equation}
where $m = 1, 2$, and the smallness condition
\begin{equation*}
\lvert \bu_0 \rvert_{B^{2 (\delta - 1\slash p)}_{q, p} (\Omega_*)} + \lvert \eta_0 \rvert_{B^{2 + \delta - 1\slash p - 1\slash q}_{q, p} (\Gamma_*)} \le \varepsilon
\end{equation*}
the transformed problem~\eqref{eq-fixed} has a unique solution $(\bu, \pi, \eta) \in \BE (T)$. Furthermore, the solution $(\bu, \pi)$ is real analytic in $\Omega_* \times J$. Especially, $\CM = \bigcup_{t \in (0, T)} (\Gamma_t \times \{t\})$ is a real analytic manifold.
\end{theo}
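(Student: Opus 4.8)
The plan is to reformulate \eqref{eq-fixed} as the abstract equation $Lz=N(z)$, $z(0)=(\bu_0,\eta_0)$, with $z=(\bu,\pi,\eta)$ and $L$ the principal linearization, and to solve it by the contraction mapping principle in the maximal-regularity class $\BE(T)$, in the spirit of K\"ohne--Pr\"uss--Wilke~\cite{KPW13} and Pr\"uss--Simonett~\cite{PS16}. The first step is a reduction to vanishing initial data. Proceeding exactly as in Section~\ref{sec-reduction_data}, but with the fixed data there replaced by the values $N(\bu_0,\cdot,\eta_0)|_{t=0}$ of the nonlinearities at $t=0$ --- which depend only on $(\bu_0,\eta_0)$ and, since $N(0)=DN(0)=0$ by Proposition~\ref{prop-nonlinearity}, are quadratic in $(\bu_0,\eta_0)$, hence of size $O(\varepsilon^2)$ --- one extends them by the semigroup and elliptic operators of the appendix (cf.\ \cite{MV14,L19}) to a datum $G_*\in\BF(T)$ with $\|G_*\|_{\BF(T)}\le C(T)\varepsilon^2$ whose time traces at $t=0$ are compatible, in the sense of conditions (k)--(p) of Theorem~\ref{th-MR-linear}, with $(\bu_0,\eta_0)$; this is precisely where the compatibility conditions \eqref{compati-fixed} are used, and, being imposed on the full closures $\ov{\Gamma_*},\ov{\Sigma_*},\ov B$, they also yield the ``trace of trace'' conditions on $S_*$ and on $\pd\Sigma_*\cap\pd B$. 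By Theorem~\ref{th-MR-linear} there is then a unique $z_*=(\bu_*,\pi_*,\eta_*)\in\BE(T)$ solving $Lz_*=G_*$, $z_*(0)=(\bu_0,\eta_0)$; because $z_*$ carries the $O(\varepsilon)$ initial data one only obtains $\|z_*\|_{\BE(T)}\le C(T)\varepsilon$, while $\mathrm{Tr}_{\Gamma_*}[\pi_*]$ takes at $t=0$ exactly the value forced by the free-surface boundary condition, and $z_*\in\BU_T$ once $\varepsilon$ is small (using Lemma~\ref{embd-nonlinear}).

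Writing $z=z_*+\bar z$ transforms \eqref{eq-fixed} into $L\bar z=N(z_*+\bar z)-Lz_*=:\mathcal{K}(\bar z)$ with $\bar z\in{}_0\BE(T)$ (the subspace of $\BE(T)$ with vanishing time traces at $t=0$), and by construction $\mathcal{K}(\bar z)\in{}_0\BF(T)$ --- the time traces at $t=0$ of $N(z_*+\bar z)$ and of $G_*$ coincide since $\bar z(0)=0$, and in particular $\mathrm{Tr}_{\Gamma_*}[\bar\pi]$ vanishes at $t=0$ --- so that $L\colon{}_0\BE(T)\to{}_0\BF(T)$ is the isomorphism furnished by Theorem~\ref{th-MR-linear}. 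Set $M:=\|L^{-1}\|$ and $\Phi(\bar z):=L^{-1}\mathcal{K}(\bar z)$. Since $N$ is real analytic on $\BU_T$ with $N(0)=DN(0)=0$ (Proposition~\ref{prop-nonlinearity}) and the requisite embeddings and multiplication properties hold (Lemma~\ref{embd-nonlinear}), one has $\|N(z_1)-N(z_2)\|_{\BF(T)}\le C(\|z_1\|+\|z_2\|)\|z_1-z_2\|$ and $\|N(z_*)\|_{\BF(T)}\le C\|z_*\|_{\BE(T)}^2\le C\varepsilon^2$ for small arguments. Splitting $\mathcal{K}(\bar z)=\bigl(N(z_*+\bar z)-N(z_*)\bigr)+\bigl(N(z_*)-G_*\bigr)$ gives, on $\bar B_r:=\{\bar z\in{}_0\BE(T):\|\bar z\|_{\BE(T)}\le r\}$, the estimates $\|\Phi(\bar z)\|_{\BE(T)}\le MC\bigl((\varepsilon+r)r+\varepsilon^2\bigr)$ and $\|\Phi(\bar z_1)-\Phi(\bar z_2)\|_{\BE(T)}\le MC(\varepsilon+r)\|\bar z_1-\bar z_2\|_{\BE(T)}$. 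Choosing $r=2MC\varepsilon^2$ and then $\varepsilon=\varepsilon(T)$ small, $\Phi$ is a contraction self-map of $\bar B_r$; its unique fixed point $\bar z$ yields the solution $z=z_*+\bar z\in\BU_T$ of \eqref{eq-fixed}, with $\|z\|_{\BE(T)}\le C\varepsilon$ so that the constraint $|\eta|_{L^\infty(J;L^\infty(\Gamma_*))}<\varepsilon$ holds by Lemma~\ref{embd-nonlinear}. Uniqueness in $\BU_T$ follows by a standard continuation argument: two solutions with the same data coincide on a short interval $[0,T_0]$ by the same contraction run with $T$ replaced by $T_0$ (the relevant nonlinear constants improving as $T_0\downarrow0$ because the residual data vanish at $t=0$, via the gain of a power of $T_0$ as in the proof of Theorem~\ref{th-bent-slipslip}), and the largest such $T_0$ is both open and closed in $[0,T]$.

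Real analyticity of the solution is obtained by the parameter trick: using the invariance of \eqref{eq-fixed} under translations and dilations of the time variable, one embeds the problem into a real-analytic family indexed by a parameter $\lambda$ near $1$ whose linearization at $\lambda=1$ is the isomorphism $L$, applies the analytic implicit function theorem to conclude that $\lambda\mapsto z_\lambda$ is real analytic into $\BE(T)$, and differentiates at $\lambda=1$ to gain analyticity of $(\bu,\pi,\eta)$ in time on $(0,T)$; spatial analyticity then follows from the parabolic smoothing of the principal part, cf.\ \cite[Ch.~9]{PS16}. Since $\Gamma_t$ is the graph of $\eta(\cdot,t)$ over $D$ and $(x',t)\mapsto\eta(x',t)$ is real analytic on $D\times(0,T)$, the set $\CM=\bigcup_{t\in(0,T)}(\Gamma_t\times\{t\})$ is a real analytic manifold, and transporting back along the Hanzawa map $\Theta_\eta$ --- a diffeomorphism precisely because $|\eta|_{L^\infty}$ is small --- gives a real analytic $(\bu,\fp)$ on $\{(x,t):x\in\Omega_t,\ t\in(0,T)\}$, which is Theorem~\ref{th-main} on $(0,T)$.

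The main obstacle is the construction of the reference function $z_*$ in the first step: one must arrange simultaneously that $z_*$ takes the prescribed initial data, that $\mathrm{Tr}_{\Gamma_*}[\pi_*]$ is consistent at $t=0$ with the free-surface condition (so that $\mathrm{Tr}_{\Gamma_*}[\bar\pi]$ vanishes there), and that every compatibility and ``trace of trace'' condition of Theorem~\ref{th-MR-linear} is met, so that $\mathcal{K}(\bar z)$ genuinely lies in the homogeneous space ${}_0\BF(T)$ --- this is exactly where the reductions of Section~\ref{sec-reduction_data}, the additional pressure regularity of Lemma~\ref{lemm-regularity-pressure}, and the sharp trace results of \cite{L19,MV14} are all needed. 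The quadratic estimates for $N$, which in the genuinely anisotropic case $p\neq q$ hinge on delicate multiplication and embedding properties of the vector-valued Triebel--Lizorkin-in-time spaces, are the substance of Proposition~\ref{prop-nonlinearity} and Lemma~\ref{embd-nonlinear} and are here taken as available.
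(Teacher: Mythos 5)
Your overall strategy matches the paper's: build a reference $z_*$ that absorbs the compatibility conditions and the initial data, reduce to a fixed point problem $L\bar z = N(z_*+\bar z) - Lz_*$ on ${}_0\BE(T)$, solve by contraction using the isomorphism from Theorem~\ref{th-MR-linear}, and then invoke the parameter trick for analyticity.

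One claim does not check out as stated and is worth correcting, even though it is not fatal. You assert that the reference right-hand side $G_* = Lz_*$ can be constructed with $\|G_*\|_{\BF(T)} \le C(T)\varepsilon^2$, on the grounds that $N(0)=DN(0)=0$ makes the $t=0$ traces of $N$ quadratic in $(\bu_0,\eta_0)$. That is true for the boundary components $\bK^*,\bG^*,\bH^*,D^*$ (and is exactly what the semigroup extensions give). But the divergence component cannot simply be the semigroup extension of its initial value: it must satisfy the hidden $H^{1,p}_\delta(J;\wh H^{-1,q}_{\Sigma_*\cup B}(\Omega_*))$ compatibility from condition (j) of Theorem~\ref{th-MR-linear}. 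The paper secures this by solving an auxiliary parabolic problem for $\bu^{**}$ (Lemma~\ref{lem-A.6}) with initial datum $\bu_0$ and setting $F^*_\mathrm{div}:=\dv\bu^{**}$; since $\bu^{**}$ carries the $O(\varepsilon)$ initial datum, $\dv\bu^{**}$ is \emph{a priori} only $O(\varepsilon)$ in $\BF_{1,\delta}(J;\Omega_*)$, not $O(\varepsilon^2)$. The paper accordingly does not use quadratic smallness of $G_*$ at all: it estimates $|Lz^*|\le M_1|z^*|\le M_1C_0\varepsilon$ and runs the contraction on a ball of \emph{fixed} radius $\delta_1/2$ via the mean value theorem bound $|DN(z+z^*)|_{\CB(\BE(T),\BF(T))}\le 1/(2M_0)$. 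Your argument survives if you replace $\|G_*\|\le C\varepsilon^2$ by $\|G_*\|\le C\varepsilon$ and take $r=O(\varepsilon)$ (the Lipschitz constant $MC(\varepsilon+r)$ is still $<1$ for small $\varepsilon$), so this is an overclaim rather than a breaking gap, but as written the self-map estimate $\|\Phi(\bar z)\|\le MC((\varepsilon+r)r+\varepsilon^2)$ with $r=2MC\varepsilon^2$ rests on an unproved bound.

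Two small additional remarks. First, the continuation argument you sketch for uniqueness beyond the contraction ball is not carried out in the paper (the paper simply appeals to contraction in a fixed ball), and it would need a sentence saying why any solution in $\BE(T)$ must enter the ball on a short time interval; this is standard but currently hand-waved. Second, your closing paragraph proves Theorem~\ref{th-main} rather than the statement at hand (Theorem~\ref{th-fixed}); the Hanzawa pushforward is the content of the separate proof of Theorem~\ref{th-main} and should not be folded into this one.
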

\begin{proof}
We introduce an auxiliary function $z_* \in \BE (T)$ that resolves the compatibility conditions and the initial conditions. This makes us to reduce the problem \eqref{abstract-system} into
\begin{equation*}
L z = N (z + z_*) - L z_* =: K_0 (z), \qquad z \in {}_0 \BE (T).
\end{equation*}
Since the mapping $L \colon {}_0 \BE (T) \to {}_0 \BF (T)$ is an isomorphism, we can solve this reduced problem by means of the contraction mapping principle. \par
\noindent \textbf{Step 1.} Set $\BI_0 := B^{2 (\delta - 1\slash p)}_{q, p} (\Omega_*)^3 \times B^{2 + \delta - 1\slash p - 1\slash q}_{q, p} (\Gamma_*)$. Let $(\bu_0, \eta_0) \in \BI_0$ satisfy the compatibility conditions~\eqref{compati-fixed}. Set
\begin{equation*}
q_1 := - 2 \mu \pd_3 + \sigma \Delta_{\Gamma_*} \eta_0 + K_3 (\bu_0, \eta_0).
\end{equation*}
Then, from Proposition~\ref{prop-nonlinearity}, we see that
\begin{equation*}
K_3^0 := K_3 (\bu_0, \eta_0) - q_1 + \sigma \Delta_{\Gamma_*} \eta_0 \in B^{2 (\delta - 1 \slash p) - 1 - 1 \slash q}_{q, p} (\Gamma_*),
\end{equation*}
cf., Section~\ref{sec-reduction_data} for the similar argument. Extend $K_3^0$ to an appropriate function $\wt K_3^0 \in B^{2 (\delta - 1 \slash p) - 1 - 1 \slash q}_{q, p} (\pd_3 \BR^3)$ and define $\wt K^*_3 (t) := e^{- (\bI - \Delta_{\pd_3 \BR^3}) t} \wt K_3^0$. Then, by~\cite[Thm.~4.2]{MV14}, we have
\begin{equation*}
\wt K_3^* \in F^{1 \slash 2 - 1 \slash (2 q)}_{p, q, \delta} (J; L^q (\pd \BR^3)) \cap L^p_\delta (J; B^{1- 1 \slash q}_{q, q} (\pd \BR^3)).
\end{equation*}
Hence, if we set $K^*_3 := \wt K^*_3 \vert_{\Gamma_*}$, it holds
\begin{equation*}
K_*^3 \in F^{1 \slash 2 - 1 \slash (2 q)}_{p, q, \delta} (J; L^q (\Gamma_*)) \cap L^p_\delta (J; B^{1 - 1 \slash q}_{q, q} (\Gamma_*))
\end{equation*}
with $K_3^* (0) = K_3^0$. Similarly, we can construct the functions $K^*_1$, $K^*_2$, $H^*_1$, and $H^*_2$ satisfying
\begin{equation*}
\begin{split}
K^*_1, K^*_2 & \in F^{1 \slash 2 - 1 \slash (2 q)}_{p, q, \delta} (J; L^q (\Gamma_*)) \cap L^p_\delta (J; B^{1 - 1 \slash q}_{q, q} (\Gamma_*)), \\
H_1^*, H_2^* & \in F^{1 \slash 2 - 1 \slash (2 q)}_{p, q, \delta} (J; L^q (B)) \cap L^p_\delta (J; B^{1 - 1 \slash q}_{q, q} (B)),
\end{split}
\end{equation*}
where $K^*_1 (0) = K_1 (\bu_0, \eta_0)$, $K^*_2 (0) = K_2 (\bu_0, \eta_0)$, $H^*_1 (0) = H_1 (\bu_0, \eta_0)$, and $H^*_2 (0) = H_2 (\bu_0, \eta_0)$. \par
We next deal with the compatibility condition on $\Sigma_*$. Since $\Sigma_*$ has a boundary, we consider a compact hypersurface $\wt \Sigma_*$ of class $C^3$ \textit{without} boundary but containing $\Sigma_*$. Let $\Delta_{\wt \Sigma_*}$ be the Laplace-Beltrami operator on $\wt \Sigma_*$. It is well-known that the negative of the operator $\bI - \Delta_{\wt \Sigma_*}$ generates an exponentially stable analytic semigroup $\{e^{- (\bI - \Delta_{\wt \Sigma_*}) t}\}_{t \ge 0}$ on $L^q (\wt \Sigma_*)$, see \cite[Thm.~6.4.3]{PS16}. As we have seen in Section~\ref{sec-reduction_data}, we see that $G_j (\bu_0, \eta_0) \in B^{2 (\delta - 1 \slash p) - 1 - 1 \slash q}_{q, p} (\Sigma_*)$, which can be derived from Proposition~\ref{prop-nonlinearity} and the fact that $\mathrm{Tr}_{t = 0}$ and the boundary operator commute (cf. Lindemulder~\cite[pp.~88]{L19}). Now, we extend $\bG (\bu_0, \eta_0)$ to $\wt \bG (\bu_0, \eta_0) \in B^{2 (\delta - 1 \slash p) - 1 - 1 \slash q}_{q, p} (\wt \Sigma_*)^3$. Then, it follows from \cite[Thm.~4.2]{MV14} that
\begin{equation*}
e^{- (\bI - \Delta_{\wt \Sigma_*}) t} \wt G_j (\bu_0, \eta_0) \in F^{1 \slash 2 - 1 \slash (2 q)}_{p, q, \delta} (J; L^q (\wt \Sigma_*)) \cap L^p_\delta (J; B^{1- 1 \slash q}_{q, q} (\wt \Sigma_*))
\end{equation*}
for $j = 1, 2, 3$, where $\wt \bG = (\wt G_1, \wt G_2, \wt G_3)^\top$. Restricting $e^{- (\bI - \Delta_{\wt \Sigma_*}) t} \wt \bG (\bu_0, \eta_0)$ to $\Sigma_*$, denoted by $\bG^*$, it holds
\begin{equation*}
\bG^* \in F^{1 \slash 2 - 1 \slash (2 q)}_{p, q, \delta} (J; L^q (\Sigma_*)) \cap L^p_\delta (J; B^{1- 1 \slash q}_{q, q} (\Sigma_*))
\end{equation*}
with $\bG^* (0) = \bG (\bu_0, \eta_0)$. \par
Consider the parabolic problem
\begin{equation*}
\left\{\begin{split}
\pd_t \bu^{* *} - \mu \Delta \bu^{* *} & = 0, & \quad & \text{in $\Omega_* \times J$}, \\
\mu (\pd_3 u_1^{* *} + \pd_1 u_3^{* *}) & = K_1^*, & \quad & \text{on $\Gamma_* \times J$}, \\
\mu (\pd_3 u_2^{* *} + \pd_2 u_3^{* *}) & = K^*_2, & \quad & \text{on $\Gamma_* \times J$}, \\
2 \mu \pd_3 u_3^{* *} & = K_3^*, & \quad & \text{on $\Gamma_* \times J$}, \\
\bu^{* *} \cdot \bn_{\Sigma_*} & = 0, & \quad & \text{on $\Sigma_* \times J$}, \\
P_{\Sigma_*} (2 \mu \bD (\bu^{* *}) \bn_{\Sigma_*}) & = \bG^*, & \quad & \text{on $\Sigma_* \times J$}, \\
u_3^{* *} & = 0, & \quad & \text{on $B \times J$}, \\
\mu (\pd_3 u_1^{* *} + \pd_1 u_3^{* *}) & = H^*_1, & \quad & \text{on $B \times J$}, \\
\mu (\pd_3 u_2^{* *} + \pd_2 u_3^{* *}) & = H^*_2, & \quad & \text{on $B \times J$}, \\
\bu^{* *} (0) & = \bu_0, & \quad & \text{in $\Omega_*$}.
\end{split}\right.
\end{equation*}
Then, by Lemma~\ref{lem-A.6}, there exists a unique solution $\bu^{* *} \in \BE_{1, \delta} (J; \Omega_*)$ due to $\bu_0 \in B^{2 (\delta - 1/p)}_{q, p} (\Omega_*)^3$. Using this solution, we define $F_\mathrm{div}^* := \dv \bu^{* *}$ with $F_\mathrm{div}^* (0) = \dv \bu_0$, where $F_\mathrm{div}^*$ belongs to the same regularity class as $f_\mathrm{div}$ described in Theorem~\ref{th-MR-linear}. Notice that, from the compatibility condition of $F_\mathrm{div}$, it holds $F_\mathrm{div}^* (0) = F_\mathrm{div} (\bu_0, \eta_0)$. Besides, in the following, we set $\bK^* := (K_1^*, K_2^*, K^*_3)^\top$ and $\bH^* := (H_1^*, H_2^*)^\top$. Employing the similar argument in Section~\ref{sec-reduction_data}, we know $D (\bu_0, \eta_0) \in B^{2 (\delta - 1 \slash p) - 1 \slash q}_{q, p} (\Gamma_*)$ due to Proposition~\ref{prop-nonlinearity}. Extend $D (\bu_0, \eta_0)$ to $\wt D (\bu_0, \eta_0) \in B^{2 (\delta - 1 \slash p) - 1 \slash q}_{q, p} (\pd \BR^3)$. From \cite[Thm.~4.2]{MV14}, we deduce that
\begin{equation*}
\wt D^* := e^{- (\bI - \Delta_{\pd \BR^3}) t} \wt D (\bu_0, \eta_0) \in \BF_{3, \delta} (J; \pd \BR^3).
\end{equation*}
Setting $D^* := \wt D^* \vert_{\Gamma_*}$, it holds $D^* \in \BF_{3, \delta} (J; \Gamma_*)$ and $D^* (0) = D (\bu_0, \eta_0)$.
\par
From Theorem~\ref{th-MR-linear}, we can find the unique solution $z^* \in \BE (T)$ of the linear problem
\begin{equation*}
L z^* = (0, F^*_\mathrm{div}, D^*, \bK^*, \bG^*, \bH^*, 0), \qquad z^* (0) = (\bu_0, \eta_0) \in \BI_0,
\end{equation*}
where $z^*$ resolves the compatibility conditions \eqref{compati-fixed}. Notice that $z^*$ enjoys the estimate
\begin{equation*}
\lvert z^* \rvert_{\BE (T)} \le C_0 \lvert (\bu_0, \eta_0) \rvert_{\BI_0} 
\end{equation*}
with some constant $C_0$ that does not depend on $(\bu_0, \eta_0)$. \par
\noindent \textbf{Step 2.} By Theorem~\ref{th-MR-linear}, the operator $L \colon {}_0 \BE (T) \to {}_0 \BF (T)$ is an isomorphism, and hence the solution of \eqref{eq-fixed} is given by $z = L^{- 1} K_0 (z)$. From the constructions of $(0, F^*_\mathrm{div}, D^*, \bK^*, \bG^*, \bH^*, 0)$, we see that $K_0 (z) \in {}_0 \BF (T)$ for any $z \in {}_0 \BE (T)$. Hence, by Proposition~\ref{prop-nonlinearity}, the mapping $K_0$ is real analytic from ${}_0 \BE (T)$ to ${}_0 \BF (T)$ yielding that $L^{- 1}_0 K_0 \colon {}_0 \BE (T) \to {}_0 \BE (T)$ is well-defined and smooth. \par
In the following, for a given Banach space $W$, we set
\begin{equation*}
c \BB_W := \{w \in W \mid \lvert w \rvert_W \le c \},
\end{equation*}
where $c$ is a positive number. Let $M_0 := \lvert L^{- 1} \rvert_{\CB ({}_0 \BF (T), {}_0 \BE (T))}$ and $M_1 := \lvert L \rvert_{\CB (\BE (T), \BF (T))}$. Based on Proposition~\ref{prop-nonlinearity}, there exists $\delta_1 > 0$ such that
\begin{equation*}
\lvert D N (z + z^*) \rvert_{\CB (\BE (T), \BF (T))} \le \frac{1}{2 M_0}
\end{equation*}
for $(z + z^*) \in \delta_1 \BB_{\BE (T)}$. We further suppose that $z \in (\delta_1 \slash 2) \BB_{\BE (T)}$. Then, for $(\bu_0, \eta_0) \in \varepsilon \BB_{\BI_0}$, we obtain
\begin{equation*}
\lvert z + z^* \rvert_{\BE (T)} \le \lvert z \rvert_{\BE (T)} + \lvert z^* \rvert_{\BE (T)} \le \frac{\delta_1}2 + C_0 \varepsilon < \delta_1.
\end{equation*}
provided that $\varepsilon$ is sufficiently small such that
\begin{equation*}
\varepsilon < \frac{\delta_1}{2 C_0 (1 + 2 M_0 M_1)}.
\end{equation*}
Therefore, by the mean value theorem, we have
\begin{equation*}
\begin{split}
\lvert L^{- 1} K_0 (z) \rvert_{\BE (T)} & \le M_0 \lvert K_0 (z) \rvert_{\BF (T)} \\
& \le M_0 \Big(\lvert N (z + z^*) \rvert_{\BF (T)} + M_1 \lvert z^* \rvert_{\BE (T)} \Big) \\
& \le M_0 \bigg(\frac{1}{2 M_0} \lvert z + z^*\rvert_{\BE (T)} + M_1 C_0 \lvert (\bu_0, \eta_0) \rvert_{\BI_0}\bigg) \\
& \le \frac{\delta_1}{4} + \bigg(\frac12 + M_0 M_1 \bigg) C_0 \varepsilon \\
& \le \frac{\delta_1}2.
\end{split}
\end{equation*}
Hence, the mapping $L^{- 1} K_0 \colon (\delta_1 \slash 2) \BB_{{}_0 \BE (T)} \to (\delta_1 \slash 2) \BB_{{}_0 \BE (T)}$ is a self-mapping. In addition, using the mean value theorem again, it holds
\begin{equation*}
\lvert L^{- 1} K_0 (z_1) - L^{- 1} K_0 (z_2) \rvert_{\BE (T)} \le \frac{1}2 \lvert z_1 - z_2 \rvert_{\BE (T)}
\end{equation*}
for all $z_1, z_2 \in (\delta_1 \slash 2) \BB_{{}_0 \BE (T)}$. This shows that $L^{- 1} K_0 \colon (\delta_1 \slash 2) \BB_{{}_0 \BE (T)} \to (\delta_1 \slash 2) \BB_{{}_0 \BE (T)}$ is contractive. Hence, the contraction mapping principle implies a unique fixed point $z_* \in (\delta_1 \slash 2) \BB_{{}_0 \BE (T)}$ of $L^{- 1} K_0$, i.e., $z_* = L^{- 1} K_0 (z_*)$. Accordingly, we find that $\wt z := z^* + z_*$ solves $L (\wt z) = N (\wt z)$.
\par
\noindent \textbf{Step 3.} From Lemma~\ref{prop-nonlinearity}, the right-hand side of \eqref{eq-fixed} is real analytic. Therefore, by the parameter trick as employed in~\cite[Thm.~6.3]{PS10} (cf.~\cite[Sec.~8]{EPS03} and~\cite[Ch.~9]{PS16}), we can show that the solution $(\bu, \pi, \eta)$ is real analytic as well. This completes the proof.
\end{proof}

Finally, we provide the proof of our main result, Theorem~\ref{th-main}.
\begin{proof}[Proof of Theorem~\ref{th-main}]
First, we see that the compatibility conditions of Theorem~\ref{th-fixed} are satisfied if and only if \eqref{compati-fixed} is satisfied. Define the mapping $\Theta_{\eta_0}$ by $\Theta_{\eta_0} (x) = x + \boldsymbol{\theta}_{\eta_0} (x') = x + \chi (x_3) \eta_0 (x_1, x_2)$. This defines a $C^2$-diffeomorphism from $\Omega_*$ onto $\Omega_0$ with inverse $\Theta_{\eta_0}^{- 1} (x) := (x - \boldsymbol{\theta}_{\eta_0} (x'))$, which follows from the Sobolev embedding theorem. Therefore, there exists some constant $C_{\eta_0}$ depending on $\eta_0$ such that
\begin{equation*}
C^{- 1}_{\eta_0} \lvert \bv_0 \rvert_{B^{2 (\delta - 1 \slash p)}_{q, p} (\Omega_0)} \le \lvert \bu_0 \rvert_{B^{2 (\delta - 1 \slash p)}_{q, p} (\Omega_*)} \le C_{\eta_0} \lvert \bv_0 \rvert_{B^{2 (\delta - 1 \slash p)}_{q, p} (\Omega_0)}.
\end{equation*}
Hence, there exists $\varepsilon_0 > 0$ such that the smallness condition of Theorem~\ref{th-main} implies the smallness condition in Theorem~\ref{th-fixed}. Theorem~\ref{th-fixed} yields the existence of a unique solution $(\bu, \pi, \eta) \in \BE (T)$ of \eqref{eq-fixed} that satisfies the additionally regularity properties denoted in Theorem~\ref{th-fixed}. Furthermore, setting
\begin{equation*}
(\bv, \fp) (x, t) = (\bu, \pi) (x - \boldsymbol{\theta}_\eta (x, t), t), \qquad (x, t) \in \CO := \{(x, t) \in \Omega_t \times (0, T)\},
\end{equation*}
we find that $(\bv, \fp, \eta)$ is a unique solution to \eqref{eq-main}, where $(\bv, \fp)$ is real analytic in $\CO$ and $\CM = \bigcup_{t \in (0, T)} (\Gamma_t \times \{t\})$ is a real analytic manifold.
\end{proof}  

\subsection*{Acknowledgments}
\noindent
The author would like to thank Giovanni Paolo Galdi for the warm hospitality when the author stayed at the University of Pittsburgh in the summer of 2019.

\appendix
\section{Extension operators}
\noindent
We collect some technical results that are needed for the execution of the Stokes equations with slip-free boundary conditions (cf. Theorem~\ref{th-model-slipfree}). The following lemmas are generalization of Propositions A.1 and~A.2 in \cite{W17}.
\begin{lemm}
\label{lem-ext1}
Let $1 < p < \infty$, $2 < q < \infty$, $1\slash p < \delta \le 1$, and $J = (0, T)$. Then there exists a bounded linear extension operator $\mathrm{ext}$ from
\begin{equation*}
{}_0 F^{1\slash2 - 1\slash q}_{p, q, \delta} (J; L^q (\BR)) \cap L^p_\delta (J; B^{1 - 2\slash q}_{q, q} (\BR)) := X_1
\end{equation*}
to
\begin{equation*}
{}_0 F^{1\slash2 - 1\slash(2q)}_{p, q, \delta} (J; L^q (\BR \times \BR_+)) \cap L^p_\delta (J; B^{1 - 1\slash q}_{q, q} (\BR \times \BR_+)) =: Y_1
\end{equation*}
such that $\mathrm{ext} [v] \vert_{\BR \times \{0\}} = v$ for all $v \in Y_1$. Furthermore, if $v = v (x_1, y, t) \in Y_1$, then $\mathrm{Tr}_{y = 0} [v] \in X_1$ such that the trace map is bounded from $Y_1$ to $X_1$.
\end{lemm}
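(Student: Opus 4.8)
The plan is to recognize both $X_1$ and $Y_1$ as \emph{parabolic} (anisotropic) Triebel--Lizorkin intersection spaces and to read the assertion as the trace/coretrace theorem for the boundary $\{y = 0\}$. Under the parabolic scaling ``one temporal derivative corresponds to two spatial derivatives'', $Y_1$ is the intersection space of joint spatial smoothness $1 - 1\slash q$ over $\BR \times \BR_+$ in the variables $(x_1, y)$, while $X_1$ is the corresponding space of spatial smoothness $1 - 2\slash q$ over $\BR$; the passage from $1 - 1\slash q$ to $1 - 2\slash q$ is exactly the loss of $1\slash q$ spatial derivatives --- and, consistently, $1\slash(2 q)$ temporal derivatives, since $\tfrac12 - \tfrac1q = \tfrac12\big(1 - \tfrac2q\big)$ --- incurred by a trace onto a hypersurface of codimension one in the $L^q$-scale. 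The hypothesis $2 < q < \infty$ enters precisely here: it is equivalent to $1 - 1\slash q > 1\slash q$, i.e.\ to the $y$-trace being well defined and bounded.

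For the coretrace, given $v \in X_1$ I would set
\begin{equation*}
\mathrm{ext}[v](x_1, y, t) := \big( e^{- y \CL_\omega^{1\slash2}} v \big)(x_1, t), \qquad y > 0,
\end{equation*}
where $\CL_\omega := \omega + \pd_t - \pd_{x_1}^2$ is the shifted parabolic operator acting in the variables $(x_1, t)$, realized on $L^p_\delta(J; L^q(\BR))$ with domain encoding ${}_0 H^{1, p}_\delta(J; L^q(\BR)) \cap L^p_\delta(J; H^{2, q}(\BR))$; the shift $\omega > 0$ only serves to make $\CL_\omega$ invertible and sectorial and is removed at the end by restricting to the finite interval $J$, exactly as in the proof of Theorem~\ref{th-model-half-free}. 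With this definition $\mathrm{ext}[v]$ has vanishing trace at $t = 0$, and $\mathrm{ext}[v]\vert_{y = 0} = v$ holds automatically. The estimate $\lvert \mathrm{ext}[v] \rvert_{Y_1} \le C \lvert v \rvert_{X_1}$ is then a consequence of the bounded $H^\infty$-calculus of $\CL_\omega$ on $L^p_\delta(J; L^q(\BR))$ --- equivalently of its $\CR$-sectoriality and the maximal $L^p_\delta$-regularity of the heat equation in the weighted Triebel--Lizorkin scale, available by Meyries--Veraar~\cite{MV14} (cf.\ Pr\"uss--Simonett~\cite[Ch.~4,~6]{PS16}): the $L^p_\delta(J; B^{1 - 1\slash q}_{q, q}(\BR \times \BR_+))$-component comes from the classical Besov extension estimate in $x_1$ together with the $y$-smoothing of the semigroup, and the ${}_0 F^{1\slash2 - 1\slash(2 q)}_{p, q, \delta}(J; L^q(\BR \times \BR_+))$-component from the square-function estimate for $\CL_\omega$, which converts the $y$-integration of the profile $y \mapsto e^{- y \CL_\omega^{1\slash2}} v$ into the prescribed fractional temporal regularity.

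For the trace direction, boundedness of $\mathrm{Tr}_{y = 0} \colon Y_1 \to X_1$ follows from the anisotropic trace theorem in the form established by Lindemulder~\cite[Thm.~4.6]{L19} (see also Meyries--Veraar~\cite{MV14}), applied with base space $L^q(\BR_{x_1})$, operator $1 - \pd_{x_1}^2$, and the extra direction $y$ carrying smoothness $1 - 1\slash q$: since $1 - 1\slash q > 1\slash q$ for $q > 2$, the $y$-trace exists, and tracking the indices through the interpolation identifies the target space as $X_1$, with temporal index $\tfrac12 - \tfrac1q$ dictated by parabolic scaling. As $\mathrm{Tr}_{y = 0} \circ \mathrm{ext} = \mathrm{id}_{X_1}$ by construction, $\mathrm{ext}$ is the desired bounded right inverse.

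The hard part is the second paragraph, namely verifying that the semigroup $e^{- y \CL_\omega^{1\slash2}}$ produces \emph{exactly} the index $\tfrac12 - \tfrac1{2q}$ of temporal smoothness in the \emph{Triebel--Lizorkin} scale $F^{\,s}_{p, q, \delta}$ --- rather than in the more accessible Bessel-potential or Besov scales --- while simultaneously respecting the temporal weight $t^{1 - \delta}$ near $t = 0$ and the vanishing-trace subspace ${}_0(\cdot)$. This is the point at which $p \ne q$ rules out the Sobolev--Slobodecki\u{\i} identifications used when $p = q$ and one must invoke the weighted operator-valued Fourier-multiplier and $H^\infty$-calculus results of Meyries--Veraar; once these mapping properties are in hand, the remaining bookkeeping --- reducing the half-line $\BR_+$ in $y$ to the full line by a higher-order reflection (Seeley-type) extension, removing the shift $\omega$, and matching the interpolation indices --- is routine.
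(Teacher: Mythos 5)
Your proof is correct in spirit, but you are doing more work than the paper: the paper's entire proof of this lemma is the single line ``This is a direct consequence of [Thm.~4.6]{L19},'' since Lindemulder's trace theorem supplies \emph{both} the trace $\mathrm{Tr}_{y=0}\colon Y_1 \to X_1$ and a bounded coretrace $X_1 \to Y_1$ in one stroke. You correctly invoke Lindemulder for the trace direction, but for the extension you construct it by hand via the Poisson-type profile $y \mapsto e^{-y\CL_\omega^{1/2}}v$ and then argue its $Y_1$-membership through $H^\infty$-calculus and square-function estimates for $\CL_\omega$ in the weighted, vector-valued $F$-scale. This is a legitimate and instructive alternative — in fact it is precisely the mechanism the paper does use in the companion Lemma~\ref{lem-ext2}, where the exponent configuration is less directly covered by a single citation and the authors write out $g(y) = -e^{-yL_0^{1/2}}L_0^{-1/2}f$ explicitly — so you are being consistent with the paper's toolkit, just applying it one lemma too early. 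What your approach buys is transparency about \emph{why} the indices shift by $(1/(2q), 1/q)$ in the temporal and spatial slots, and about where $q > 2$ enters (existence of the $y$-trace, i.e.\ $1 - 1/q > 1/q$); your parabolic-scaling bookkeeping $\tfrac12 - \tfrac1q = \tfrac12(1 - \tfrac2q)$ is correct. What it costs is that verifying the $F^{1/2 - 1/(2q)}_{p,q,\delta}$-regularity of the semigroup extension when $p \ne q$ is not a one-line matter — you rightly flag it as the hard part — whereas Lindemulder has already packaged exactly this in the cited theorem, vanishing time-trace subspace and temporal weight included. In short: your argument is sound and would work if the square-function/$H^\infty$ estimates for $\CL_\omega$ on $L^p_\delta(J; L^q)$ were written out, but the paper's intended proof is simply to read both directions off Lindemulder's Theorem~4.6 and stop there.
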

\begin{proof}
This is a direct consequence of \cite[Thm.~4.6]{L19}.
\end{proof}
\begin{lemm}
\label{lem-ext2}
Suppose $1 < p < \infty$, $2 < q < \infty$, $1\slash p < \delta \le 1$ and set $J = (0, T)$ with $T > 0$. If
\begin{equation*}
f \in {}_0 F^{3\slash2 - 1\slash q}_{p, q, \delta} (J; L^q (\BR)) \cap {}_0 H^{1, p}_\delta (J; B^{1 - 2\slash q}_{q, q} (\BR)) \cap L^p_\delta (J; B^{2 - 2\slash q}_{q, q} (\BR)) =: X_2,
\end{equation*}
then there exists $g = g (t, x_1, y)$ with the regularity
\begin{equation*}
g \in {}_0 F^{2 - 1\slash(2 q)}_{p, q, \delta} (J; L^q (\BR \times \BR_+)) \cap {}_0 H^{1, p}_\delta (J; B^{2 - 1\slash q}_{q, q} (\BR \times \BR_+)) \cap L^p_\delta (J; B^{3 - 1\slash q}_{q, q} (\BR \times \BR_+)) =: Y_2
\end{equation*}
such that $\pd_y g = f$ at $y = 0$. 
%In addition, the mapping $(\mathrm{Tr}_{y = 0} \circ \pd_y) \colon Y_2 \to X_2$ is continuous.
\end{lemm}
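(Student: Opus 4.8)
The plan is to realise $g$ as a parabolic Poisson-type extension of $f$ in the half-line variable $y$, arranged so that the extension gains exactly one parabolic spatial derivative: then $\pd_y g$ evaluated at $y=0$ reproduces $f$, while $g$ itself picks up the regularity that defines the space $Y_2=\BE_{4,\delta}(J;\BR\times\BR_+)$. First I would reduce to the case $t\in\BR$. Since $f\in X_2={}_0\BF_{4,\delta}(J;\BR)$ vanishes at $t=0$ together with its first-order time trace (the latter coming from the $H^{1,p}$-component), $f$ extends to a function $\wt f$ over $\BR$ in the corresponding weighted class by a bounded extension operator preserving the ${}_0$-condition (cf.\ \cite{MV14,L19}); since we will work on the finite interval $J$ in the end, any large shift parameter introduced below is harmless and can be removed a posteriori exactly as in the proof of Theorem~\ref{th-model-half-free}. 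It therefore suffices to build $g$ on $\BR\times\BR\times\BR_+$ and restrict afterwards to $J$.

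Next I would construct $g$ explicitly through a Laplace multiplier. Using the (weighted, bilateral) Laplace transform $\CL$ in $t$ as in the proof of Theorem~\ref{th-model-half-free}, writing $\lambda=\gamma+i\tau$ with $\lambda$ ranging over a sector $\Sigma_{\theta+\pi\slash2}$ containing the closed right half-plane, and letting $\xi$ be dual to $x_1$, set
\[
\omega=\omega(\lambda,\xi):=\bigl(r+\lambda+\lvert\xi\rvert^2\bigr)^{1\slash2}\qquad(\text{principal branch}),\qquad r>0\ \text{large},
\]
i.e.\ $\omega=(r\bI+\lambda-\Delta_{x_1})^{1\slash2}$ as a Fourier multiplier in $x_1$, and define
\[
\CL[g](\lambda,x_1,y):=-\,\omega^{-1}e^{-y\omega}\,\CL[\wt f](\lambda,x_1),\qquad y>0 .
\]
Then $\pd_y\CL[g]=e^{-y\omega}\CL[\wt f]$, so $\pd_y g=\wt f$ at $y=0$, and hence $\pd_y g=f$ at $y=0$ on $J$; moreover $g(0)=\pd_t g(0)=0$ is read off from the construction, so $g$ has the required vanishing time trace. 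The large shift $r$ makes $\omega^{-1}$ a bounded holomorphic symbol down to $\lambda=\xi=0$, and it is discarded at the end by restricting to $J$.

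It then remains to prove that $\wt f\mapsto g$ is bounded from the $\BR$-analogue of ${}_0\BF_{4,\delta}$ into the $\BR$-analogue of ${}_0\BE_{4,\delta}$. The operator families $\{\omega^{j}\pd_y^{k}(\omega^{-1}e^{-y\omega})\}$ are $\CR$-bounded on sectors $\Sigma_{\theta+\pi\slash2}\cup\{0\}$ by the standard Mikhlin-type estimates for the bounded analytic semigroup generated by $-\omega$, so the operator-valued Fourier multiplier theorem of Pr\"uss~\cite{P19}, applied as in the proof of Theorem~\ref{th-model-half-free}, transfers the estimate componentwise. Concretely $\omega^{-1}$ raises the parabolic order by one while the half-space extension $e^{-y\omega}$ adds $1\slash q$ to the spatial Besov order (equivalently $1\slash(2q)$ to the temporal Triebel--Lizorkin order), so the three defining pieces of $\BF_{4,\delta}$ are carried into the three defining pieces of $\BE_{4,\delta}$, namely
\[
F^{3\slash2-1\slash q}_{p,q,\delta}(L^q)\to F^{2-1\slash(2q)}_{p,q,\delta}(L^q),\quad
H^{1,p}_\delta(B^{1-2\slash q}_{q,q})\to H^{1,p}_\delta(B^{2-1\slash q}_{q,q}),\quad
L^p_\delta(B^{2-2\slash q}_{q,q})\to L^p_\delta(B^{3-1\slash q}_{q,q}),
\]
and their intersection is preserved. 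The Besov bookkeeping on $\BR\times\BR_+$ is handled by the anisotropic trace and extension results of Meyries--Veraar~\cite{MV14} and Lindemulder~\cite{L19}, the same ones underlying Lemma~\ref{lem-ext1}; in fact a softer alternative is to identify $\BF_{4,\delta}(J;\BR)$, up to the ${}_0$-condition, with $\mathrm{Tr}_{y=0}\circ\pd_y\bigl[\BE_{4,\delta}(J;\BR\times\BR_+)\bigr]$ and to invoke the existence of a bounded right inverse from \cite{L19}.

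The hard part will be that $\BE_{4,\delta}$ and $\BF_{4,\delta}$ are genuine triple-intersection spaces: they are not of the simple form $F^{s}_{p,q,\delta}(J;L^q)\cap L^p_\delta(J;H^{2s,q})$, because the height function carries two extra spatial derivatives coming from the surface-tension term. Hence no single off-the-shelf parabolic trace theorem applies, and one must verify the $\CR$-bounds and the resulting mapping property for all three components simultaneously and check that the extension respects their intersection. A secondary, purely technical point is the presence of the temporal weight $\lvert t\rvert^{p(1-\delta)}$ and the vanishing-trace condition at $t=0$, which is what forces the reduction in the first step and requires the weight-compatible multiplier theory of \cite{MV14,P19} rather than the classical one.
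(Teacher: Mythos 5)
Your construction is essentially the same as the paper's: you realize $g$ as the Poisson extension in $y$ of the half-order parabolic antiderivative of $f$, that is $g=-\,\omega^{-1}e^{-y\omega}\CL[\wt f]$ in symbol notation, which is nothing but the operator expression $g(y)=-e^{-yL_0^{1/2}}L_0^{-1/2}f$ with $L_0=\pd_t-\pd_{x_1}^2$ used in the paper. The reduction to $t\in\BR$, the choice of $\omega$, and the identity $\pd_y g|_{y=0}=f$ all agree.

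Where you and the paper part ways is the verification of $g\in Y_2$, and there your route has a real gap. You propose to transfer each of the three defining pieces of $X_2$ to the matching piece of $Y_2$ ``componentwise'' via $\CR$-bounds and the Pr\"uss multiplier theorem. The first of your claimed mappings, $F^{3/2-1/q}_{p,q,\delta}(J;L^q(\BR))\to F^{2-1/(2q)}_{p,q,\delta}(J;L^q(\BR\times\BR_+))$, is not true in isolation: the multiplier $\omega^{-1}$ buys $1/2$ of temporal Triebel--Lizorkin order, but the additional $1/(2q)$ that appears in the target exponent is what a half-space extension gains \emph{only when the matching spatial regularity is simultaneously present}, i.e.\ it is an artifact of the intersection structure and is not visible at the level of $F^{s}_{p,q,\delta}(L^q)$ alone. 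You notice this yourself (``the hard part will be that $\BE_{4,\delta}$ and $\BF_{4,\delta}$ are genuine triple-intersection spaces\ldots'') but do not resolve it, and the proposed fallback --- citing \cite{L19} for a right inverse of $\mathrm{Tr}_{y=0}\circ\pd_y$ on $\BE_{4,\delta}(J;\BR\times\BR_+)$ --- is not available off the shelf either, since \cite[Thm.~4.6]{L19} treats two-component intersections (as used in Lemma~\ref{lem-ext1}), not this three-component space.

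The paper's resolution, which you should compare against, avoids $\CR$-bounded multiplier estimates entirely. It works with $L_0$ as a sectorial operator with bounded imaginary powers on $L^p_\delta(J;L^q(\BR))$, observes that $f$, $\pd_t f$, $L_0^{-1/2}f$, $L_0^{-1/2}\pd_t f$ all lie in the \emph{two}-component space $X_1$ of Lemma~\ref{lem-ext1} (using the isomorphism of $L_0^{-1/2}$ on the Da~Prato--Grisvard spaces $D_{L_0}(\alpha,p)$), applies Lemma~\ref{lem-ext1} to get membership of $g$ and $\pd_t g$ in a lifted two-component space over $\BR\times\BR_+$, and then bootstraps to the full $Y_2$ by \cite[Prop.~3.10]{MV12} in $t$ and by the stationary trace theorem in $y$ together with $\pd_y g|_{y=0}=f\in L^p_\delta(J;B^{2-2/q}_{q,q}(\BR))$. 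That decomposition into ``base regularity from Lemma~\ref{lem-ext1}, then one bootstrap in $t$ and one in $y$'' is the key idea missing from your write-up; if you want to keep the multiplier viewpoint, the concrete way to close the gap is to first establish the $Y_1$-type membership of $\pd_y g$ (and $\pd_t\pd_y g$) by treating those as extensions in the sense of Lemma~\ref{lem-ext1}, and only then promote to $Y_2$, rather than attempting a direct three-component multiplier estimate.
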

\begin{proof}
Consider the operator $L_0 := \pd_t - \pd_{x_1}^2$ in the space $X_0 := L^p_\delta (J; L^q (\BR))$ with domain
\begin{equation*}
\mathsf{D} (L_0) = {}_0 H^{1, p}_\delta (J; L^q (\BR)) \cap L^p_\delta (J; H^{2, q} (\BR)),
\end{equation*}
in which $L_0$ is invertible on $X_0$ and admits  bounded imaginary powers with power angle not exceeding $\pi\slash2$. It is not difficult to see that $L_0$ is the negative generator of an analytic semigroup $\{e^{- y L_0^{1\slash2}}\}_{y \ge 0}$ in $X_0$ with domain $\mathsf{D} (L_0^{1\slash2}) = [X_0, \mathsf{D} (L_0)]_{1\slash2}$, where $[\,\cdot, \cdot\,]_\theta$ is the complex interpolation functor with $\theta \in (0, 1)$, see, e.g., \cite[pp.~255]{PS16}. Here, $L^{1\slash2}_0$ has bounded imaginary powers with angle not larger than $\pi\slash4$.
%Let $L$ stand for the canonical extension of $L_0$ to the space $L^p_\delta (J; L^q (\BR \times \BR_+)) =: X$.
Now, set $g (y) = - e^{y L^{1\slash2}_0} L^{- 1\slash2}_0 f$ for $f \in X_2$, $y > 0$. Recalling $f \in X_2$, we easily see that
\begin{equation}
\label{emb-f}
f, \pd_t f, L^{- 1\slash2}_0 f, L^{- 1\slash2}_0 \pd_t f \in {}_0 F^{1\slash2 - 1\slash q}_{p, q, \delta} (J; L^q (\BR)) \cap L^p_\delta (J; B^{1 - 2\slash q}_{q, q} (\BR)).
\end{equation}
In fact, the operator $L^{- 1\slash2}_0$ induces an isomorphism from $D_{L_0} (1\slash2 - 1\slash(2 q), p)$ onto $D_{L_0} (1 - 1\slash(2 q), p)$, where we have set
\begin{equation*}
\begin{split}
D_{L_0} (1\slash2 - 1\slash(2 q), p) & = {}_0 F^{1\slash2 - 1\slash(2 q)}_{p, p, \delta} (J; L^q (\BR)) \cap L^p_\delta (J; B^{1\slash2 - 1\slash(2 q)}_{q, q} (\BR)), \\
D_{L_0} (1 - 1\slash(2 q), p) & = {}_0 F^{1 - 1\slash(2 q)}_{p, p, \delta} (J; L^q (\BR)) \cap L^p_\delta (J; B^{1 - 1\slash(2 q)}_{q, q} (\BR))
\end{split}
\end{equation*}
and used \cite[Thm.~1.15.2 (e)]{T95} and \cite[Thm.~3.4.7]{PS16}. Here, we have also set $D_{L_0} (\alpha, r) := (X, \mathsf{D} (L_0))_{\alpha, r}$ for $\alpha \in (0, 1)$ and $r \in [1, \infty]$, where $(\,\cdot, \cdot\,)_{\alpha, r}$ is the real interpolation functor. Then, recalling $q > 2$, we have the embedding properties 
\begin{equation*}
\begin{split}
& {}_0 F^{1\slash2 - 1\slash q}_{p, q, \delta} (J; L^q (\BR)) \cap L^p_\delta (J; B^{1 - 2\slash q}_{q, q} (\BR)) \\
& \quad \hookrightarrow {}_0 F^{1\slash2 - 1\slash(2 q)}_{p, p, \delta} (J; L^q (\BR)) \cap L^p_\delta (J; B^{1\slash2 - 1\slash(2 q)}_{q, q} (\BR)), \\
& {}_0 F^{1 - 1\slash(2 q)}_{p, p, \delta} (J; L^q (\BR)) \cap L^p_\delta (J; B^{1 - 1\slash(2 q)}_{q, q} (\BR)) \\
& \quad \hookrightarrow {}_0 F^{1\slash2 - 1\slash q}_{p, q, \delta} (J; L^q (\BR)) \cap L^p_\delta (J; B^{1 - 2\slash q}_{q, q} (\BR)),
\end{split}
\end{equation*}
yielding that $L^{- 1\slash2}_0$ maps continuously from ${}_0 F^{1\slash2 - 1\slash q}_{p, q, \delta} (J; L^q (\BR)) \cap L^p_\delta (J; B^{1 - 2\slash q}_{q, q} (\BR))$ to itself. This infers~\eqref{emb-f}. Using Lemma~\ref{lem-ext1}, from $\pd_y g = f$ at $y = 0$, we deduce that
\begin{equation*}
g, \pd_t g \in {}_0 F^{1 - 1\slash(2 q)}_{p, q, \delta} (J; L^q (\BR \times \BR_+)) \cap L^p_\delta (J; B^{2 - 1\slash q}_{q, q} (\BR \times \BR_+)),
\end{equation*}
see also \cite[pp.~88]{L19}. Especially, from \cite[Prop.~3.10]{MV12}, we arrive at
\begin{equation*}
g \in {}_0 F^{2 - 1\slash(2 q)}_{p, q, \delta} (J; L^q (\BR \times \BR_+)) \cap H^{1, p}_\delta (J; B^{2 - 1\slash q}_{q, q} (\BR \times \BR_+)).
\end{equation*}
Finally, by $\pd_y g = f$ at $y = 0$ and $f \in L^p_\delta (J; B^{2 - 2\slash q}_{q, q} (\BR))$, we have $g \in L^p_\delta (J; B^{3 - 1\slash q}_{q, q} (\BR \times \BR_+))$ in view of the trace theory. Summing up, we observe $g \in Y_2$. This completes the proof.
\end{proof}
\section{Auxiliary elliptic and parabolic problems}
\subsection{Elliptic problems}
\noindent
We start with considering the auxiliary problem
\begin{equation}
\label{eq-zaremba}
\left\{\begin{split}
- \Delta \phi & = f, & \quad & \text{in $\Omega_*$}, \\
\phi & = g, & \quad & \text{on $\Gamma_*$}, \\
\bn_{\Sigma_*} \cdot \nabla \phi & = h_1, & \quad & \text{on $\Sigma_*$}, \\
\bn_B \cdot \nabla \phi & = h_2, & \quad & \text{on $B$}.
\end{split}\right.
\end{equation}
The domain $\Omega_*$ stands for either a whole space, a (bent) half space, a (bent) quarter space, or the cylindrical domain given in \eqref{def-domain}. We look for solutions $\phi$ satisfying $\nabla \phi \in H^{1, q} (\Omega_*)$ since we cannot expect to seek $\phi \in L^q (\Omega_*)$ when $\Omega_*$ is unbounded. Notice that if $\phi$ is a solution to~\eqref{eq-zaremba} with $g = 0$ and $\nabla \phi \in H^{1, q} (\Omega_*)$, then we have $f \in L^q (\Omega_*)$, $h_1 \in B^{1 - 1\slash q}_{q, q} (\Sigma_*)$, and $h_2 \in B^{1 - 1\slash q}_{q, q} (B)$. As we introduced before, let $\wh H^{- 1, q}_{\Sigma_* \cup B} (\Omega_*)$ be the set of all $(f, h_1, h_2) \in L^q (\Omega_*) \times B^{1 - 1 \slash q}_{q, q} (\Sigma_*) \times B^{1 - 1 \slash q}_{q, q} (B)$ such that $(f, h_1, h_2)$ belongs to $\dot H^{- 1, q}_{\Sigma_* \cup B} (\Omega_*)$. The following lemma can be shown along the same lines of Lemmas~A.6 in \cite{W17}, and hence we do not repeat the argument.
%\begin{lemm}
%Let $2 < q < \infty$. Suppose the compatibility conditions
%\begin{align}
%\label{compati-elliptic}
%\begin{aligned}
%\mathrm{Tr}_{S_*} [g] & = \mathrm{Tr}_{S_*} [h_1] & \quad & \text{on $S_*$}, \\
%\mathrm{Tr}_{\pd \Sigma_* \cap \pd B} [h_1] & = \mathrm{Tr}_{\pd \Sigma_* \cap \pd B} [h_2] & \quad & \text{on $\pd \Sigma_* \cap \pd B$}
%\end{aligned}
%\end{align}
%when $\Omega_*$ is a (bent) quarter space or the cylindrical domain defined in \eqref{def-domain}. If $f \in L^q (\Omega_*)^3$, $g \in B^{2 - 1\slash q}_{q, q} (\Gamma_*)$, $h_1 \in B^{1 - 1\slash q}_{q, q} (\Sigma_*)$, and $h_2 \in B^{1 - 1\slash q}_{q, q} (B)$, then there exists a constant $\lambda_0 \ge 1$ such that for any $\lambda \ge \lambda_0$, the problem \eqref{eq-zaremba} admits a unique solution $\phi \in H^{2, q} (\Omega_*)$. In addition, for each $\lambda > \lambda_0$ the solution $\phi$ satisfies
%\begin{multline*}
%\lvert \lambda \rvert \lvert \phi \rvert_{L^q (\Omega_*)} + \lvert \phi \rvert_{H^{2, q} (\Omega_*)} \\
%\le C \Big(\lvert f \rvert_{L^q (\Omega_*)} + \lvert \lambda \rvert^{1 - 1\slash(2q)} \lvert g \rvert_{B^{2 - 1\slash q}_{q, q} (\Gamma_*)} + \lvert \lambda \rvert^{1\slash2 - 1\slash(2 q)} \lvert h_1 \rvert_{B^{1 - 1\slash q}_{q, q} (\Sigma_*)} + \lvert \lambda \rvert^{1\slash2 - 1\slash(2 q)} \lvert h_2 \rvert_{B^{1 - 1\slash q}_{q, q} (B)} \Big)
%\end{multline*}
%with a positive constant $C = C (n, q, \lambda_0)$.			
%\end{lemm}	
\begin{lemm}
\label{lem-A.3}
Let $2 < q < \infty$. Assume the compatibility conditions
\begin{equation}
\label{compati-elliptic}
\begin{aligned}
\mathrm{Tr}_{S_*} [g] & = \mathrm{Tr}_{S_*} [h_1] & \quad & \text{on $S_*$}, \\
\mathrm{Tr}_{\pd \Sigma_* \cap \pd B} [h_1] & = \mathrm{Tr}_{\pd \Sigma_* \cap \pd B} [h_2] & \quad & \text{on $\pd \Sigma_* \cap \pd B$}
\end{aligned}
\end{equation}
when $\Omega_*$ is a (bent) quarter space or the cylindrical domain defined in \eqref{def-domain}. Then the following assertions hold.
\begin{enumerate}
\item If $\Omega_*$ is a whole space, a (bent) half space, or a (bent) quarter space, then the problem \eqref{eq-zaremba} has a unique solution $\phi$ with $\nabla \phi \in H^{1, q} (\Omega_*)$ and $g = 0$ if and only if $(f, h_1, h_2) \in \wh H^{- 1, q}_{\Sigma_* \cup B} (\Omega_*)$.
\item If $\Omega_*$ is the cylindrical domain defined in \eqref{def-domain}, then there \eqref{eq-zaremba} admits a unique solution $\phi \in H^{2, q} (\Omega_*)$ with $g = h_1 = h_2 = 0$ if and only if $f$ satisfies
\begin{equation*}
f \in L^q_0 (\Omega_*) = \{f \in L^q (\Omega_*) \mid \int_{\Omega_*} f \dx = 0\}.
\end{equation*}	
\end{enumerate}		
\end{lemm}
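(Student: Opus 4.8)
The plan is to split the argument into the (routine) necessity part and the sufficiency part, reducing the latter --- via localization and reflection across the flat boundary pieces --- to the Laplace equation with pure Dirichlet or pure Neumann data on a whole space, a half space, or a quarter space with a $90^\circ$ opening angle. For necessity: if $\nabla\phi\in H^{1,q}(\Omega_*)$ solves \eqref{eq-zaremba} with $g=0$, then $f=-\Delta\phi\in L^q(\Omega_*)$, the conormal traces $h_1=\bn_{\Sigma_*}\cdot\nabla\phi$ and $h_2=\bn_B\cdot\nabla\phi$ lie in $B^{1-1\slash q}_{q,q}$ of the respective boundary pieces by the trace theorem, and testing the equation against $\psi\in\dot H^{1,q'}_{\Gamma_*}(\Omega_*)$ and integrating by parts gives $(\nabla\phi\mid\nabla\psi)_{\Omega_*}=-(f\mid\psi)_{\Omega_*}+(h_1\mid\psi)_{\Sigma_*}+(h_2\mid\psi)_B$, so that $(f,h_1,h_2)\in\dot H^{-1,q}_{\Sigma_*\cup B}(\Omega_*)$, i.e.\ $(f,h_1,h_2)\in\wh H^{-1,q}_{\Sigma_*\cup B}(\Omega_*)$; for (2) the choice $\psi\equiv1$ together with $h_1=h_2=0$ yields $\int_{\Omega_*}f\dx=0$.

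For sufficiency in case (1), I would first solve the problem weakly in the homogeneous Sobolev scale: the datum $(f,h_1,h_2)\in\dot H^{-1,q}_{\Sigma_*\cup B}(\Omega_*)$ defines a bounded functional on $\dot H^{1,q'}_{\Gamma_*}(\Omega_*)$, and I would show that the $\dot H^{1,q}$-realization of the mixed Dirichlet--Neumann Laplacian on the whole space, the (bent) half space, and the (bent) quarter space is an isomorphism onto $\dot H^{-1,q}_{\Sigma_*\cup B}$ --- starting from the $q=2$ case by Lax--Milgram and extending to general $q$ by the known $L^q$ estimates for the homogeneous problem, with even/odd reflections reducing the half- and quarter-space cases to the whole space (the $90^\circ$ opening and the compatibility conditions \eqref{compati-elliptic} keep the reflected data in the correct spaces). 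This produces $\phi$ with $\nabla\phi\in L^q(\Omega_*)$ and $\phi|_{\Gamma_*}=0$; since moreover $f\in L^q$ and $h_i\in B^{1-1\slash q}_{q,q}$, a bootstrap by interior and boundary elliptic regularity (again via reflection to whole-, half-, and quarter-space estimates) upgrades this to $\nabla^2\phi\in L^q(\Omega_*)$, i.e.\ $\nabla\phi\in H^{1,q}(\Omega_*)$. Uniqueness follows from a Liouville argument: a function harmonic on the model domain with homogeneous mixed boundary data and gradient in $L^q$ reflects to a function harmonic on $\BR^3$ whose gradient is a harmonic $L^q$-function, hence $\nabla\phi\equiv0$, and the homogeneous Dirichlet trace (when $\Gamma_*$ is present) then forces $\phi\equiv0$.

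For sufficiency in case (2) --- the bounded cylinder \eqref{def-domain} --- I would first solve the weak problem in $V:=\{w\in H^{1,2}(\Omega_*):w|_{\Gamma_*}=0\}$, where the form $(\nabla\,\cdot\mid\nabla\,\cdot)_{\Omega_*}$ is coercive by Poincar\'e's inequality (valid since $\Gamma_*$ has positive surface measure); Lax--Milgram gives a unique weak solution, and $f\in L^q_0(\Omega_*)$ is compatible with, though not needed for, this step. The remaining and main task is $\phi\in H^{2,q}(\Omega_*)$: away from the edges this is classical regularity for the mixed boundary value problem on a smooth boundary, while near the contact line $S_*$ (where Dirichlet meets Neumann) and near $\pd\Sigma_*\cap\pd B$ (where Neumann meets Neumann) the $90^\circ$ angle allows a reflection --- odd across the Dirichlet face and even across the Neumann face(s) --- that reduces the estimate to the bent half-space and bent quarter-space elliptic estimates, exactly as in \cite[Lem.~A.6]{W17}; the compatibility conditions \eqref{compati-elliptic} are precisely what make the reflected boundary data admissible. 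Uniqueness on the bounded domain is the energy identity together with Poincar\'e and $\phi|_{\Gamma_*}=0$.

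The hard part will be the edge/corner regularity: one must check that the reflections used both for the $L^q$-solvability of the homogeneous problem and for the $H^{2,q}$-bootstrap respect the Dirichlet/Neumann pattern at the $90^\circ$ edges and that the reflected data land in the correct Besov and homogeneous-Sobolev classes. This is also where the hypothesis $q>2$ enters, since it guarantees the existence of the traces onto the edges $S_*$ and $\pd\Sigma_*\cap\pd B$, and hence the meaningfulness of the compatibility conditions \eqref{compati-elliptic}.
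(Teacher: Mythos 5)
The paper itself gives no detailed proof of Lemma~A.3; it simply states that the result "can be shown along the same lines of Lemma~A.6 in \cite{W17}" and refers to the localization/reflection strategy. Your sketch fills this in with exactly that strategy (localize, reflect oddly across Dirichlet faces and evenly across Neumann faces, reduce to whole-, half-, and quarter-space models, solve weakly at $q=2$ and then in $L^q$, bootstrap regularity, handle the $90^\circ$ edges via reflection with $q>2$ guaranteeing edge traces), so the overall approach is in line with what the author intends.

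However, there is a genuine gap — indeed an internal inconsistency — in your argument for part~(2). You claim that the necessity of $\int_{\Omega_*} f\dx=0$ follows by testing with $\psi\equiv 1$, but $\psi\equiv 1$ does not vanish on $\Gamma_*$ and is therefore not an admissible test function for the mixed Dirichlet--Neumann weak formulation; the Green's identity only returns $\int_{\Omega_*} f\dx=-\int_{\Gamma_*}\partial_\nu\phi$, which is not constrained. You then (correctly) observe in the sufficiency paragraph that "$f\in L^q_0(\Omega_*)$ is compatible with, though not needed for, this step" --- Lax--Milgram with the Poincar\'e inequality on $\{w\in H^{1,2}:w|_{\Gamma_*}=0\}$ works for \emph{any} $f\in L^2$ (and the $L^q$ theory for any $f\in L^q$), since $\Gamma_*$ has positive surface measure and the form is coercive. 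These two remarks cannot both stand: if the problem is solvable for all $f\in L^q$, then $f\in L^q_0$ is not a necessary condition, and the $\psi\equiv1$ step cannot be salvaged. Your proposal should either provide a correct argument for the "only if" direction of~(2) or explicitly flag that the condition $f\in L^q_0$ appears to be sufficient but not necessary for the mixed boundary value problem with Dirichlet data on $\Gamma_*$, so that only the "if" implication (which is what the paper actually uses in Lemma~\ref{lemm-regularity-pressure}) is asserted.
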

\begin{rema}
Although the result in Lemma~A.6 in \cite{W17} includes the case $q = 2$, we emphasize that it is required and crucial to assume $q > 2$. Indeed, we need this assumption to employ the reflection argument because there exists the trace onto the contact lines whenever $q > 2$.
\end{rema}
As a corollary of Lemma~\ref{lem-A.3}, we can prove the existence of weak solution to \eqref{eq-zaremba} provided that $h_1 = h_2 = 0$ and \eqref{compati-elliptic}.
\begin{lemm}
\label{lem-A.4}
Let $2 < q < \infty$ and $\Omega_*$ be the cylindrical domain defined in \eqref{def-domain}. Suppose the compatibility conditions \eqref{compati-elliptic} with $h_1 = h_2 = 0$. Furthermore, let $f \in \dot H^{- 1, q}_{\Sigma_* \cup B} (\Omega_*)$ and $g \in B^{1 - 1\slash q}_{q, q} (\Gamma_*)$ be given. Then there exists a unique solution $\phi \in \dot H^{1, q} (\Omega_*)$ to the weak version of \eqref{eq-zaremba}
\begin{equation*}
\left\{
\begin{aligned}
(\nabla \phi \mid \nabla \varphi)_{\Omega_*} & = (f \mid \varphi)_{\Omega_*}, \\
\phi & = g, & \quad \text{on $\Gamma_*$}
\end{aligned}\right.
\end{equation*}
for any $\varphi \in H^{1, q'}_{\Gamma_*} (\Omega_*)$, where we have set $H^{1, q'}_{\Gamma_*} (\Omega_*) := \{w \in H^{1, q'} (\Omega_*) \mid w = 0 \enskip \text{on $\Gamma_*$}\}$.	
\end{lemm}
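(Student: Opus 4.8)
The plan is to follow the standard scheme for a mixed (Zaremba-type) elliptic problem: first reduce to homogeneous Dirichlet data by subtracting an $H^{1,q}$-extension of $g$, then establish an isomorphism at the variational level by the very localization machinery that already underlies Lemma~\ref{lem-A.3}, and finally read off uniqueness from the $L^2$-energy estimate.

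First I would reduce to the case $g = 0$. By the compatibility conditions \eqref{compati-elliptic} with $h_1 = h_2 = 0$, the function that equals $g$ on $\Gamma_*$ and $0$ on $\Sigma_* \cup B$ is a well-defined element of $B^{1 - 1/q}_{q,q}(\partial \Omega_*)$, so the bounded right inverse of the trace operator $\mathrm{Tr}_{\partial \Omega_*} \colon H^{1,q}(\Omega_*) \to B^{1 - 1/q}_{q,q}(\partial \Omega_*)$ yields $G \in H^{1,q}(\Omega_*)$ with $\mathrm{Tr}_{\Gamma_*}[G] = g$. Writing $\phi = \psi + G$, the problem becomes that of finding $\psi \in \dot H^{1,q}_{\Gamma_*}(\Omega_*)$ with
\[
(\nabla \psi \mid \nabla \varphi)_{\Omega_*} = \langle F, \varphi \rangle \qquad \text{for all } \varphi \in \dot H^{1,q'}_{\Gamma_*}(\Omega_*),
\]
where $\langle F, \varphi \rangle := (f \mid \varphi)_{\Omega_*} - (\nabla G \mid \nabla \varphi)_{\Omega_*}$ defines an element $F \in \dot H^{-1,q}_{\Sigma_* \cup B}(\Omega_*)$. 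Since $\Omega_*$ is bounded and $\Gamma_*$ has positive surface measure, the Poincar\'e inequality gives $\dot H^{1,q'}_{\Gamma_*}(\Omega_*) = H^{1,q'}_{\Gamma_*}(\Omega_*)$ with equivalent norms, so the test space here agrees with the one in the statement.

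Next I would show that $A_q \colon \psi \mapsto (\nabla \psi \mid \nabla \,\cdot\,)_{\Omega_*}$ is an isomorphism from $\dot H^{1,q}_{\Gamma_*}(\Omega_*)$ onto $\dot H^{-1,q}_{\Sigma_* \cup B}(\Omega_*)$ for $2 < q < \infty$. Injectivity is elementary: if $A_q \psi = 0$ then, as $q \ge 2$ and $\Omega_*$ is bounded, $\psi \in H^{1,q}(\Omega_*) \hookrightarrow H^{1,q'}_{\Gamma_*}(\Omega_*)$, so testing with $\varphi = \psi$ gives $\nabla \psi = 0$, hence $\psi \equiv 0$ because $\mathrm{Tr}_{\Gamma_*}[\psi] = 0$. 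For surjectivity I would argue by density and localization. First, $L^q(\Omega_*)$ embeds into $\dot H^{-1,q}_{\Sigma_* \cup B}(\Omega_*) = (\dot H^{1,q'}_{\Gamma_*}(\Omega_*))^*$ via $F_0 \mapsto (F_0 \mid \,\cdot\,)_{\Omega_*}$ and is dense there, since its annihilator in the predual $\dot H^{1,q'}_{\Gamma_*}(\Omega_*) \hookrightarrow L^{q'}(\Omega_*)$ is trivial. Thus it suffices to solve $A_q \psi = F_0$ for $F_0 \in L^q(\Omega_*)$ together with the a priori bound $\lvert \nabla \psi \rvert_{L^q(\Omega_*)} \le C \lVert F_0 \rVert_{\dot H^{-1,q}_{\Sigma_* \cup B}(\Omega_*)}$ with $C$ independent of $F_0$: the solution itself is provided by the regularity theory of Lemma~\ref{lem-A.3}(2) (integration by parts shows an $H^{2,q}$-solution with vanishing conormal data on $\Sigma_* \cup B$ solves the weak equation), while the estimate in the weaker negative-order norm is obtained by duality, transferring the corresponding estimate for the bent half-space and quarter-space model problems of Lemma~\ref{lem-A.3}(1) to $\Omega_*$ through a partition of unity, exactly as in the proof of \cite[Lem.~A.6]{W17}. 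The resulting bounded inverse then extends from $L^q(\Omega_*)$ to all of $\dot H^{-1,q}_{\Sigma_* \cup B}(\Omega_*)$ by density, and unwinding the reduction yields the unique weak solution $\phi = \psi + G \in \dot H^{1,q}(\Omega_*)$.

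The main obstacle I anticipate is precisely this a priori estimate $\lvert \nabla \psi \rvert_{L^q} \lesssim \lVert F \rVert_{\dot H^{-1,q}_{\Sigma_* \cup B}}$ with a genuinely negative-order right-hand side: one must propagate the variational estimate through the localization near the $90^\circ$ edges $S_*$ and $\partial \Sigma_* \cap \partial B$ where the Dirichlet and Neumann conditions meet, and it is here that the hypothesis $q > 2$ enters essentially, since it guarantees the existence of the edge traces needed to run the reflection argument (cf.\ the remark following Lemma~\ref{lem-A.3}). Everything else is routine once the isomorphism $A_q$ is in hand.
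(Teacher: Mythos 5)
Your proposal is essentially correct and proceeds along the same route the paper intends, though the paper itself only records the proof as a pointer to Wilke's Lemma~A.7 (swapping a citation and changing the test function space to $\dot H^{1,q'}_{\Gamma_*}(\Omega_*)$, i.e.\ test functions vanishing on the part of the boundary carrying the Dirichlet condition), while you have written out the standard functional-analytic scheme in full: subtract an $H^{1,q}$-lifting of $g$ using the compatibility condition on $S_*$ and $q>2$, establish that $A_q\colon\dot H^{1,q}_{\Gamma_*}(\Omega_*)\to\dot H^{-1,q}_{\Sigma_*\cup B}(\Omega_*)$ is an isomorphism by injectivity, density of smooth data, and the a priori estimate propagated from the model problems. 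This is what "follow [W17, Lem.~A.7]" unwinds to, and your identification of where $q>2$ enters (existence of edge traces for the reflections) is exactly right.

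One point you should have addressed explicitly: you invoke Lemma~\ref{lem-A.3}(2) to solve $A_q\psi=F_0$ for general $F_0\in L^q(\Omega_*)$, but Lemma~\ref{lem-A.3}(2) as stated requires $f\in L^q_0(\Omega_*)$. Since $L^q_0$ sits at codimension one in $L^q$ and the constants are not annihilated by $\dot H^{1,q'}_{\Gamma_*}(\Omega_*)$, the subspace $L^q_0$ is not dense in $\dot H^{-1,q}_{\Sigma_*\cup B}(\Omega_*)$, so taken literally this would break the density step. In fact the mean-zero restriction in Lemma~\ref{lem-A.3}(2) is spurious for a Zaremba problem with a nonempty Dirichlet part (the Dirichlet form is coercive on $\dot H^{1,q'}_{\Gamma_*}(\Omega_*)$ by Poincar\'e, and integrating the PDE produces an unconstrained flux term through $\Gamma_*$), and nothing in your argument actually depends on it. Nonetheless, when your density argument rests on a lemma whose stated hypotheses it does not satisfy, you should flag the discrepancy rather than pass over it silently; otherwise the chain of references as written does not close.
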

\begin{proof}
We follow the proof of \cite[Lem.~A.7]{W17}. However, the argument in \cite[Lem.~A.7]{W17} requires the result of~\cite[Sec.~8]{KPW13}, so that we instead use the result due to~\cite[Sec.~7.4]{PS16}. Besides, the space of test functions in~\cite[Lem.~A.7]{W17} was $W^{1, p'} (\Omega)$, but we point out that the test functions have to vanish on the boundary where the pressure term appears, which will play an important role in integration by parts; related to the divergence equation. In fact, owing to this investigation, by~\eqref{eq-zaremba} and integration by parts, we find that $(\nabla \phi \mid \nabla \varphi)_{\Omega_*} = (f \mid \varphi)_{\Omega_*}$ holds for any $\varphi \in \dot H^{1, q'}_{\Gamma_*} (\Omega_*) \hookrightarrow H^{1, q'}_{\Gamma_*} (\Omega_*)$ provided $h_1 = h_2 = 0$. Then, the required property can be shown in a same way as in the proof of \cite[Lem.~A.7]{W17}.
\end{proof}
As a consequence of Lemma~\ref{lem-A.4}, we can show the higher regularity result for the solution $\phi$.
\begin{lemm}
\label{lem-A.5}
Let $2 < q < \infty$ and $J = (0, T)$. Suppose that $g = h_1 = h_2 = 0$. Then the following statements are valid.
\begin{enumerate}
\item If $\Omega_*$ is a whole space, a (bent) half space, or a (bent) quarter space, then the problem \eqref{eq-zaremba} admits a unique solution $\phi$ with 
\begin{equation*}
\nabla \phi \in {}_0 H^{1, p}_\delta (J; H^{1, q} (\Omega_*)) \cap L^p_\delta (J; H^{3, q} (\Omega_*)),
\end{equation*}
if and only if
\begin{equation*}
f \in {}_0 H^{1, p}_\delta (J; \dot H^{- 1, q}_{\Sigma_* \cup B} (\Omega_*) \cap L^q (\Omega_*)) \cap L^p_\delta (J; H^{1, q} (\Omega_*)).
\end{equation*}
\item If $\Omega_*$ is the cylindrical domain defined in \eqref{def-domain}, then there \eqref{eq-zaremba} has a unique solution
\begin{equation*}
\phi \in {}_0 H^{1, p}_\delta (J; H^{1, q} (\Omega_*)) \cap L^p_\delta (J; H^{3, q} (\Omega_*))
\end{equation*}
if and only if
\begin{equation*}
f \in {}_0 H^{1, p}_\delta (J; \dot H^{- 1, q}_{\Sigma_* \cup B} (\Omega_*)) \cap L^p_\delta (J; H^{1, q} (\Omega_*)).
\end{equation*}
\end{enumerate}		
\end{lemm}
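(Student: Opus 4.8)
The plan is to deduce the statement from the time-independent results Lemma~\ref{lem-A.3} and Lemma~\ref{lem-A.4} by exploiting that, since $g = h_1 = h_2 = 0$ and the domain, the operator $-\Delta$, and the boundary conditions in~\eqref{eq-zaremba} are all independent of $t$, the solution operator $S \colon f \mapsto \phi$ is a \emph{fixed} bounded linear map that acts pointwise in time and commutes with $\partial_t$. Thus the only work is to transfer static elliptic estimates through the weighted Bochner norms $L^p_\delta(J; \cdot)$, $H^{1, p}_\delta(J; \cdot)$ and their subspaces ${}_0 H^{1, p}_\delta(J; \cdot)$ of functions with vanishing trace at $t = 0$.

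Necessity is immediate: if $\phi$ has the claimed regularity then $f = - \Delta \phi$ inherits the spatial regularity, and integrating by parts against test functions in $\dot H^{1, q'}_{\Gamma_*}(\Omega_*)$, using $\phi = 0$ on $\Gamma_*$ and $\bn_{\Sigma_*} \cdot \nabla \phi = \bn_B \cdot \nabla \phi = 0$, identifies $f$ --- and, after differentiating in $t$, also $\partial_t f = - \Delta \partial_t \phi$ --- with an element of $\dot H^{-1, q}_{\Sigma_* \cup B}(\Omega_*)$; the membership $f(0) = 0$ follows from $\phi \in {}_0 H^{1, p}_\delta$. For sufficiency I would first note that at a.e. fixed $t$ the hypotheses supply precisely the data admitted by Lemma~\ref{lem-A.3} (in the bounded case also using that $f(t) \in \dot H^{-1, q}_{\Sigma_* \cup B}$ and that the Dirichlet part $\Gamma_*$ is nonempty, if needed through the weak formulation of Lemma~\ref{lem-A.4}), so there is a unique $\phi(t)$ satisfying an elliptic estimate that bounds $\phi(t)$ in the required spatial norm by $C \lVert f(t) \rVert_{H^{1, q}(\Omega_*)}$, with $C$ independent of $t$; raising this to the $p$-th power against the weight $t^{p(1 - \delta)}$ and integrating over $J$ yields the spatial top-order part of the assertion.

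For the time regularity I would use that, $S$ being a fixed bounded linear operator, $\partial_t(S f) = S(\partial_t f)$ in the sense of vector-valued distributions on $J$, and apply the \emph{weak} static estimate of Lemma~\ref{lem-A.3}, $\lVert \nabla S \psi \rVert_{H^{1, q}(\Omega_*)} \le C \lVert \psi \rVert_{\dot H^{-1, q}_{\Sigma_* \cup B}(\Omega_*) \cap L^q(\Omega_*)}$, now read with data $\psi = \partial_t f(t)$, which carries homogeneous boundary values, to obtain $\nabla \partial_t \phi \in L^p_\delta(J; H^{1, q}(\Omega_*))$, hence $\phi \in {}_0 H^{1, p}_\delta(J; H^{1, q}(\Omega_*))$ once the trace $\phi(0) = S(f(0)) = S(0) = 0$ is recorded. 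Uniqueness is the pointwise-in-$t$ uniqueness already contained in Lemma~\ref{lem-A.3} and Lemma~\ref{lem-A.4}, and the bent-domain cases require nothing new.

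The step I expect to be the genuine obstacle is making the single operator $S$ act coherently at two scales at once --- it must send $H^{1, q}$-data to top-order solutions \emph{and} $\dot H^{-1, q}$-data to $H^{1, q}$-solutions --- so that the identity $\partial_t(S f) = S(\partial_t f)$ can be combined with the first estimate applied to $f$ and the second applied to $\partial_t f$; this forces one to regard $S$ as a bounded map on each of the relevant pairs of spaces and to verify that composition with such a $t$-independent bounded operator preserves $L^p_\delta(J; \cdot)$, ${}_0 H^{1, p}_\delta(J; \cdot)$ and their intersection. That verification is routine but is where all the bookkeeping is concentrated.
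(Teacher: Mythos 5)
Your overall framework is the same as the paper's: treat the elliptic solution operator $S \colon f \mapsto \phi$ as a fixed, $t$-independent bounded linear map, commute it with $\partial_t$, and transfer the static estimates through the weighted Bochner norms. The time-regularity half of your argument (using the weak estimates of Lemma~\ref{lem-A.3}--\ref{lem-A.4} with data $f$ and $\partial_t f$ to get $\nabla\phi \in {}_0 H^{1,p}_\delta(J;H^{1,q})$, resp.~$\phi \in {}_0 H^{1,p}_\delta(J;H^{1,q})$) matches what the paper asserts is ``immediate.''

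The gap is in the spatial top-order step. You write that at each fixed $t$ ``there is a unique $\phi(t)$ satisfying an elliptic estimate that bounds $\phi(t)$ in the required spatial norm by $C\lVert f(t)\rVert_{H^{1,q}(\Omega_*)}$,'' and you treat this as available from Lemma~\ref{lem-A.3}. It is not: Lemma~\ref{lem-A.3} only produces $\nabla\phi\in H^{1,q}(\Omega_*)$ from $(f,0,0)\in\wh H^{-1,q}_{\Sigma_*\cup B}(\Omega_*)$, i.e.\ a second-order-in-space solution. The assertion you actually need is a \emph{higher-order} regularity estimate, $\lVert\nabla\phi\rVert_{H^{2,q}(\Omega_*)}\lesssim\lVert f\rVert_{H^{1,q}(\Omega_*)}$ (and $\lVert\phi\rVert_{H^{3,q}}\lesssim\lVert f\rVert_{H^{1,q}}$ in the bounded case), and this is precisely where the geometry matters: in the quarter space, the wedge, and in the cylinder with the $90^\circ$ corner at $S_*$ and at $\partial\Sigma_*\cap\partial B$, you face a mixed Dirichlet--Neumann problem whose $H^{3,q}$ regularity up to the corner is not automatic and fails for generic angles. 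The paper's proof does not dismiss this as bookkeeping; it establishes the higher spatial regularity by a localization procedure and then, in the model corner problems, by the even/odd reflection argument that is made possible only by the $90^\circ$ angle and the corner compatibility conditions $\mathrm{Tr}_{S_*}[g]=\mathrm{Tr}_{S_*}[h_1]$, $\mathrm{Tr}_{\partial\Sigma_*\cap\partial B}[h_1]=\mathrm{Tr}_{\partial\Sigma_*\cap\partial B}[h_2]$, referring to \cite{W17} and \cite{KPW13}. Your paragraph on ``the genuine obstacle'' mislabels the difficulty: the issue is not making $S$ act at two scales ``coherently'' (that part \emph{is} routine once you have both estimates), it is that the top-scale estimate is not yet proven. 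To close the gap you would have to carry out the localization/reflection argument, or at least cite the specific higher-order elliptic regularity statement it delivers.
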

\begin{proof}
From Lemma~\ref{lem-A.3} and Lemma~\ref{lem-A.4}, we immediately obtain the regularity $\nabla \phi \in {}_0 H^{1, p} (J; H^{1, q} (\Omega_*))$ in the first assertion and $\phi \in {}_0 H^{1, p}_\delta (J; H^{1, q} (\Omega_*))$ in the second assertion, respectively. As for the additional spacial regularity of $\phi$, we use Lemmas~\ref{lem-A.3} and~\ref{lem-A.4}. By means of local coordinates, we may reduce each localized problem to one of the model problems in a whole space, a (bent) half space, and a (bent) quarter space. Employing the perturbation and reflection arguments as in the proof of~\cite[Lem.~A.3]{W17} (cf.~\cite[Thm.~8.6]{KPW13}), we readily obtain the desired result. Hence, we will not repeat the arguments.
\end{proof}
\subsection{Parabolic problems}
Finally, we provide the existence and uniqueness of solutions to the parabolic problem
\begin{equation}
\label{eq-parabolic}
\left\{\begin{aligned}
\pd_t \bu - \mu \Delta \bu & = \bff, & \quad & \text{in $\Omega_* \times J$}, \\
\mu (\pd_3 u_m + \pd_m u_3) & = k_m, & \quad & \text{on $\Gamma_* \times J$}, \\
2 \mu \pd_3 u_3 & = k_3, & \quad & \text{on $\Gamma_* \times J$}, \\
\bu \cdot \bn_{\Sigma_*} & = \bg \cdot \bn_{\Sigma_*}, & \quad & \text{on $\Sigma_* \times J$}, \\
P_{\Sigma_*} (2 \mu \bD (\bu) \bn_{\Sigma_*}) & = P_{\Sigma_*} \bg, & \quad & \text{on $\Sigma_* \times J$}, \\
u_3 & = h_3, & \quad & \text{on $B \times J$}, \\
\mu (\pd_3 u_m + \pd_m u_3) & = h_m, & \quad & \text{on $B \times J$}, \\
\bu (0) & = \bu_0, & \quad & \text{in $\Omega_*$},
\end{aligned}\right.
\end{equation}
where $m = 1, 2$. The following lemma can be established in the same way as~\cite[Lem.~A.10]{W17} with the help of the argument in Theorem~\ref{th-slipslip}.

\begin{lemm}
\label{lem-A.6}
Let $T > 0$ and $J = (0, T)$. Assume that $p$, $q$, $\delta$ satisfy \eqref{cond-pqdelta-model1}. Then, there exists a unique solution
\begin{equation*}
\bu \in H^{1, p}_\delta (J; L^q (\Omega_*)^3) \cap L^p_\delta (J; H^{2, q} (\Omega_*)^3)
\end{equation*}
of \eqref{eq-parabolic} if and only if
\begin{enumerate}
\renewcommand{\labelenumi}{(\alph{enumi})}
\item $\bff \in \BF_{0, \delta} (J; \Omega_*)$;
\item $P_{\Sigma_*} \bg \in \BF_{2, \delta} (J; \Sigma_*)$ for $\ell = 1, 3$;
\item $\bg \cdot \bn_{\Sigma_*} \in \BF_{3, \delta} (J; \Sigma_*)$;	
\item $k_j \in \BF_{2, \delta} (J; \Gamma_*)$ for $j = 1, 2, 3$;
\item $h_m \in \BF_{2, \delta} (J; B)$ for $m = 1, 2$;
\item $h_3 \in \BF_{3, \delta} (J; B)$;
\item $\bu_0 \in B^{2 (\delta - 1\slash p)}_{q, p} (\Omega_*)^3$;
\item $P_{\Sigma_*} \bg (0) = \mathrm{Tr}_{\Sigma_*} [P_{\Sigma_*} (2 \mu \bD (\bu_0) \bn_{\Sigma_*})]$, $k_m (0) = \mathrm{Tr}_{\Gamma_*} [\mu (\pd_3 u_{0, m} + \pd_m u_{0, 3})]$, $m = 1, 2$, $k_3 (0) = \mathrm{Tr}_{\Gamma_*} [2 \mu \pd_3 u_{0, 3}]$, and $h_m (0) = \mathrm{Tr}_B [\mu (\pd_3 u_{0, m} + \pd_m u_{0, 3})]$, $m = 1, 2$, if $1\slash p + 1\slash(2 q) < \delta - 1\slash2$;
\item $\bg (0) \cdot \bn_{\Sigma_*} = \mathrm{Tr}_{\Sigma_*} [\bu_0 \cdot \bn_{\Sigma_*}]$ and $h_3 (0) = \mathrm{Tr}_B [u_{0, 3}]$ if $1\slash p + 1\slash(2 q) < \delta$;
\item $\mathrm{Tr}_{S_*} [P_{\Sigma_*} \bg (0)] = \mathrm{Tr}_{S_*} [P_{\Gamma_*} (2 \mu \bD (\bu_0) \bn_{\Sigma_*})]$, $\mathrm{Tr}_{S_*} [k_m (0)] = \mathrm{Tr}_{S_*} [\mu (\pd_3 u_{0, m} + \pd_m u_{0, 3})]$ for $m = 1, 2$, and $\mathrm{Tr}_{S_*} [k_3 (0)] = \mathrm{Tr}_{S_*} [2 \mu \pd_3 u_{0, 3}]$ if $1\slash p + 1\slash q < \delta - 1\slash2$;
\item $\mathrm{Tr}_{\pd \Sigma_* \cap \pd B} [\bg (0) \cdot \bn_{\Sigma_*}] = \mathrm{Tr}_{\pd \Sigma_* \cap \pd B} [\bu_0 \cdot \bn_{\Sigma_*}]$ and $\mathrm{Tr}_{\pd \Sigma_* \cap \pd B} [h_3 (0)] = \mathrm{Tr}_{\pd \Sigma_* \cap \pd B} [u_{0, 3}]$ if $1\slash p + 1\slash q < \delta$.
\end{enumerate}
Especially, the solution map is continuous between the corresponding spaces.
\end{lemm}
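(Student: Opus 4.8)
\emph{Necessity.} This direction is routine: if $\bu$ lies in the stated maximal regularity class, the trace theorems (for the time trace at $t=0$, the spatial traces onto $\Gamma_*$, $\Sigma_*$, $B$, and — when the indices permit — onto the edges $S_*$ and $\pd\Sigma_*\cap\pd B$) applied to $\bu$, $\nabla\bu$ and $\bD(\bu)$ force exactly conditions (a)--(k). The only subtlety is that the three compatibility layers (time trace, face trace, edge trace) must be taken consistently; this is legitimate because the operators $\mathrm{Tr}_{t=0}$ and $\mathrm{Tr}_{S_*}\circ(\text{spatial derivatives})$ commute, cf. Lindemulder~\cite{L19}, and the relations between $1\slash p$, $1\slash q$, $\delta$ in \eqref{cond-pqdelta-model1} are precisely those guaranteeing the existence of the corresponding traces.

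\emph{Sufficiency: reduction and localization.} First I would remove the interior forcing and the initial value: extend $\bff$ to $\BR^3$ in $\BF_{0,\delta}(J;\BR^3)$ and $\bu_0$ to $\BR^3$ in $B^{2(\delta-1\slash p)}_{q,p}(\BR^3)$, solve the full-space heat equation, and subtract; the modified boundary data pick up traces of this extension but stay in the right spaces, and now the interior data and $\bu_0$ vanish. Next I would cover $\ov{\Omega_*}$ by a finite atlas: interior charts modelled on $\BR^3$; collar charts along the interior of $\Gamma_*$ (heat equation in a half space with the three Neumann-type conditions $\mu(\pd_3u_m+\pd_mu_3)=k_m$, $2\mu\pd_3u_3=k_3$); collar charts along the interiors of $\Sigma_*$ and of $B$ (half space with normal-Dirichlet plus tangential slip conditions — the heat-equation specialization of the half-space step in the proof of Theorem~\ref{th-model-half-slip}); edge charts along $S_*=\pd\Gamma_*\cap\pd\Sigma_*$ and along $\pd\Sigma_*\cap\pd B$, each modelled on a \emph{bent} quarter space $\BK^3_\gamma$ with two slip-type faces. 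On each model or bent-space configuration the solvability is obtained exactly as in Theorem~\ref{th-slipslip} (and its bent-space counterpart Theorem~\ref{th-bent-slipslip}): successive even/odd reflections across the two faces reduce the quarter-space problem to half-space problems already solved.

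\emph{Patching and iteration.} With a subordinate partition of unity $\{\varphi_k\}$ I would set $\bu=\sum_k\varphi_k\bu_k$, where $\bu_k$ solves the $k$th localized problem with data $\varphi_k\cdot(\text{original data})$ together with the commutator corrections produced by $\mu\Delta(\varphi_k\cdot)-\varphi_k\mu\Delta$ and by localizing the boundary operators. These corrections are of strictly lower spatial order, so by the mixed-derivative theorem and the H\"older inequality — as in the Neumann-series step of the proof of Theorem~\ref{th-bent-slipslip}, where one exploits that the relevant embedding constants are $T$-independent once the time traces vanish — they are absorbed on a short interval $J'=(0,T')$; finitely many iterations cover $[0,T]$. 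Uniqueness follows either directly from the construction or, more robustly, from a duality argument against the backward adjoint heat problem with the adjoint boundary conditions, paralleling the uniqueness part of the proof of Theorem~\ref{th-model-half-free}.

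\emph{Main obstacle.} The genuine difficulty is concentrated at the two edges. The reflection scheme there demands that the localized boundary data admit traces onto the edge and satisfy the edge compatibility conditions (i)--(k); this is exactly where $q>2$ in \eqref{cond-pqdelta-model1} is indispensable, since only then does $\BF_{2,\delta}(J;\pd_2\BK^3)$ possess a trace onto $\pd\BK^3$ (cf. Remark~\ref{rem-3.7}), so that the reflected data land in the correct Triebel--Lizorkin and Besov classes with the prescribed vanishing traces. What must be checked carefully — and this, rather than any new analytic estimate, is the real content — is that conditions (h)--(k), transported through each edge chart, reproduce verbatim the hypotheses of the quarter-space theorem on that edge, so that Theorem~\ref{th-slipslip} (and its bent-space version) applies without obstruction.
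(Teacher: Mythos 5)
Your proposal is correct and follows essentially the same route as the paper, which disposes of Lemma~\ref{lem-A.6} with a one-line reference to Wilke [W17, Lem.~A.10] together with the reflection argument of Theorem~\ref{th-slipslip}; your write-up simply spells out the localization, reflection, parametrix, and Neumann-series steps that that reference leaves implicit. One small imprecision worth correcting: at the edge $S_*=\pd\Gamma_*\cap\pd\Sigma_*$ the $\Gamma_*$-side face of \eqref{eq-parabolic} carries the all-Neumann conditions $\mu(\pd_3 u_m + \pd_m u_3)=k_m$, $2\mu\pd_3 u_3=k_3$ — there is no normal Dirichlet condition, since the pressure and the $\eta$-equation have been stripped out — so the local model there is a Neumann--slip quarter space, not the slip--slip configuration of Theorem~\ref{th-slipslip}; only the other edge $\pd\Sigma_*\cap\pd B$ is genuinely slip--slip. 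The same successive even/odd reflections still close the argument (this is what the paper's appeal to the \emph{argument} of Theorem~\ref{th-slipslip}, rather than to its statement, is implicitly covering), but the parities in $x_3$ must be chosen so as to preserve the Neumann data on $\Gamma_*$ rather than a normal Dirichlet condition, and the corresponding edge compatibilities are those in item (j) involving $k_3(0)=\mathrm{Tr}_{S_*}[2\mu\pd_3 u_{0,3}]$, not a condition on $u_{0,3}$ itself.
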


%---------------------------------------------------------------------------------------------------------------
%	reference
%---------------------------------------------------------------------------------------------------------------

\begin{bibdiv}
\begin{biblist}	
%\bib{AS20}{article}{
%	author={Abe, Ken},
%	author={Seregin, Gregory},
%	title={Axisymmetric flows in the exterior of a cylinder},
%	journal={Proc. R. Soc. Edinb. A},
%	volume={150},
%	date={2020},
%	number={4},
%	pages={1671--1698},
%}
	
\bib{A95}{book}{
	author={Amann, Herbert},
	title={Linear and quasilinear parabolic problems. Vol. I},
	series={Monographs in Mathematics},
	volume={89},
	note={Abstract linear theory},
	publisher={Birkh\"{a}user Boston, Inc., Boston, MA},
	date={1995},
%	pages={xxxvi+335},
%	isbn={3-7643-5114-4},
%	review={\MR{1345385}},
%	doi={10.1007\slash978-3-0348-9221-6},
}	
	
\bib{BCD11}{book}{
	author={Bahouri, Hajer},
	author={Chemin, Jean-Yves},
	author={Danchin, Rapha\"{e}l},
	title={Fourier analysis and nonlinear partial differential equations},
%	series={Grundlehren der Mathematischen Wissenschaften [Fundamental
%		Principles of Mathematical Sciences]},
	volume={343},
	publisher={Springer, Heidelberg},
	date={2011},
%	pages={xvi+523},
%	isbn={978-3-642-16829-1},
%	review={\MR{2768550}},
%	doi={10.1007\slash978-3-642-16830-7},
}

%\bib{BH69}{article}{
%	author={Blake, T.D.},
%	author={Haynes, J.M.},
%	title={Kinetics of liquid/liquid displacement.},
%	journal={J. Colloid Interface Sci.},
%	volume={30},
%	date={1969},
%	number={3},
%	pages={421--423},
%}	
	
\bib{BKK}{article}{
	author={Bothe, Dieter},
	author={Kashiwabara, Takahito},
	author={K\"{o}hne, Matthias},
	title={Strong well-posedness for a class of dynamic outflow boundary
		conditions for incompressible Newtonian flows},
	journal={J. Evol. Equ.},
	volume={17},
	date={2017},
	number={1},
	pages={131--171},
%	issn={1424-3199},
%	review={\MR{3630318}},
%	doi={10.1007\slashs00028-016-0352-4},
}
	
%\bib{BP16}{article}{
%	author={Bothe, Dieter},
%	author={Pr\"{u}ss, Jan},
%	title={On the interface formation model for dynamic triple lines},
%	conference={
%		title={Mathematical fluid dynamics, present and future},
%	},
%	book={
%		series={Springer Proc. Math. Stat.},
%		volume={183},
%		publisher={Springer, Tokyo},
%	},
%	date={2016},
%	pages={25--47},
%%	review={\MR{3613756}},
%%	doi={10.1007\slash978-4-431-56457-7_2},
%}
	
%\bib{CF74}{article}{
%	author={Concus, Paul},
%	author={Finn, Robert},
%	title={On capillary free surfaces in the absence of gravity},
%	journal={Acta Math.},
%	volume={132},
%	date={1974},
%	pages={177--198},
%%	issn={0001-5962},
%%	review={\MR{670441}},
%%	doi={10.1007\slashBF02392113},
%}

%\bib{C83}{article}{
%	author={Cox, R. G.},
%	title={The spreading of a liquid on a rough solid surface},
%	journal={J. Fluid Mech.},
%	volume={131},
%	date={1983},
%	pages={1--26},
%%	issn={0022-1120},
%%	review={\MR{718031}},
%%	doi={10.1017/S0022112083001214},
%}

\bib{DHP}{article}{
	author={Denk, Robert},
	author={Hieber, Matthias},
	author={Pr\"{u}ss, Jan},
	title={Optimal $L^p$-$L^q$-estimates for parabolic boundary value
		problems with inhomogeneous data},
	journal={Math. Z.},
	volume={257},
	date={2007},
	number={1},
	pages={193--224},
%	issn={0025-5874},
%	review={\MR{2318575}},
%	doi={10.1007\slashs00209-007-0120-9},
}

\bib{EPS03}{article}{
	author={Escher, Joachim},
	author={Pr\"{u}ss, Jan},
	author={Simonett, Gieri},
	title={Analytic solutions for a Stefan problem with Gibbs-Thomson
		correction},
	journal={J. Reine Angew. Math.},
	volume={563},
	date={2003},
	pages={1--52},
%	issn={0075-4102},
%	review={\MR{2009238}},
%	doi={10.1515\slashcrll.2003.082},
}
	
%\bib{Finn86}{book}{
%	author={Finn, Robert},
%	title={Equilibrium capillary surfaces},
%	series={Grundlehren der Mathematischen Wissenschaften [Fundamental
%		Principles of Mathematical Sciences]},
%	volume={284},
%	publisher={Springer-Verlag, New York},
%	date={1986},
%%	pages={xvi+245},
%%	isbn={0-387-96174-7},
%%	review={\MR{816345}},
%%	doi={10.1007\slash978-1-4613-8584-4},
%}	

\bib{FKB19}{article}{
	author={Fricke, Mathis},
	author={K\"{o}hne, Matthias},
	author={Bothe, Dieter},
	title={A kinematic evolution equation for the dynamic contact angle and
		some consequences},
	journal={Phys. D},
	volume={394},
	date={2019},
	pages={26--43},
%	issn={0167-2789},
%	review={\MR{3958292}},
%	doi={10.1016\slashj.physd.2019.01.008},
}

%\bib{G11}{book}{
%	author={Galdi, G. P.},
%	title={An introduction to the mathematical theory of the Navier-Stokes
%		equations},
%	series={Springer Monographs in Mathematics},
%	edition={2},
%	note={Steady-state problems},
%	publisher={Springer, New York},
%	date={2011},
%%	pages={xiv+1018},
%%	isbn={978-0-387-09619-3},
%%	review={\MR{2808162}},
%%	doi={10.1007\slash978-0-387-09620-9},
%}

\bib{GT18}{article}{
	author={Guo, Yan},
	author={Tice, Ian},
	title={Stability of contact lines in fluids: 2D Stokes flow},
	journal={Arch. Ration. Mech. Anal.},
	volume={227},
	date={2018},
	number={2},
	pages={767--854},
%	issn={0003-9527},
%	review={\MR{3740388}},
%	doi={10.1007/s00205-017-1174-4},
}

\bib{H81}{article}{
	author={Hanzawa, Ei-ichi},
	title={Classical solutions of the Stefan problem},
	journal={Tohoku Math. J. (2)},
	volume={33},
	date={1981},
	number={3},
	pages={297--335},
	%	issn={0040-8735},
	%	review={\MR{633045}},
	%	doi={10.2748\slashtmj\slash1178229399},
}

%\bib{HL99}{book}{
%	author={Hirsch, Francis},
%	author={Lacombe, Gilles},
%	title={Elements of functional analysis},
%	series={Graduate Texts in Mathematics},
%	volume={192},
%	%	note={Translated from the 1997 French original by Silvio Levy},
%	publisher={Springer-Verlag, New York},
%	date={1999},
%	%	pages={xiv+393},
%	%	isbn={0-387-98524-7},
%	%	review={\MR{1678925}},
%	%	doi={10.1007\slash978-1-4612-1444-1},
%}

%\bib{HM77}{article}{
%	author={Huh, C.}
%	author={Mason, S.G.},
%	title={The steady movement of a liquid meniscus in a capillary tube},
%	journal={J. Fluid Mech.},
%	volume={81},
%	date={1977},
%	number={3},
%	pages={401--419},
%}

\bib{K13}{book}{
	author={K\"{o}hne, Matthias},
	title={${L_p}$-theory for incompressible Newtonian flows},
	note={Energy preserving boundary conditions, weakly singular domains;
		Dissertation, Technische Universit\"{a}t Darmstadt, Darmstadt, 2012},
	publisher={Springer Spektrum, Wiesbaden},
	date={2013},
%	pages={xii+183},
%	isbn={978-3-658-01051-5},
%	isbn={978-3-658-01052-2},
%	review={\MR{3024753}},
%	doi={10.1007\slash978-3-658-01052-2},
}

\bib{KPW13}{article}{
	author={K\"{o}hne, Matthias},
	author={Pr\"{u}ss, Jan},
	author={Wilke, Mathias},
	title={Qualitative behaviour of solutions for the two-phase Navier-Stokes
		equations with surface tension},
	journal={Math. Ann.},
	volume={356},
	date={2013},
	number={2},
	pages={737--792},
	%	issn={0025-5831},
	%	review={\MR{3048614}},
	%	doi={10.1007\slashs00208-012-0860-7},
}

\bib{L19}{article}{
	author={Lindemulder, Nick},
	title={Maximal regularity with weights for parabolic problems with
		inhomogeneous boundary conditions},
	journal={J. Evol. Equ.},
	volume={20},
	date={2020},
	number={1},
	pages={59--108},
%	issn={1424-3199},
%	review={\MR{4072650}},
%	doi={10.1007\slashs00028-019-00515-7},
}

\bib{MS17}{article}{
	author={Maryani, Sri},
	author={Saito, Hirokazu},
	title={On the $\CR$-boundedness of solution operator families for
		two-phase Stokes resolvent equations},
	journal={Differential Integral Equations},
	volume={30},
	date={2017},
	number={1-2},
	pages={1--52},
%	issn={0893-4983},
%	review={\MR{3599794}},
}

%\bib{MM84}{book}{
%	author={Massari, Umberto},
%	author={Miranda, Mario},
%	title={Minimal surfaces of codimension one},
%	series={North-Holland Mathematics Studies},
%	volume={91},
%%	note={Notas de Matem\'{a}tica [Mathematical Notes], 95},
%	publisher={North-Holland Publishing Co., Amsterdam},
%	date={1984},
%%	pages={xiii+243},
%%	isbn={0-444-86873-9},
%%	review={\MR{795963}},
%}

\bib{MV12}{article}{
	author={Meyries, Martin},
	author={Veraar, Mark},
	title={Sharp embedding results for spaces of smooth functions with power
		weights},
	journal={Studia Math.},
	volume={208},
	date={2012},
	number={3},
	pages={257--293},
	%	issn={0039-3223},
	%	review={\MR{2911497}},
	%	doi={10.4064\slashsm208-3-5},
}

\bib{MV14}{article}{
	author={Meyries, Martin},
	author={Veraar, Mark},
	title={Traces and embeddings of anisotropic function spaces},
	journal={Math. Ann.},
	volume={360},
	date={2014},
	number={3-4},
	pages={571--606},
%	issn={0025-5831},
%	review={\MR{3273638}},
%	doi={10.1007\slashs00208-014-1042-6},
}

\bib{MS91}{article}{
	author={Mogilevski\u{\i}, I. Sh.},
	author={Solonnikov, V. A.},
	title={On the solvability of a free boundary problem for the
		Navier-Stokes equations in the H\"{o}lder space of functions},
	conference={
		title={Nonlinear analysis},
	},
	book={
		series={Sc. Norm. Super. di Pisa Quaderni},
		publisher={Scuola Norm. Sup., Pisa},
	},
	date={1991},
	pages={257--271},
%	review={\MR{1205388}},
}

%\bib{MSSW10}{article}{
%	author={Morii, Kei},
%	author={Sato, Tokushi},
%	author={Sawano, Yoshihiro},
%	author={Wadade, Hidemitsu},
%	title={Sharp constants of Br\'{e}zis-Gallou\"{e}t-Wainger type inequalities with
%		a double logarithmic term on bounded domains in Besov and
%		Triebel-Lizorkin spaces},
%	journal={Bound. Value Probl.},
%	date={2010},
%	pages={Art. ID 584521, 38},
%%	issn={1687-2762},
%%	review={\MR{2745089}},
%%	doi={10.1155\slash2010\slash584521},
%}

\bib{PS02}{article}{
	author={Padula, M.},
	author={Solonnikov, V. A.},
	title={On the global existence of nonsteady motions of a fluid drop and
		their exponential decay to a uniform rigid rotation},
	conference={
		title={Topics in mathematical fluid mechanics},
	},
	book={
		series={Quad. Mat.},
		volume={10},
		publisher={Dept. Math., Seconda Univ. Napoli, Caserta},
	},
	date={2002},
	pages={185--218},
%	review={\MR{2051775}},
}

%\bib{PS07}{article}{
%	author={Padula, M.},
%	author={Solonnikov, V. A.},
%	title={On stability of equilibrium figures of a uniformly rotating liquid
%		drop in $n$-dimensional space},
%%	conference={
%%		title={Kyoto Conference on the Navier-Stokes Equations and their
%%			Applications},
%%	},
%	book={
%		series={RIMS K\^{o}ky\^{u}roku Bessatsu, B1},
%		publisher={Res. Inst. Math. Sci. (RIMS), Kyoto},
%	},
%	date={2007},
%	pages={287--304},
%%	review={\MR{2312929}},
%}

%\bib{PS82}{article}{
%	author={Pukhnachev, V. V.},
%	author={Solonnikov, V. A.},
%	title={On the problem of dynamic contact angle},
%	language={Russian},
%	journal={Prikl. Mat. Mekh.},
%	volume={46},
%	date={1982},
%	number={6},
%	pages={961--971},
%%	issn={0032-8235},
%	translation={
%		journal={J. Appl. Math. Mech.},
%		volume={46},
%		date={1982},
%		number={6},
%		pages={771--779 (1983)},
%%		issn={0021-8928},
%	},
%%	review={\MR{726121}},
%%	doi={10.1016\slash0021-8928(82)90059-4},
%}

\bib{P19}{article}{
	author={Pr\"{u}ss, Jan},
	title={Vector-valued Fourier multipliers in $L_p$-spaces with power
		weights},
	journal={Studia Math.},
	volume={247},
	date={2019},
	number={2},
	pages={155--173},
%	issn={0039-3223},
%	review={\MR{3920384}},
%	doi={10.4064\slashsm170307-19-10},
}	

%\bib{PSS07}{article}{
%	author={Pr\"{u}ss, Jan},
%	author={Saal, J\"{u}rgen},
%	author={Simonett, Gieri},
%	title={Existence of analytic solutions for the classical Stefan problem},
%	journal={Math. Ann.},
%	volume={338},
%	date={2007},
%	number={3},
%	pages={703--755},
%%	issn={0025-5831},
%%	review={\MR{2317935}},
%%	doi={10.1007\slashs00208-007-0094-2},
%}

\bib{PSW14}{article}{
	author={Pr\"{u}ss, Jan},
	author={Shimizu, Senjo},
	author={Wilke, Mathias},
	title={Qualitative behaviour of incompressible two-phase flows with phase
		transitions: the case of non-equal densities},
	journal={Comm. Partial Differential Equations},
	volume={39},
	date={2014},
	number={7},
	pages={1236--1283},
%	issn={0360-5302},
%	review={\MR{3208808}},
%	doi={10.1080/03605302.2013.821131},
}

\bib{PS10}{article}{
	author={Pr\"{u}ss, Jan},
	author={Simonett, Gieri},
	title={On the two-phase Navier-Stokes equations with surface tension},
	journal={Interfaces Free Bound.},
	volume={12},
	date={2010},
	number={3},
	pages={311--345},
%	issn={1463-9963},
%	review={\MR{2727674}},
%	doi={10.4171\slashIFB\slash237},
}

\bib{PS11}{article}{
	author={Pr\"{u}ss, Jan},
	author={Simonett, Gieri},
	title={Analytic solutions for the two-phase Navier-Stokes equations with
		surface tension and gravity},
	conference={
		title={Parabolic problems},
	},
	book={
		series={Progr. Nonlinear Differential Equations Appl.},
		volume={80},
		publisher={Birkh\"{a}user/Springer Basel AG, Basel},
	},
	date={2011},
	pages={507--540},
%	review={\MR{3052594}},
%	doi={10.1007\slash 978-3-0348-0075-4_26},
}

\bib{PS16}{book}{
	author={Pr\"{u}ss, Jan},
	author={Simonett, Gieri},
	title={Moving interfaces and quasilinear parabolic evolution equations},
	series={Monographs in Mathematics},
	volume={105},
	publisher={Birkh\"{a}user\slash Springer, [Cham]},
	date={2016},
%	pages={xix+609},
%	isbn={978-3-319-27697-7},
%	isbn={978-3-319-27698-4},
%	review={\MR{3524106}},
%	doi={10.1007\slash 978-3-319-27698-4},
}

\bib{SS20pre}{article}{
	author={Saito, Hirokazu},
	author={Shibata, Yoshihiro},
	title={On the global wellposedness for free boundary problem for
		the Navier-Stokes systems with surface tension},
	note={Available at 
		%\href{https://arxiv.org/abs/1912.10121}{\texttt{arXiv:1912.10121}}
		\texttt{arXiv:1912.10121}
	},
}
	
\bib{RE07}{article}{
	author={Ren, Weiqing},
	author={E, Weinan},
	title={Boundary conditions for the moving contact line problem},
	journal={Phys. Fluids},
	volume={19},
	date={2007},
	pages={022101},
}

\bib{Sbook}{book}{
	author={Sawano, Yoshihiro},
	title={Theory of Besov spaces},
	series={Developments in Mathematics},
	volume={56},
	publisher={Springer, Singapore},
	date={2018},
%	pages={xxiii+945},
%	isbn={978-981-13-0835-2},
%	isbn={978-981-13-0836-9},
%	review={\MR{3839617}},
%	doi={10.1007\slash 978-981-13-0836-9},
}

%\bib{Sawa20}{article}{
%	author={Sawano, Yoshihiro},
%	title={Homogeneous Besov spaces},
%	journal={Kyoto J. Math.},
%	volume={60},
%	date={2020},
%	number={1},
%	pages={1--43},
%%	issn={2156-2261},
%%	review={\MR{4065179}},
%%	doi={10.1215\slash 21562261-2019-0038},
%}

\bib{S14}{article}{
	author={Shibata, Yoshihiro},
	title={On the $\scr{R}$-boundedness of solution operators for the Stokes
		equations with free boundary condition},
	journal={Differential Integral Equations},
	volume={27},
	date={2014},
	number={3-4},
	pages={313--368},
%	issn={0893-4983},
%	review={\MR{3161607}},
}

\bib{S16}{article}{
	author={Shibata, Yoshihiro},
	title={On the $\mathcal R$-bounded solution operator and the maximal
		$L_p$-$L_q$ regularity of the Stokes equations with free boundary
		condition},
	conference={
		title={Mathematical fluid dynamics, present and future},
	},
	book={
		series={Springer Proc. Math. Stat.},
		volume={183},
		publisher={Springer, Tokyo},
	},
	date={2016},
	pages={203--285},
%	review={\MR{3613763}},
%	doi={10.1007\slash 978-4-431-56457-7_9},
}

\bib{S20}{article}{
	author={Shibata, Yoshihiro},
	title={$\mathcal R$ boundedness, maximal regularity and free boundary
		problems for the Navier Stokes equations},
	conference={
		title={Mathematical analysis of the Navier-Stokes Equations},
	},
	book={
		series={Lecture Notes in Math.},
		volume={2254},
		publisher={Springer, Cham},
	},
	date={2020},
	pages={{193--462}},
}

\bib{SS20}{article}{
	author={Shibata, Yoshihiro},
	author={Saito, Hirokazu}
	title={Global well-posedness for incompressible--incompressible
		two-phase problem},
	conference={
		title={Fluids under pressure},
	},
	book={
		series={Advances in Mathematical Fluid Mechanics},
		publisher={Birkh{\"a}user, Cham.},
	},
	date={2020},
	pages={157--347},
}

\bib{SS11}{article}{
	author={Shibata, Yoshihiro},
	author={Shimizu, Senjo},
	title={Maximal $L_p$-$L_q$ regularity for the two-phase Stokes equations;
		model problems},
	journal={J. Differential Equations},
	volume={251},
	date={2011},
	number={2},
	pages={373--419},
%	issn={0022-0396},
%	review={\MR{2800157}},
%	doi={10.1016\slash j.jde.2011.04.005},
}

\bib{SS12}{article}{
	author={Shibata, Yoshihiro},
	author={Shimizu, Senjo},
	title={On the maximal $L_p$-$L_q$ regularity of the Stokes problem with
		first order boundary condition; model problems},
	journal={J. Math. Soc. Japan},
	volume={64},
	date={2012},
	number={2},
	pages={561--626},
	%	issn={0025-5645},
	%	review={\MR{2916080}},
}

\bib{SY17}{article}{
	author={Shimizu, Senjo},
	author={Yagi, Shintaro},
	title={On local $L_p$-$L_q$ well-posedness of incompressible two-phase
		flows with phase transitions: non-equal densities with large initial
		data},
	journal={Adv. Differential Equations},
	volume={22},
	date={2017},
	number={9-10},
	pages={737--764},
%	issn={1079-9389},
%	review={\MR{3656491}},
}

%\bib{SSO07}{book}{
%	author={Slattery, John C.},
%	author={Sagis, Leonard},
%	author={Oh, Eun-Suok},
%	title={Interfacial transport phenomena},
%	edition={2},
%	publisher={Springer, New York},
%	date={2007},
%%	pages={xviii+827},
%%	isbn={978-0-387-38438-2},
%%	isbn={0-387-38438-3},
%%	review={\MR{2284654}},
%}

\bib{S84}{article}{
	author={Solonnikov, V. A.},
	title={Solvability of the problem of evolution of an isolated amount of a
		viscous incompressible capillary fluid},
	language={Russian, with English summary},
	note={Mathematical questions in the theory of wave propagation, 14},
	journal={Zap. Nauchn. Sem. Leningrad. Otdel. Mat. Inst. Steklov. (LOMI)},
	volume={140},
	date={1984},
	pages={179--186},
%	issn={0373-2703},
%	review={\MR{765724}},
}

%\bib{S88}{article}{
%	author={Solonnikov, V. A.},
%	title={Unsteady flow of a finite mass of a fluid bounded by a free
%		surface},
%	language={Russian, with English summary},
%	journal={Zap. Nauchn. Sem. Leningrad. Otdel. Mat. Inst. Steklov.
%		(LOMI)},
%	volume={152},
%	date={1986},
%	number={Kraev. Zadachi Mat. Fiz. i Smezhnye Vopr. Teor. Funktsi\u{\i}18},
%	pages={137--157, 183--184},
%%	issn={0373-2703},
%	translation={
%		journal={J. Soviet Math.},
%		volume={40},
%		date={1988},
%		number={5},
%		pages={672--686},
%%		issn={0090-4104},
%	},
%%	review={\MR{869248}},
%%	doi={10.1007\slash BF01094193},
%}

\bib{S89}{article}{
	author={Solonnikov, V. A.},
	title={Unsteady motions of a finite isolated mass of a self-gravitating
		fluid},
	language={Russian},
	journal={Algebra i Analiz},
	volume={1},
	date={1989},
	number={1},
	pages={207--249},
%	issn={0234-0852},
	translation={
		journal={Leningrad Math. J.},
		volume={1},
		date={1990},
		number={1},
		pages={227--276},
%		issn={1048-9924},
	},
%	review={\MR{1015340}},
}

\bib{S91}{article}{
	author={Solonnikov, V. A.},
	title={Solvability of a problem on the evolution of a viscous
		incompressible fluid, bounded by a free surface, on a finite time
		interval},
	language={Russian},
	journal={Algebra i Analiz},
	volume={3},
	date={1991},
	number={1},
	pages={222--257},
%	issn={0234-0852},
	translation={
		journal={St. Petersburg Math. J.},
		volume={3},
		date={1992},
		number={1},
		pages={189--220},
%		issn={1061-0022},
	},
%	review={\MR{1120848}},
}

\bib{S95}{article}{
	author={Solonnikov, V. A.},
	title={On some free boundary problems for the Navier-Stokes equations
		with moving contact points and lines},
	journal={Math. Ann.},
	volume={302},
	date={1995},
	number={4},
	pages={743--772},
%	issn={0025-5831},
%	review={\MR{1343648}},
%	doi={10.1007\slash BF01444515},
}

%\bib{S97}{article}{
%	author={Solonnikov, V. A.},
%	title={Problems with free boundaries and with moving contact points for
%		two-dimensional stationary Navier-Stokes equations},
%	language={Russian, with English and Russian summaries},
%	journal={Zap. Nauchn. Sem. S.-Peterburg. Otdel. Mat. Inst. Steklov.
%		(POMI)},
%	volume={213},
%	date={1994},
%	number={Kraev. Zadachi Mat. Fiz. Smezh. Voprosy Teor. Funktsi\u{\i}. 25},
%	pages={179--205, 227--228},
%%	issn={0373-2703},
%	translation={
%		journal={J. Math. Sci. (New York)},
%		volume={84},
%		date={1997},
%		number={1},
%		pages={930--947},
%%		issn={1072-3374},
%	},
%%	review={\MR{1329317}},
%%	doi={10.1007\slash BF02399944},
%}

\bib{S11}{article}{
	author={Solonnikov, V. A.},
	title={On the linear problem arising in the study of a free boundary
		problem for the Navier-Stokes equations},
	journal={Algebra i Analiz},
	volume={22},
	date={2010},
	number={6},
	pages={235--269},
	%	issn={0234-0852},
	translation={
		journal={St. Petersburg Math. J.},
		volume={22},
		date={2011},
		number={6},
		pages={1023--1049},
		%		issn={1061-0022},
	},
	%	review={\MR{2760093}},
	%	doi={10.1090\slash S1061-0022-2011-01182-3},
}

\bib{T95}{book}{
	author={Triebel, Hans},
	title={Interpolation theory, function spaces, differential operators},
	edition={2},
	publisher={Johann Ambrosius Barth, Heidelberg},
	date={1995},
%	pages={532},
%	isbn={3-335-00420-5},
%	review={\MR{1328645}},
}

\bib{W17}{thesis}{
	author={Wilke, Mathias},
	title={Rayleigh-Taylor instability for the two-phase Navier--Stokes
		equations with surface tension in cylindrical domains},
	date={2017},
	note={Habilitations--Schrift Universit\"at Halle,
		Naturwissenschaftliche Fakult\"at II (2013), arXiv:1703.05214}
}

\bib{ZT17}{article}{
	author={Zheng, Yunrui},
	author={Tice, Ian},
	title={Local well posedness of the near-equilibrium contact line problem
		in 2-dimensional Stokes flow},
	journal={SIAM J. Math. Anal.},
	volume={49},
	date={2017},
	number={2},
	pages={899--953},
%	issn={0036-1410},
%	review={\MR{3625802}},
%	doi={10.1137/16M1095238},
}
		
\end{biblist}
\end{bibdiv}

\end{document}